\newcommand{\R}{{\mathbb R}}
\newcommand{\ds}{\displaystyle}
\newtheorem{theor}{Theorem}[section]
\newtheorem{lem}{Lemma}[section]
\newtheorem{prop}{Proposition}[section]
\newtheorem{cor}{Corollary}[section]
\newtheorem{rem}{Remark}[section]
\title[On a critical Kirchhoff type problem in high dimension]{ On the Brezis-Nirenberg problem for a Kirchhoff type equation  in high dimension}
\author{F. Faraci}
\address[F. Faraci]{Department of Mathematics and Computer Sciences, University of Catania, 95125 Catania, Italy}
\email{ffaraci@dmi.unict.it}
\author{K. Silva}
\address[K. Silva]{Instituto de Matem\'{a}tica e Estat\'{i}stica, Universidade Federal de Goi\'{a}s, Goi\^{a}nia GO74001-970,
Brazil}
\email{kayesilva@ufg.br}
\thanks{The second author is the corresponding author.}
\newcommand{\mylabel}[2]{#2\def\@currentlabel{#2}\label{#1}}
\begin{document}

	\begin{abstract} The present paper  deals with a parametrized Kirchhoff type problem involving a critical nonlinearity in high dimension. Existence, non existence and multiplicity of solutions are obtained under the effect of a subcritical perturbation by combining variational properties  with a careful analysis of the fiber maps of the energy functional associated to the problem. The particular case of a pure power perturbation is also addressed. Through the study of the Nehari manifolds we extend the general case to  a wider range of the parameters. 
		
	\end{abstract}

\maketitle

\begin{center}
	\begin{minipage}{12cm}
		\tableofcontents
	\end{minipage}
\end{center}

 \bigskip
 
\emph{Mathematics Subject Classification (2010)}: 35J20, 35B33.

\smallskip

\emph{Key words and phrases}: Kirchhoff operator, critical nonlinearity, fibering maps, Nehari
 manifolds. 

\section{Introduction and main results}	
Nonlocal boundary value problems of the type
$$
\left\{
\begin{array}{ll}
- \left( a+b\ds\int_\Omega |\nabla u|^2 dx\right)\Delta u=
f(x,u), & \hbox{ in } \Omega \\ \\
u=0, & \hbox{on } \partial \Omega
\end{array}
\right.
$$
are related to the stationary version of the equation
$$\frac{\partial^2 u}{\partial t^2}- \left( a+b\ds\int_\Omega |\nabla u|^2 dx\right)\Delta u=f(t,x,u),$$  proposed by Kirchhoff (\cite{K}) as a generalization of the D'Alembert's wave  equation  to describe the transversal oscillations of a stretched string. Here  $u$ denotes the displacement, $f$ is the
external force, $b$ is the initial tension and $a$ is related to the intrinsic properties
of the string. The importance of these kind of problems and its mathematical developments were made very clear on the recent short survey \cite{PR}.  

Recently, the existence and multiplicity of solutions of  Kirchhoff problems   under the effect of a critical nonlinearity $f$ have received considerable attention. Indeed, the challenging feature of such  problems is due to the presence of a nonlocal term together with the lack of compactness of the Sobolev embedding $H^{1}_0(\Omega)\hookrightarrow L^{2^\star}(\Omega)$ which prevent the application of standard variational methods.

The existence and multiplicity of solutions of  Kirchhoff type equations  with critical exponents  have been investigated by using different techniques as truncation and variational methods,  the Nehari manifold approach, the Ljusternik--Schnirelmann category theory,  genus theory (see for instance \cite{ CF,Fan, FS} and the references therein).

In the recent works  \cite{ACF, F, H1, H2, N1, DN,  YM},  an application of the Lions' Concentration Compactness principle  allows to prove  the Palais Smale condition of the energy functional, a key property  for the application of the well known Mountain Pass Theorem. Notice that according to the space dimension $N$, the geometry of the energy functional changes  and when $N\geq 4$ (coercive case) the property holds when $a$ and $b$ satisfy a suitable constraint (see \cite{H1,H2, DN, YM}).

Indeed, when $N\geq 4$, in \cite{FF} it is shown that the interaction between the Kirchhoff operator and the critical term leads to some useful variational properties of the energy functional such as the weak lower semicontinuity and the Palais Smale property when $a^\frac{N-4}{2}b\geq  C_1(N)$ or $a^\frac{N-4}{2}b> C_2(N)$ respectively, for suitable constants $C_1(N)<C_2(N)$.

In this paper we study  the following critical Kirchhoff problem
$$
(\mathcal P_\lambda) \ \ \ \ \left\{
  \begin{array}{ll}
    - \left( a+b\ds\int_\Omega |\nabla u|^2 dx\right)\Delta u=
|u|^{2^*-2}u+\lambda f(x,u), & \hbox{ in } \Omega \\ \\
    u=0, & \hbox{on } \partial \Omega
  \end{array}
\right.
$$
where  $\Omega\subseteq \R^N$ ($N>4$) is a bounded domain,   $a, b$ are positive fixed numbers, $2^*$ is the Sobolev  critical exponent,
 $\lambda$ is a positive  parameter, $f$ a subcritical Carath\'{e}odory function. 

In the present paper, through a careful analysis of the fiber maps associated to the energy functional, we will study the existence, non existence and the multiplicity of solutions of $(\mathcal P_\lambda)$. Indeed, by using the fibration method introduced in \cite{P} and the notion of extremal values of \cite{Y}, we will describe the topological changes of the energy functional, when  the parameters $a,b,\lambda$ vary. As it will become clear throughout our study, from the very geometry of the fibers, we will be able to deduce  a precise, and in some cases complete picture on existence, non-existence and multiplicity results.
	
	When the nonlinearity $f$ is a pure power term, i.e. $f(x,u)=|u|^{p-2}u$ for some $p\in (2,2^*)$, we will  go further in our study  and through a detailed analysis  of the Nehari set associated to problem $(\mathcal P_\lambda)$ (see \cite{N1,N2}), we will show the existence of two critical hyperbolas on the plane $(a,b)$, that separates the plane into regions where the energy functional exhibits  distinct topological properties. Some of the ideas used here come from \cite{S,S1}, where the subcritical case was studied and a complete bifurcation diagram was provided. Our work contains  new results in the framework of Kirchhoff type equations with critical nonlinearity and extends the results of  \cite{DN} (for a detailed comparison see below). 

	To give a better description of our results, let us endow the Sobolev space $H^1_0(\Omega)$ with the
	classical norm $\|u\|=\left( \int_{\Omega }|\nabla u|^2 \ dx\right)^{\frac{1}{2}}$ and  denote by $\|u\|_{q}$ the Lebesgue norm in $L^{q}(\Omega)$ for $1\leq q \leq 2^*$,  i.e. $\|u\|_{q}=\left(\int_{\Omega} |u|^{q} \ dx\right)^{\frac{1}{q}}$.
	Let $S_N$ be the embedding constant of $H^1_0(\Omega)\hookrightarrow L^{2^*}(\Omega)$, i.e.
	\begin{equation}\label{embedding}S_N=\inf_{u\in H^1_0(\Omega) \setminus \{0\}}\frac{\|u\|^2}{ \|u\|^2_{2^*}} . \end{equation}
	Let us recall that
$$
	S_N=\frac{N(N-2)}{4}\omega_N^{\frac{2}{N}}
$$
	(where $\omega_N$ is the volume of the unit ball in $\R^N$) is sharp, but is never achieved unless $\Omega=\mathbb R^N$.
	
	For $N>4$ let us introduce the following constants whcih will have a crucial role in the sequel: 
$$
	C_1(N)=\frac{4(N-4)^\frac{N-4}{2}}{N^\frac{N-2}{2}S_N^\frac{N}{2}} \qquad \hbox{and} \qquad
	C_2(N)=\frac{2(N-4)^{\frac{N-4}{2}}}{(N-2)^{\frac{N-2}{2}}S_{N}^{\frac{N}{2}}},$$
and notice that $C_1(N)< C_2(N)$.

On the nonlinearity $f$ we will assume the following: 
\begin{enumerate}
	\item[(\mylabel{f1}{$\mathcal{F}_1$})] $f:\Omega\times \mathbb{R}\to \mathbb{R}$ is a Carath\'{e}odory function satisfying $f(x,0)=0$ for a.a. $x\in \Omega$;
	\item[(\mylabel{f2}{$\mathcal{F}_2$})] $f(x,v)> 0$ for every $v>0$ and a.a. $x\in \Omega$, $f(x,v)< 0$ for every $v<0$ and a.a. $x\in \Omega$. Moreover there exists $\mu>0$ such that  $f(x,v)\geq \mu >0$ for a.a. $x\in \Omega$ and every $v\in I$, being $I$ an open interval of $(0,+\infty)$;
	\item[(\mylabel{f3}{$\mathcal{F}_3$})]there exist $c>0$, $p\in (2,2^*)$ such that $|f(x,v)|\leq c(1+|v|^{p-1})$ for every $v\in \mathbb R$ and a.a. $x\in \Omega$;
	\item[(\mylabel{f4}{$\mathcal{F}_4$})] $f(x,v)=o(|v|)$ for $v\to 0$ and uniformly in $x\in \Omega$.
\end{enumerate}

Denote by $\Phi_\lambda:H^1_0(\Omega)\to\R$ the energy functional associated to $(\mathcal P_\lambda)$,
\[\Phi_\lambda(u)=\frac{a}{2}\|u\|^2+\frac{b}{4} \|u\|^4-\frac{1}{2^*}\|u\|^{2^*}_{2^*}-\lambda\int_\Omega F(x,u)dx \qquad \mbox{for every }\  u \in H^1_0(\Omega),\]
where
\begin{equation*}
F(x,v)=\int_0^vf(x,t)dt.
\end{equation*}
Note that from \eqref{f1} and \eqref{f3},  $\Phi_\lambda$ is well defined and $\Phi_\lambda\in C^1(H^1_0(\Omega))$.

\medskip

Our first result establishes the existence of global minimizers when $a^\frac{N-4}{2}b\geq C_1(N)$.
\begin{theor}\label{T1} Assume  \eqref{f1} - \eqref{f4}. 
	
	If $a^\frac{N-4}{2}b> C_1(N)$, then there exists $\lambda_0^*:=\lambda_0^*(a,b)>0$ such that:
	\begin{itemize}
		\item[i)]  For each $\lambda>\lambda_0^*$, problem $(\mathcal P_\lambda)$ has a non-zero solution $u_\lambda$, which is a global minimizer to $\Phi_\lambda$ with negative energy.
		\item[ii)]  Problem $(\mathcal P_{\lambda_0^*})$ has a non-zero solution $u_{\lambda_0^*}$, which is a global minimizer to $\Phi_{\lambda_0^*}$ with zero energy.
		\item[iii)] If $0<\lambda<\lambda_0^*$, then $\Phi_\lambda(u)>0$ for all $u\in H_0^1(\Omega)\setminus\{0\}$ and $0$ is a global minimizer of $\Phi_\lambda$.
	\end{itemize}
If $a^\frac{N-4}{2}b= C_1(N)$,   then for each $\lambda>0$, problem $(\mathcal P_\lambda)$ has a non-zero solution $u_\lambda$, which is a global minimizer to $\Phi_\lambda$ with negative energy. Furthermore, if $(a_k)_k,(b_k)_k$ are sequences satisfying $\displaystyle a_k^\frac{N-4}{2}b_k\downarrow C_1(N)$, $a_k\to a>0$ and $b_k\to b>0$, then $\lambda_0^*(a_k,b_k)\to 0$.
\end{theor}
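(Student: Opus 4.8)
My plan is to treat $(\mathcal P_\lambda)$ purely variationally: since $\Phi_\lambda\in C^1(H^1_0(\Omega))$, every global minimizer is a weak solution, so the whole statement reduces to analysing $m(\lambda):=\inf_{H^1_0(\Omega)}\Phi_\lambda$ and the set where it is attained. First I would record the structural facts. Because $N>4$ gives $2<p<2^*<4$, the quartic term dominates: using the Sobolev inequality for $\|u\|_{2^*}^{2^*}$ and \eqref{f3}--\eqref{f4} for $F$ one gets $\Phi_\lambda(u)\ge \frac b4\|u\|^4-o(\|u\|^4)$, so $\Phi_\lambda$ is coercive, uniformly for $\lambda$ in bounded sets. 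Next, $u\mapsto\int_\Omega F(x,u)$ is sequentially weakly continuous (compact embedding $H^1_0\hookrightarrow L^p$, $p<2^*$, plus a Vitali argument), so, combined with the weak lower semicontinuity of $u\mapsto\frac a2\|u\|^2+\frac b4\|u\|^4-\frac1{2^*}\|u\|_{2^*}^{2^*}$ proved in \cite{FF} whenever $a^{\frac{N-4}2}b\ge C_1(N)$, the functional $\Phi_\lambda$ is sequentially weakly lower semicontinuous; hence $m(\lambda)$ is attained for every $\lambda>0$. Finally \eqref{f2} forces $F(x,v)>0$ for $v\ne0$, so for fixed $u\ne0$ the map $\lambda\mapsto\Phi_\lambda(u)$ is strictly decreasing; consequently $m$ is non-increasing and locally Lipschitz, $m(\lambda)\le\Phi_\lambda(0)=0$ for all $\lambda$, and $m(\lambda)\to-\infty$.

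Assume first $a^{\frac{N-4}2}b>C_1(N)$ and set $\lambda_0^*:=\inf\{\lambda>0:\ m(\lambda)<0\}$ (nonempty since $m\to-\infty$). Monotonicity of $m$ and the strict decay of $\lambda\mapsto\Phi_\lambda(u)$ give directly: $m(\lambda)<0$ for $\lambda>\lambda_0^*$, so its minimizer is $\ne0$ and solves $(\mathcal P_\lambda)$, which is (i); $m(\lambda)=0$ for $0<\lambda<\lambda_0^*$, and in that range $\Phi_\lambda$ cannot vanish off the origin, since $\Phi_\lambda(u)=0$ with $u\ne0$ would give $\Phi_{\lambda'}(u)<0$ for $\lambda'\in(\lambda,\lambda_0^*)$, contradicting $m\equiv0$ there; this is (iii). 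The delicate points are $\lambda_0^*>0$ and (ii). For $\lambda_0^*>0$ I would check, by minimising the one–variable function $t\mapsto\frac a2t^{2-2^*}+\frac b4t^{4-2^*}$ (the minimum being at $t^2=\frac{4a}{b(N-4)}$), that $C_1(N)$ is exactly the threshold for which $\psi(t):=\frac a2t^2+\frac b4t^4-\frac1{2^*}S_N^{-2^*/2}t^{2^*}\ge0$ on $[0,\infty)$; under the \emph{strict} inequality $\psi(t)>0$ for all $t>0$, and since $\psi(t)/(t^2+t^4)\to a/2$ as $t\to0$ and $\to b/4$ as $t\to\infty$, this yields a uniform gap $\Phi_0(u)\ge\psi(\|u\|)\ge c(\|u\|^2+\|u\|^4)$ with $c>0$; estimating $\lambda\int_\Omega F(x,u)$ via \eqref{f3}--\eqref{f4} and $\|u\|^p\le\|u\|^2+\|u\|^4$ then gives $\Phi_\lambda\ge0$ for all small $\lambda$, i.e. $\lambda_0^*>0$. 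For (ii), take $\lambda_n\downarrow\lambda_0^*$ with minimizers $u_n\ne0$; then $m(\lambda_0^*)=0$ by continuity of $m$, and weak lower semicontinuity shows a weak limit $u_*$ of $(u_n)$ is a global minimizer of $\Phi_{\lambda_0^*}$ with $\Phi_{\lambda_0^*}(u_*)=0$. That $u_*\ne0$ is the crux: on a fixed small ball $\Phi_\lambda\ge0$ uniformly for $\lambda$ near $\lambda_0^*$ (from the gap), while $\Phi_{\lambda_n}(u_n)<0$, so $\|u_n\|$ is bounded below; if $u_*=0$ then, from the Nehari identity $a\|u_n\|^2+b\|u_n\|^4=\|u_n\|_{2^*}^{2^*}+o(1)$ and $\Phi_{\lambda_n}(u_n)\to0$, one is forced to $\|u_n\|^2\to t_0^2:=\frac{2a(2^*-2)}{b(4-2^*)}=\frac{4a}{b(N-4)}$ together with $a t_0^2+b t_0^4\le S_N^{-2^*/2}t_0^{2^*}$; but at precisely $t=t_0$ the strict inequality $a^{\frac{N-4}2}b>C_1(N)$ is equivalent to $S_N^{-2^*/2}t_0^{2^*}<a t_0^2+b t_0^4$, a contradiction. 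This minimizer is $u_{\lambda_0^*}$, and being a minimizer it solves $(\mathcal P_{\lambda_0^*})$.

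In the boundary case $a^{\frac{N-4}2}b=C_1(N)$ one has $\psi(t_0)=0$, so the uniform gap is lost and I would instead insert test functions. Take the classical Aubin--Talenti instantons truncated to $H^1_0(\Omega)$, call them $v_\varepsilon$ (so that $\|v_\varepsilon\|^2/\|v_\varepsilon\|_{2^*}^2=S_N+O(\varepsilon^{N-2})$), and rescale to $u_\varepsilon=s_\varepsilon v_\varepsilon$ with $\|u_\varepsilon\|=t_0$; a short computation with this expansion gives $0\le\Phi_0(u_\varepsilon)=\psi(t_0)+O(\varepsilon^{N-2})=O(\varepsilon^{N-2})$, while the concentration of the instantons together with \eqref{f2} (which yields $F(x,v)\ge c_0>0$ once $v$ exceeds a fixed level) gives $\int_\Omega F(x,u_\varepsilon)\ge c_0\,|\{u_\varepsilon\ge \mathrm{const}\}|\ge c\,\varepsilon^{N/2}$. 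Since $N>4$ means $N/2<N-2$, for $\varepsilon$ small the perturbation wins and $\Phi_\lambda(u_\varepsilon)<0$; hence $m(\lambda)<0$ for every $\lambda>0$, and the conclusion follows as in (i). For the final limit assertion, note that $\lambda_0^*(a_k,b_k)>0$ is well defined by the first part since $a_k^{\frac{N-4}2}b_k>C_1(N)$; now fix $\lambda>0$ and, working with the limit pair $(a,b)$ (for which $a^{\frac{N-4}2}b=C_1(N)$), fix $\varepsilon$ with $\Phi_\lambda^{a,b}(u_\varepsilon)<0$. As $k\to\infty$, $\Phi_\lambda^{a_k,b_k}(u_\varepsilon)=\Phi_\lambda^{a,b}(u_\varepsilon)+\frac{a_k-a}2\|u_\varepsilon\|^2+\frac{b_k-b}4\|u_\varepsilon\|^4\to\Phi_\lambda^{a,b}(u_\varepsilon)<0$, so $m_{a_k,b_k}(\lambda)<0$ for $k$ large, whence $\lambda_0^*(a_k,b_k)\le\lambda$; since $\lambda>0$ was arbitrary and $\lambda_0^*\ge0$, this forces $\lambda_0^*(a_k,b_k)\to0$.

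I expect the main obstacle to be the boundary/limit case, specifically the quantitative comparison of the two vanishing rates $\Phi_0(u_\varepsilon)=O(\varepsilon^{N-2})$ and $\int_\Omega F(x,u_\varepsilon)\gtrsim\varepsilon^{N/2}$, which is exactly where the hypothesis $N>4$ is used and which makes the behaviour genuinely different from the strict case. The second technical hurdle is the non-degeneracy step in (ii) — showing that the minimizer at $\lambda_0^*$ is nontrivial — which relies on the exact arithmetic identity linking ``$\psi(t_0)>0$'' with the strict inequality ``$a^{\frac{N-4}2}b>C_1(N)$'' (and, at equality, on their simultaneous failure, which is why the boundary case must be handled separately).
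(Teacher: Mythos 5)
Your proof is correct and, while it follows the same overall skeleton as the paper (direct minimization of the coercive, weakly lower semicontinuous $\Phi_\lambda$; monotonicity in $\lambda$; a threshold parameter; Brezis--Nirenberg bubbles for the boundary case and the limit assertion), it implements the two delicate steps differently. The paper defines $\lambda_0^*$ as the extremal value $\inf_u\lambda_0(u)$ of a per-fiber parameter and proves $\lambda_0^*>0$ by a compactness/contradiction argument on normalized sequences (Proposition 2.4 there); your definition $\lambda_0^*=\inf\{\lambda:\inf\Phi_\lambda<0\}$ is equivalent but more economical for this theorem, and your quantitative gap $\psi(t)\ge c\,(t^2+t^4)$ under the strict inequality gives $\lambda_0^*>0$ without any compactness, which is arguably cleaner. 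For the nontriviality of the minimizer at $\lambda_0^*$, the paper has a one-line argument ($g(\|u_k\|)\ge\min g>0$ while the right-hand side vanishes), whereas you combine the Euler--Lagrange and energy identities to force $\|u_n\|^2\to t_0^2=\tfrac{4a}{b(N-4)}$ and then invoke the identity ``$a t_0^2+b t_0^4>S_N^{-2^*/2}t_0^{2^*}$ at that specific $t_0$ if and only if $a^{\frac{N-4}{2}}b>C_1(N)$''. I checked this arithmetic claim and it is correct (raising $a\frac{N}{N-4}>S_N^{-N/(N-2)}\bigl(\frac{4a}{b(N-4)}\bigr)^{2/(N-2)}$ to the power $\frac{N-2}{2}$ gives exactly $a^{\frac{N-4}{2}}b>C_1(N)$), but since you assert it without computation you should display the verification -- it is the hinge of your step (ii) and is not an obviously known fact; the paper's route via $g$ avoids it entirely. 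Two minor cautions: first, your justification of the uniform lower bound $\|u_n\|\ge r$ ``from the gap'' is misattributed -- the gap estimate only controls small $\lambda$, and what you actually need is the standard near-origin estimate $\Phi_\lambda(u)\ge(\tfrac a2-\lambda c\varepsilon)\|u\|^2-C\|u\|^{\min(p,2^*)}\ge 0$ on a small ball, uniformly for bounded $\lambda$ (the claim is true, just cite the right estimate). Second, your instanton rates (Sobolev deficit $O(\varepsilon^{N-2})$, concentration set of measure $\varepsilon^{N/2}$) correspond to the $\mu$-parametrization $U_\mu(x)=c_N\mu^{\frac{N-2}{2}}(\mu^2+|x|^2)^{-\frac{N-2}{2}}$, while the paper's $v_\varepsilon=\varphi(x)(\varepsilon+|x|^2)^{-\frac{N-2}{2}}$ gives deficit $O(\varepsilon^{\frac{N-2}{2}})$ against measure $\varepsilon^{N/4}$; both comparisons reduce to $N>4$, but you must use one convention consistently in both estimates, since mixing them (deficit $\varepsilon^{\frac{N-2}{2}}$ against measure $\varepsilon^{N/2}$) would reverse the inequality and break the argument.
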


 In the sequel, $\lambda_0^*$ is as in Theorem \ref{T1}. For $\lambda<\lambda_0^*$ but close to $\lambda_0^*$ we  can still prove the existence of a non trivial local minimizer provided that  $a^\frac{N-4}{2}b\ge C_2(N)$ as it is shown in the next result.
\begin{theor}\label{T12} Assume \eqref{f1} - \eqref{f4}. If $a^\frac{N-4}{2}b\ge C_2(N)$, then there exists $\varepsilon>0$ such that for each $\lambda_0^*-\varepsilon<\lambda<\lambda_0^*$,  problem $(\mathcal P_\lambda)$ has a non-zero solution $u_\lambda$, which is a local minimizer to $\Phi_\lambda$ with positive energy. Moreover $\Phi_\lambda(u)>0$ for all $u\in H_0^1(\Omega)\setminus\{0\}$.
\end{theor}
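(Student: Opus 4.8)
The claim that $\Phi_\lambda(u)>0$ for all $u\in H^1_0(\Omega)\setminus\{0\}$ is exactly part iii) of Theorem~\ref{T1}, since $\lambda<\lambda_0^*$; in particular every non-zero critical point of $\Phi_\lambda$ has positive energy, so it suffices to produce, for $\lambda$ slightly below $\lambda_0^*$, a non-zero \emph{local} minimizer of $\Phi_\lambda$. The plan is to trap one inside a small neighbourhood of the minimizer set of $\Phi_{\lambda_0^*}$. The tools are: (a) for $a^{\frac{N-4}{2}}b\ge C_2(N)$ the functional $\Phi_\mu$ is coercive and sequentially weakly lower semicontinuous on $H^1_0(\Omega)$ for every $\mu>0$ (the subcritical term $\int_\Omega F(x,\cdot)\,dx$ is weakly continuous because $p<2^*$); (b) the Brezis--Lieb type splitting: if $u_n\rightharpoonup u$ in $H^1_0(\Omega)$ then, up to a subsequence, with $z_n:=u_n-u$,
\[
\Phi_{\lambda_0^*}(u_n)\ \ge\ \Phi_{\lambda_0^*}(u)+\eta(\|z_n\|^2)+o(1),\qquad \eta(s):=\frac a2\,s+\frac b4\,s^2-\frac{S_N^{-2^*/2}}{2^*}\,s^{2^*/2},
\]
obtained by expanding $\|u_n\|^2$ and $\|u_n\|^4$, applying the Brezis--Lieb lemma to $\|\cdot\|_{2^*}^{2^*}$, using compactness of $H^1_0(\Omega)\hookrightarrow L^p(\Omega)$, and $\|z_n\|_{2^*}^{2^*}\le S_N^{-2^*/2}\|z_n\|^{2^*}$; (c) the elementary computation behind the two constants, namely $a^{\frac{N-4}{2}}b\ge C_1(N)\Leftrightarrow\eta\ge 0$ on $[0,+\infty)$ and $a^{\frac{N-4}{2}}b\ge C_2(N)\Leftrightarrow\eta$ non-decreasing on $[0,+\infty)$ (equivalently $\eta'\ge0$). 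Since $C_1(N)<C_2(N)$, under our hypothesis $\eta$ is non-decreasing and strictly positive on $(0,+\infty)$; this is the only place the stronger constant $C_2(N)$ enters.

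Next I would introduce the set of non-zero global minimizers $\mathcal Z:=\{u\in H^1_0(\Omega)\setminus\{0\}:\Phi_{\lambda_0^*}(u)=0\}$, which is non-empty by Theorem~\ref{T1} ii), with $\min_{H^1_0(\Omega)}\Phi_{\lambda_0^*}=0$. A routine use of \eqref{f3}–\eqref{f4} gives $\Phi_{\lambda_0^*}(u)\ge\frac a8\|u\|^2$ for $\|u\|$ small, so $\mathcal Z$ is bounded away from $0$ in norm; coercivity gives that $\mathcal Z$ is bounded; and applying the splitting to a sequence in $\mathcal Z$ (for which $\Phi_{\lambda_0^*}(u)$ and $\eta(\|z_n\|^2)$ must both tend to $0$, hence $z_n\to0$ strongly) shows $\mathcal Z$ is strongly compact. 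Consequently $u\mapsto\operatorname{dist}(u,\mathcal Z)$ is sequentially weakly lower semicontinuous, so the closed neighbourhoods $\mathcal U_r:=\{u:\operatorname{dist}(u,\mathcal Z)\le r\}$ are bounded and weakly closed, hence weakly compact, and $0\notin\mathcal U_r$ once $r<\operatorname{dist}(0,\mathcal Z)$.

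The core estimate, which I expect to be the main obstacle, is: for every $r\in(0,r_0]$ with $r_0<\operatorname{dist}(0,\mathcal Z)$,
\[
\inf\bigl\{\Phi_{\lambda_0^*}(u):\operatorname{dist}(u,\mathcal Z)=r\bigr\}\ \ge\ \eta(r^2)\ >\ 0.
\]
To prove it I take a minimizing sequence $u_n$ for the left side; it is bounded by coercivity, so $u_n\rightharpoonup u_*$ along a subsequence, and weak lower semicontinuity of $\operatorname{dist}(\cdot,\mathcal Z)$ gives $\operatorname{dist}(u_*,\mathcal Z)\le r<\operatorname{dist}(0,\mathcal Z)$, so $u_*\neq0$. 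If $\Phi_{\lambda_0^*}(u_*)>0$ the estimate is immediate; otherwise $u_*\in\mathcal Z$, whence $\|u_n-u_*\|\ge\operatorname{dist}(u_n,\mathcal Z)=r$, and the splitting together with the monotonicity of $\eta$ yields $\Phi_{\lambda_0^*}(u_n)\ge\eta(\|u_n-u_*\|^2)+o(1)\ge\eta(r^2)+o(1)$. This is precisely where $a^{\frac{N-4}{2}}b\ge C_2(N)$ — rather than the weaker $\ge C_1(N)$ — is used: under $C_1(N)$ only, $\eta>0$ but not monotone, and a minimizing sequence could have $\|z_n\|^2$ sitting where $\eta$ is small, so the lower bound would collapse.

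Finally I would fix such an $r$, set $2\delta:=\eta(r^2)>0$ and $M:=\sup_{u\in\mathcal U_r}\bigl|\int_\Omega F(x,u)\,dx\bigr|<\infty$ (finite by boundedness of $\mathcal U_r$ and \eqref{f3}), and use $\Phi_\lambda(u)=\Phi_{\lambda_0^*}(u)+(\lambda_0^*-\lambda)\int_\Omega F(x,u)\,dx$ to get $|\Phi_\lambda-\Phi_{\lambda_0^*}|<\delta$ on $\mathcal U_r$ whenever $\lambda_0^*-\varepsilon<\lambda<\lambda_0^*$ with $\varepsilon:=\delta/M$. Then, writing $w_0$ for the minimizer of Theorem~\ref{T1} ii),
\[
\Phi_\lambda(w_0)<\Phi_{\lambda_0^*}(w_0)+\delta=\delta\le\inf\bigl\{\Phi_\lambda(u):\operatorname{dist}(u,\mathcal Z)=r\bigr\},
\]
the last inequality because on that sphere $\Phi_\lambda\ge\Phi_{\lambda_0^*}-\delta\ge2\delta-\delta=\delta$. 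By weak lower semicontinuity $\Phi_\lambda$ attains its minimum over the weakly compact $\mathcal U_r$ at some $u_\lambda$, and the strict inequality forces $\operatorname{dist}(u_\lambda,\mathcal Z)<r$; thus $u_\lambda$ lies in the open set $\{\operatorname{dist}(\cdot,\mathcal Z)<r\}\subset\mathcal U_r$ and is a local minimizer of $\Phi_\lambda$ on $H^1_0(\Omega)$, hence a critical point, i.e. a solution of $(\mathcal P_\lambda)$. It is non-zero since $0\notin\mathcal U_r$, and $\Phi_\lambda(u_\lambda)>0$ by Theorem~\ref{T1} iii); together with $\Phi_\lambda>0$ on $H^1_0(\Omega)\setminus\{0\}$ from the same part of Theorem~\ref{T1}, this establishes the statement.
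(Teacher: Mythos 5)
Your argument is correct, but it takes a genuinely different route from the paper's. The paper (Theorem \ref{existencelocal}) fixes a small radius $r<\|u_{\lambda_0^*}\|$ on whose sphere $\Phi_\lambda\ge M>0$ uniformly for $\lambda\le\lambda_0^*$ (Proposition \ref{MPGIMPROVED}), shows that $\hat I_\lambda=\inf_{\|u\|\ge r}\Phi_\lambda\to 0$ as $\lambda\uparrow\lambda_0^*$ (Proposition \ref{convergezero}), and then produces the minimizer of $\Phi_\lambda$ over the exterior region $\{\|u\|\ge r\}$ by Ekeland's variational principle combined with the Palais--Smale property of Lemma \ref{variational property}, item 2) --- which is precisely where $C_2(N)$ enters there, and which in the borderline case $a^{\frac{N-4}{2}}b=C_2(N)$ forces the extra care of keeping the level below $\frac{(2^*-2)^2a^2}{4\cdot 2^*(4-2^*)b}$. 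You instead minimize over a weakly compact tubular neighbourhood of the compact set $\mathcal Z$ of non-zero global minimizers of $\Phi_{\lambda_0^*}$, with the energy barrier on $\{\operatorname{dist}(\cdot,\mathcal Z)=r\}$ coming from a Brezis--Lieb splitting; for you $C_2(N)$ enters only through the monotonicity of $\eta$. This dispenses with Ekeland and the Palais--Smale condition entirely (so the borderline case needs no separate treatment), at the cost of establishing compactness of $\mathcal Z$ and the barrier estimate. Two remarks. First, your barrier claim $\inf\{\Phi_{\lambda_0^*}(u):\operatorname{dist}(u,\mathcal Z)=r\}\ge\eta(r^2)$ is not fully justified: in your ``Case 1'' (where the weak limit $u_*$ satisfies $\Phi_{\lambda_0^*}(u_*)>0$) you only get that the infimum is bounded below by the positive number $\Phi_{\lambda_0^*}(u_*)$, which need not dominate $\eta(r^2)$. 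This is harmless, since the final step only uses strict positivity of the infimum; just define $2\delta$ to be that infimum rather than $\eta(r^2)$. Second, because every sequence in your argument is bounded, the barrier and the compactness of $\mathcal Z$ really only require $\eta>0$ on compact subsets of $(0,\infty)$, i.e.\ $a^{\frac{N-4}{2}}b>C_1(N)$, not monotonicity of $\eta$; so your construction appears to give the conclusion of Theorem \ref{T41} for general $f$ satisfying \eqref{f1}--\eqref{f4}, which is stronger than what the paper obtains there via the Nehari manifold in the power case.
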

A second solution of $(P_\lambda)$ of  mountain pass type   is ensured by the next theorem. 
\begin{theor}\label{T2} Assume \eqref{f1} - \eqref{f4}. If $a^\frac{N-4}{2}b> C_2(N)$, then there exists $\varepsilon>0$ such that for each $\lambda>\lambda_0^*-\varepsilon$, problem $(\mathcal P_\lambda)$ has a non-zero solution $v_\lambda$, which is of a mountain pass type to $\Phi_\lambda$, with positive energy. If $a^\frac{N-4}{2}b= C_2(N)$, then the same result holds for $\lambda$ sufficiently large.
\end{theor}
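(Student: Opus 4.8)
The plan is to obtain $v_\lambda$ as a mountain pass critical point of $\Phi_\lambda$ lying between the ``valley'' at the origin and the minimizer $u_\lambda$ furnished by Theorems \ref{T1}--\ref{T12}. First I record the variational structure. Since $N>4$ one has $2^*<4$, and also $p<2^*$, so the quartic term $\frac{b}{4}\|u\|^4$ dominates both $\frac{1}{2^*}\|u\|^{2^*}_{2^*}$ and the subcritical term $\lambda\int_\Omega F(x,u)\,dx$, which grows at most like $\|u\|^p$ by \eqref{f3}; hence $\Phi_\lambda$ is coercive and bounded below, and since $a^\frac{N-4}{2}b>C_2(N)>C_1(N)$ it is also weakly lower semicontinuous. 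Moreover, combining \eqref{f4} with \eqref{f3} gives $|F(x,v)|\le\sigma v^2+C_\sigma|v|^p$, whence $\Phi_\lambda(u)\ge\bigl(\tfrac{a}{2}-\lambda\sigma C\bigr)\|u\|^2-O(\|u\|^p)-O(\|u\|^{2^*})$; choosing $\sigma$ small (depending on $\lambda$) shows that $0$ is a strict local minimum of $\Phi_\lambda$ and that $\inf_{\|u\|=\rho}\Phi_\lambda=:\alpha>0$ for some $\rho=\rho(\lambda)>0$.

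Next I set up the mountain pass geometry. When $\lambda\ge\lambda_0^*$ take $u_\lambda$ to be the global minimizer from Theorem \ref{T1}, so that $\Phi_\lambda(u_\lambda)\le0=\Phi_\lambda(0)$, which forces $u_\lambda\notin B_\rho(0)$ because $\Phi_\lambda>0$ on $B_\rho(0)\setminus\{0\}$; when $\lambda_0^*-\varepsilon<\lambda<\lambda_0^*$, with $\varepsilon$ the constant of Theorem \ref{T12}, take $u_\lambda$ to be the local minimizer from Theorem \ref{T12}, which has positive energy. In both situations $0\ne u_\lambda$, both are local minima of $\Phi_\lambda$, and every path joining $0$ to $u_\lambda$ either meets the sphere $\{\|u\|=\rho\}$ (where $\Phi_\lambda\ge\alpha>0$) or ends at a point with positive energy; consequently the mountain pass level
$$c_\lambda=\inf_{\gamma\in\Gamma}\ \max_{t\in[0,1]}\Phi_\lambda(\gamma(t)),\qquad \Gamma=\{\gamma\in C([0,1];H^1_0(\Omega)):\gamma(0)=0,\ \gamma(1)=u_\lambda\},$$
is strictly positive.

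The heart of the proof is the Palais--Smale condition: I claim that, when $a^\frac{N-4}{2}b>C_2(N)$, $\Phi_\lambda$ satisfies $(PS)_c$ for every $c\in\R$. Indeed, a $(PS)_c$ sequence $(u_n)$ is bounded by coercivity, so $u_n\rightharpoonup u$; setting $v_n=u_n-u$, $A=\lim\|v_n\|^2$ and $T=\|u\|^2+A$, the relations $\langle\Phi_\lambda'(u_n),u_n\rangle\to0$ and $\langle\Phi_\lambda'(u_n),u\rangle\to0$, together with the Brezis--Lieb lemma, the compactness of $H^1_0(\Omega)\hookrightarrow L^q(\Omega)$ for $q<2^*$ and the subcritical bound \eqref{f3}, yield $(a+bT)A=\lim\|v_n\|^{2^*}_{2^*}\le S_N^{-2^*/2}A^{2^*/2}$; using $T\ge A$ this gives $a+bA\le S_N^{-2^*/2}A^{2/(N-2)}$, and an elementary discussion of the right-hand side shows that this inequality has no solution $A>0$ precisely when $a^\frac{N-4}{2}b>C_2(N)$. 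Hence $A=0$, i.e.\ $u_n\to u$ in $H^1_0(\Omega)$. Therefore $c_\lambda$ is a critical value and the mountain pass theorem produces a solution $v_\lambda$ of $(\mathcal P_\lambda)$ with $\Phi_\lambda(v_\lambda)=c_\lambda>0$; since $\Phi_\lambda(v_\lambda)>0=\Phi_\lambda(0)$ one has $v_\lambda\ne0$, and $v_\lambda$ is of mountain pass type. This proves the statement for every $\lambda>\lambda_0^*-\varepsilon$ when $a^\frac{N-4}{2}b>C_2(N)$.

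In the borderline case $a^\frac{N-4}{2}b=C_2(N)$ the analysis above leaves exactly one level $c^*>0$ — the energy of a single concentrating Sobolev bubble carrying the critical mass $A^*$ determined by the equality case of the previous inequality — at which $(PS)$ may fail, while $(PS)_c$ still holds for every $c<c^*$. Thus it suffices to show $c_\lambda<c^*$, and the plan is to secure this for $\lambda$ large by choosing, for such $\lambda$, a path in $\Gamma$ that exploits the increasingly deep valley at $u_\lambda$ and along which $\Phi_\lambda$ stays below $c^*$. This last estimate is where I expect the real difficulty: the mountain pass geometry, the compactness analysis for $a^\frac{N-4}{2}b>C_2(N)$, and the identification of $c^*$ are routine once the variational properties of $\Phi_\lambda$ recalled above are in hand, whereas keeping the mountain pass level strictly below the single non-compactness threshold, uniformly for $\lambda$ large, in the critical case $a^\frac{N-4}{2}b=C_2(N)$ is the delicate point.
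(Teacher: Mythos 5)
For the case $a^{\frac{N-4}{2}}b>C_2(N)$ your argument is essentially the paper's: mountain pass geometry between the valley at $0$ and the (global or local) minimizer from Theorems \ref{T1}--\ref{T12}, plus the global Palais--Smale property, which you correctly reduce to the non-solvability of $a+bA\le S_N^{-2^*/2}A^{\frac{2}{N-2}}$ for $A>0$ --- this is exactly the statement $h>0$ on $(0,\infty)$ of Lemma \ref{functions}\,ii), i.e.\ item 2) of Lemma \ref{variational property}. (One small point you gloss over: when the endpoint is the \emph{local} minimizer with positive energy, the mountain pass theorem needs $\Phi_\lambda(u_\lambda)$ to lie strictly below the rim value $\alpha=\inf_{\|u\|=\rho}\Phi_\lambda$; this is guaranteed by the construction in Theorem \ref{T12}, where $\hat I_\lambda<M$ is arranged via $\hat I_\lambda\to 0$ as $\lambda\uparrow\lambda_0^*$, but it should be said.)

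The genuine gap is the borderline case $a^{\frac{N-4}{2}}b=C_2(N)$, which is half of the statement: you correctly identify that $(PS)_c$ fails only at the single level $c^*=\frac{(2^*-2)^2a^2}{4\cdot 2^*(4-2^*)b}$ and that it suffices to get $c_\lambda<c^*$ for large $\lambda$, but you stop at "the plan is to choose a path\dots" and flag the estimate as the delicate point without carrying it out. In fact this step is elementary and does not require any bubble-energy expansion: the paper shows $c_\lambda\to 0$ as $\lambda\to\infty$ using only the ray $t\mapsto tu_{\lambda_0^*}$ as test path. Since $F\ge 0$, the fiber map $\psi_{\lambda,u_{\lambda_0^*}}$ is nonincreasing in $\lambda$; fix $\lambda'$ and $\delta>0$ with $0<\psi_{\lambda',u_{\lambda_0^*}}(t)\le\varepsilon$ on $(0,\delta]$, and note that for $\lambda$ large enough $\psi_{\lambda,u_{\lambda_0^*}}<0$ on any compact subinterval of $(0,\infty)$ away from the origin, so the maximum of $\psi_{\lambda,u_{\lambda_0^*}}$ over the path is attained in $(0,\delta)$ and is $\le\psi_{\lambda',u_{\lambda_0^*}}(t_{\max})\le\varepsilon$. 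Hence $c_\lambda\le\varepsilon$ eventually, and in particular $c_\lambda<c^*$ for $\lambda$ sufficiently large, which restores compactness at the mountain pass level. Without some such quantitative control of $c_\lambda$ your proposal does not prove the second assertion of the theorem.
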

For the next result, we need the additional hypothesis:
\begin{enumerate}
	\item[(\mylabel{f5}{$\mathcal{F}_5$})] For each $u\in H_0^1(\Omega)\setminus\{0\}$, the function $(0,\infty)\ni t\mapsto \int_\Omega f(x,tu(x)) dx$ is $C^1$.
\end{enumerate}
\begin{theor}\label{T3} Assume \eqref{f1} - \eqref{f5}. If $\displaystyle a^\frac{N-4}{2}b> C_2(N)$, then there exists $\lambda^*:=\lambda^*(a,b)\in (0,\lambda_0^*)$,  such that if $\lambda\in(0,\lambda^*)$, then $(\mathcal P_\lambda)$ has no non-zero solution. Moreover, there exists $u\in H_0^1(\Omega)\setminus\{0\}$ such that $\Phi'_\lambda(u)u=0$ if, and only if $\lambda\ge \lambda^*$.
\end{theor}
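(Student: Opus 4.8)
\emph{Proof strategy.} The plan is to recast, for each ray, the existence of a critical point of $\Phi_\lambda$ as a single equation in $\lambda$, and to identify $\lambda^*$ with an extremal value of the resulting family of functions.

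For $u\in H^1_0(\Omega)\setminus\{0\}$ consider the fiber map $\gamma_u(t):=\Phi_\lambda(tu)$ and its unperturbed counterpart $\psi_u(t):=\frac a2 t^2\|u\|^2+\frac b4 t^4\|u\|^4-\frac1{2^*}t^{2^*}\|u\|^{2^*}_{2^*}$, $t>0$; a nonzero solution $v$ of $(\mathcal P_\lambda)$ satisfies $\Phi'_\lambda(v)v=0$, that is $\gamma'_{v/\|v\|}(\|v\|)=0$. Since $N>4$ forces $2^*<4$, and since $a^{\frac{N-4}2}b>C_2(N)$, the map $\psi_u$ is strictly increasing on $(0,\infty)$ for every $u\neq0$: writing $s=t^2$ one has $\psi'_u(t)=t\,h_u(t^2)$ with $h_u(s)=a\|u\|^2+bs\|u\|^4-s^{2/(N-2)}\|u\|^{2^*}_{2^*}$, the function $h_u$ attains its minimum at a unique $s_0(u)$ with $h_u(s_0(u))=a\|u\|^2-\tfrac{N-4}2 b\,s_0(u)\|u\|^4$, and, using \eqref{embedding}, $a^{\frac{N-4}2}b>C_2(N)$ is exactly what makes $h_u(s_0(u))>0$ for every $u$; in fact it yields the uniform bound $\psi'_u(t)\ge\delta_0\,t\|u\|^2$ with $\delta_0:=\delta_0(a,b,N)>0$ (this is the same computation that produces the value $C_2(N)$). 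Since $\int_\Omega f(x,tu)u\,dx>0$ for $t>0$ by \eqref{f2} and is continuous in $t$ (with the regularity supplied by \eqref{f5}), we may define
\[
\lambda_u(t):=\frac{\psi'_u(t)}{\int_\Omega f(x,tu)u\,dx}\;>0,
\]
and then $\gamma_u$ has a critical point at level $t$ if and only if $\lambda=\lambda_u(t)$.

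Next I would analyse $\lambda_u$. By \eqref{f4}, $\int_\Omega f(x,tu)u\,dx=o(t)$ as $t\to0^+$, while $\psi'_u(t)\sim at\|u\|^2$, so $\lambda_u(t)\to+\infty$ as $t\to0^+$; by \eqref{f3} the same integral is $O(t^{p-1})$, and since $p-1<3$ and $\psi'_u(t)\sim bt^3\|u\|^4$ (here $2^*-1<3$ is used) we get $\lambda_u(t)\to+\infty$ as $t\to+\infty$ too. Hence $\bar\lambda(u):=\min_{t>0}\lambda_u(t)$ is attained and strictly positive and, by continuity and the intermediate value theorem, the range of $\lambda_u$ equals $[\bar\lambda(u),+\infty)$. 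Put $\lambda^*:=\inf\{\bar\lambda(u):u\in H^1_0(\Omega)\setminus\{0\}\}$. Then $\mathcal N_\lambda:=\{v\in H^1_0(\Omega)\setminus\{0\}:\Phi'_\lambda(v)v=0\}$ is nonempty precisely when $\lambda\in\bigcup_u[\bar\lambda(u),+\infty)$; as every nonzero solution of $(\mathcal P_\lambda)$ lies in $\mathcal N_\lambda$, this already gives both the non-existence statement for $\lambda<\lambda^*$ and the ``only if'' part of the last assertion, provided $\lambda^*>0$. The latter follows by combining $\psi'_u(t)\ge\delta_0 t\|u\|^2$ with $\int_\Omega f(x,tu)u\,dx\le\varepsilon t\|u\|^2_2+c_\varepsilon t^{p-1}\|u\|^p_p$ (from \eqref{f3}--\eqref{f4}) and \eqref{embedding}: a short computation gives $\lambda_u(t)\ge\lambda_0>0$ uniformly in $u$ and $t$, whence $\lambda^*\ge\lambda_0>0$.

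It remains to show that $\lambda^*$ is attained, and this is the heart of the matter — the obstacle being the loss of compactness carried by the critical term. I would take a minimizing sequence $v_n=t_nw_n$ with $\|w_n\|=1$; the uniform behaviour of $\lambda_u$ near $0$ and $+\infty$ found above confines $t_n$ to a compact subinterval of $(0,\infty)$, so $(v_n)$ is bounded in $H^1_0(\Omega)$ and, along a subsequence, $v_n\rightharpoonup v^*$, $v_n\to v^*$ in $L^q(\Omega)$ for $q<2^*$ and a.e. Write $G(v):=a\|v\|^2+b\|v\|^4-\|v\|^{2^*}_{2^*}$ and $H(v):=\int_\Omega f(x,v)v\,dx$. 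Then $G(v_n)=t_n\psi'_{w_n}(t_n)\ge\delta_0 t_n^2$ stays bounded away from $0$; if $v^*=0$ then $H(v_n)\to0$, hence $G(v_n)=\lambda_{w_n}(t_n)H(v_n)\to0$, a contradiction, so $v^*\neq0$ and $H(v_n)\to H(v^*)>0$. By the Brezis--Lieb lemma, along a further subsequence $\|v_n-v^*\|^2\to\ell\ge0$ and $\|v_n-v^*\|^{2^*}_{2^*}\to m\le S_N^{-2^*/2}\ell^{2^*/2}$, and therefore
\[
\lim_{n}G(v_n)=G(v^*)+a\ell+2b\|v^*\|^2\ell+b\ell^2-m\;\ge\;G(v^*)+\ell\,\phi(\ell),
\]
where $\phi(\ell):=a+2b\|v^*\|^2+b\ell-S_N^{-2^*/2}\ell^{2/(N-2)}$. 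A one-variable computation shows that $\phi$ attains its minimum over $[0,\infty)$ at a point $\ell_0>0$ with $\phi(\ell_0)=a+2b\|v^*\|^2-\tfrac{N-4}2 b\,\ell_0=\delta_0+2b\|v^*\|^2>0$ — the identity with $\delta_0$ coming from the very computation of the first step, so that this is once more where $a^{\frac{N-4}2}b>C_2(N)$ enters. Hence $\lim_n G(v_n)\ge G(v^*)$, and since $H(v_n)\to H(v^*)>0$ we obtain $\lambda^*=\lim_n G(v_n)/H(v_n)\ge G(v^*)/H(v^*)\ge\lambda^*$; equality holds throughout, so $v^*$ realizes $\lambda^*$ (and, incidentally, $\ell=0$, i.e. $v_n\to v^*$ strongly). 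Consequently $\bigcup_u[\bar\lambda(u),+\infty)=[\lambda^*,+\infty)$, which yields $\mathcal N_\lambda\neq\emptyset$ if and only if $\lambda\ge\lambda^*$, and in particular $(\mathcal P_\lambda)$ has no nonzero solution for $\lambda<\lambda^*$. Finally $\lambda^*<\lambda_0^*$: for $\lambda$ slightly below $\lambda_0^*$, Theorem~\ref{T12} produces a nonzero solution of $(\mathcal P_\lambda)$, hence such $\lambda$ lies in $[\lambda^*,+\infty)$, forcing $\lambda^*<\lambda_0^*$. As already indicated, the single genuinely delicate point is the compactness of the minimization defining $\lambda^*$: the critical nonlinearity is handled by Brezis--Lieb, the decisive fact being the strict positivity of the defect $\phi$, which is precisely what the hypothesis $a^{\frac{N-4}2}b>C_2(N)$ guarantees.
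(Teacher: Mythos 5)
Your proposal is correct and follows essentially the same route as the paper: your $\bar\lambda(u)=\min_{t>0}\lambda_u(t)$ is exactly the paper's $\lambda(u)$ defined by the system $\psi'_{\lambda,u}=\psi''_{\lambda,u}=0$, the uniform positivity $\lambda^*>0$ and the compactness in the attainment step both rest, as in the paper, on $\min h>0$ being equivalent to $a^{\frac{N-4}{2}}b>C_2(N)$, and non-existence for $\lambda<\lambda^*$ follows because the Nehari set is empty there. The only departures are organizational: you prove attainment by an explicit Brezis--Lieb splitting (which is precisely the content of the weak lower semicontinuity of $u\mapsto\Phi'_\lambda(u)u$ that the paper invokes via Lemma \ref{variational property}(3) in Proposition \ref{lambdaachieved}), and you obtain $\lambda^*<\lambda_0^*$ from Theorem \ref{T12} rather than from the pointwise inequality $\lambda(u)<\lambda_0(u)$ of Corollary \ref{lambda<lambdazero} -- both are valid and non-circular.
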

Now we focus on the power case $f(x,u)=|u|^{p-2}u$ with $p\in (2,2^*)$. In this case, some conclusions of Theorems \ref{T1}, \ref{T2} and \ref{T3} had already been established in \cite{DN}. Indeed, a comparison between the constants $\alpha_2$ (defined in \cite{DN}) and $C_2(N)$ shows (after some obvious modifications with respect to $a>0$) that $\alpha_2=C_2(N)$. Therefore \cite[Theorem B.8]{DN} corresponds to our Theorem \ref{T1} with the following observations:
\begin{itemize}
	\item[1)] In \cite[Theorem B.8]{DN} the existence of a global minimum of the energy functional $u_\lambda$ is only proved for $\alpha_2=C_2(N)\le a^\frac{N-4}{2}b$ and $\lambda$ sufficiently large in order to make the infimum negative, while in our case, we find $u_\lambda$ for all  $C_1(N)\le a^\frac{N-4}{2}b$ (remember that $C_1(N)<C_2(N)$) and there is a threshold $\lambda_0^*$ for the sign of the energy of $\Phi_\lambda$. Moreover, we proved the existence of a local minimizer with positive energy in case $\Phi_\lambda(u)>0$ for $u\neq 0$ and $a^\frac{N-4}{2}b\ge C_2(N)$ (see Theorem \ref{T12}).
	\item[2)] The arguments used in \cite[Theorem B.8]{DN}, to prove a mountain pass geometry to $\Phi_\lambda$ require $\lambda$ to be sufficiently large in order to make the infimum negative. We show that this geometry is preserved even in the case where $\Phi_\lambda(u)>0$ for all $u\in H_0^1(\Omega)\setminus\{0\}$ (see Theorems \ref{T12}, \ref{T2}).
	\item[3)] Theorem \ref{T3} was proved in \cite[Theorem B.8]{DN} for $\lambda$ sufficiently small. We also show that there exists $u\in H_0^1(\Omega)\setminus\{0\}$ such that $\Phi'_\lambda(u)u=0$ if, and only if $\lambda \ge \lambda^*$. However, when $f(x,u)=|u|^{p-2}u$ this result can be improved (see Theorem \ref{nonexistenceimproved}).
\end{itemize}

Concerning item 1), in fact, we have now a fairly complete result. Combining Theorem \ref{T1} with \cite[Proposition 4.2]{DN} we conclude that the curve $\displaystyle a^\frac{N-4}{2}b= C_1(N)$ is a threshold in the following sense:
\begin{theor}\label{T4} Suppose that $f(x,u)=|u|^{p-2}u$. If $0<\displaystyle a^\frac{N-4}{2}b\le C_1(N)$, then $\Phi_\lambda$ has a global minimizer with negative energy for all $\lambda>0$. If $\displaystyle a^\frac{N-4}{2}b> C_1(N)$, then $\Phi_\lambda$ has a global minimizer with negative energy if, and only if, $\lambda>\lambda_0^*(a,b)>0$, it has two global minimizers with zero energy for $\lambda=\lambda_0^*(a,b)$, and has zero as unique minimizer if $\lambda<\lambda_0^*(a,b)$. Moreover, if $(a_k)_k,(b_k)_k$ are sequences satisfying $\displaystyle a_k^\frac{N-4}{2}b_k\downarrow C_1(N)$, $a_k\to a>0$ and $b_k\to b>0$, then $\lambda_0^*(a_k,b_k)\to 0$. In all cases the global minimizer is a solution to problem $(\mathcal P_\lambda)$.
\end{theor}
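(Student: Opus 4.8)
The plan is to treat the power case $f(x,u)=|u|^{p-2}u$ by exploiting the explicit homogeneity that this choice introduces, together with the already-established Theorem \ref{T1} and the result \cite[Proposition 4.2]{DN}. First I would dispose of the regime $0<a^{\frac{N-4}{2}}b\le C_1(N)$: this is precisely the content of \cite[Proposition 4.2]{DN} (or follows by the same truncation/fibering argument), giving a global minimizer with negative energy for every $\lambda>0$, and it only remains to observe that such a minimizer, being a critical point of $\Phi_\lambda$, solves $(\mathcal P_\lambda)$. For the regime $a^{\frac{N-4}{2}}b>C_1(N)$, the existence statement and the limit $\lambda_0^*(a_k,b_k)\to 0$ are already given by Theorem \ref{T1}; the genuinely new assertions to be proved here are (a) that \emph{no} nonzero global minimizer with negative energy exists when $\lambda<\lambda_0^*$ — equivalently $0$ is the unique global minimizer — and (b) the \emph{uniqueness up to the expected multiplicity}: exactly two global minimizers at $\lambda=\lambda_0^*$ (both with zero energy) and a single minimizer (namely $0$) below the threshold.

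The heart of the matter is the analysis of the fiber map $t\mapsto \varphi_u(t):=\Phi_\lambda(tu)$ for fixed $u\in H_0^1(\Omega)\setminus\{0\}$, which in the power case becomes the explicit one-variable function
\[
\varphi_u(t)=\frac{a}{2}\|u\|^2 t^2+\frac{b}{4}\|u\|^4 t^4-\frac{1}{2^*}\|u\|_{2^*}^{2^*}t^{2^*}-\frac{\lambda}{p}\|u\|_p^p\, t^p .
\]
I would normalize to $\|u\|=1$ and study $g(t)=\tfrac a2 t^2+\tfrac b4 t^4-\tfrac{1}{2^*}A t^{2^*}-\tfrac{\lambda}{p}B t^p$ with $A=\|u\|_{2^*}^{2^*}\le S_N^{-2^*/2}$ and $B=\|u\|_p^p>0$. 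The key structural fact, which is where the constant $C_1(N)$ enters, is that when $a^{\frac{N-4}{2}}b\ge C_1(N)$ the \emph{purely Kirchhoff--critical part} $h(t)=\tfrac a2 t^2+\tfrac b4 t^4-\tfrac{1}{2^*}A t^4\cdot t^{2^*-4}$ — more precisely $t\mapsto \Phi_0(tu)$ — stays nonnegative and has $0$ as global minimum (this is essentially the content of the weak lower semicontinuity threshold from \cite{FF} recalled in the introduction, and is the reason $C_1(N)$ is the sharp exponent product for coercivity/positivity of the critical Kirchhoff functional). Subtracting the subcritical term $\tfrac{\lambda}{p}Bt^p$ with $2<p<2^*$ then produces a function whose behaviour near $0$ is governed by the $t^2$ term (so $g>0$ for small $t$), which is eventually positive (the $bt^4$ term dominates $t^{2^*}$ only if... — here one must be careful: for $N>4$, $2^*<4$ iff $N<\dots$; in fact $2^*=2N/(N-2)<4\iff N>4$, so for $N>4$ we do have $2^*<4$ and the $t^4$ term dominates at infinity), hence $g\to+\infty$. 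Therefore $g$ attains a negative minimum on $(0,\infty)$ for some $u$ iff $\lambda$ is large enough, and the infimum over all $u$ of these minima passes through $0$ exactly at a well-defined threshold; by homogeneity in $\lambda$ this threshold is exactly $\lambda_0^*(a,b)$ of Theorem \ref{T1}, and for $\lambda<\lambda_0^*$ every fiber stays $\ge 0$ with equality only at $t=0$, proving (a).

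For the multiplicity count (b), at $\lambda=\lambda_0^*$ the minimizing fiber directions are exactly those $u$ (normalized) for which $\min_{t>0} g_u(t)=0$, and at such a point the minimum is attained at some $t_0>0$ with $g_u(t_0)=g_u'(t_0)=0$; I would argue that the extremal direction is unique up to scalars — this uses that the map $(\text{direction})\mapsto \min_t g$ is minimized at a unique ray because the relevant optimization (Lagrange/Hölder) forces $|u|^{2^*}$ and $|u|^p$ to be extremal simultaneously, pinning down $u$ up to a multiple of a fixed function, and then $\pm u$ give the two minimizers, both at energy $0$. I expect the main obstacle to be exactly this last uniqueness-of-the-extremal-direction argument: showing there is a \emph{single} ray realizing the zero minimum (rather than a continuum) requires a strict-convexity or strict-monotonicity property of the reduced functional along the Nehari/fiber set, and one has to rule out degeneracy coming from the critical exponent (where minimizers of $S_N$ do not exist on bounded $\Omega$). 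I would handle it by a careful second-derivative/implicit-function analysis of $t\mapsto g_u(t)$ near the double root $t_0$ combined with the strict inequality $A=\|u\|_{2^*}^{2^*}<S_N^{-2^*/2}$ valid on any bounded domain, which prevents the extremal configuration from splitting; alternatively, one invokes the corresponding uniqueness already obtained in \cite{DN,S,S1} for the analogous subcritical/critical fiber analysis and adapts it. Once uniqueness of the ray is in hand, the remaining bookkeeping — zero energy at $\lambda_0^*$, negative energy and persistence of a (not necessarily unique) global minimizer for $\lambda>\lambda_0^*$, and the convergence $\lambda_0^*(a_k,b_k)\to0$ — is immediate from Theorem \ref{T1}.
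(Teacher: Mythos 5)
Your overall route matches the paper's: the range $0<a^{\frac{N-4}{2}}b\le C_1(N)$ is delegated to \cite[Proposition 4.2]{DN}, and everything in the range $a^{\frac{N-4}{2}}b>C_1(N)$ except the multiplicity count at $\lambda=\lambda_0^*$ is already contained in Theorem \ref{T1} (in particular, the uniqueness of $0$ as global minimizer for $\lambda<\lambda_0^*$ is exactly Theorem \ref{T1}\,iii), via $\psi_{\lambda,u}(t)>\psi_{\lambda_0(u),u}(t)\ge 0$ for $t>0$). One small omission in the first range: \cite[Proposition 4.2]{DN} needs the hypothesis that $\inf\Phi_\lambda<0$, and the paper explicitly verifies it — for $a^{\frac{N-4}{2}}b<C_1(N)$ one already has some $u$ with $\Phi_0(u)<0$ (Remark \ref{lambdazeronegative}), hence $\Phi_\lambda(u)\le\Phi_0(u)<0$ for all $\lambda>0$; this verification is where $C_1(N)$ being the exact threshold is used, and you should not skip it.

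The genuine problem is your treatment of the clause ``two global minimizers with zero energy for $\lambda=\lambda_0^*$''. You read this as a uniqueness statement about the minimizing ray, namely that the zero minimum is realized by exactly one direction $u$ up to sign, and you correctly identify that you cannot prove it — you only gesture at a strict-convexity argument and at external references. But no such uniqueness is being claimed, and it would in fact be inconsistent with your own count: since $I_{\lambda_0^*}=0=\Phi_{\lambda_0^*}(0)$, the origin is itself a global minimizer, so if $\pm u$ were ``the two'' minimizers there would be at least three. The intended (and proved) statement is simply that at $\lambda=\lambda_0^*$ the functional has \emph{both} the trivial global minimizer $0$ \emph{and} the nontrivial one $u_{\lambda_0^*}$ produced by Theorem \ref{existencem1}\,i), each with energy $I_{\lambda_0^*}=0$; no claim is made that there are no others. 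So the step you flag as ``the main obstacle'' is not an obstacle at all — it is a misreading that sends you after an (unproved, and likely unprovable by the sketched means) uniqueness result, while the actual content follows in one line from Theorem \ref{T1}\,ii) together with $\Phi_{\lambda_0^*}(0)=0$. With that correction, the rest of your argument is sound and coincides with the paper's.
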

Theorem \ref{T4} settles down the existence of global minimizers with negative energy for all ranges of  $a^\frac{N-4}{2}b$. It complements \cite[Theorem 1.2 and Theorem B.8]{DN}. In the power case it is also possible to improve Theorem \ref{T12} in the case $a^\frac{N-4}{2}b>C_1(N)$ and $\lambda<\lambda_0^*$ (in such case $\Phi_\lambda$ has zero as global minimizer):
\begin{theor}\label{T41}Suppose that $f(x,u)=|u|^{p-2}u$. If $C_1(N)<a^\frac{N-4}{2}b< C_2(N)$, then there exists $\varepsilon>0$ such that for each $\lambda_0^*-\varepsilon<\lambda<\lambda_0^*$,  problem $(\mathcal P_\lambda)$ has a non-zero solution $u_\lambda$, which is a local minimizer to $\Phi_\lambda$ with positive energy. Moreover $\Phi_\lambda(u)>0$ for all $u\in H_0^1(\Omega)\setminus\{0\}$.

\end{theor}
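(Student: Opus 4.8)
The assertion that $\Phi_\lambda(u)>0$ for all $u\in H^1_0(\Omega)\setminus\{0\}$ is not new here: since $a^{\frac{N-4}{2}}b>C_1(N)$ and $\lambda<\lambda_0^*$, Theorem \ref{T4} already gives that $0$ is the unique global minimizer of $\Phi_\lambda$. So the real task is to produce, for $\lambda$ slightly below $\lambda_0^*$, a \emph{non-zero local} minimizer of $\Phi_\lambda$ with positive energy, thereby improving Theorem \ref{T12} from the range $a^{\frac{N-4}{2}}b\ge C_2(N)$ down to $a^{\frac{N-4}{2}}b>C_1(N)$. The plan is to run the fibering method on the Nehari set and to replace the \emph{global} Palais--Smale condition (which is exactly what forces $C_2(N)$ in Theorem \ref{T12}) by a \emph{localized} compactness statement, available already for $a^{\frac{N-4}{2}}b>C_1(N)$, exploiting that the local minimum we are chasing sits at an energy level that collapses to $0$ as $\lambda\uparrow\lambda_0^*$.

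Writing $u=tw$ with $\|w\|=1$ and $t>0$, homogeneity of $f(x,u)=|u|^{p-2}u$ gives the fiber map
\[
\gamma_w(t):=\Phi_\lambda(tw)=\tfrac a2 t^2+\tfrac b4 t^4-\tfrac{1}{2^*}\|w\|_{2^*}^{2^*}t^{2^*}-\tfrac{\lambda}{p}\|w\|_p^p\,t^p,
\]
which depends on $w$ only through $\|w\|_{2^*}^{2^*}\le S_N^{-2^*/2}$ and $\|w\|_p^p\le|\Omega|^{1-p/2^*}S_N^{-p/2}$, both bounded. Since $2<p<2^*<4$ for $N>4$, one has $\gamma_w'(t)=t\,h_w(t)$ with $h_w(t)=a+bt^2-\|w\|_{2^*}^{2^*}t^{2^*-2}-\lambda\|w\|_p^pt^{p-2}$, $h_w(0^+)=a>0$ and $h_w(+\infty)=+\infty$; hence, whenever $\min_{t>0}h_w<0$, the fiber first increases, has a local maximum at $t_1(w)$, a local minimum at $t_2(w)$, and then increases to $+\infty$. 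The uniform bounds on $\|w\|_{2^*},\|w\|_p$ together with $0<\lambda<\lambda_0^*$ force every such $t_1(w),t_2(w)$ to lie in a fixed compact interval $[\rho,R]\subset(0,+\infty)$ depending only on $a,b,N,|\Omega|,\lambda_0^*$. Put $\mathcal{N}_\lambda=\{v\neq 0:\Phi'_\lambda(v)v=0\}$ and let $\mathcal{N}^+_\lambda$, $\mathcal{N}^0_\lambda$ be the subsets where $\gamma_v''(1)>0$, resp. $\gamma_v''(1)=0$. By the above, $\mathcal{N}^+_\lambda=\{t_2(w)w:\min h_w<0\}$, so $\mathcal{N}^+_\lambda\subset\{\rho\le\|v\|\le R\}$: it is bounded and bounded away from $0$.

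Next I would show that $\mathcal{N}^+_\lambda\neq\emptyset$ and that $m(\lambda):=\inf_{\mathcal{N}^+_\lambda}\Phi_\lambda$ satisfies $0<m(\lambda)\to 0$ as $\lambda\uparrow\lambda_0^*$. Positivity is immediate from $\Phi_\lambda>0$ off the origin. For the rest, take the zero-energy global minimizer $u_{\lambda_0^*}$ from Theorem \ref{T4} at $\lambda=\lambda_0^*$: the direction $w_0=u_{\lambda_0^*}/\|u_{\lambda_0^*}\|$ has $\min h_{w_0}<0$ (a genuine valley of height $0$), keeps this property for $\lambda$ near $\lambda_0^*$, and since lowering $\lambda$ raises $\gamma_{w_0}$ by $(\lambda_0^*-\lambda)\tfrac1p\|w_0\|_p^pt^p$, the bottom of its valley at the parameter $\lambda$ has height $O(\lambda_0^*-\lambda)$, whence $0<m(\lambda)=O(\lambda_0^*-\lambda)$. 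By the concentration--compactness analysis of \cite{FF}, $\Phi_\lambda$ satisfies $(PS)_c$ for every $c<c^*$, for some threshold $c^*=c^*(a,b)>0$; fix $\varepsilon>0$ so that, for $\lambda_0^*-\varepsilon<\lambda<\lambda_0^*$, one has $\mathcal{N}^+_\lambda\neq\emptyset$ and $m(\lambda)<c^*$. Now minimize: a minimizing sequence $v_n\in\mathcal{N}^+_\lambda$ is bounded; applying Ekeland's principle on $\mathcal{N}^+_\lambda$ (a $C^1$ manifold off $\mathcal{N}^0_\lambda$) and using that the Lagrange multiplier vanishes because $G'(v)v=\gamma_v''(1)\neq 0$ on $\mathcal{N}^+_\lambda$, where $G(v)=\Phi'_\lambda(v)v$, we may assume $\Phi'_\lambda(v_n)\to 0$. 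Since $m(\lambda)<c^*$, $(PS)_{m(\lambda)}$ gives $v_n\to u_\lambda$ strongly, so $u_\lambda\neq 0$ (because $\|u_\lambda\|\ge\rho$), $\Phi'_\lambda(u_\lambda)=0$ and $\Phi_\lambda(u_\lambda)=m(\lambda)>0$: thus $u_\lambda$ solves $(\mathcal P_\lambda)$. One then checks $u_\lambda\in\mathcal{N}^+_\lambda$ and not on $\mathcal{N}^0_\lambda$ (using $\inf_{\mathcal{N}^0_\lambda}\Phi_\lambda>m(\lambda)$ for $\lambda$ near $\lambda_0^*$, obtained by the same estimate as for $m(\lambda)$); being an interior point of $\mathcal{N}^+_\lambda$, $u_\lambda$ is a local minimizer, since for $u$ near $u_\lambda$, writing $u=tw$ with $w=u/\|u\|$ near $u_\lambda/\|u_\lambda\|$, the fiber analysis yields $\Phi_\lambda(u)=\gamma_w(t)\ge\gamma_w(t_2(w))\ge m(\lambda)=\Phi_\lambda(u_\lambda)$.

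The main obstacle is the compactness step: in the strip $C_1(N)<a^{\frac{N-4}{2}}b<C_2(N)$ the global Palais--Smale condition fails, so the minimization has to be carried out at an energy level below the first non-compactness threshold $c^*(a,b)$; this is precisely why $\lambda$ must be taken close to $\lambda_0^*$ (so that $m(\lambda)<c^*$), and why the power nonlinearity — which makes the shape and the height of the fiber valleys explicit, hence controllable — is what enables the improvement over Theorem \ref{T12}. A secondary delicate point is to exclude the degenerate set $\mathcal{N}^0_\lambda$ at the minimizer, i.e. to establish $\inf_{\mathcal{N}^0_\lambda}\Phi_\lambda>m(\lambda)$ for $\lambda$ near $\lambda_0^*$.
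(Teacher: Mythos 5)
Your overall strategy --- minimize $\Phi_\lambda$ on $\mathcal N^+_\lambda$, show that the level $m(\lambda)=c^+$ tends to $0$ as $\lambda\uparrow\lambda_0^*$ by testing along the direction of the zero-energy global minimizer $u_{\lambda_0^*}$, and keep $c^+$ strictly below the degeneracy level on $\mathcal N^0_\lambda$ --- is exactly the paper's (Proposition \ref{bifurcation} combined with Lemma \ref{naimenineq}). The genuine gap is in your compactness step. You assert that, ``by the concentration--compactness analysis of \cite{FF}'', $\Phi_\lambda$ satisfies $(PS)_c$ for every $c$ below some positive threshold $c^*(a,b)$ even when $C_1(N)<a^{\frac{N-4}{2}}b<C_2(N)$. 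No such statement is available: Lemma \ref{variational property} (and the results of \cite{FF} and \cite{DN} it relies on) gives the Palais--Smale condition only for $a^{\frac{N-4}{2}}b\ge C_2(N)$. Worse, the usual ``one bubble costs at least a fixed positive energy'' heuristic is not obviously true here. If $u_k\rightharpoonup u_0$ is a Palais--Smale sequence with $A:=\lim\|u_k-u_0\|^2>0$ and $B:=\lim\|u_k-u_0\|_{2^*}^{2^*}$, the limit relations force $\bigl(a+b(\|u_0\|^2+A)\bigr)A=B\le S_N^{-2^*/2}A^{2^*/2}$, i.e. $h(\sqrt A)\le 0$ (so bubbling is possible precisely in your strip, with $A$ confined to a compact interval bounded away from $0$), and the associated energy defect equals
\[
A\left[\left(\tfrac12-\tfrac{1}{2^*}\right)\bigl(a+b\|u_0\|^2\bigr)+\left(\tfrac14-\tfrac{1}{2^*}\right)bA\right],
\]
whose second term is \emph{negative} because $2^*<4$ for $N>4$. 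Hence there is no a priori positive lower bound on the energy carried by a lost bubble, and the ``local $(PS)_c$ below a positive $c^*$'' you lean on is unproven and quite possibly false in this regime; you cannot conclude strong convergence of the minimizing sequence this way.

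The paper circumvents this by a splitting argument on the constrained minimizing sequence, adapted from \cite[Proposition 4.2]{DN}: if $u_k\in\mathcal N^+_\lambda$ with $\Phi_\lambda(u_k)\to c^+$ does not converge strongly to its nonzero weak limit $u_0$, set $\tilde u_k=u_k-u_0$, choose $e>0$ with $\|u_0\|^2=e\lim\|\tilde u_k\|^2$, and consider $w_{k,s}=(1+s)^{1/2}u_0+(1-es)^{1/2}\tilde u_k$; the identities $\overline h(0)=c^+$, $\overline h'(0)=0$, $\overline h''(0)<0$ and $\overline g(0)<0$ show that for suitable $s$ and large $k$ one has $\Phi_\lambda(w_{k,s})<c^+$ and $\Phi'_\lambda(w_{k,s})w_{k,s}<0$, so $w_{k,s}$ falls into case iii) of Proposition \ref{graph psi 1} and can be projected onto $\mathcal N^+_\lambda$ by $t^+_\lambda(w_{k,s})$ at a strictly smaller energy, contradicting the definition of $c^+$. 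To repair your proposal you must replace the invoked local Palais--Smale condition by this (or an equivalent) argument; the remaining ingredients of your write-up --- nonemptiness of $\mathcal N^+_\lambda$ near $\lambda_0^*$, the decay $c^+\to 0$, the separation $c^+<\frac{(p-2)^2a^2}{4p(4-p)b}\le\sigma\le c^0$, and the passage from a minimizer on $\mathcal N^+_\lambda$ to a local minimizer of $\Phi_\lambda$ via Proposition \ref{naturalconstraint} --- do match the paper.
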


 Concerning the second solution, we complement \cite[Theorem 1.1]{DN} with the following results.
\begin{theor}\label{T5} Suppose that $f(x,u)=|u|^{p-2}u$ and $0<a^\frac{N-4}{2}b< C_2(N)$. Then there exists $p_0(a,b)\in(2,2^*)$ such that if $p\in(p_0(a,b),2^*)$, then for all $\lambda>0$, problem $(\mathcal P_\lambda)$ has a non-zero solution $v_\lambda$ with positive energy.
\end{theor}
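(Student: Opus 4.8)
\emph{Plan.} The solution $v_\lambda$ will be produced as a minimizer of $\Phi_\lambda$ over the \emph{unstable} component $\mathcal N^-_\lambda$ of the Nehari set of $(\mathcal P_\lambda)$, namely the set of $u\in H^1_0(\Omega)\setminus\{0\}$ for which the fiber $\psi_u(t)=\Phi_\lambda(tu)$ has a \emph{local maximum} at $t=1$. For $f(x,u)=|u|^{p-2}u$ one has $\psi_u(t)=\tfrac a2 t^2\|u\|^2+\tfrac b4 t^4\|u\|^4-\tfrac1{2^*}t^{2^*}\|u\|^{2^*}_{2^*}-\tfrac\lambda p t^p\|u\|_p^p$ with $2<p<2^*<4$, so $\psi_u(0)=0$, $\psi_u$ increases near $0$, and $\psi_u(t)\to+\infty$; hence each fiber has either no interior critical point or exactly one local maximum followed by one local minimum. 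The first task is to check $\mathcal N^-_\lambda\neq\emptyset$ for every $\lambda>0$: testing with a strongly concentrated Aubin--Talenti profile $U_\varepsilon$ makes $\|U_\varepsilon\|_p^p$ negligible, so $\psi_{U_\varepsilon}$ is a small perturbation of the pure--critical fiber $t\mapsto\tfrac a2 t^2\|U_\varepsilon\|^2+\tfrac b4 t^4\|U_\varepsilon\|^4-\tfrac1{2^*}t^{2^*}\|U_\varepsilon\|^{2^*}_{2^*}$, which \emph{does} admit a local maximum precisely because $0<a^{\frac{N-4}2}b<C_2(N)$; a continuity argument then places $t_1(U_\varepsilon)U_\varepsilon$ in $\mathcal N^-_\lambda$. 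Since $\psi_u$ is strictly increasing on $(0,t_1(u))$, every element of $\mathcal N^-_\lambda$ has $\Phi_\lambda>0$, and a uniform lower bound for $\|u\|$ on $\mathcal N^-_\lambda$ gives $c_\lambda:=\inf_{\mathcal N^-_\lambda}\Phi_\lambda>0$.

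The second, decisive, step is compactness. A minimizing sequence for $c_\lambda$ on $\mathcal N^-_\lambda$ can be upgraded to a bounded (by coercivity of $\Phi_\lambda$) Palais--Smale sequence at level $c_\lambda$. Passing to a weak limit, the only obstruction to strong convergence is the splitting off of critical bubbles, and the Kirchhoff--adapted concentration--compactness analysis underlying the $C_2(N)$ threshold recalled in the Introduction shows that in that case $c_\lambda$ must coincide with one of the explicit ``bubbling levels''. The relevant one here is the local--maximum value $c^*(a,b)$ of the pure--critical fiber, given via the smaller root $L_1$ of $a+bL=S_N^{-N/(N-2)}L^{2/(N-2)}$ by $c^*(a,b)=\tfrac1N aL_1-\tfrac{N-4}{4N}bL_1^2$. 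Hence it suffices to establish the strict inequality $c_\lambda<c^*(a,b)$ (together with the — more delicate — check that no lower ``$u\neq0$ plus bubble'' level lies below it).

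This strict inequality is where the hypothesis $p\in(p_0(a,b),2^*)$ enters, and it is the core of the argument. The plan is to estimate $c_\lambda\le\max_{t>0}\Phi_\lambda(tU_\varepsilon)$ over the first hump of the fiber for the truncated instanton: the pure--critical part contributes $c^*(a,b)+O(\varepsilon^{N-2})$ from truncation, while the subcritical term contributes a strictly negative correction of order $\lambda\varepsilon^{N-p(N-2)/2}$, whose exponent is $<N-2$ since $N>4$. For $\lambda$ bounded below this already yields $c_\lambda<c^*(a,b)$ for \emph{every} $p\in(2,2^*)$; the genuine point is uniformity as $\lambda\downarrow0$. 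There one exploits that, as $p\uparrow2^*$, the perturbation behaves like an extra critical term, so the comparison is governed by the limiting functional $\tfrac a2\|u\|^2+\tfrac b4\|u\|^4-\tfrac{1+\lambda}{2^*}\|u\|^{2^*}_{2^*}$, whose unstable--Nehari level is $\widetilde c^*(\lambda)=\tfrac1N a\widetilde L_1-\tfrac{N-4}{4N}b\widetilde L_1^2$, with $\widetilde L_1<L_1$ the smaller root of $a+bL=(1+\lambda)S_N^{-N/(N-2)}L^{2/(N-2)}$. The condition $a^{\frac{N-4}2}b<C_2(N)$ forces $L_1$ to lie on the increasing branch of $L\mapsto\tfrac1N aL-\tfrac{N-4}{4N}bL^2$, whence $\widetilde c^*(\lambda)<c^*(a,b)$ for all $\lambda>0$; by continuity in $p$ one obtains a threshold $p_0(a,b)<2^*$, independent of $\lambda$, with $c_\lambda<c^*(a,b)$ for all $\lambda>0$ and all $p\in(p_0(a,b),2^*)$.

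Granted the strict inequality, $c_\lambda$ is attained: the Palais--Smale sequence converges strongly to some $v_\lambda\in\mathcal N^-_\lambda$; a routine verification, via the fiber--map structure, that this minimizer does not lie on the degenerate set $\mathcal N^0_\lambda$ makes $\mathcal N^-_\lambda$ a natural constraint at $v_\lambda$, so $v_\lambda$ is a nonzero solution of $(\mathcal P_\lambda)$ with $\Phi_\lambda(v_\lambda)=c_\lambda>0$. I expect the hard part to be exactly the uniform-in-$\lambda$ estimate $c_\lambda<c^*(a,b)$: controlling the interplay of the nonlocal term, the truncation corrections to the instanton, and the $L^p$--perturbation, and in particular ruling out that some lower ``$u\neq0$ plus bubble'' level spoils compactness, is the step that genuinely forces $p$ to be close to $2^*$.
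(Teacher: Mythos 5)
Your overall strategy (minimizing $\Phi_\lambda$ over the unstable Nehari component $\mathcal N^-$) is the paper's, but the two steps you single out as the crux are not the ones that actually carry the proof, and as written they leave genuine gaps. First, the passage from a minimizing sequence on $\mathcal N^-$ to a Palais--Smale sequence is not automatic here: since $a^{\frac{N-4}{2}}b<C_2(N)$, the degenerate set $\mathcal N^0$ is nonempty, and a minimizing sequence could drift towards it, in which case the Ekeland/implicit-function argument on the constraint collapses. This --- not critical bubbling --- is where the hypothesis $p>p_0(a,b)$ actually enters: one needs $c^-(a,b,\lambda)<\sigma$, where $\sigma\ge\frac{(p-2)^2a^2}{4p(4-p)b}$ bounds from below the energies of sequences in $\mathcal N$ with $\psi''_{u_k}(1)\to0$ (Lemma \ref{naimenineq}). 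The paper obtains this by proving $c^-(a,b,0)<\frac{(2^*-2)^2a^2}{4\cdot 2^*(4-2^*)b}$ (Proposition \ref{nehariestimatives}), defining $p_0$ as the unique $p$ with $c^-(a,b,0)=\frac{(p-2)^2a^2}{4p(4-p)b}$ (the right-hand side is increasing in $p$ and vanishes at $p=2$), and then using the monotonicity $c^-(a,b,\lambda)\le c^-(a,b,0)$ (Proposition \ref{decreasingincreasing}). Your proposal never addresses $\mathcal N^0$, and Remark \ref{comparisonnaimen}(ii) shows that for $p$ below this threshold the needed comparison genuinely fails, so the restriction on $p$ cannot be relocated away from this step.

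Second, the compactness criterion you propose --- $c_\lambda<c^*(a,b)$ with $c^*$ the local-maximum level of the pure-critical Kirchhoff fiber --- is not available off the shelf. For Kirchhoff Palais--Smale sequences the weak limit $u$ solves an equation with coefficient $a+bA$, where $A=\lim_k\|u_k\|^2\neq\|u\|^2$ in the non-compact case, so the standard energy quantization (``$u$ plus bubbles'' at levels $\Phi_\lambda(u)+jc^*$) does not hold in the form you invoke; you yourself flag the ``$u\neq 0$ plus bubble'' levels as delicate and leave them unresolved, and that is precisely the whole difficulty. The paper (following \cite{DN}) bypasses quantization altogether: if the minimizing sequence does not converge, one builds the deformation $w_{k,s}=(1+s)^{1/2}u_0+(1-es)^{1/2}\tilde u_k$, projects it back onto $\mathcal N^-$, and contradicts the minimality of $c^-$ (cf.\ the proof of Theorem \ref{T41}). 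Finally, your uniform-in-$\lambda$ strict inequality is a red herring: the theorem is asserted for each fixed $\lambda>0$ separately, and in any case the argument ``by continuity in $p$, independently of $\lambda$'' cannot work as stated, since the gap $c^*(a,b)-\widetilde c^*(\lambda)$ on which it relies vanishes as $\lambda\downarrow 0$.
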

\begin{theor}\label{T6} Suppose that $f(x,u)=|u|^{p-2}u$. For each $a,b>0$ there exists $\tilde\lambda:=\tilde\lambda(a,b,p)>0$ such that for all $\lambda>\tilde\lambda$, problem $(\mathcal P_\lambda)$ has a non-zero solution $v_\lambda$ with positive energy.
\end{theor}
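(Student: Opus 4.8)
The plan is to obtain $v_\lambda$ as a mountain pass critical point of $\Phi_\lambda$: the mechanism is that a large $\lambda$ drives the mountain pass level down, below the lowest energy at which $\Phi_\lambda$ can lose the Palais--Smale property, so that no restriction on $a^{\frac{N-4}{2}}b$ is needed. For the \emph{geometry}, valid for every $\lambda>0$: since $p>2$, using $\|u\|_p\le c_S\|u\|$ and $\|u\|_{2^*}\le S_N^{-1/2}\|u\|$ one gets $\Phi_\lambda(u)\ge \tfrac a2\|u\|^2-\tfrac{\lambda c_S^p}{p}\|u\|^p-\tfrac{S_N^{-2^*/2}}{2^*}\|u\|^{2^*}\ge\tfrac a4\|u\|^2$ for $0<\|u\|\le\rho_\lambda$, so $0$ is a strict local minimizer with $\Phi_\lambda(0)=0$ and $\alpha_\lambda:=\inf_{\|u\|=\rho_\lambda}\Phi_\lambda>0$; since $2^*>4$, $\Phi_\lambda(tw)\to-\infty$ as $t\to+\infty$ for any fixed $w\neq0$, hence there is $R=R(\lambda)$ with $\|Rw\|>\rho_\lambda$ and $\Phi_\lambda(Rw)<0$ (one could also take $Rw=u_\lambda$, the negative-energy global minimizer of Theorem \ref{T4}, when it is available). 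The Mountain Pass Theorem then provides a Palais--Smale sequence at the level $c_\lambda:=\inf_{\gamma\in\Gamma}\max_{[0,1]}\Phi_\lambda\circ\gamma\ge\alpha_\lambda>0$, where $\Gamma=\{\gamma\in C([0,1];H^1_0(\Omega)):\gamma(0)=0,\ \gamma(1)=Rw\}$.

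The crucial estimate is that $c_\lambda\to 0$ as $\lambda\to+\infty$. Indeed
\[
c_\lambda\le\max_{t\ge0}\Phi_\lambda(tw)=\max_{t\ge0}\Big(\tfrac a2 t^2\|w\|^2+\tfrac b4 t^4\|w\|^4-\tfrac1{2^*}t^{2^*}\|w\|_{2^*}^{2^*}-\tfrac\lambda p t^p\|w\|_p^p\Big),
\]
and since $2<p<2^*$ and $2<4<2^*$ this maximum is positive, attained at some $t_\lambda>0$, and a comparison of the four monomials shows $t_\lambda\to0$ and the maximum $\to0$ as $\lambda\to\infty$. (Equivalently, after the rescaling $u=\lambda^{1/(2-p)}v$, $\Phi_\lambda$ degenerates into $\lambda^{2/(2-p)}$ times the subcritical functional $\tfrac a2\|v\|^2-\tfrac1p\|v\|_p^p$.)

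Next I would show that, for some fixed $\alpha>0$ independent of $\lambda$, $\Phi_\lambda$ satisfies $(PS)_c$ for all $c\in(0,\alpha)$. A Palais--Smale sequence $(u_n)$ at level $c$ is bounded: from $\Phi_\lambda(u_n)-\tfrac1{2^*}\Phi'_\lambda(u_n)u_n=c+o(1)+o(\|u_n\|)$ one gets $\tfrac aN\|u_n\|^2-\tfrac{(N-4)b}{4N}\|u_n\|^4-\lambda(\tfrac1p-\tfrac1{2^*})\|u_n\|_p^p=c+o(1)+o(\|u_n\|)$, and since $N>4$ the negative quartic term forces $\limsup\|u_n\|^2\le\tfrac{4a}{(N-4)b}$. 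Then $u_n\rightharpoonup u$ along a subsequence, $t_n:=\|u_n\|^2\to t_0$, $u$ solves $-(a+bt_0)\Delta u=|u|^{2^*-2}u+\lambda|u|^{p-2}u$, and with $v_n=u_n-u\rightharpoonup0$, $\ell:=\lim\|v_n\|^2=t_0-\|u\|^2$, Brezis--Lieb gives $\lim\|v_n\|_{2^*}^{2^*}=(a+bt_0)\ell$; if $\ell>0$, the concentration analysis of \cite{FF} shows $v_n$ is to leading order a finite sum of Aubin--Talenti bubbles at the \emph{single} scale $(a+bt_0)^{(N-2)/4}$, so $t_0$ solves $h_j(t):=jS_N^{N/2}(a+bt)^{(N-2)/2}-t=0$ for some integer $j\ge1$ (with $\|u\|^2=t_0$ when $u=0$), and a computation yields $c=\tfrac aN t_0-\tfrac{(N-4)b}{4N}t_0^2-\lambda(\tfrac1p-\tfrac1{2^*})\|u\|_p^p$. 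Each $h_j$ is convex with $h_j(0)>0$ and has zeros only for $j$ below a finite bound, so when $u=0$ the value $c=\tfrac{t_0}{4N}(4a-(N-4)bt_0)$ lies in a fixed finite set, disjoint from a fixed interval $(0,\alpha)$ (it equals $0$ only when $a^{\frac{N-4}{2}}b=C_1(N)/j$, which is harmless since $c_\lambda>0$); and when $u\neq0$, either $u$ itself concentrates as $\lambda\to\infty$, which reduces to the previous case with additional bubbles, or it does not, in which case $\|u\|\to0$ (the auxiliary Brezis--Nirenberg--Kirchhoff equation above has no non-concentrating solutions whose norm stays away from $0$ as $\lambda\to\infty$, since its subcritically rescaled limit $-\Delta w=\varepsilon_\lambda|w|^{2^*-2}w+|w|^{p-2}w$, $\varepsilon_\lambda\to0$, has bounded solutions), so $c$ again converges to a value in the above finite set. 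In all cases $c$ stays at distance $\ge\alpha$ from $c_\lambda$ once $\lambda$ is large.

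Combining the three steps: there is $\tilde\lambda=\tilde\lambda(a,b,p)$ with $0<c_\lambda<\alpha$ whenever $\lambda>\tilde\lambda$; then $(PS)_{c_\lambda}$ holds, and the Mountain Pass Theorem yields a critical point $v_\lambda$ with $\Phi_\lambda(v_\lambda)=c_\lambda>0$, i.e.\ a non-zero solution of $(\mathcal P_\lambda)$ with positive energy. The main obstacle is precisely the third step: for $a^{\frac{N-4}{2}}b\le C_2(N)$ the functional $\Phi_\lambda$ genuinely fails the Palais--Smale condition at certain positive levels, so one must pin down exactly which levels are lost — the rigidity of the bubbling scale $(a+bt_0)^{(N-2)/4}$ quantizes them into a discrete set (for the relevant weak limits) which, apart from the harmless value $0$, is bounded away from $0$ — and then exploit $c_\lambda\to0$ to place $c_\lambda$ strictly below that set.
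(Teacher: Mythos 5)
Your first two steps are sound and essentially coincide with estimates the paper itself uses (the geometry is Proposition \ref{MPG}, and the argument that the relevant minimax level tends to $0$ as $\lambda\to\infty$ appears in Theorem \ref{MPS} and Theorem \ref{N-existence1}). The gap is exactly where you flag it: the claim that $\Phi_\lambda$ satisfies $(PS)_c$ for all $c\in(0,\alpha)$ with $\alpha>0$ independent of $\lambda$. Two things are missing. First, the full profile decomposition you invoke (``a finite sum of Aubin--Talenti bubbles at the single scale'') is not in \cite{FF}; that reference proves the Palais--Smale property only under $a^{\frac{N-4}{2}}b>C_2(N)$ via the single-concentration lower bound $\ell\ge S_N^{N/2}(a+bt_0)^{\frac{N-2}{2}}$, and a lower bound alone does not suffice here: for many admissible $a,b$ one has $S_N^{N/2}a^{\frac{N-2}{2}}<\frac{4a}{(N-4)b}$, so the constraint $t_0\ge S_N^{N/2}a^{\frac{N-2}{2}}$ does not prevent $t_0$ from sitting near the positive zero of $q(t)=\frac{a}{N}t-\frac{(N-4)b}{4N}t^2$, where the putative bad level $q(t_0)$ is arbitrarily small and positive. (Moreover, sign-changing bubbles make the energy spectrum of $-\Delta W=|W|^{2^*-2}W$ non-discrete a priori, so even granting a decomposition, ``$j$ an integer'' is optimistic.) Second, and more seriously, the case $u\neq 0$ is not an argument: there $t_0=\|u\|^2+\ell$ with $\|u\|^2$ ranging over a continuum (the solution set of a $\lambda$-dependent auxiliary equation), the level $c=q(t_0)-\lambda(\tfrac1p-\tfrac1{2^*})\|u\|_p^p$ is a difference of two quantities you do not control, and the assertion that the auxiliary equation ``has no non-concentrating solutions whose norm stays away from $0$'' is an unproven rigidity statement at least as hard as the theorem itself. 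As written, the third step does not close.

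The paper proves Theorem \ref{T6} by a route that avoids verifying any Palais--Smale condition at the level in question: it minimizes $\Phi_\lambda$ over $\mathcal N^-$ (Theorem \ref{N-existence1}). The same limit $c^-(a,b,\lambda)\to 0$ is established there, but it is compared not with a bubbling threshold but with the Nehari-geometric threshold $\frac{(p-2)^2a^2}{4p(4-p)b}\le\sigma\le c^0$ of Lemma \ref{naimenineq}, which bounds from below the energy of sequences in $\mathcal N$ degenerating to $\mathcal N^0$. Once $c^-$ drops below this threshold, compactness of a minimizing sequence is recovered either from \cite[Corollary 3.3 and Proposition 4.1]{DN} when $a^{\frac{N-4}{2}}b<C_2(N)$ (a translation-of-paths argument on the constraint, not a bubbling analysis) or, when $a^{\frac{N-4}{2}}b\ge C_2(N)$, from the weak lower semicontinuity of $u\mapsto\Phi_\lambda'(u)u$ combined with the rescaling $t^-_\lambda(u)u\in\mathcal N^-$ of the weak limit; Proposition \ref{naturalconstraint} then upgrades the constrained minimizer to a free critical point. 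If you wish to keep a mountain-pass formulation you would have to carry out a full Struwe decomposition for the Kirchhoff functional and classify its non-compactness levels uniformly in $\lambda$, including the $u\neq 0$ profiles; the Nehari route is substantially cheaper.
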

We note here that in \cite[Theorem 1.1]{DN}, it was proved that for each fixed $p$, the conclusion of Theorem \ref{T5} holds true for sufficiently small $b$. We refer the reader to Theorem \ref{N-existence} and Remark \ref{comparisonnaimen}, in particular to item ii), where we show that the technique used to prove \cite[Theorem 1.1]{DN} (which we also used) can not hold for all values of $a,b,p$. However, the above theorem ensures that for each $p$ problem $(\mathcal P_\lambda)$ still has a second solution provided  $\lambda$ is big enough.

We conclude this work with an existence  result \`{a} la Brezis Nirenberg \cite{BN} which is a consequence of our study in the limit case ($b\downarrow 0$). 
\begin{theor}\label{T7} For each $\lambda>0$ and $p\in(2,2^*)$, the problem
	
	\begin{equation*}	(\mathcal Q_\lambda) \ \ \ \ \left\{
	\begin{array}{ll}
	- \Delta u=
	|u|^{2^*-2}u+\lambda |u|^{p-2}u, & \hbox{ in } \Omega,  \\
	u=0, & \hbox{on } \partial \Omega.
	\end{array}
	\right.
	\end{equation*}
	has  a nontrivial solution.
\end{theor}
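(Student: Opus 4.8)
The plan is to realize $(\mathcal Q_\lambda)$ as the limit, as $b\downarrow 0$, of the Kirchhoff problems $(\mathcal P_\lambda)$ with $a=1$ and $f(x,u)=|u|^{p-2}u$. For $b\ge 0$ put
\[
\Phi_\lambda^{(b)}(u)=\frac12\|u\|^2+\frac b4\|u\|^4-\frac{1}{2^*}\|u\|_{2^*}^{2^*}-\frac{\lambda}{p}\|u\|_p^p,
\]
so that $\Phi_\lambda^{(0)}$ is the energy functional of $(\mathcal Q_\lambda)$ and its critical points are exactly the weak solutions of $(\mathcal Q_\lambda)$. Fix $\lambda>0$ and $p\in(2,2^*)$. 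For $b>0$ small we have $b<C_2(N)$, so with $a=1$ the hypothesis of Theorem \ref{T5} on $a^{\frac{N-4}{2}}b$ is met, and moreover $p_0(1,b)<p$ since the Kirchhoff obstruction encoded in $p_0$ disappears as $b\to0$ (this monotone behaviour, resp.\ that of $\tilde\lambda(1,b,p)$ in Theorem \ref{T6}, being read off from the corresponding construction); alternatively one runs a mountain pass argument for $\Phi_\lambda^{(b)}$ directly. In any case we obtain, for all small $b>0$, a non-zero solution $v_b$ of $(\mathcal P_\lambda)$ with $c_b:=\Phi_\lambda^{(b)}(v_b)>0$, and, inspecting its construction (a Nehari/mountain pass min-max level), $c_b\le\max_{t\in[0,1]}\Phi_\lambda^{(b)}(\gamma(t))$ for a suitable path $\gamma$ joining $0$ to a point of negative $\Phi_\lambda^{(b)}$-energy. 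Testing $(\Phi_\lambda^{(b)})'(v_b)=0$ with $v_b$ and using \eqref{embedding} gives $1\le S_N^{-2^*/2}\|v_b\|^{2^*-2}+\lambda C\|v_b\|^{p-2}$, hence a $b$-independent lower bound $\|v_b\|\ge\delta>0$.

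Next I would bound the levels $c_b$ strictly below the compactness threshold $\frac1N S_N^{N/2}$, uniformly in small $b$. Take the path $\gamma$ above to be built from the classical Brezis--Nirenberg family of truncated Aubin--Talenti instantons, for which $\max_t\Phi_\lambda^{(0)}(\gamma(t))<\frac1N S_N^{N/2}$; this estimate holds for every $N>4$ and every $p\in(2,2^*)$, since the subcritical term only lowers the mountain pass value and $p>\frac{4}{N-2}$ automatically. As $\gamma([0,1])$ is compact in $H_0^1(\Omega)$, one has $\max_t\Phi_\lambda^{(b)}(\gamma(t))=\max_t\Phi_\lambda^{(0)}(\gamma(t))+O(b)$, whence $\limsup_{b\to0^+}c_b<\frac1N S_N^{N/2}$.

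The main difficulty is a \emph{uniform} a priori bound $\sup_{b\ \text{small}}\|v_b\|<\infty$. From $(\Phi_\lambda^{(b)})'(v_b)v_b=0$ and $\Phi_\lambda^{(b)}(v_b)=c_b$ one obtains
\[
\|v_b\|^2+b\|v_b\|^4=\|v_b\|_{2^*}^{2^*}+\lambda\|v_b\|_p^p,
\]
\[
c_b=\frac1N\|v_b\|^2-\frac{N-4}{4N}\,b\|v_b\|^4-\lambda\Big(\frac1p-\frac1{2^*}\Big)\|v_b\|_p^p,
\]
but because $2^*<4$ when $N>4$, no elementary combination of these identities controls $\|v_b\|$ once $b>0$, and the coercivity of $\Phi_\lambda^{(b)}$ degenerates as $b\to0$; this is the delicate point of the proof. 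I would close it using the specific Nehari-set construction of $v_b$ in Theorem \ref{T5}: $v_b$ lies on the branch $\mathcal N_b^-$ of the Nehari set, whose fiber-map description, combined with $c_b<\frac1N S_N^{N/2}$ and \eqref{embedding}, confines $\|v_b\|$ to a $b$-independent ball.

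Granting this bound, choose $b_n\downarrow0$; up to a subsequence $v_{b_n}\rightharpoonup v$ in $H_0^1(\Omega)$, $v_{b_n}\to v$ in $L^p(\Omega)$ and a.e.\ in $\Omega$. Since $b_n\|v_{b_n}\|^2\to0$, passing to the limit in $(1+b_n\|v_{b_n}\|^2)\int_\Omega\nabla v_{b_n}\cdot\nabla\varphi=\int_\Omega|v_{b_n}|^{2^*-2}v_{b_n}\varphi+\lambda\int_\Omega|v_{b_n}|^{p-2}v_{b_n}\varphi$ for $\varphi\in C_c^\infty(\Omega)$ is routine: the prefactor tends to $1$, the $L^p$ term converges by strong convergence, and $|v_{b_n}|^{2^*-2}v_{b_n}\rightharpoonup|v|^{2^*-2}v$ in $L^{2^*/(2^*-1)}(\Omega)$ (it is bounded there and converges a.e.). Hence $v$ is a weak solution of $(\mathcal Q_\lambda)$. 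Finally $v\ne0$: if $v=0$ then $\|v_{b_n}\|_p^p\to0$, so the first identity gives $\|v_{b_n}\|^2=\|v_{b_n}\|_{2^*}^{2^*}+o(1)$ (using $b_n\|v_{b_n}\|^4\to0$) and the second gives $\|v_{b_n}\|^2\to N\bar c$, where $\bar c:=\lim c_{b_n}$; from $\|v_{b_n}\|\ge\delta$ we get $\bar c\ge\delta^2/N>0$, while $\bar c<\frac1N S_N^{N/2}$ by the previous paragraph. Then $\|v_{b_n}\|_{2^*}^{2^*}\to N\bar c$ as well, and \eqref{embedding} forces $N\bar c\ge S_N(N\bar c)^{2/2^*}$, i.e.\ $\bar c\ge\frac1N S_N^{N/2}$ --- a contradiction. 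Therefore $v$ is the desired nontrivial solution of $(\mathcal Q_\lambda)$.
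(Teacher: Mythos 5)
Your strategy coincides with the paper's own proof (Theorem \ref{BNR}): take $a=1$, let $b\downarrow 0$, produce for each small $b$ a solution $v_b$ on the branch $\mathcal N_b^-$, keep the levels uniformly below $\tfrac1N S_N^{N/2}$ via the Brezis--Nirenberg instanton estimate (Proposition \ref{energylambdanega} plus the right-continuity of $b\mapsto c^-(1,b,\lambda)$ from Proposition \ref{continuousA}), pass to the weak limit in the equation, and exclude $v=0$ by the standard dichotomy at the level $\tfrac1N S_N^{N/2}$. The existence of $v_b$ for small $b$, the uniform lower bound $\|v_b\|\ge\delta$, the level estimate, the passage to the limit and the nonvanishing argument are all correct as written.

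There is, however, a genuine gap at exactly the step you flag as the main difficulty: the uniform bound $\sup_{b\,\text{small}}\|v_b\|<\infty$. You assert that membership in $\mathcal N_b^-$ together with $c_b<\tfrac1N S_N^{N/2}$ and \eqref{embedding} ``confines $\|v_b\|$ to a $b$-independent ball,'' but no argument is given, and the natural attempts fail. Combining $\psi_{\lambda,v_b}'(1)=0$ with $\psi_{\lambda,v_b}''(1)<0$ only yields $(4-2^*)\,b\|v_b\|^2<2^*-2$, and inserting this into the identity $c_b=\Phi_\lambda^{(b)}(v_b)-\tfrac1{2^*}(\Phi_\lambda^{(b)})'(v_b)v_b$ leaves $\tfrac{1}{2N}\|v_b\|^2<c_b+\lambda\tfrac{2^*-p}{p\,2^*}\|v_b\|_p^p\le c_b+C\|v_b\|^p$, which does not control $\|v_b\|$ since $p>2$; the alternative combination $\Phi_\lambda^{(b)}-\tfrac1p(\Phi_\lambda^{(b)})'$ produces a coefficient that is positive only for $p>\tfrac{2(N-2)}{N-3}$, so it cannot cover the whole range $p\in(2,2^*)$. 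The paper closes this step by a projection argument that your sketch omits: setting $t_b:=t^-_{0,\lambda}(v_b)\in(0,1]$ (monotonicity of $t^-$ in $b$, Lemma \ref{increas}), the sandwich $\Phi_{0,\lambda}(t_bv_b)\le\Phi_{b,\lambda}(t_bv_b)\le\Phi_{b,\lambda}(v_b)=c^-(1,b,\lambda)$ together with Proposition \ref{continuousA} shows that $(t_bv_b)$ is a minimizing sequence for $c^-(1,0,\lambda)$, hence bounded; a separate contradiction argument using the Nehari identities and $c^-(1,b,\lambda)>0$ shows that $t_b$ stays away from $0$; only then does one get $\|v_b\|=\|t_bv_b\|/t_b\le C$. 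Without this (or a genuinely new substitute), the weak limit cannot be extracted and your proof does not go through.
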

The last remark of this Section explains  the reason why we focus on  positive parameters $\lambda$:
\begin{rem} If $\lambda\leq 0$, problem $(\mathcal P_\lambda)$ might have only the zero solution. Indeed, assume that $\Omega$ is a  star shaped domain and $f(v)=|v|^{p-2}v$ with $p\in (2,2^*)$. Then, if   $u$ is a solution of $(\mathcal P_\lambda)$ then $w=(a+b\|u\|^2)^{-\frac{1}{2^*-2}}u$ satisfies the equation  $-\Delta w=|w|^{2^*-2}w+\mu |w|^{p-2}w$ for some $\mu\leq 0$. Applying the Pohozaev identity we deduce that  $w=0$.
\end{rem}

The work is organized as follows:

\begin{itemize}
	\item in Section \ref{S2} we collect some prelimaries results that will be used throughout the work;
	\item in Section \ref{S3} we prove Theorems \ref{T1}, \ref{T12}, \ref{T2} and \ref{T3};
	\item in Section \ref{S4} we prove Theorems \ref{T4}, \ref{T41}, \ref{T5}, \ref{T6} and \ref{T7},
	\item in Appendix \ref{A} and \ref{AB} we present some technical results concerning the Nehari set associated to problem $(\mathcal{P}_\lambda)$ and $(\mathcal{P}_0)$ respectively.
\end{itemize}

\section{Preliminaries results}\label{S2}
In this Section we provide some auxiliary result which will be used throughout the work. Here only hypothesis \eqref{f1}-\eqref{f4} are used. For each $a,b>0$, define $g,h:(0,\infty)\to \mathbb{R}$ by
\begin{equation*}
g(t)=\frac{a}{2}+\frac{b}{4}t^2-S_N^{\frac{-2^*}{2}}\frac{t^{2^*-2}}{2^*},
\end{equation*}
\begin{equation*}
h(t)=a+bt^2-S_N^{\frac{-2^*}{2}}t^{2^*-2}.
\end{equation*}
A simple calculation shows that
\begin{lem}\label{functions} There holds:
	\begin{itemize}
		\item[i)]  $g$ has a unique local minimizer at
		\begin{equation}\label{t0}
		t_0=\left(\frac{2^* b }{2(2^*-2)}{S}_N^{\frac{2^*}{2}}\right)^{\frac{1}{2^*-4}}.
		\end{equation}
		Moreover, $g(t_0)>0$  if and only if  $a^\frac{N-4}{2}b> C_1(N)$, while if $a^\frac{N-4}{2}b= C_1(N)$, then  $g(t_0)=0$.
		\item[ii)] $h$ has a unique local minimizer at
		\begin{equation}\label{t00}
		t_0=\left(\frac{2 b }{2^*-2}{S}_N^{\frac{2^*}{2}}\right)^{\frac{1}{2^*-4}}.
		\end{equation}
		Moreover, $h(t_0)>0$ if and only if  $a^\frac{N-4}{2}b> C_2(N)$, while if  $a^\frac{N-4}{2}b= C_2(N)$, then
		 $h(t_0)=0$.
	\end{itemize}
\end{lem}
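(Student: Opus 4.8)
The plan is to prove both items by the same elementary one‑variable calculus argument; the only point that needs real attention is the sign bookkeeping of the exponents, in particular the fact that $2^*-4<0$ for $N>4$, which is precisely why the critical point defined by a \emph{negative} power in \eqref{t0}--\eqref{t00} turns out to be the (unique) minimizer rather than a maximizer.

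For (i) I would first differentiate: on $(0,\infty)$,
\[
g'(t)=\frac{b}{2}\,t-\frac{2^*-2}{2^*}\,S_N^{-2^*/2}\,t^{2^*-3}
= t\Big(\frac{b}{2}-\frac{2^*-2}{2^*}\,S_N^{-2^*/2}\,t^{2^*-4}\Big).
\]
Since $2^*-4<0$, the map $t\mapsto t^{2^*-4}$ is strictly decreasing from $+\infty$ to $0$, so the bracket is strictly increasing, negative near $0$ and positive near $+\infty$, hence it vanishes at exactly one point; solving $\frac{b}{2}=\frac{2^*-2}{2^*}S_N^{-2^*/2}t^{2^*-4}$ identifies that point with the $t_0$ of \eqref{t0}. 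Combined with $g(0^+)=a/2$ and $g(t)\to+\infty$ as $t\to\infty$ (because $2^*-2<2$, so the quadratic term dominates), this shows $g$ is strictly decreasing on $(0,t_0)$ and strictly increasing on $(t_0,\infty)$, so $t_0$ is the unique local, and in fact global, minimizer on $(0,\infty)$.

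To evaluate $g(t_0)$ the key algebraic simplification is that the defining relation for $t_0$ reads $S_N^{-2^*/2}t_0^{2^*-4}=\tfrac{2^*b}{2(2^*-2)}$, whence $S_N^{-2^*/2}t_0^{2^*-2}=\tfrac{2^*b}{2(2^*-2)}\,t_0^2$; substituting this into $g(t_0)$ collapses the last two terms and, after rewriting $2^*=\tfrac{2N}{N-2}$, yields $g(t_0)=\tfrac a2-\tfrac{N-4}{8}\,b\,t_0^2$. Inserting $t_0^2=\big(\tfrac N4\,b\,S_N^{2^*/2}\big)^{2/(2^*-4)}=\big(\tfrac N4\,b\,S_N^{2^*/2}\big)^{-(N-2)/(N-4)}$, the inequality $g(t_0)\ge 0$ becomes, after raising successively to the (positive, hence order‑preserving) powers $(N-4)/(N-2)$ and $(N-2)/2$ and cleaning up the constants — using $1-\tfrac{N-4}{N-2}=\tfrac{2}{N-2}$ and $S_N^{2^*(N-2)/4}=S_N^{N/2}$ — exactly $a^{(N-4)/2}b\ge C_1(N)$, with equality matching equality. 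Item (ii) is the same computation verbatim with $h$ in place of $g$: here $h'(t)=t\big(2b-(2^*-2)S_N^{-2^*/2}t^{2^*-4}\big)$ produces the $t_0$ of \eqref{t00}, the relation $S_N^{-2^*/2}t_0^{2^*-2}=\tfrac{2b}{2^*-2}t_0^2$ gives $h(t_0)=a+bt_0^2\big(1-\tfrac{2}{2^*-2}\big)=a-\tfrac{N-4}{2}b\,t_0^2$, and the identical algebra turns $h(t_0)\ge0$ into $a^{(N-4)/2}b\ge C_2(N)$.

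The only genuinely error‑prone part — the ``main obstacle'', such as it is — is this exponent arithmetic: one must keep track that $2^*-4$, $2/(2^*-4)=-(N-2)/(N-4)$ and $1-\tfrac{N-4}{N-2}=\tfrac{2}{N-2}$ have the stated signs, so that raising the inequalities to those powers preserves their direction, and one must check that the constants produced are literally $C_1(N)$ and $C_2(N)$ as defined in the text. Everything else is the ``simple calculation'' already advertised in the statement.
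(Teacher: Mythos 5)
Your proof is correct, and it is exactly the computation the paper omits: the lemma is introduced there with only the phrase ``A simple calculation shows that,'' and your sign analysis of $g'$, $h'$ together with the exponent bookkeeping reducing $g(t_0)\ge 0$ and $h(t_0)\ge 0$ to $a^{\frac{N-4}{2}}b\ge C_1(N)$ and $a^{\frac{N-4}{2}}b\ge C_2(N)$ is that calculation, carried out correctly.
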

\begin{rem}\label{gh} Lemma \ref{functions} gives  the same conclusion if instead of $g,h$ we use $t^2g(t)$ and $t^2h(t)$. Indeed, note for example that $t^2g(t)=0$ and $(t^2g(t))'=0$ if, and only if, $g(t)=g'(t)=0$.
\end{rem}
As a consequence of Lemma \ref{functions} and Remark \ref{gh} we have
\begin{cor}\label{graph g} Suppose that $a^\frac{N-4}{2}b< C_2(N)$, then the function $\overline{g}(t)=t^2g(t)$ has only two critical points, $0 < t^-_{a,b} < t^+_{a,b} $. Moreover, $t^-_{a,b}$ is a local maximum and  $t^+_{a,b}$ is a local minimum with $\overline{g}''(t^-_{a,b})<0<\overline{g}''(t^+_{a,b})$. Furthermore if $a^\frac{N-4}{2}b= C_2(N)$, then the function $g(t)t^2$ is increasing and has a unique critical point at $t_{a,b}$ satisfying $\overline{g}''(t_{a,b})=0$ and
	\begin{equation*}
	g(t_{a,b})t_{a,b}^2=\frac{(2^*-2)^2a^2}{4 \cdot2^*(4-2^*)b}.
	\end{equation*}
\end{cor}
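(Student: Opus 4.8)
The plan is to reduce the whole statement to the shape of the function $h$, which is already described by Lemma~\ref{functions}(ii). First I would record that, for $t>0$,
\[
\overline{g}'(t)=at+bt^3-S_N^{-2^*/2}t^{2^*-1}=t\,h(t),\qquad
\overline{g}''(t)=h(t)+t\,h'(t),
\]
so that the positive critical points of $\overline{g}$ are precisely the zeros of $h$, and at such a zero $t^\ast$ one has $\overline{g}''(t^\ast)=t^\ast h'(t^\ast)$. By Lemma~\ref{functions}(ii), $h$ is strictly decreasing on $(0,t_0)$ and strictly increasing on $(t_0,\infty)$, where $t_0$ is as in \eqref{t00}; moreover $h(0^+)=a>0$ and $h(t)\to+\infty$ as $t\to\infty$, the last fact because $N>4$ forces $2<2^*<4$ and hence $2^*-2<2$.

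Next I would treat the case $a^{\frac{N-4}{2}}b<C_2(N)$. Here $h(t_0)<0$ by Lemma~\ref{functions}(ii), so the intermediate value theorem together with the monotonicity just recorded shows that $h$ has exactly two zeros $0<t^-_{a,b}<t_0<t^+_{a,b}$; these are the only critical points of $\overline{g}$. Since $\overline{g}'=t\,h$, we get $\overline{g}'>0$ on $(0,t^-_{a,b})$, $\overline{g}'<0$ on $(t^-_{a,b},t^+_{a,b})$ and $\overline{g}'>0$ on $(t^+_{a,b},\infty)$, so $t^-_{a,b}$ is a local maximum and $t^+_{a,b}$ a local minimum of $\overline{g}$. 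Finally, $h'$ vanishes only at $t_0$, and since $t^\pm_{a,b}\ne t_0$ we have $h'(t^-_{a,b})<0$ (we are on the decreasing branch of $h$) and $h'(t^+_{a,b})>0$ (increasing branch), so $\overline{g}''(t^-_{a,b})=t^-_{a,b}h'(t^-_{a,b})<0<t^+_{a,b}h'(t^+_{a,b})=\overline{g}''(t^+_{a,b})$.

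For the case $a^{\frac{N-4}{2}}b=C_2(N)$, Lemma~\ref{functions}(ii) gives $h(t_0)=0$, hence $h\ge0$ on $(0,\infty)$ with $h(t)=0$ only at the isolated point $t=t_0=:t_{a,b}$. Then $\overline{g}'=t\,h\ge0$ with $\overline{g}'(t)=0$ only at $t_{a,b}$, so $\overline{g}$ is (strictly) increasing and $t_{a,b}$ is its unique critical point, and $\overline{g}''(t_{a,b})=h(t_0)+t_0h'(t_0)=0$ since $t_0$ minimizes $h$. To obtain the closed form for $g(t_{a,b})t_{a,b}^2=\overline{g}(t_{a,b})$ I would exploit the two relations $h(t_0)=0$ and $h'(t_0)=0$: the former yields $S_N^{-2^*/2}t_0^{2^*}=a t_0^2+b t_0^4$, while the latter reads $2b t_0^2=(2^*-2)S_N^{-2^*/2}t_0^{2^*-2}=(2^*-2)(a+b t_0^2)$, whence $b t_0^2=\dfrac{(2^*-2)a}{4-2^*}$ (recall $4-2^*>0$). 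Plugging these into
\[
\overline{g}(t_0)=\frac{a}{2}t_0^2+\frac{b}{4}t_0^4-\frac{1}{2^*}\bigl(a t_0^2+b t_0^4\bigr)
\]
and simplifying gives $\overline{g}(t_{a,b})=\dfrac{(2^*-2)^2a^2}{4\cdot 2^*(4-2^*)b}$, as claimed.

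Everything here is elementary once the picture of $h$ is fixed; the only mildly delicate point is the sign bookkeeping, in particular the repeated use of $2<2^*<4$ (so that $t^{2^*-2}$ is beaten by $t^2$ at infinity and $4-2^*>0$ in the final formula), together with the observation that $h'$ does not vanish at $t^\pm_{a,b}$ in the first case, which is immediate since $t_0$ is the unique critical point of $h$ while $h(t_0)<0$ there. I do not anticipate any real obstacle.
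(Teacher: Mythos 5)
Your proof is correct and follows essentially the route the paper intends: the corollary is stated there without proof as a consequence of Lemma \ref{functions} and Remark \ref{gh}, and the key identity $\overline{g}'(t)=t\,h(t)$ (hence $\overline{g}''(t^\ast)=t^\ast h'(t^\ast)$ at a zero $t^\ast$ of $h$) is exactly the reduction that makes Lemma \ref{functions}(ii) applicable. Your sign analysis of the two branches of $h$ and the closed-form computation of $\overline{g}(t_{a,b})$ from $h(t_0)=h'(t_0)=0$ both check out.
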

\begin{prop}\label{functionsappl} Suppose that $u\in H_0^1(\Omega)\setminus\{0\}$, then
	\begin{itemize}
		\item[i)] for all $t>0$ we have
			\begin{equation*}
		\frac{a}{2}\|u\|^2+\frac{b}{4}\|u\|^4t^2-\|u\|_{2^*}^{2^*}\frac{t^{2^*-2}}{2^*}>g(\|u\|t)\|u\|^2 ;
		\end{equation*}
			\item[ii)] for all $t>0$ we have \begin{equation*}
			a\|u\|^2+b\|u\|^4t^2-\|u\|_{2^*}^{2^*}t^{2^*-2}>h(\|u\|t)\|u\|^2.		
			\end{equation*}
			
	\end{itemize}
\end{prop}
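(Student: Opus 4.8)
\emph{Proof proposal.} The plan is to reduce both inequalities, by substituting $t\mapsto\|u\|t$ in the explicit formulas for $g$ and $h$, to a single fact: for every $u\in H^1_0(\Omega)\setminus\{0\}$ the \emph{strict} Sobolev inequality $\|u\|_{2^*}^{2^*}<S_N^{-2^*/2}\|u\|^{2^*}$ holds. This strictness is precisely the content, recalled just after \eqref{embedding}, that the best constant $S_N$ is never attained on a bounded domain: since $\Omega\neq\mathbb R^N$, the infimum defining $S_N$ has no minimizer, so $\|u\|^2/\|u\|_{2^*}^2>S_N$ for all $u\neq0$, and raising to the power $2^*/2$ gives the asserted strict inequality $\|u\|_{2^*}^{2^*}<S_N^{-2^*/2}\|u\|^{2^*}$.

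For item i), I would compute, from the definition of $g$,
\[
g(\|u\|t)\,\|u\|^2=\frac{a}{2}\|u\|^2+\frac{b}{4}\|u\|^4t^2-S_N^{-2^*/2}\frac{\|u\|^{2^*}}{2^*}t^{2^*-2},
\]
so that subtracting this from the left-hand side of i) makes the first two terms cancel and leaves
\[
\frac{a}{2}\|u\|^2+\frac{b}{4}\|u\|^4t^2-\|u\|_{2^*}^{2^*}\frac{t^{2^*-2}}{2^*}-g(\|u\|t)\|u\|^2=\frac{t^{2^*-2}}{2^*}\Big(S_N^{-2^*/2}\|u\|^{2^*}-\|u\|_{2^*}^{2^*}\Big),
\]
which is strictly positive for every $t>0$ by the strict Sobolev inequality above.

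For item ii) the computation is identical: from the definition of $h$,
\[
h(\|u\|t)\,\|u\|^2=a\|u\|^2+b\|u\|^4t^2-S_N^{-2^*/2}\|u\|^{2^*}t^{2^*-2},
\]
and hence the difference between the left-hand side of ii) and $h(\|u\|t)\|u\|^2$ equals $t^{2^*-2}\big(S_N^{-2^*/2}\|u\|^{2^*}-\|u\|_{2^*}^{2^*}\big)>0$ for all $t>0$. Since the whole argument is an algebraic rearrangement plus one invocation of the non-attainment of $S_N$, there is no real obstacle here; the only point that deserves attention is keeping the inequality strict rather than merely $\ge$, which is exactly where the hypothesis that $\Omega$ is a bounded (in particular proper) subdomain of $\mathbb R^N$ enters.
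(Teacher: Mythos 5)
Your proof is correct and follows essentially the same route as the paper: both arguments reduce the claim to the strict Sobolev inequality $\|u\|_{2^*}^{2^*}<S_N^{-2^*/2}\|u\|^{2^*}$, which holds because the infimum defining $S_N$ is not attained on a bounded domain. The only (immaterial) difference is that the paper multiplies both sides by $t^2$ before comparing, whereas you expand $g(\|u\|t)\|u\|^2$ and subtract directly.
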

\begin{proof} i) Indeed note that
	\begin{eqnarray*}
	t^2\left[\frac{a}{2}\|u\|^2+\frac{b}{4}\|u\|^4t^2-\|u\|_{2^*}^{2^*}\frac{t^{2^*-2}}{2^*}\right]&=&\frac{a}{2}(\|u\|t)^2+\frac{b}{4}(\|u\|t)^4-\frac{\|u\|_{2^*}^{2^*}}{\|u\|^{2^*}}\frac{(\|u\|t)^{2^*}}{2^*}\\ &>&\frac{a}{2}(\|u\|t)^2+\frac{b}{4}(\|u\|t)^4-{S}_N^{-\frac{2^*}{2}}\frac{(\|u\|t)^{2^*}}{2^*}, \qquad t>0.
	\end{eqnarray*}
	The conclusion follows from Lemma \ref{functions}. The strict inequality above is a consequence of the non existence of minimizers for \eqref{embedding}. The proof of ii) is similar.
\end{proof}

The next Lemma gives some important variational properties of the energy functional $\Phi_{\lambda}$.
\begin{lem}\label{variational property} The following  holds true. 
\begin{itemize}
	\item[1)] Let $a,b$ be positive numbers such that  $a^{\frac{N-4}{2}}b\geq C_{1}(N)$. Suppose that $\lambda_k\to \lambda\geq 0$ and $u_k\rightharpoonup u$. Then, $\Phi_{\lambda}(u)\leq \liminf_k\Phi_{\lambda_k}(u_k).$

 	\item[2)] Let $a,b$ be positive numbers such that  $a^{\frac{N-4}{2}}b\ge C_{2}(N)$. Suppose that $\lambda_k\to \lambda\geq 0$, $\Phi_{\lambda_k}(u_k)\to c\in \mathbb{R}$ and  $\Phi'_{\lambda_k}(u_k)\to 0$. If $a^{\frac{N-4}{2}}b= C_{2}(N)$ assume also that
 	\begin{equation*}
 	c\neq \frac{(2^*-2)^2a^2}{4 \cdot 2^*(4-2^*)b}.
 	\end{equation*}
 	 Then, $u_k$ has a convergent subsequence.
 	\item[3)] Let $a,b$ be positive numbers such that  $a^{\frac{N-4}{2}}b\geq C_{2}(N)$. Suppose that $\lambda_k\to \lambda\geq 0$ and $u_k\rightharpoonup u$. Then, $\Phi'_{\lambda}(u)(u)\leq \liminf_k\Phi'_{\lambda_k}(u_k)(u_k).$
 		\end{itemize}
 	\end{lem}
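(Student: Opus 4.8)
The plan is to exploit the two pointwise inequalities from Proposition \ref{functionsappl} together with the Brezis--Lieb lemma, which is the standard device for handling the loss of compactness in the critical term. For part 1), given $u_k\rightharpoonup u$, I would write $u_k=u+w_k$ with $w_k\rightharpoonup 0$, so that $\|u_k\|^2=\|u\|^2+\|w_k\|^2+o(1)$ and, by Brezis--Lieb, $\|u_k\|_{2^*}^{2^*}=\|u\|_{2^*}^{2^*}+\|w_k\|_{2^*}^{2^*}+o(1)$; also $\int_\Omega F(x,u_k)\to\int_\Omega F(x,u)$ by compactness of the subcritical embedding and \eqref{f3}. After collecting terms, $\Phi_{\lambda_k}(u_k)-\Phi_\lambda(u)$ equals, up to $o(1)$, an expression of the form $\tfrac a2\|w_k\|^2+\tfrac b4(\|u\|^2+\|w_k\|^2)^2-\tfrac b4\|u\|^4-\tfrac1{2^*}\|w_k\|_{2^*}^{2^*}$; dropping the nonnegative cross term $\tfrac b2\|u\|^2\|w_k\|^2$ and using $\|w_k\|_{2^*}^{2^*}\le S_N^{-2^*/2}\|w_k\|^{2^*}$ one bounds this below by $\overline g(\|w_k\|)=\|w_k\|^2 g(\|w_k\|)$ (this is essentially Proposition \ref{functionsappl} i) applied to $w_k$, with an additional $\tfrac b4\|u\|^4$-term absorbed). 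By Lemma \ref{functions} i), the hypothesis $a^{(N-4)/2}b\ge C_1(N)$ is exactly what guarantees $g(t_0)\ge 0$, hence $g\ge 0$ on $(0,\infty)$ and therefore $\overline g(\|w_k\|)\ge 0$. Taking $\liminf_k$ gives $\Phi_\lambda(u)\le\liminf_k\Phi_{\lambda_k}(u_k)$.

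Part 3) is handled analogously but using $h$ and $\overline h(t)=t^2h(t)$ in place of $g,\overline g$: with the same decomposition, $\Phi'_{\lambda_k}(u_k)(u_k)-\Phi'_\lambda(u)(u)$ is, modulo $o(1)$ and a discarded nonnegative term, bounded below by $\overline h(\|w_k\|)$, which is nonnegative precisely when $a^{(N-4)/2}b\ge C_2(N)$ by Lemma \ref{functions} ii); here one uses the fact that $\Phi'_\lambda(v)(v)=a\|v\|^2+b\|v\|^4-\|v\|_{2^*}^{2^*}-\lambda\int_\Omega f(x,v)v\,dx$ and that $\int_\Omega f(x,u_k)u_k\to\int_\Omega f(x,u)u$ by the subcritical growth and Vitali's theorem.

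Part 2) is the substantive one, establishing the Palais--Smale--type compactness. From $\Phi_{\lambda_k}(u_k)\to c$ and $\Phi'_{\lambda_k}(u_k)\to 0$ one first gets boundedness of $(u_k)$ in $H^1_0(\Omega)$: the combination $\Phi_{\lambda_k}(u_k)-\tfrac1{2^*}\Phi'_{\lambda_k}(u_k)(u_k)$ kills the critical term and, since $p<2^*$, the perturbation is controlled, yielding coercivity of this combination in the coercive regime $N>4$. Passing to a subsequence $u_k\rightharpoonup u$, one has $\Phi'_\lambda(u)=0$, so $u$ solves $(\mathcal P_\lambda)$. Writing again $u_k=u+w_k$, the key identities are $\|u_k\|^2=\|u\|^2+\|w_k\|^2+o(1)$, Brezis--Lieb for the $L^{2^*}$-norm, and — after subtracting $\Phi'_\lambda(u)(u)=0$ and using $\Phi'_{\lambda_k}(u_k)(u_k)\to 0$ — a relation of the form $a\|w_k\|^2+b\,\|w_k\|^2(\|u\|^2+\|w_k\|^2)=\|w_k\|_{2^*}^{2^*}+o(1)$, equivalently $\|w_k\|^2\bigl(a+b\|u\|^2+b\|w_k\|^2-S_N^{-2^*/2}\|w_k\|^{2^*-2}(\|w_k\|_{2^*}^{2^*}/(S_N^{-2^*/2}\|w_k\|^{2^*}))\bigr)=o(1)$. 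Suppose $\|w_k\|\to\ell>0$ along a subsequence. Using $\|w_k\|_{2^*}^{2^*}\le S_N^{-2^*/2}\|w_k\|^{2^*}$ in the energy relation (obtained the same way as in part 1), $c=\Phi_\lambda(u)+\lim_k\bigl[\tfrac a2\|w_k\|^2+\tfrac b4(\|u\|^2+\|w_k\|^2)^2-\tfrac b4\|u\|^4-\tfrac1{2^*}\|w_k\|_{2^*}^{2^*}\bigr]$, and since $\Phi_\lambda(u)\ge\inf_{H^1_0}\Phi_\lambda$ while the bracketed limit is $\ge\overline g(\ell)\ge 0$ when $a^{(N-4)/2}b\ge C_2(N)$ (even $\ge C_1(N)$ suffices for nonnegativity), one contradicts the assumed value of $c$: in the strict case $a^{(N-4)/2}b>C_2(N)$ one shows $\ell$ would have to make $h(\ell)\le 0$, impossible since $h>0$ everywhere; in the borderline case $a^{(N-4)/2}b=C_2(N)$, $h(t_0)=0$ forces $\ell=t_0$ and then the energy identity pins $c$ down to exactly $\tfrac{(2^*-2)^2a^2}{4\cdot 2^*(4-2^*)b}$ (via the value $g(t_{a,b})t_{a,b}^2$ computed in Corollary \ref{graph g}), which is excluded by hypothesis. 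Hence $\ell=0$, i.e. $u_k\to u$ strongly. The main obstacle is this last step: carefully reconciling the energy identity with the derivative identity to extract the precise forbidden level $\tfrac{(2^*-2)^2a^2}{4\cdot 2^*(4-2^*)b}$ in the critical case $a^{(N-4)/2}b=C_2(N)$, which is where the constant $C_2(N)$ and the exact minimum value from Corollary \ref{graph g} enter.
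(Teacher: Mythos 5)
The paper itself does not prove this lemma; it simply cites \cite{FF} (Lemmas 2.1--2.2) and \cite{DN} (Propositions B.1 and B.4). Your Br\'ezis--Lieb splitting $u_k=u+w_k$ combined with Proposition \ref{functionsappl} and Lemma \ref{functions} is exactly the mechanism those references use, and parts 1) and 3) are correct as you present them. In part 2), however, there are two concrete problems.

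First, the boundedness step: for $N>4$ one has $2^*<4$, so in $\Phi_{\lambda_k}(u_k)-\frac{1}{2^*}\Phi'_{\lambda_k}(u_k)(u_k)$ the coefficient $b\left(\frac14-\frac{1}{2^*}\right)$ of $\|u_k\|^4$ is \emph{negative}; that combination is not coercive (it behaves like $-C\|u_k\|^4$), so your stated route to boundedness fails. The fix is immediate: $\Phi_{\lambda_k}$ itself is coercive in this regime, since $\frac b4\|u\|^4$ dominates $\|u\|^{2^*}$ and $\|u\|^p$ ($p<2^*<4$), and $\Phi_{\lambda_k}(u_k)\to c$ already gives boundedness. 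Second, and more seriously, the claim that the weak limit satisfies $\Phi'_\lambda(u)=0$ is not justified at that stage of the argument: because of the nonlocal coefficient, passing to the limit in $\Phi'_{\lambda_k}(u_k)\varphi\to 0$ only yields $-(a+bt^2)\Delta u=|u|^{2^*-2}u+\lambda f(x,u)$ with $t^2=\lim\|u_k\|^2=\|u\|^2+\ell^2$, and testing this with $u$ gives $\Phi'_\lambda(u)(u)=-b\ell^2\|u\|^2$, which need not vanish before strong convergence is known. Moreover, the derivation is internally inconsistent: subtracting ``$\Phi'_\lambda(u)(u)=0$'' from the expansion of $\Phi'_{\lambda_k}(u_k)(u_k)$ would produce $a\ell^2+2b\|u\|^2\ell^2+b\ell^4=\lim_k\|w_k\|_{2^*}^{2^*}$, which is \emph{not} the identity you then use. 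The identity you actually wrote, $a\ell^2+b\ell^2(\|u\|^2+\ell^2)=\lim_k\|w_k\|_{2^*}^{2^*}$, is the correct one, but it must be obtained by subtracting the true limit equation (with coefficient $a+bt^2$) tested against $u$. Once derived this way, the rest of your argument is sound: $\ell>0$ forces $a+b\ell^2-S_N^{-2^*/2}\ell^{2^*-2}\le -b\|u\|^2\le 0$, i.e.\ $h(\ell)\le 0$, impossible when $a^{\frac{N-4}{2}}b>C_2(N)$; in the borderline case it forces $u=0$ and $\ell=t_0$, and the energy expansion then pins $c$ to exactly $\frac{(2^*-2)^2a^2}{4\cdot 2^*(4-2^*)b}$, which is excluded by hypothesis.
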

 	\begin{proof} Item 1) can be found, after some mild modifications, in \cite[Lemma 2.1]{FF}. In a similar way 3) can be proved. Item 2) follows easily from \cite[Lemma 2.2]{FF} when $a^{\frac{N-4}{2}}b> C_{2}(N)$ (see also \cite[Proposition B.1]{DN}). The case $a^{\frac{N-4}{2}}b= C_{2}(N)$  can be deduced from \cite[Proposition B.4]{DN}. Note from Corollary \ref{graph g} that
 		\begin{equation*}
 		\frac{(2^*-2)^2a^2}{4\cdot2^*(4-2^*)b}=g(t_{a,b})t_{a,b}^2,
 		\end{equation*}
 		and one can immediately see, after introducing the parameter $a$, that $g(t_{a,b})t_{a,b}^2=g(\tau_{b}^0)$, where $g(\tau_{b}^0)$ was defined in \cite[Lemma B3]{DN}.
 	\end{proof}
For each $\lambda\ge 0$ and $u\in H_0^1(\Omega)\setminus\{0\}$,  define the fiber maps associated to $\Phi_\lambda$,  $\psi_{\lambda,u}: \ (0,+\infty)\to\R$ by $$\psi_{\lambda,u}(t):=\Phi_\lambda(tu)= \frac{a}{2}\|u\|^2t^2+\frac{b}{4} \|u\|^4t^4-\frac{1}{2^*}\|u\|^{2^*}_{2^*}t^{2^*}-\lambda\int_\Omega F(x,tu)dx.$$
\begin{prop}\label{graph psi} Suppose $\lambda\geq 0$ and $u\in H_0^1(\Omega)\setminus\{0\}$, then
	\begin{itemize}
		\item[i)] there exists a neighborhood $V$ of the origin such that $\psi_{\lambda,u}(t)>0$ for all $t\in V\cap(0,+\infty)$. Moreover $\psi_{\lambda,u}(t)\to \infty$ as $t\to \infty$ and $\psi_{\lambda,u}$ is bounded from below;
		\item[ii)] there exists a neighborhood $V$ of the origin such that $\psi'_{\lambda,u}(t)>0$ for all $t\in V\cap(0,+\infty)$. Moreover $\psi'_{\lambda,u}(t)\to \infty$ as $t\to \infty$ and $\psi'_{\lambda,u}$ is bounded from below.
	\end{itemize}
\end{prop}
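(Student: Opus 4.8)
The plan is to read the growth behaviour of $\psi_{\lambda,u}$ and of its derivative directly off the explicit formulas, using two ingredients. First, since $N>4$ we have $2<2^*<4$, so the Kirchhoff quartic term $\frac b4\|u\|^4t^4$ (respectively $b\|u\|^4t^3$ for the derivative) is the dominant term as $t\to+\infty$; this is precisely the coercivity built into the problem in high dimension. Second, from \eqref{f3}--\eqref{f4} one obtains, for every $\varepsilon>0$, a constant $C_\varepsilon>0$ such that
$$|f(x,v)|\le\varepsilon|v|+C_\varepsilon|v|^{p-1},\qquad |F(x,v)|\le\tfrac{\varepsilon}{2}|v|^2+\tfrac{C_\varepsilon}{p}|v|^p\qquad\text{for a.a.\ }x\in\Omega,\ v\in\R$$
(splitting according to whether $|v|\le\delta_\varepsilon$ or $|v|>\delta_\varepsilon$), as well as the cruder consequences of \eqref{f3} alone, namely $|f(x,v)|\le c(1+|v|^{p-1})$ and $|F(x,v)|\le c|v|+\frac cp|v|^p$.

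For item i), fix $u\in H_0^1(\Omega)\setminus\{0\}$ and $\lambda\ge0$. Near the origin I would keep the quadratic term, drop the nonnegative quartic term, and use the refined bound on $F$ together with the Sobolev embeddings $H_0^1(\Omega)\hookrightarrow L^2(\Omega),L^p(\Omega)$, obtaining
$$\psi_{\lambda,u}(t)\ge\Big(\tfrac a2-\tfrac{\lambda\varepsilon}{2}\tfrac{\|u\|_2^2}{\|u\|^2}\Big)\|u\|^2t^2-\tfrac1{2^*}\|u\|_{2^*}^{2^*}t^{2^*}-\tfrac{\lambda C_\varepsilon}{p}\|u\|_p^pt^p\qquad(t>0).$$
Choosing $\varepsilon$ small enough (depending on $u$ and $\lambda$) that the coefficient of $t^2$ is at least $\frac a4\|u\|^2$, and recalling $2<\min\{p,2^*\}$, the last two terms are $o(t^2)$ as $t\to0^+$, so $\psi_{\lambda,u}>0$ on a punctured neighbourhood of the origin. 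For the behaviour at $+\infty$ and the lower bound I would instead drop the nonnegative quadratic term, keep the quartic term, and use the crude bound on $F$ to get, for all $t>0$,
$$\psi_{\lambda,u}(t)\ge\tfrac b4\|u\|^4t^4-\tfrac1{2^*}\|u\|_{2^*}^{2^*}t^{2^*}-\lambda c\,\|u\|_1\,t-\tfrac{\lambda c}{p}\|u\|_p^p\,t^p.$$
The right-hand side is continuous on $[0,+\infty)$ and, since $2^*<4$ and $p<2^*<4$, tends to $+\infty$; hence it is bounded below on $[0,+\infty)$. This yields simultaneously $\psi_{\lambda,u}(t)\to+\infty$ and, through the same lower bound, boundedness from below of $\psi_{\lambda,u}$.

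Item ii) follows by the same scheme applied to
$$\psi'_{\lambda,u}(t)=a\|u\|^2t+b\|u\|^4t^3-\|u\|_{2^*}^{2^*}t^{2^*-1}-\lambda\int_\Omega f(x,tu)u\,dx.$$
Near the origin the refined bound on $f$ gives $\psi'_{\lambda,u}(t)\ge\big(a-\lambda\varepsilon\frac{\|u\|_2^2}{\|u\|^2}\big)\|u\|^2t-\|u\|_{2^*}^{2^*}t^{2^*-1}-\lambda C_\varepsilon\|u\|_p^pt^{p-1}$, and since $2^*-1>1$ and $p-1>1$, choosing $\varepsilon$ small makes $\psi'_{\lambda,u}(t)>0$ for small $t>0$. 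At $+\infty$ and for the lower bound, dropping the nonnegative term $a\|u\|^2t$ and using the crude bound on $f$ yields $\psi'_{\lambda,u}(t)\ge b\|u\|^4t^3-\|u\|_{2^*}^{2^*}t^{2^*-1}-\lambda c\|u\|_1-\lambda c\|u\|_p^pt^{p-1}$ for $t>0$; since $2^*-1<3$ and $p-1<2^*-1<3$, the right-hand side is continuous on $[0,+\infty)$ with limit $+\infty$, which gives both remaining claims.

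The argument is essentially a bookkeeping of exponents. The main point requiring care — rather than a genuine obstacle — is the derivation of the pointwise bound $|f(x,v)|\le\varepsilon|v|+C_\varepsilon|v|^{p-1}$ from \eqref{f3}--\eqref{f4}, together with the observation that in the regime $N>4$ one has $2^*<4$, so that it is the Kirchhoff quartic term, not the critical term, that controls the fibers at infinity.
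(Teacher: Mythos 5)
Your proof is correct and follows essentially the same route as the paper: isolate the dominant power of $t$ (the quadratic/linear term near the origin via \eqref{f4}, the Kirchhoff quartic/cubic term at infinity via \eqref{f3}) and use $2<p<2^*<4$ for $N>4$; you merely spell out the $\varepsilon$-splitting $|f(x,v)|\le\varepsilon|v|+C_\varepsilon|v|^{p-1}$ that the paper leaves implicit here (and states explicitly later). No gaps.
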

\begin{proof}
	
i) Note that	\begin{equation*}
	\psi_{\lambda,u}(t)=t^2\left(\frac{a}{2}\|u\|^2+\frac{b}{4} \|u\|^4t^2-\frac{1}{2^*}\|u\|^{2^*}_{2^*}t^{2^*-2}-\lambda\int_\Omega \frac{F(x,tu)}{t^2}dx\right).
	\end{equation*}
	From \eqref{f4} we conclude the existence of $V$. On the other hand we have
	\begin{equation*}
	\psi_{\lambda,u}(t)=t^4\left(\frac{a}{2}\|u\|^2t^{-2}+\frac{b}{4} \|u\|^4-\frac{1}{2^*}\|u\|^{2^*}_{2^*}t^{2^*-4}-\lambda\int_\Omega \frac{F(x,tu)}{t^4}dx\right).
	\end{equation*}
	Since $2<p<2^*<4$, we conclude from \eqref{f3} that $\psi_{\lambda,u}(t)\to \infty$ as $t\to \infty$. The last part is obvious.
	
	ii) Note that
	\begin{equation*}
	\psi'_{\lambda,u}(t)=t\left(a\|u\|^2+b\|u\|^4t^2-\|u\|^{2^*}_{2^*}t^{2^*-2}-\lambda\int_\Omega \frac{f(x,tu)u}{t}dx\right).
	\end{equation*}
	From \eqref{f4} again we conclude the existence of $V$. On the other hand we have
	\begin{equation*}
	\psi'_{\lambda,u}(t)=t^3\left(a\|u\|^2t^{-2}+b \|u\|^4-\|u\|^{2^*}_{2^*}t^{2^*-4}-\lambda\int_\Omega \frac{f(x,tu)}{t^3}dx\right).
	\end{equation*}
	Since $2<p<2^*<4$, we conclude from \eqref{f4} that $\psi'_{\lambda,u}(t)\to \infty$ as $t\to \infty$. The last part is obvious.
	\end{proof}
The remaining part of this Section is devoted to define a suitable extremal parameter $\lambda_0^*$ which will be crucial in our arguments.  Consider the system

\begin{equation}\label{zeroener}
\left\{	\begin{array}{ll}
\psi_{\lambda,u}(t)=0,\\
\psi'_{\lambda,u}(t)=0, \\
\psi_{\lambda,u}(t)=\inf_{s>0}\psi_{\lambda,u}(s).
\end{array} \right.
\end{equation}

\begin{prop}\label{lambdazero} Assume that $a^\frac{N-4}{2}b\ge C_1(N)$ and take $u\in H_0^1(\Omega)\setminus\{0\}$. Then  there exists a unique positive $\lambda_0(u)$  satisfying \eqref{zeroener}.

\end{prop}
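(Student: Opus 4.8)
The plan is to collapse the three-equation system \eqref{zeroener} into the study of a single scalar quotient. Write the fiber map as $\psi_{\lambda,u}(t)=\phi_u(t)-\lambda G_u(t)$, where
\[
\phi_u(t)=\frac a2\|u\|^2t^2+\frac b4\|u\|^4t^4-\frac1{2^*}\|u\|_{2^*}^{2^*}t^{2^*},\qquad G_u(t)=\int_\Omega F(x,tu)\,dx .
\]
By \eqref{f1}--\eqref{f2} we have $F(x,v)>0$ whenever $v\neq0$, so (since $u\not\equiv0$) $G_u(t)>0$ for every $t>0$, and, using \eqref{f3} and dominated convergence, $G_u$ is continuous on $(0,\infty)$. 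For $t>0$ we may therefore factor
\[
\psi_{\lambda,u}(t)=G_u(t)\bigl(P_u(t)-\lambda\bigr),\qquad P_u(t):=\frac{\phi_u(t)}{G_u(t)},
\]
with $P_u$ continuous on $(0,\infty)$. The first observation is then purely formal: a pair $(\lambda,t)$ with $t>0$ satisfies \eqref{zeroener} if and only if $\lambda=P_u(t)=\min_{s>0}P_u(s)$. Indeed, the first line of \eqref{zeroener} forces $P_u(t)=\lambda$; combining it with the third forces $\psi_{\lambda,u}(s)\ge0$, i.e.\ $P_u(s)\ge\lambda$, for all $s>0$, so that $\lambda=P_u(t)=\min_s P_u(s)$. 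Conversely, if $\lambda=P_u(t)=\min_s P_u(s)$ then $\psi_{\lambda,u}(t)=0$ while $\psi_{\lambda,u}(s)=G_u(s)(P_u(s)-\lambda)\ge0$ for every $s>0$; hence $t$ is an interior minimiser of the $C^1$ function $\psi_{\lambda,u}$ (recall $\Phi_\lambda\in C^1$), so $\psi'_{\lambda,u}(t)=0$ as well, and all three lines of \eqref{zeroener} hold. Consequently the statement reduces to showing that $P_u$ attains its infimum on $(0,\infty)$ at an interior point and that this minimum is strictly positive; existence of $\lambda_0(u)$ and its uniqueness then both follow, with $\lambda_0(u)=\min_{s>0}P_u(s)>0$.

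Strict positivity of $\min P_u$ — once we know it is attained — is exactly where $a^{\frac{N-4}{2}}b\ge C_1(N)$ enters. By Proposition \ref{functionsappl}(i), $\phi_u(t)>t^2 g(\|u\|t)\|u\|^2$ for all $t>0$. By Lemma \ref{functions}(i), under $a^{\frac{N-4}{2}}b\ge C_1(N)$ one has $g(t_0)\ge0$ at the unique critical point $t_0$ of $g$; since moreover $g(0^+)=a/2>0$ and $g(t)\to+\infty$ as $t\to+\infty$, this single critical point is the global minimiser, so $g\ge g(t_0)\ge0$ on $(0,\infty)$. Hence $\phi_u(t)>t^2 g(\|u\|t)\|u\|^2\ge0$, i.e.\ $\phi_u(t)>0$, so $P_u>0$ on $(0,\infty)$.

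To see that $P_u$ achieves a minimum I will show $P_u(t)\to+\infty$ both as $t\to0^+$ and as $t\to+\infty$; continuity of $P_u$ then provides an interior minimiser $t^*_u$, and we set $\lambda_0(u):=P_u(t^*_u)$. As $t\to0^+$: by Proposition \ref{graph psi}(i), for each $\lambda\ge0$ there is a neighbourhood of the origin on which $\psi_{\lambda,u}>0$, i.e.\ $P_u(t)>\lambda$ for all small $t>0$; as $\lambda$ is arbitrary, $P_u(t)\to+\infty$. As $t\to+\infty$: since $N>4$ we have $2<p<2^*<4$, so $\phi_u(t)\ge\tfrac b8\|u\|^4t^4$ for $t$ large, while \eqref{f3} yields $|F(x,v)|\le c(|v|+|v|^p/p)$ and hence $G_u(t)\le c\|u\|_1 t+\tfrac cp\|u\|_p^p t^p\le C_u t^p$ for $t\ge1$; therefore $P_u(t)\ge c_u t^{4-p}\to+\infty$. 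This proves existence, and, by the equivalence in the first paragraph, any $\lambda$ for which \eqref{zeroener} is solvable must equal $\min_{s>0}P_u(s)=\lambda_0(u)$, which gives uniqueness.

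The only genuinely delicate point I expect is the behaviour of $P_u$ at infinity — specifically, ruling out decay of $P_u$ to $0$ there, which would force $\lambda_0(u)=0$. This is precisely where subcriticality $p<2^*<4$ is indispensable: it lets the quartic term of $\phi_u$ dominate the at-most-$O(t^p)$ growth of $G_u$. Everything else is routine bookkeeping on top of Lemma \ref{functions}, Proposition \ref{functionsappl} and Proposition \ref{graph psi}.
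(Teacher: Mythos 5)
Your argument is correct, and it takes a genuinely different route from the paper. The paper's proof sweeps the parameter $\lambda$: using $\psi_{\lambda,u}-\psi_{\lambda',u}=(\lambda'-\lambda)\int_\Omega F(x,tu)\,dx$ it shows the family $\{\psi_{\lambda,u}\}_\lambda$ is pointwise decreasing in $\lambda$, that $\psi_{\lambda,u}>0$ on $(0,\infty)$ for small $\lambda$ and $\psi_{\lambda,u}(t)\to-\infty$ for fixed $t$ as $\lambda\to\infty$, and then concludes the existence and uniqueness of the transition value $\lambda_0(u)$, with positivity obtained separately from $\psi_{0,u}>g(\|u\|t)(\|u\|t)^2\ge0$. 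You instead implement the nonlinear Rayleigh quotient device of Il'yasov (the reference the paper cites for extremal values but does not actually deploy in this proof): writing $\psi_{\lambda,u}=G_u\,(P_u-\lambda)$ with $P_u=\phi_u/G_u$, you identify $\lambda_0(u)=\min_{t>0}P_u(t)$ and reduce the whole proposition to continuity, strict positivity and coercivity of $P_u$ at $0^+$ and $+\infty$. The two proofs use the same ingredients (Lemma \ref{functions}, Proposition \ref{functionsappl}, Proposition \ref{graph psi}, and the growth bounds from \eqref{f3}--\eqref{f4}), but your formulation buys two things: uniqueness of $\lambda$ is immediate from the identity $\lambda=\min P_u$, and the step the paper leaves implicit --- that at the threshold value the infimum of $\psi_{\lambda,u}$ over $t$ is actually \emph{attained} at an interior point with value zero --- is handled explicitly by the compactness argument for $P_u$. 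The only price is the need to check $G_u(t)>0$ for all $t>0$ (which \eqref{f2} gives, since $F(x,v)>0$ for $v\neq0$) so that the quotient is well defined; you do this. All the individual estimates (the lower bound $\phi_u>t^2g(\|u\|t)\|u\|^2\ge0$ under $a^{\frac{N-4}{2}}b\ge C_1(N)$, the blow-up of $P_u$ at $0^+$ via Proposition \ref{graph psi}, and the $t^{4-p}$ lower bound at infinity using $p<2^*<4$) are sound.
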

\begin{proof} Note that
	\begin{equation}\label{fiberlambda}
	\psi_{\lambda,u}(t)- \psi_{\lambda',u}(t)=(\lambda'-\lambda)\int_\Omega F(x,tu)dx.
	\end{equation}
	Since $F(x,v)\geq 0$ for all $v\in \mathbb{R}$ (see \eqref{f2}), we conclude from \eqref{fiberlambda} that $\psi_{\lambda,u}(t)- \psi_{\lambda',u}(t)\ge 0$ for all $t\in \mathbb{R}$ and $0\le\lambda<\lambda'$. Moreover, on compact sets of the form $[c,d]$, with $0<c<d$, we deduce that $\psi_{\lambda,u}\to \psi_{\lambda',u}$ uniformly  as $\lambda\to \lambda'$. From Proposition \ref{graph psi}, there exists a neighborhood of the origin $V_{\lambda'}$ such that $\psi_{\lambda',u}(t)>0$ if $t\in V_{\lambda'}\cap(0,+\infty)$, therefore $\psi_{\lambda,u}(t)>0$ for all $0\le\lambda<\lambda'$. Once $\psi_{0,u}$ is positive on $(0,\infty)$ (see Proposition \ref{functionsappl}) and tends to $\infty$ as $t\to \infty$ we conclude that for $\lambda$ sufficiently small, the fiber map $\psi_{\lambda,u}$ is positive in $(0,\infty)$. On the other hand, fixed $t>0$ one can easily see that $\psi_{\lambda,u}(t)\to -\infty $ as $\lambda\to \infty$. Therefore, there exists a unique $\lambda_0(u)$ solving system \eqref{zeroener}.
	
	Now we claim that $\lambda_0(u)>0$. Indeed, from Lemma \ref{functions} and Proposition \ref{functionsappl} we have that
	\begin{equation*}
	\psi_{0,u}(t)> g(\|u\|t)(\|u\|t)^2\ge  0, \quad  \forall t>0.
	\end{equation*}
	From \eqref{fiberlambda} we conclude that $\lambda_0(u)>0$.

\end{proof}
\begin{rem}\label{lambdazeronegative} The proof of Proposition \ref{lambdazero} also shows that if $a^\frac{N-4}{2}b< C_1(N)$, then there exists $u\in H_0^1(\Omega)\setminus\{0\}$ such that $\lambda_0(u)<0$.
\end{rem}
\begin{prop}\label{prop systems} For each $u\in H^1_0(\Omega)\setminus \{0\}$ one has: $\lambda_0(u)$ is the unique parameter $\lambda>0$ for which the fiber map $\psi_{\lambda,u}$ has a critical point with zero energy and satisfies $\inf_{t>0}\psi_{\lambda,u}(t)=\inf_{t>0}\psi_{\lambda_0(u),u}(t)=0$. Moreover, if $\lambda>\lambda_0(u)$, then $\inf_{t>0} \psi_{\lambda,u}(t) < 0$ while if $0<\lambda\leq\lambda_0(u)$, then $\inf_{t>0} \psi_{\lambda,u}(t) = 0$.
	\end{prop}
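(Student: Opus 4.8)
The plan is to study the function $m(\lambda):=\inf_{t>0}\psi_{\lambda,u}(t)$ for $\lambda\ge 0$ and to show that it is finite, non-increasing, never positive, equal to $0$ precisely on $(0,\lambda_0(u)]$, and strictly negative for $\lambda>\lambda_0(u)$; the characterisation of $\lambda_0(u)$ will then follow by matching these facts against the defining system \eqref{zeroener} together with the uniqueness already obtained in Proposition \ref{lambdazero}.

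First I would record a few elementary observations. By Proposition \ref{graph psi} i), $\psi_{\lambda,u}$ is bounded below, so $m(\lambda)\in\R$. By \eqref{fiberlambda} we have $\psi_{\lambda,u}(t)-\psi_{\lambda',u}(t)=(\lambda'-\lambda)\int_\Omega F(x,tu)\,dx$, and \eqref{f2} forces $F(x,v)>0$ for every $v\neq 0$ (the integrand of $F(x,\cdot)$ keeps a constant sign on $(0,v)$); since $u\neq 0$ this gives $\int_\Omega F(x,tu)\,dx>0$ for every $t>0$, hence $\lambda\mapsto\psi_{\lambda,u}(t)$ is strictly decreasing for each fixed $t>0$, and therefore $m$ is non-increasing on $[0,\infty)$. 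Moreover each term of $\psi_{\lambda,u}(t)$ tends to $0$ as $t\to 0^+$ (for the term involving $F$ use \eqref{f4}), so $\psi_{\lambda,u}(t)\to 0$ and thus $m(\lambda)\le 0$ for every $\lambda\ge 0$. Finally, if $(\bar t,\lambda_0(u))$ solves \eqref{zeroener} then $\psi_{\lambda_0(u),u}(\bar t)=0=\inf_{s>0}\psi_{\lambda_0(u),u}(s)$, that is, $m(\lambda_0(u))=0$.

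The two displayed assertions now follow at once. If $0<\lambda\le\lambda_0(u)$, monotonicity of $m$ gives $m(\lambda)\ge m(\lambda_0(u))=0$, while $m(\lambda)\le 0$, so $\inf_{t>0}\psi_{\lambda,u}(t)=0$. If $\lambda>\lambda_0(u)$, I would test with the point $\bar t>0$ above: by \eqref{fiberlambda},
\[
\psi_{\lambda,u}(\bar t)=\psi_{\lambda_0(u),u}(\bar t)+(\lambda_0(u)-\lambda)\int_\Omega F(x,\bar t u)\,dx=(\lambda_0(u)-\lambda)\int_\Omega F(x,\bar t u)\,dx<0,
\]
because $\lambda>\lambda_0(u)$ and $\int_\Omega F(x,\bar t u)\,dx>0$; hence $\inf_{t>0}\psi_{\lambda,u}(t)\le\psi_{\lambda,u}(\bar t)<0$.

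It remains to prove the characterisation of $\lambda_0(u)$. The parameter $\lambda_0(u)$ has the stated properties: $m(\lambda_0(u))=0$ by the second paragraph, and $\bar t$ is, by \eqref{zeroener}, a critical point of $\psi_{\lambda_0(u),u}$ with zero energy. For uniqueness, suppose $\lambda>0$ is such that $\psi_{\lambda,u}$ admits a critical point $\tilde t$ with $\psi_{\lambda,u}(\tilde t)=0$ and $\inf_{t>0}\psi_{\lambda,u}(t)=0$; then $\psi_{\lambda,u}(\tilde t)=0=\inf_{s>0}\psi_{\lambda,u}(s)$ and $\psi'_{\lambda,u}(\tilde t)=0$, i.e.\ $(\tilde t,\lambda)$ solves \eqref{zeroener}, so by the uniqueness statement of Proposition \ref{lambdazero} we get $\lambda=\lambda_0(u)$. (Equivalently, from the two inequalities above every $\lambda>\lambda_0(u)$ violates the infimum condition, while no $\lambda\in(0,\lambda_0(u))$ can carry a zero-energy critical point, since such a point would again solve \eqref{zeroener} because $\inf_{t>0}\psi_{\lambda,u}(t)=0$ there.) I do not expect a serious obstacle here; the only points requiring a little care are the vanishing of the fibers as $t\to 0^+$ and the observation that, once $m(\lambda)=0$, a zero-energy critical point of $\psi_{\lambda,u}$ automatically realises its infimum — which is exactly what lets one apply the uniqueness in Proposition \ref{lambdazero}.
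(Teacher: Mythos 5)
Your proof is correct and follows essentially the same route as the paper: both arguments rest on the monotone dependence of $\psi_{\lambda,u}$ on $\lambda$ coming from \eqref{fiberlambda} together with evaluation at the point $\bar t$ solving \eqref{zeroener}, and the uniqueness claim reduces to Proposition \ref{lambdazero}. The only difference is that you make explicit two steps the paper leaves implicit — that $\inf_{t>0}\psi_{\lambda,u}(t)\le 0$ because the fiber vanishes as $t\to 0^+$, and that a zero-energy critical point automatically realises the infimum when that infimum is $0$ — which is a welcome clarification rather than a new idea.
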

\begin{proof}
 Choose any $t>0$ that solves \eqref{zeroener}. If $\lambda>\lambda_0(u)$, then $\psi_{\lambda,u}(t)<\psi_{\lambda_0(u),u}(t)=0$ and the claim follows. If $\lambda\leq\lambda_0(u)$, then $\psi_{\lambda,u}(t)\geq\psi_{\lambda_0(u),u}(t)\geq 0$ for all $t\ge 0$ and the conclusion follows at once.
	
\end{proof}
We introduce the following extremal parameter (see \cite{Y})
 \[\lambda_0^*=\inf_{u\in H^1_0(\Omega)\setminus \{0\}} \lambda_0(u).\]
\begin{prop}\label{comparison} The following holds true.
	\begin{itemize}
			\item[i)] If $\displaystyle a^\frac{N-4}{2}b>C_1(N)$, then $\lambda_0^*>0$.
		\item[ii)] If $\displaystyle a^\frac{N-4}{2}b=C_1(N)$,  then $ \lambda_0^*=0$. Moreover if $u_k\in H_0^1(\Omega)\setminus\{0\}$ satisfies $\lambda_0(u_k)\to \lambda_0^*=0$, then $u_k\rightharpoonup 0$ and $\frac{\|u_k\|_2^2}{\|u_k\|_{2^*}^2}\to S_N$.
	\end{itemize}
	
\end{prop}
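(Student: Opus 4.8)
My plan is to base everything on the identity
\[
\lambda_0(u)=\inf_{t>0}\frac{\psi_{0,u}(t)}{\int_\Omega F(x,tu)\,dx},\qquad u\in H^1_0(\Omega)\setminus\{0\},
\]
whose denominator is positive by \eqref{f2}. First I would derive it from Proposition \ref{prop systems} and \eqref{fiberlambda}: writing $\psi_{\lambda,u}(t)=\psi_{0,u}(t)-\lambda\int_\Omega F(x,tu)\,dx$, the fiber $\psi_{\lambda,u}$ is nonnegative on $(0,\infty)$ precisely when $\lambda$ is at most the infimum above, while Proposition \ref{prop systems} identifies $\{\lambda>0:\ \psi_{\lambda,u}\ge0\ \text{on}\ (0,\infty)\}$ with $(0,\lambda_0(u)]$. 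Since $\psi_{\lambda,su}(t)=\psi_{\lambda,u}(st)$, the functional $\lambda_0$ is $0$-homogeneous, so throughout I may assume $\|u\|=1$.

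For i), with $\|u\|=1$ Proposition \ref{functionsappl}(i) gives $\psi_{0,u}(t)\ge t^2g(t)=:\overline g(t)$, and since $a^{\frac{N-4}{2}}b>C_1(N)$, Lemma \ref{functions}(i) (together with $g$ being decreasing on $(0,t_0)$ and increasing afterwards) yields $\overline g(t)\ge t^2g(t_0)>0$ for every $t>0$. On the other hand \eqref{f3}--\eqref{f4} provide, for each $\eta>0$, a constant $C_\eta>0$ with $0\le F(x,v)\le\eta v^2+C_\eta|v|^p$, hence $\int_\Omega F(x,tu)\,dx\le\eta c_2t^2+C_\eta c_pt^p$ with $c_2,c_p$ embedding constants independent of $u$. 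Then $\lambda_0(u)\ge\inf_{t>0}\overline g(t)/(\eta c_2t^2+C_\eta c_pt^p)=:m$, and this quotient is continuous and positive on $(0,\infty)$ with limit $a/(2\eta c_2)>0$ at $0^+$ and $+\infty$ at infinity (here $2<p<2^*<4$ is used), so $m>0$; as $m$ does not depend on $u$, $\lambda_0^*\ge m>0$.

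For ii) I would first exhibit a sequence with $\lambda_0\to0$. Fix $x_0\in\Omega$ and let $u_\epsilon$ be the truncated Aubin--Talenti bubble concentrating at $x_0$, normalised by $\|u_\epsilon\|=1$; the Brezis--Nirenberg estimates give $0\le S_N^{-2^*/2}-\|u_\epsilon\|_{2^*}^{2^*}=O(\epsilon^{N-2})$, and for suitable fixed $c,r>0$ one has $u_\epsilon\ge c$ on $B_{r\epsilon^{1/2}}(x_0)$, a ball of measure $\asymp\epsilon^{N/2}$. Let $t_0$ be the minimiser of $\overline g$, so $\overline g(t_0)=0$ by Lemma \ref{functions}(i) (now $a^{\frac{N-4}{2}}b=C_1(N)$); evaluating the quotient at $t=t_0$, the numerator is $\psi_{0,u_\epsilon}(t_0)=\tfrac{1}{2^*}\big(S_N^{-2^*/2}-\|u_\epsilon\|_{2^*}^{2^*}\big)t_0^{2^*}=O(\epsilon^{N-2})$, whereas, shrinking $r$ so that $t_0c\ge\sup I$, hypothesis \eqref{f2} forces $F(x,t_0u_\epsilon)\ge\mu|I|$ on $B_{r\epsilon^{1/2}}(x_0)$, so the denominator is $\gtrsim\epsilon^{N/2}$. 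Hence $\lambda_0(u_\epsilon)\lesssim\epsilon^{(N-4)/2}\to0$, which with Proposition \ref{lambdazero} (each $\lambda_0(u)>0$) gives $\lambda_0^*=0$. Balancing the orders $\epsilon^{N-2}$ and $\epsilon^{N/2}$ is exactly where $N>4$ is used, and together with the rigidity step below I expect it to be the main obstacle.

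Finally, for the last assertion, take $\|u_k\|=1$ with $\lambda_0(u_k)\to0$ and $t_k>0$ realising the infimum, so $\lambda_0(u_k)=\psi_{0,u_k}(t_k)/\int_\Omega F(x,t_ku_k)\,dx$. Bounding $\psi_{0,u_k}(t)$ below by $\tfrac a2t^2-\tfrac{1}{2^*}\|u_k\|_{2^*}^{2^*}t^{2^*}$ for small $t$ and by $\tfrac b4t^4-\tfrac{1}{2^*}\|u_k\|_{2^*}^{2^*}t^{2^*}$ for large $t$, using $\|u_k\|_{2^*}^{2^*}\le S_N^{-2^*/2}$ and the growth bounds on $\int_\Omega F$, one checks that $t_k\to0$ would force $\liminf_k\lambda_0(u_k)>0$ and $t_k\to\infty$ would force $\lambda_0(u_k)\to\infty$; so along a subsequence $t_k\to t^*\in(0,\infty)$, $u_k\rightharpoonup u$ and $\|u_k\|_{2^*}^{2^*}\to L\in[0,S_N^{-2^*/2}]$. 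If $u\neq0$ then $\int_\Omega F(x,t_ku_k)\,dx\to\int_\Omega F(x,t^*u)\,dx>0$ by compact embedding, while $\liminf_k\psi_{0,u_k}(t_k)\ge\psi_{0,u}(t^*)>0$ by the weak lower semicontinuity of $\Phi_0$ (Lemma \ref{variational property}(1), applicable since $a^{\frac{N-4}{2}}b=C_1(N)$) and Proposition \ref{functionsappl}(i), contradicting $\lambda_0(u_k)\to0$; hence $u=0$. Then $\int_\Omega F(x,t_ku_k)\,dx\to0$, so $\psi_{0,u_k}(t_k)=\lambda_0(u_k)\int_\Omega F(x,t_ku_k)\,dx\to0$, which in the limit reads $\overline g(t^*)+\tfrac{1}{2^*}\big(S_N^{-2^*/2}-L\big)(t^*)^{2^*}=0$; both summands are nonnegative, so each vanishes, forcing $L=S_N^{-2^*/2}$ (and $t^*=t_0$). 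Since every subsequence gives the same limit, $\|u_k\|_{2^*}^{2^*}\to S_N^{-2^*/2}$, that is, the Sobolev quotient $\|u_k\|^2/\|u_k\|_{2^*}^2$ of $u_k$ converges to $S_N$.
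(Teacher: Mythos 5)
Your part i) and the rigidity statement at the end of part ii) are fine: the nonlinear Rayleigh quotient identity $\lambda_0(u)=\inf_{t>0}\psi_{0,u}(t)/\int_\Omega F(x,tu)\,dx$ follows from Proposition \ref{prop systems} exactly as you say, and the resulting direct lower bound in i) is a clean repackaging of the paper's contradiction argument (same ingredients: Proposition \ref{functionsappl}, Lemma \ref{functions}, and the $\varepsilon v^2+C_\varepsilon|v|^p$ bound on $F$). The argument that $u=0$ and $\|u_k\|_{2^*}^{2^*}\to S_N^{-2^*/2}$ is also essentially the paper's.

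The gap is in the construction showing $\lambda_0^*=0$, and it is exactly at the step you flagged as the main obstacle. For the truncated Aubin--Talenti bubble normalised by $\|u_\epsilon\|=1$, the Brezis--Nirenberg estimates give $S_N^{-2^*/2}-\|u_\epsilon\|_{2^*}^{2^*}=O(\epsilon^{\frac{N-2}{2}})$, not $O(\epsilon^{N-2})$: the cutoff contributes a genuinely order-one term to $\|\nabla v_\epsilon\|_2^2=K_1\epsilon^{-\frac{N-2}{2}}+O(1)$, so after normalisation the deficit in the Sobolev quotient is of order $\epsilon^{\frac{N-2}{2}}$ and no better. With the correct rate, your quotient at $t=t_0$ is only bounded by $\epsilon^{\frac{N-2}{2}}/\epsilon^{\frac{N}{2}}=\epsilon^{-1}$, which does not tend to zero, so the argument as written fails. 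The repair is to improve the denominator, not the numerator: the set where $u_\epsilon\geq c$ for a \emph{fixed} $c>0$ is not $B_{r\epsilon^{1/2}}(x_0)$ but the much larger ball of radius $\asymp\epsilon^{1/4}$ (since $u_\epsilon(x)\asymp\epsilon^{\frac{N-2}{4}}|x-x_0|^{-(N-2)}$ away from the core, and this exceeds $c$ precisely for $|x-x_0|\lesssim\epsilon^{1/4}$). This yields $\int_\Omega F(x,t_0u_\epsilon)\,dx\gtrsim\epsilon^{\frac{N}{4}}$, which is the estimate the paper proves, and then $\lambda_0(u_\epsilon)\lesssim\epsilon^{\frac{N-2}{2}-\frac{N}{4}}=\epsilon^{\frac{N-4}{4}}\to0$; the condition $N>4$ enters exactly here. (A secondary point: your ``$\sup I$'' should be replaced by any $\alpha$ with $\tilde F(\alpha)>0$, since $I$ need not be bounded; the paper's $F(x,v)\geq\beta$ for $v\geq\alpha$ handles this.)
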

\begin{proof} i) Let us prove that $\lambda_0^*>0$.  Notice first that the function $u\to\lambda_0(u)$ is zero homogeneous. Indeed, if $(t,\lambda_0(u))$ solves system \eqref{zeroener} and $\mu>0$, then
 \begin{equation*}
\left\{	\begin{array}{ll}
\psi_{\lambda,\mu u}(t)=\psi_{\lambda,u}(\mu t)=0,\\
\psi'_{\lambda,\mu u}(t)=\psi'_{\lambda, u}(\mu t)=0,
\end{array} \right.
\end{equation*}
by uniqueness, $\lambda(\mu u)=\lambda (u)$.	 We argue by contradiction assuming that $\lambda_0^*=0$. Then, there exists $\{u_k\}\subseteq H^1_0(\Omega)\setminus\{0\}$ such that $\lambda_k:=\lambda_0(u_k)\to 0.$ By homogeneity we can assume that $\|u_k\|=1$.
	 Then for each $k$, there exists $t_k>0$ such that
	 $\Phi_{\lambda_k}(t_ku_k)=\psi_{\lambda_k,u_k}(t_k)=0$ or equivalently
	\[\frac{a}{2}+\frac{b}{4}t_k^2-\frac{1}{2^*}\|u_k\|_{2^*}^{2^*} t_k^{2^*-2}-\lambda_k \int_\Omega \frac{F(x,t_ku_k)}{t_k^2}dx=0.\]
	 Thus, by Proposition \ref{functionsappl}, we obtain for each $k\in \mathbb N$
	\begin{equation}\label{limit}
	g(t_k)< \frac{a}{2}+\frac{b}{4}t_k^2-\frac{1}{2^*}\|u_k\|_{2^*}^{2^*} t_k^{2^*-2}\leq \lambda_k \int_\Omega \frac{F(x,t_ku_k)}{t_k^2}dx.
	\end{equation}
	Notice that from \eqref{f3} and \eqref{f4}, one has that for each $\varepsilon>0$ there exists $c>0$ such that $|f(x,v)|\leq \varepsilon |v|+c|v|^{p-1}$ for all $x\in\Omega$, $v\in \mathbb R$. Thus, $|F(x,v)|\leq \frac{\varepsilon}{2} v^2+\frac{c}{p}|v|^{p}$ for all $x\in\Omega$, $v\in \mathbb R$.  Hence, we deduce that $\{t_k\}$ is bounded in $(0,+\infty)$ and converge to some $\bar t>0$. Thus, from \eqref{limit} and Lemma \ref{functions} we deduce  that
	\[0<g(\bar t)\leq \lim_{k\to \infty}\lambda_k \int_\Omega \frac{F(x,t_ku_k)}{t_k^2}dx=0,\] which is a contradiction.
	
	
	ii) Without loss of generality we assume that $0\in \Omega$. Fix $\varphi\in C_0^\infty(\Omega)$ such that $\varphi\ge 0$ and $\varphi(x)=1$ in the open ball centered at $0$ of radius $R$ for some $R>0$. For each $\varepsilon>0$, define
	\begin{equation*}
	v_\varepsilon(x)=\frac{\varphi(x)}{(\varepsilon+|x|^2)^{\frac{N-2}{2}}}.
	\end{equation*}
	Let $u_\varepsilon=v_\varepsilon/\|v_\varepsilon\|$ and note that $u_{\varepsilon}\in H_0^1(\Omega)$ and (see \cite{BN})
	\begin{equation}\label{BNEQ}
	\|u_\varepsilon\|=1,\ \ \|u_\varepsilon\|_{2^*}^{2^*}=S_N^{\frac{-2^*}{2}}+O(\varepsilon^{\frac{2^*N}{4}}),\ \ \|v_\varepsilon\|=\frac{c}{\varepsilon^{\frac{N-2}{4}}}+k(\varepsilon),
	\end{equation}
	where $c>0$ does not depend on $\varepsilon$, $k(\varepsilon)>c_1>0$ for small $\varepsilon>0$, where $c_1$ is a constant and for every $q\in[2,2^*)$. Now given any $\lambda>0$ and fixed $t>0$, note that
	\begin{eqnarray*}
	\psi_{\lambda,u_\varepsilon}(t)&= &\frac{a}{2}t^2+\frac{b}{4}t^4-\frac{1}{2^*}\|u_\varepsilon\|_{2^*}^{2^*} t^{2^*}-\lambda \int_\Omega F(x,tu_\varepsilon)dx \\
	&=& t^2g(t)-\frac{1}{2^*}O(\varepsilon^{\frac{2^*N}{4}})t^{2^*}-\lambda \int_\Omega F(x,tu_\varepsilon)dx.
	\end{eqnarray*}
Take $t=t_0$ where $t_0$ is given by Lemma \ref{functions} and notice that, since $a^\frac{N-4}{2}b= C_1(N)$, then $g(t_0)=0$. 
We have that
\begin{equation*}
	\psi_{\lambda,u_\varepsilon}(t_0)=-\frac{1}{2^*}O(\varepsilon^{\frac{2^*N}{4}})t_0^{2^*}-\lambda \int_\Omega F(x,t_0u_\varepsilon)dx.
\end{equation*}
Let us estimate $\int_\Omega F(x,t_0u_\varepsilon)dx$ from below.
 By assumption \eqref{f2}, one has that $f(x,v)\geq \mu \chi_I(v)$ (being $\chi_I$ the characteristic function of the interval $I$), so there exist $\alpha, \beta >0$ such that $F(x,v)\geq \tilde F(v):=\mu\int_0^v \chi_I(t) dt \geq \beta $ for every $v\geq \alpha$.  Following Corollary 2.1 of \cite{BN} and using the positivity and  monotonicity of $F$, 
\begin{align*}
\int_{\Omega}F(x,t_0 u_\varepsilon)dx&\geq \int_{|x|\leq R}F(x,t_0 u_\varepsilon) dx \geq \int_{|x|\leq R}F\left(x, \frac{t_0}{\|v_\varepsilon\|(\varepsilon+|x|^2)^{\frac{N-2}{2}}}\right)dx \\&\geq \int_{|x|\leq R}\tilde F\left( \frac{t_0}{\|v_\varepsilon\|(\varepsilon+|x|^2)^{\frac{N-2}{2}}}\right)dx =c_1\varepsilon^\frac{N}{2}\int_0^{R\varepsilon^{-\frac{1}{2}} }\tilde F\left( \frac{t_0}{\|v_\varepsilon\|}\left(\frac{\varepsilon^{-1}}{1+s^2}\right)^{\frac{N-2}{2}}\right)s^{N-1}ds
\end{align*}
Notice that  
\begin{equation}\label{MM}
\tilde F\left(\frac{t_0}{\|v_\varepsilon\|}\left(\frac{\varepsilon^{-1}}{1+s^2}\right)^{\frac{N-2}{2}}\right)\geq \beta \hbox{ if $s$ is such that} \  \frac{t_0}{\|v_\varepsilon\|}\left(\frac{\varepsilon^{-1}}{1+s^2}\right)^{\frac{N-2}{2}} \geq\alpha.
\end{equation}
The second inequality of \eqref{MM} is equivalent to 
\begin{equation*}
\frac{t_0\varepsilon^{\frac{2-N}{4}}}{(c+\varepsilon^{\frac{N-2}{4}}k(\varepsilon))(1+s^2)^{\frac{N-2}{2}}}\ge \alpha,
\end{equation*}
 which is true if $s\le c_2\varepsilon^{-\frac{1}{4}}$ for some constant $c_2$ and small $\varepsilon$. Therefore, by taking a smaller $R$ if necessary, we deduce from \eqref{MM} that
$$\int_{\Omega}F(x,t_0 u_\varepsilon)dx\geq c_3 \varepsilon^\frac{N}{2}\int_0^{R\varepsilon^{-\frac{1}{4}} }\beta s^{N-1}ds=c_3 \varepsilon^{\frac{N}{4}},$$ for some positive constant $c_3$.
Thus, 
$$	\psi_{\lambda,u_\varepsilon}(t_0)\leq\varepsilon^{\frac{N}{4}}\left[-\frac{1}{2^*}\frac{O(\varepsilon^{\frac{2^*N}{4}})}{\varepsilon^{\frac{N}{4}}}t_0^{2^*}-\lambda c_3\right]<0,$$
 for small $\varepsilon$ and hence $\lambda_0(u_\varepsilon)<\lambda$. Once $\lambda$ was arbitrary we deduce that $\lambda_0^*=0$.

Now suppose that $u_k\in H_0^1(\Omega)\setminus\{0\}$ satisfies $\lambda_k:=\lambda_0(u_k)\to \lambda_0^*=0$. As in i) we may assume that $\|u_k\|=1$ and $u_k\rightharpoonup u$. Moreover there exists $t_k>0$ such that
\begin{equation*}
\frac{a}{2}+\frac{b}{4}t_k^2-\frac{1}{2^*}\|u_k\|_{2^*}^{2^*} t_k^{2^*-2}-\lambda_k\int_\Omega\frac{F(x,t_ku_k)}{t_k^2}dx=0 \qquad \hbox{for each} \ k\in \mathbb N.
\end{equation*}
From \eqref{f3} and \eqref{f4} we conclude that $t_k\to t>0$ and $\|u_k\|_{2^*}^{2^*}\to s>0$ and hence
\begin{equation*}
\frac{a}{2}+\frac{b}{4}t^2-\frac{1}{2^*}s t^{2^*-2}=0.
\end{equation*}
From the assumption on  $a$ and $b$ we conclude that $s=S_N^{\frac{-2^*}{2}}$ and hence $u_k$ is a minimizing sequence to $S_N$. Moreover, if $u\neq 0$, then (the first inequality is a consequence of Lemma \ref{functions} and the fact that  $\|u\|\le 1$)
\begin{eqnarray*}
0\le \frac{a}{2}+\frac{b}{4}t^2-\frac{S_N^{\frac{-2^*}{2}}}{2^*}\|u\|^{2^*} t^{2^*-2}&\le&  \frac{a}{2}+\frac{b}{4}t^2-\frac{1}{2^*}\|u\|_{2^*}^{2^*} t^{2^*-2} \\
	&\le & \liminf_{k\to \infty}\left(\frac{a}{2}+\frac{b}{4}t_k^2-\frac{1}{2^*}\|u_k\|_{2^*}^{2^*} t_k^{2^*-2}-\lambda_k\int_\Omega\frac{F(x,t_ku_k)}{t_k^2}dx\right) \\
	&=& 0,
\end{eqnarray*}
and consequently $u$ is a minimizer to $S_N$, which is an absurd, therefore $u=0$.

	\end{proof}
\begin{prop}\label{NehariP}  For each $\lambda\leq \lambda_0^*$ and each  $u\in H^1_0(\Omega)\setminus \{0\}$, $\inf_{t>0}\psi_{\lambda,u}(t)=0$; for each $\lambda>\lambda_0^*$ there exists $u\in H^1_0(\Omega)\setminus \{0\}$ such that $\Phi_\lambda(u)<0$.
\end{prop}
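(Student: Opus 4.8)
The plan is to read the statement as a global reformulation of Proposition \ref{prop systems} via the definition $\lambda_0^*=\inf_{u\in H_0^1(\Omega)\setminus\{0\}}\lambda_0(u)$, reducing everything to that proposition on each fiber separately, with the value $\lambda=0$ treated apart.

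For the first assertion I fix $\lambda\le\lambda_0^*$ and an arbitrary $u\in H_0^1(\Omega)\setminus\{0\}$. If $\lambda>0$, then from the definition of $\lambda_0^*$ as an infimum one has $0<\lambda\le\lambda_0^*\le\lambda_0(u)$, and Proposition \ref{prop systems} gives immediately $\inf_{t>0}\psi_{\lambda,u}(t)=0$. The remaining possibility is $\lambda=0$, which genuinely occurs in the critical case $a^{\frac{N-4}{2}}b=C_1(N)$ since there $\lambda_0^*=0$ by Proposition \ref{comparison}(ii); here Proposition \ref{prop systems} is not directly available (it is stated for positive parameters), so I argue as in the proof of Proposition \ref{lambdazero}: from Proposition \ref{functionsappl}(i) together with Lemma \ref{functions} (which, under $a^{\frac{N-4}{2}}b\ge C_1(N)$, forces $g\ge 0$ on $(0,\infty)$) one gets $\psi_{0,u}(t)>g(\|u\|t)(\|u\|t)^2\ge 0$ for every $t>0$, while the explicit form of $\psi_{0,u}$ shows $\psi_{0,u}(t)\to 0$ as $t\to 0^+$; hence $\inf_{t>0}\psi_{0,u}(t)=0$.

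For the second assertion I fix $\lambda>\lambda_0^*$. Since $\lambda_0^*$ is an infimum, there exists $u\in H_0^1(\Omega)\setminus\{0\}$ with $\lambda_0(u)<\lambda$, and Proposition \ref{prop systems} then yields $\inf_{t>0}\psi_{\lambda,u}(t)<0$; choosing $t>0$ with $\psi_{\lambda,u}(t)<0$ and recalling $\psi_{\lambda,u}(t)=\Phi_\lambda(tu)$, the element $tu\in H_0^1(\Omega)\setminus\{0\}$ has $\Phi_\lambda(tu)<0$, as required.

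I do not expect a substantial obstacle, since all the analytic content already resides in Propositions \ref{lambdazero}, \ref{prop systems} and \ref{comparison}. The only point deserving a little care is the borderline value $\lambda=0$ (equivalently, the critical case $a^{\frac{N-4}{2}}b=C_1(N)$, in which $\lambda_0^*=0$): there the infimum of the fiber map equals $0$ but is only approached as $t\to 0^+$ and is not attained, which is exactly where one needs the strict positivity of $\psi_{0,u}$ coming from the non-existence of extremals for $S_N$.
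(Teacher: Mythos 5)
Your proof is correct and follows essentially the same route as the paper: the paper's own proof simply invokes Proposition \ref{prop systems} via $\lambda\le\lambda_0^*\le\lambda_0(u)$ for the first claim and the infimum definition of $\lambda_0^*$ for the second. Your extra treatment of the borderline value $\lambda=0$ (arising when $a^{\frac{N-4}{2}}b=C_1(N)$, where Proposition \ref{prop systems} as stated only covers $\lambda>0$) is a small but legitimate refinement that the paper glosses over.
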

\begin{proof}
From Proposition \ref{prop systems},  if $\lambda\leq\lambda_0^*\leq \lambda_0(u)$, $\inf_{t>0}\psi_{\lambda,u}(t)=0$ for each  $u\in H^1_0(\Omega)\setminus \{0\}$; while if $\lambda>\lambda_0^*$, there exists $u\in H^1_0(\Omega)\setminus \{0\}$ such that $\inf_{t>0}\psi_{\lambda,u}(t)<0$ which implies at once the claim.
	
	\end{proof}
\section{Existence and non-existence results - General case}\label{S3}
In this Section we study the existence of global/local minimizers and mountain pass type solutions to $\Phi_\lambda$. At the end of the Section we show a non-existence result for small $\lambda>0$. We note here that in the first three subsections, only hypothesis \eqref{f1}-\eqref{f4} are needed, while in the fourth subsection we need to add hypothesis \eqref{f5}.
\subsection{Global minimizers for $\lambda\ge \lambda_0^*$} For each $\lambda>0$ define
\begin{equation*}
I_\lambda=\inf\{\Phi_\lambda(u):u\in H_0^1(\Omega)\}.
\end{equation*}

	\begin{theor}\label{existencem0} Suppose that $a^\frac{N-4}{2}b\ge C_1(N)$ and $\lambda>\lambda_0^*$. Then, there exists $u_\lambda\in H_0^1(\Omega)\setminus\{0\}$ such that $I_\lambda=\Phi_\lambda(u_\lambda)<0$.	
	\end{theor}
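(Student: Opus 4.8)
The plan is to run the direct method on $\Phi_\lambda$. First I would show $\Phi_\lambda$ is coercive and bounded below. From Proposition \ref{graph psi} (or directly from the definition) one sees $\Phi_\lambda(u)\to+\infty$ as $\|u\|\to\infty$: indeed, writing out
\[
\Phi_\lambda(u)=\|u\|^4\left(\frac{a}{2}\|u\|^{-2}+\frac{b}{4}-\frac{1}{2^*}\frac{\|u\|_{2^*}^{2^*}}{\|u\|^4}\|u\|^{2^*-4}-\lambda\frac{\int_\Omega F(x,u)\,dx}{\|u\|^4}\right),
\]
and using $2^*<4$, assumption \eqref{f3}–\eqref{f4} to control $F$ by $\tfrac{\varepsilon}{2}|v|^2+\tfrac{c}{p}|v|^p$ with $p<2^*<4$, and the Sobolev embedding, the bracket stays bounded away from $-\infty$ for large $\|u\|$. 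Hence $I_\lambda>-\infty$ and every minimizing sequence $(u_k)$ is bounded in $H_0^1(\Omega)$.

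Next, $I_\lambda<0$. This is where Proposition \ref{NehariP} (equivalently Proposition \ref{prop systems}) enters: since $\lambda>\lambda_0^*$, there exists $w\in H_0^1(\Omega)\setminus\{0\}$ with $\inf_{t>0}\psi_{\lambda,w}(t)<0$, so picking the minimizing $t$ gives a point $tw$ with $\Phi_\lambda(tw)<0$, whence $I_\lambda<0=\Phi_\lambda(0)$; in particular the minimizer, once obtained, is automatically non-zero.

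Then I would extract a weakly convergent subsequence $u_k\rightharpoonup u_\lambda$ (after passing to a subsequence; here the boundedness just proved is used, together with $\lambda_k\equiv\lambda\to\lambda$). Since $a^\frac{N-4}{2}b\ge C_1(N)$, item 1) of Lemma \ref{variational property} (weak lower semicontinuity of $\Phi_\lambda$, which rests on the interaction between the Kirchhoff term and the critical term, cf.\ \cite{FF}) yields
\[
\Phi_\lambda(u_\lambda)\le\liminf_k\Phi_\lambda(u_k)=I_\lambda,
\]
and since $\Phi_\lambda(u_\lambda)\ge I_\lambda$ by definition of the infimum, $u_\lambda$ is a global minimizer with $\Phi_\lambda(u_\lambda)=I_\lambda<0$; as noted this forces $u_\lambda\neq 0$. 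Finally, being a free minimizer of a $C^1$ functional, $u_\lambda$ satisfies $\Phi_\lambda'(u_\lambda)=0$, i.e.\ it is a (weak) solution of $(\mathcal P_\lambda)$.

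The only delicate point is the weak lower semicontinuity step: unlike in the local Brezis–Nirenberg setting, $\Phi_\lambda$ is \emph{not} w.l.s.c.\ for arbitrary $a,b$, and this is precisely why the hypothesis $a^\frac{N-4}{2}b\ge C_1(N)$ is imposed — it guarantees, via Lemma \ref{functions}(i) and Proposition \ref{functionsappl}(i) ($g(t_0)\ge 0$), that the loss of compactness in the critical term along $u_k\rightharpoonup u_\lambda$ is dominated by the gain in the Kirchhoff term $\tfrac b4\|u\|^4$. Everything else — coercivity, negativity of $I_\lambda$, the Euler–Lagrange equation — is routine once that structural inequality is in hand, and it is exactly the content of Lemma \ref{variational property}(1) which we are entitled to invoke.
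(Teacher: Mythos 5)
Your proposal is correct and follows essentially the same route as the paper: coercivity of $\Phi_\lambda$ from \eqref{f3}--\eqref{f4} and $2^*<4$, sequential weak lower semicontinuity from item 1) of Lemma \ref{variational property} (which is where $a^{\frac{N-4}{2}}b\ge C_1(N)$ is used), and negativity of $I_\lambda$ via Proposition \ref{NehariP}, which forces the minimizer to be non-zero. The only difference is that you spell out the coercivity estimate and the Euler--Lagrange conclusion in more detail than the paper does.
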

\begin{proof} In fact, one can easily see by using \eqref{f3}, \eqref{f4} and  the Sobolev embeddings  that $\Phi_\lambda$ is coercive. From Lemma \ref{variational property}  $\Phi_\lambda$ is also sequentially weakly lower semi-continuous and therefore by direct minimization arguments,  there exists $u_\lambda\in H_0^1(\Omega)$ such that $I_\lambda=\Phi_\lambda(u_\lambda)$. Moreover, from Proposition \ref{NehariP} there exists $w\in H_0^1(\Omega)$ such that $\Phi_\lambda(w)<0$, hence $I_\lambda<0$ and $u_\lambda\neq 0$.
\end{proof}
\begin{theor}\label{existencem1} Suppose that $a^\frac{N-4}{2}b\ge C_1(N)$ and $\lambda=\lambda_0^*$. The following holds true. 
	\begin{enumerate}
		\item[i)] If $a^\frac{N-4}{2}b> C_1(N)$, there exists $u_{\lambda_0^*}\in H_0^1(\Omega)\setminus\{0\}$ such that $I_{\lambda_0^*}=\Phi_{\lambda_0^*}(u_{\lambda_0^*})$. Moreover, $I_{\lambda_0^*}=0$.
		\item[ii)]  If $a^\frac{N-4}{2}b= C_1(N)$,  $u=0$ is the only minimizer for $I_{\lambda_0^*}$.
	\end{enumerate}	
\end{theor}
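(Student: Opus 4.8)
The plan is to handle the two cases separately; case ii) is immediate, while case i) carries the real content. For case ii), where $a^{\frac{N-4}{2}}b=C_1(N)$, Proposition~\ref{comparison} ii) gives $\lambda_0^*=0$, so $\Phi_{\lambda_0^*}=\Phi_0$. For any $u\in H_0^1(\Omega)\setminus\{0\}$, Proposition~\ref{functionsappl} i) with $t=1$ gives $\Phi_0(u)>g(\|u\|)\|u\|^2$, and by Lemma~\ref{functions} i) one has $g\ge g(t_0)=0$ on $(0,\infty)$ (here $t_0$ is the unique critical point of $g$, which is coercive at both ends of $(0,\infty)$). Hence $\Phi_0(u)>0=\Phi_0(0)$ for every $u\neq 0$, so $I_0=0$ and $u=0$ is its only minimizer.

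For case i), assume $a^{\frac{N-4}{2}}b>C_1(N)$, so that $\lambda_0^*>0$ by Proposition~\ref{comparison} i). First note $I_{\lambda_0^*}=0$: by Proposition~\ref{NehariP}, $\inf_{t>0}\psi_{\lambda_0^*,u}(t)=0$ for every $u\in H_0^1(\Omega)\setminus\{0\}$, hence $\Phi_{\lambda_0^*}(v)\ge 0$ for all $v$ (write $v=\|v\|(v/\|v\|)$ when $v\neq 0$), together with $\Phi_{\lambda_0^*}(0)=0$. To obtain a \emph{nonzero} minimizer, fix a sequence $\lambda_k\downarrow\lambda_0^*$ with $\lambda_0^*<\lambda_k\le\lambda_0^*+1$, and use Theorem~\ref{existencem0} to get $u_k\in H_0^1(\Omega)\setminus\{0\}$ with $\Phi_{\lambda_k}(u_k)=I_{\lambda_k}<0$. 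Since $2<p<2^*<4$, hypotheses \eqref{f3}--\eqref{f4} together with the Sobolev embedding make the functionals $\Phi_{\lambda_k}$ coercive uniformly in $k$; thus $(u_k)_k$ is bounded and, up to a subsequence, $u_k\rightharpoonup u$ in $H_0^1(\Omega)$. By Lemma~\ref{variational property} 1), $\Phi_{\lambda_0^*}(u)\le\liminf_k\Phi_{\lambda_k}(u_k)=\liminf_k I_{\lambda_k}\le 0$, which combined with $\Phi_{\lambda_0^*}(u)\ge I_{\lambda_0^*}=0$ forces $\Phi_{\lambda_0^*}(u)=0=I_{\lambda_0^*}$. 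Hence $u$ is a global minimizer of $\Phi_{\lambda_0^*}$ and, being a critical point of the $C^1$ functional $\Phi_{\lambda_0^*}$, a solution of $(\mathcal P_{\lambda_0^*})$.

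The main obstacle is excluding $u=0$. Suppose $u=0$. By compactness of $H_0^1(\Omega)\hookrightarrow L^q(\Omega)$ for $q\in[2,2^*)$ and the bound $|F(x,v)|\le\frac{\varepsilon}{2}v^2+\frac{c}{p}|v|^p$ (from \eqref{f3}--\eqref{f4}), $\int_\Omega F(x,u_k)\,dx\to 0$. Inserting this into $\Phi_{\lambda_k}(u_k)<0$ and using $\|u_k\|_{2^*}^{2^*}\le S_N^{-2^*/2}\|u_k\|^{2^*}$ yields $g(\|u_k\|)\|u_k\|^2\le\frac{a}{2}\|u_k\|^2+\frac{b}{4}\|u_k\|^4-\frac{1}{2^*}\|u_k\|_{2^*}^{2^*}\le o(1)$; since $a^{\frac{N-4}{2}}b>C_1(N)$ gives $g\ge g(t_0)>0$ on $(0,\infty)$ by Lemma~\ref{functions} i), we conclude $\|u_k\|\to 0$, i.e. $u_k\to 0$ strongly in $H_0^1(\Omega)$. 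On the other hand, choosing $\varepsilon>0$ small in the above growth bound and using $\lambda_k\le\lambda_0^*+1$ and $2<p<2^*$, one finds $\rho>0$ independent of $k$ with $\Phi_{\lambda_k}(v)\ge\frac{a}{8}\|v\|^2$ for all $\|v\|\le\rho$. Since $u_k\neq 0$ and $\|u_k\|\to 0$, this gives $\Phi_{\lambda_k}(u_k)>0$ for $k$ large, contradicting $\Phi_{\lambda_k}(u_k)<0$. Hence $u\neq 0$, completing case i).
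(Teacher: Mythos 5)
Your proof is correct and follows essentially the same route as the paper's: the minimizing sequence $\lambda_k\downarrow\lambda_0^*$ with global minimizers $u_k$ from Theorem~\ref{existencem0}, uniform coercivity, weak lower semicontinuity (Lemma~\ref{variational property}) combined with Proposition~\ref{NehariP} to get $I_{\lambda_0^*}=\Phi_{\lambda_0^*}(u)=0$, and the exclusion of $u=0$ via the strict positivity of $g$ when $a^{\frac{N-4}{2}}b>C_1(N)$; case ii) is handled identically. The only (harmless) deviation is in ruling out $u=0$: the paper divides by $\|u_k\|^2$ and contradicts $g(\|u_k\|)\ge\min g>0$ directly, whereas you first deduce $\|u_k\|\to 0$ and then contradict $\Phi_{\lambda_k}(u_k)<0$ with the small-ball positivity estimate — a minor variant of the same idea.
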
	
\begin{proof} 	
i)	In fact, take a sequence $\lambda_k\downarrow \lambda_0^*$. From Theorem \ref{existencem0}, for each $k$, we can find $u_k\in H_0^1(\Omega)\setminus\{0\}$ such that $I_{\lambda_k}=\Phi_{\lambda_k}(u_k)<0$. Since $\lambda_k\downarrow \lambda_0^*$ it follows (as in the proof of Theorem \ref{existencem0}) that $\{u_k\}$ is bounded and therefore we may assume that $u_k\rightharpoonup u$ in $H_0^1(\Omega)$. From Lemma \ref{variational property} we obtain
	\begin{equation*}
	\Phi_{\lambda_0^*}(u)\le \liminf_{k\to \infty}\Phi_{\lambda_k}(u_k)\le 0.
	\end{equation*}
	Proposition \ref{NehariP} ensures that $\Phi_{\lambda_0^*}(w)\ge 0$ for each $w\in H_0^1(\Omega)$ and thus $\lim_{k\to \infty}\Phi_{\lambda_k}(u_k)=\Phi_{\lambda_0^*}(u)=0$, or  $I_{\lambda_0^*}=\Phi_{\lambda_0^*}(u)=0$.
	
	 To conclude the proof, we have to show that $u\neq 0$. In fact
 	\begin{equation*}
	\frac{a}{2}\|u_k\|^2+\frac{b}{4}\|u_k\|^4-\frac{S_N^{-\frac{2^*}{2}}}{2^*}\|u_k\|^{2^*}\\\le \frac{a}{2}\|u_k\|^2+\frac{b}{4}\|u_k\|^4-\frac{1}{2^*}\|u_k\|^{2^*}
	\le \lambda_k\int_\Omega{F(x,u_k)}dx.
	\end{equation*}
	Thus, \begin{equation*}
	g(\|u_k\|)=\frac{a}{2}+\frac{b}{4}\|u_k\|^2-\frac{S_N^{-\frac{2^*}{2}}}{2^*}\|u_k\|^{2^*-2}
	\le \lambda_k\int_\Omega\frac{F(x,u_k)}{\|u_k\|^2}dx.
	\end{equation*}
	If $u=0$, from \eqref{f3} and \eqref{f4}, the right hand side in the above inequality would tend to zero against the fact that $g(\|u_k\|)\geq \min_{[0,+\infty[}g>0$ (see Lemma \ref{functions}).

	ii) From Proposition \ref{comparison} we know that $\lambda_0^*=0$ and hence
	\begin{equation*}
	\Phi_{\lambda_0^*}(u)=\frac{a}{2}\|u\|^2+\frac{b}{4}\|u\|^4-\frac{1}{2^*}\|u\|_{2^*}^{2^*}.
	\end{equation*}
	The hypothesis $a^\frac{N-4}{2}b=C_1(N)$ implies that $u=0$ is the only minimizer for this functional. Indeed, from Proposition \ref{functionsappl} and Lemma \ref{functions} we have that
	\begin{equation*}
	\Phi_{\lambda_0^*}(u)>g(\|u\|)\|u\|^2\ge 0, \forall u\in H_0^1(\Omega)\setminus\{0\}.
	\end{equation*}
	
\end{proof}
\begin{prop}\label{lambdazeroachieved} Suppose that $a^\frac{N-4}{2}b> C_1(N)$. If $u\in H_0^1(\Omega)\setminus\{0\}$ satisfies $I_{\lambda_0^*}=\Phi_{\lambda_0^*}(u)$, then $\lambda_0^*=\lambda_0(u)$.
\end{prop}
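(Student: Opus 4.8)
The plan is to recognize that the hypotheses say precisely that the pair $(t,\lambda)=(1,\lambda_0^*)$ solves the system \eqref{zeroener} for this particular $u$, and then to invoke the uniqueness statement of Proposition \ref{lambdazero} — together with $\lambda_0^*>0$, which holds by Proposition \ref{comparison} i) under the hypothesis $a^\frac{N-4}{2}b>C_1(N)$ — to identify $\lambda_0(u)$ with $\lambda_0^*$. (Note that $\lambda_0(u)\ge\lambda_0^*$ is automatic from the definition $\lambda_0^*=\inf_{v\ne 0}\lambda_0(v)$, so it is really the reverse inequality that needs an argument.)

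First I would record the relevant values. By Theorem \ref{existencem1} i), $I_{\lambda_0^*}=0$; hence the assumption $\Phi_{\lambda_0^*}(u)=I_{\lambda_0^*}$ gives $\psi_{\lambda_0^*,u}(1)=\Phi_{\lambda_0^*}(u)=0$. Since $\psi_{\lambda_0^*,u}(t)=\Phi_{\lambda_0^*}(tu)\ge I_{\lambda_0^*}=0$ for every $t>0$, we get $\inf_{s>0}\psi_{\lambda_0^*,u}(s)=0$ and this infimum is attained at $t=1$. Thus the first and third equations of \eqref{zeroener} hold with $t=1$ and $\lambda=\lambda_0^*$. Moreover $\psi_{\lambda_0^*,u}\in C^1(0,+\infty)$ (because $\Phi_{\lambda_0^*}\in C^1(H_0^1(\Omega))$) and it attains its global minimum at the interior point $t=1$, so $\psi'_{\lambda_0^*,u}(1)=0$, which is the second equation of \eqref{zeroener}.

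Next, since $a^\frac{N-4}{2}b\ge C_1(N)$ and $u\in H_0^1(\Omega)\setminus\{0\}$, Proposition \ref{lambdazero} applies and provides a \emph{unique} positive number $\lambda_0(u)$ for which the system \eqref{zeroener} admits a solution $t>0$ (equivalently, by Proposition \ref{prop systems}, the unique $\lambda>0$ whose fiber map has a critical point of zero energy realizing $\inf_{t>0}\psi_{\lambda,u}(t)=0$). We have just exhibited $\lambda_0^*$ as one such number, and $\lambda_0^*>0$ by Proposition \ref{comparison} i) because $a^\frac{N-4}{2}b>C_1(N)$; uniqueness therefore forces $\lambda_0(u)=\lambda_0^*$, which is the claim.

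There is no real obstacle here: the statement is essentially a bookkeeping translation of ``$u$ is a global minimizer of $\Phi_{\lambda_0^*}$ with $\Phi_{\lambda_0^*}(u)=I_{\lambda_0^*}=0$'' into the three defining conditions of $\lambda_0(u)$. The only two points needing a word of care are that $t=1$ is an interior point of $(0,+\infty)$ (so the derivative vanishes there), and that $\lambda_0^*$ is strictly positive — and the latter is exactly the place where the strict inequality $a^\frac{N-4}{2}b>C_1(N)$, rather than $\ge$, is used, through Proposition \ref{comparison} i).
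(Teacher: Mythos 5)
Your proof is correct and fills in, in the natural way, exactly what the paper's one-line proof (``a consequence of the definition of $\lambda_0^*$'') leaves implicit: the hypotheses force $(t,\lambda)=(1,\lambda_0^*)$ to solve the system \eqref{zeroener}, and the uniqueness from Proposition \ref{lambdazero} together with $\lambda_0^*>0$ identifies $\lambda_0(u)$ with $\lambda_0^*$. No gaps; this is the same approach as the paper, just written out.
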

\begin{proof} The equality $\lambda_0^*=\lambda_0(u)$ is a consequence of the definition of $\lambda_0^*$.
\end{proof}
\begin{theor}\label{continuitylambdazero}   If $\displaystyle a_k^\frac{N-4}{2}b_k\downarrow C_1(N)$, $a_k\to a>0$ and $b_k\to b>0$, then $\lambda_k:=\lambda_0^*\to 0$. Moreover, if $u_k\in H_0^1(\Omega)\setminus\{0\}$ satisfies $\lambda_k=\lambda_0(u_k)$, then $u_k\rightharpoonup 0$ and  $\frac{\|u_k\|_2^2}{\|u_k\|_{2^*}^2}\to S_N$.
	
\end{theor}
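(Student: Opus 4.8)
The plan is to follow closely the proof of Proposition~\ref{comparison} ii); the only genuinely new point is that the constants now move with $k$, so that the function $g$ of Lemma~\ref{functions} no longer vanishes at its minimum but only does so in the limit. For a pair of positive numbers $(\alpha,\beta)$ let $\psi_{\lambda,u}^{\alpha,\beta}$, $\Phi_\lambda^{\alpha,\beta}$, $g^{\alpha,\beta}$ and $t_0^{\alpha,\beta}$ denote the objects of Section~\ref{S2} built with $a,b$ replaced by $\alpha,\beta$, and set $\tau_k:=t_0^{a_k,b_k}$, $m_k:=\min_{t>0}g^{a_k,b_k}(t)=g^{a_k,b_k}(\tau_k)$. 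Since $a_k^{\frac{N-4}{2}}b_k\to C_1(N)$, $a_k\to a$ and $b_k\to b$, the limit pair satisfies $a^{\frac{N-4}{2}}b=C_1(N)$; by Lemma~\ref{functions} i) we have $m_k>0$ (we may discard the possible $k$'s with $a_k^{\frac{N-4}{2}}b_k=C_1(N)$, for which $\lambda_k=0$ by Proposition~\ref{comparison} ii)), and by continuity $m_k\to g^{a,b}(t_0^{a,b})=0$ and $\tau_k\to t_0:=t_0^{a,b}$.

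To prove $\lambda_k\to 0$, I would fix $\lambda>0$ arbitrary and reuse the concentrating family $u_\varepsilon=v_\varepsilon/\|v_\varepsilon\|$ of the proof of Proposition~\ref{comparison} ii). Evaluating the fiber map at $\tau_k$, the same computation (now with $g^{a_k,b_k}(\tau_k)=m_k$ instead of $0$) gives, for all small $\varepsilon$,
\[
\psi_{\lambda,u_\varepsilon}^{a_k,b_k}(\tau_k)=\tau_k^2 m_k-\frac{1}{2^*}O(\varepsilon^{\frac{2^*N}{4}})\tau_k^{2^*}-\lambda\int_\Omega F(x,\tau_k u_\varepsilon)\,dx\le\tau_k^2 m_k+C\varepsilon^{\frac{2^*N}{4}}-\lambda c_3\varepsilon^{\frac{N}{4}},
\]
the bound $\int_\Omega F(x,\tau_k u_\varepsilon)\,dx\ge c_3\varepsilon^{N/4}$ holding uniformly for $k$ large since $\tau_k\to t_0>0$. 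The decisive point is then to choose $\varepsilon_k\downarrow 0$ with $m_k/\varepsilon_k^{N/4}\to 0$ (for instance $\varepsilon_k=m_k^{2/N}+1/k$), so that $\tau_k^2 m_k\varepsilon_k^{-N/4}+C\varepsilon_k^{(2^*-1)N/4}-\lambda c_3\to-\lambda c_3<0$ and hence $\inf_{t>0}\psi_{\lambda,u_{\varepsilon_k}}^{a_k,b_k}(t)<0$ for $k$ large; by Proposition~\ref{prop systems} this forces $\lambda>\lambda_0(u_{\varepsilon_k})\ge\lambda_0^*(a_k,b_k)=\lambda_k$ (all relative to $(a_k,b_k)$). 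Since $\lambda>0$ was arbitrary and $\lambda_k>0$ by Proposition~\ref{comparison} i), we get $\lambda_k\to 0$.

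For the second assertion I would, as in Proposition~\ref{comparison} i), use the zero-homogeneity of $\lambda_0(\cdot)$ to normalise $\|u_k\|=1$ and pass to a subsequence with $u_k\rightharpoonup u$ and $\|u_k\|_{2^*}^{2^*}\to s\in[0,S_N^{-2^*/2}]$. By Proposition~\ref{lambdazero} there is $t_k>0$ solving \eqref{zeroener} for $\lambda_k,u_k$ and $(a_k,b_k)$; using \eqref{f3}, \eqref{f4}, $\|u_k\|=1$ and $\lambda_k\to 0$ one checks, exactly as in Proposition~\ref{comparison} ii), that $\{t_k\}$ remains in a compact subset of $(0,\infty)$ (a limit $t_k\to 0$ would give $a/2=0$ in the first equation of \eqref{zeroener} divided by $t_k^2$; a limit $t_k\to\infty$ would give, dividing the first equation of \eqref{zeroener} by $t_k^4$ and the second by $t_k^3$, both $\|u_k\|_{2^*}^{2^*}t_k^{2^*-4}\to\frac{2^*b}{4}$ and $b=\frac{2^*b}{4}$, which is absurd since $2<2^*<4$). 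Passing to the limit in the first equation of \eqref{zeroener} divided by $t_k^2$ (the perturbation term vanishing because $\lambda_k\to 0$) gives $\frac a2+\frac b4 t_0^2-\frac1{2^*}s t_0^{2^*-2}=0$, i.e.
\[
s=\frac{2^*}{t_0^{2^*-2}}\,g^{a,b}(t_0)+S_N^{-\frac{2^*}{2}}\ \ge\ S_N^{-\frac{2^*}{2}},
\]
because $g^{a,b}(t_0)\ge\min g^{a,b}=0$ (here $a^{\frac{N-4}{2}}b=C_1(N)$, Lemma~\ref{functions} i)). Combined with $\|u_k\|_{2^*}^{2^*}\le S_N^{-2^*/2}$ this forces $s=S_N^{-2^*/2}$; as the limit is the same for every subsequence, $\|u_k\|_{2^*}^{2^*}\to S_N^{-2^*/2}$ along the whole sequence, whence $\|u_k\|^2/\|u_k\|_{2^*}^2=\|u_k\|_{2^*}^{-2}\to S_N$, and $(u_k)$ is a minimising sequence for \eqref{embedding}.

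Finally, to prove $u_k\rightharpoonup 0$ I would argue by contradiction along the subsequence above, where $t_ku_k\rightharpoonup t_0u$ with $t_0>0$. From $\psi_{\lambda_k,u_k}^{a_k,b_k}(t_k)=0$ one has $\Phi_0^{a_k,b_k}(t_ku_k)=\lambda_k\int_\Omega F(x,t_ku_k)\,dx\to 0$, hence also $\Phi_0^{a,b}(t_ku_k)=\Phi_0^{a_k,b_k}(t_ku_k)-\frac{a_k-a}{2}t_k^2-\frac{b_k-b}{4}t_k^4\to 0$, so that Lemma~\ref{variational property} 1) (applied with the fixed pair $(a,b)$, for which $a^{\frac{N-4}{2}}b=C_1(N)\ge C_1(N)$, and the constant sequence $0$) gives $\Phi_0^{a,b}(t_0u)\le\liminf_k\Phi_0^{a,b}(t_ku_k)=0$. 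But Proposition~\ref{functionsappl} i) (with $t=1$) and Lemma~\ref{functions} i) give $\Phi_0^{a,b}(v)>g^{a,b}(\|v\|)\|v\|^2\ge 0$ for every $v\in H_0^1(\Omega)\setminus\{0\}$, so $t_0u=0$ and therefore $u=0$. As every subsequence has a further subsequence along which this holds, $u_k\rightharpoonup 0$ along the full sequence, which together with the previous step gives the statement. The main obstacle is the first step: since $g^{a_k,b_k}$ only tends to, but does not equal, zero at its minimum, one must tie the concentration scale $\varepsilon_k$ of the bubbles to the defect $m_k$ so that the term $\tau_k^2 m_k$ stays negligible compared with the gain $\lambda c_3\varepsilon_k^{N/4}$ produced by the subcritical perturbation; everything else is a direct transcription of the results of Section~\ref{S2}.
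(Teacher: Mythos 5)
Your proposal is correct and follows essentially the same strategy as the paper: concentrating Aubin--Talenti bubbles to force $\lambda_0^*(a_k,b_k)\to 0$, then the normalized-sequence analysis of the zero-energy critical points for the second assertion. The only local differences are that the paper avoids your coupling of $\varepsilon_k$ to the defect $m_k$ by fixing $\varepsilon$ and letting $k\to\infty$ first (so that $g_k(t_{0,k})\to g(t_0)=0$ before $\varepsilon$ is sent to zero), and that your derivation of $u_k\rightharpoonup 0$ via the weak lower semicontinuity of $\Phi_0^{a,b}$ (Lemma \ref{variational property}~1)) together with $\Phi_0^{a,b}>0$ on $H_0^1(\Omega)\setminus\{0\}$ is a clean substitute for the paper's chain of inequalities on the Sobolev quotient.
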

\begin{proof} For each $\varepsilon>0$, define $u_\varepsilon$ as in the proof of Proposition \ref{comparison}. Given any $\lambda>0$ and fixed $t>0$, note from \eqref{BNEQ} that
	\begin{eqnarray*}
		\psi_{\lambda,u_\varepsilon}(t)&= &\frac{a_k}{2}t^2+\frac{b_k}{4}t^4-\frac{1}{2^*}\|u_\varepsilon\|_{2^*}^{2^*} t^{2^*}-\lambda \int_\Omega F(x,tu_\varepsilon)dx \\
		&=& t^2g_k(t)-\frac{1}{2^*}O(\varepsilon^{\frac{2^*N}{4}})t^{2^*}-\lambda \int_\Omega F(x,tu_\varepsilon)dx,
	\end{eqnarray*} where $g_k$ is the analogous of $g$ with $a_k$ and $ b_k$ instead of $a$ and $b$.
	By taking $t=t_{0,k}$ where $t_{0,k}$ is given in \eqref{t0} (with $a_k$ and $ b_k$ instead of $a$ and $b$) we have that $t_{0,k}\to t_0>0$ ($t_0$ as in \eqref{t0}) and 	
	\begin{equation*}
\lim_k	\psi_{\lambda,u_\varepsilon}(t_{0,k})=
	\varepsilon^{\frac{N}{4}}\left[-\frac{1}{2^*}\frac{O(\varepsilon^{\frac{2^*N}{4}})}{\varepsilon^\frac{N}{4}}t_{0}^{2^*}-\lambda \int_\Omega \frac{F(x,t_{0}u_\varepsilon)}{\varepsilon^\frac{N}{4}}dx
\right],
	\end{equation*}
	 Since
	$$\int_{\Omega}F(x,t_{0} u_\varepsilon)dx\geq c \varepsilon^{\frac{N}{4}},$$ for some positive constant $c$,
	 we get  that $\psi_{\lambda,u_\varepsilon}(t_{0,k})<0$ for small $\varepsilon$ and big $k$. Then $\lambda_k\le\lambda_0(u_\varepsilon)<\lambda$. Once $\lambda$ was arbitrary we deduce that $\lambda_0^*=0$.
	
	Now suppose that $u_k\in H_0^1(\Omega)\setminus\{0\}$ satisfies $\lambda_k:=\lambda_0(u_k)\to \lambda_0^*=0$.  We may assume that $\|u_k\|=1$ and $u_k\rightharpoonup u$. Moreover there exists $t_k>0$ such that
	\begin{equation*}
	\frac{a_k}{2}+\frac{b_k}{4}t_k^2-\frac{1}{2^*}\|u_k\|_{2^*}^{2^*} t_k^{2^*-2}-\lambda_k\int_\Omega\frac{F(x,t_ku_k)}{t_k^2}=0.
	\end{equation*}
	From \eqref{f3} and \eqref{f4} we conclude that $t_k\to t>0$ and $\|u_k\|_{2^*}^{2^*}\to s>0$ and hence
	\begin{equation*}
	\frac{a}{2}+\frac{b}{4}t^2-\frac{1}{2^*}s t^{2^*-2}=0.
	\end{equation*}
	From the fact that  $a^\frac{N-4}{2}b=C_1(N)$ we infer that $s=S_N^{\frac{-2^*}{2}}$ and hence $(u_k)_k$ is a minimizing sequence to $S_N$. Moreover, if $u\neq 0$, then  (the first inequality is a consequence of Lemma \ref{functions} and the fact that  $\|u\|\le 1$)
	\begin{eqnarray*}
		0\le \frac{a}{2}+\frac{b}{4}t^2-\frac{S_N^{\frac{-2^*}{2}}}{2^*}\|u\|^{2^*} t^{2^*-2}&\le&  \frac{a}{2}+\frac{b}{4}t^2-\frac{1}{2^*}\|u\|_{2^*}^{2^*} t^{2^*-2} \\
		&\le & \liminf_{k\to \infty}\left(\frac{a_k}{2}+\frac{b_k}{4}t_k^2-\frac{1}{2^*}\|u_k\|_{2^*}^{2^*} t_k^{2^*-2}-\lambda_k\int_\Omega\frac{F(x,t_ku_k)}{t_k^2}dx\right) \\
		&=& 0,
	\end{eqnarray*}
	and consequently $u$ is a minimizer to $S_N$, which is an absurd, therefore $u=0$.
\end{proof}

\subsection{Mountain pass type solution for $\lambda\ge \lambda_0^*$}

\begin{prop}\label{MPG} For each $\lambda>0$, there exists $R_\lambda>0$ such that
	\begin{equation*}
	\inf\{\Phi_\lambda(u):\|u\|=R_\lambda\}>0.
	\end{equation*}
\end{prop}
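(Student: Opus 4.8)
The plan is to establish this mountain pass geometry by a direct Sobolev estimate, controlling the subcritical perturbation and the critical term by a small multiple of $\|u\|^2$ together with strictly higher powers of $\|u\|$.

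First I would record, as in the proof of Proposition \ref{comparison}, that combining \eqref{f3} and \eqref{f4} yields: for every $\varepsilon>0$ there is $c_\varepsilon>0$ with $|f(x,v)|\le \varepsilon|v|+c_\varepsilon|v|^{p-1}$ for a.a.\ $x\in\Omega$ and all $v\in\R$, hence, integrating in $v$,
$$|F(x,v)|\le \frac{\varepsilon}{2}|v|^2+\frac{c_\varepsilon}{p}|v|^{p}\qquad\text{for a.a. }x\in\Omega,\ v\in\R.$$
Using the continuous embeddings $H^1_0(\Omega)\hookrightarrow L^2(\Omega)$ and $H^1_0(\Omega)\hookrightarrow L^p(\Omega)$ (recall $2<p<2^*$), there are constants $k_2,k_p>0$ with $\|u\|_2^2\le k_2\|u\|^2$ and $\|u\|_p^p\le k_p\|u\|^p$; together with $\|u\|_{2^*}^{2^*}\le S_N^{-\frac{2^*}{2}}\|u\|^{2^*}$ coming from \eqref{embedding}, and discarding the nonnegative term $\frac b4\|u\|^4$, this gives
$$\Phi_\lambda(u)\ \ge\ \Big(\frac a2-\frac{\lambda\varepsilon k_2}{2}\Big)\|u\|^2-\frac{\lambda c_\varepsilon k_p}{p}\|u\|^{p}-\frac{S_N^{-\frac{2^*}{2}}}{2^*}\|u\|^{2^*}.$$

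Next, with $\lambda>0$ fixed, I would choose $\varepsilon=\varepsilon(\lambda)>0$ so small that $\lambda\varepsilon k_2/2\le a/4$, which leaves
$$\Phi_\lambda(u)\ \ge\ \|u\|^2\left(\frac a4-\frac{\lambda c_\varepsilon k_p}{p}\|u\|^{p-2}-\frac{S_N^{-\frac{2^*}{2}}}{2^*}\|u\|^{2^*-2}\right).$$
Since $p-2>0$ and $2^*-2>0$, the quantity in parentheses converges to $a/4>0$ as $\|u\|\to 0^+$, so there is $R_\lambda>0$ such that it is $\ge a/8$ whenever $\|u\|=R_\lambda$. Consequently $\inf\{\Phi_\lambda(u):\|u\|=R_\lambda\}\ge \frac a8 R_\lambda^2>0$, which is the claim.

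The argument is elementary, and there is no real obstacle; the only point deserving a word of care is that the threshold radius $R_\lambda$ (and the auxiliary $\varepsilon$) depends on $\lambda$, but this is harmless since $\lambda$ is fixed and no uniformity in $\lambda$ is asserted. One could instead try to invoke the pointwise positivity near the origin from Proposition \ref{graph psi}(i), but that bound is not uniform in the direction $u$, so the quantitative Sobolev estimate above is the natural route.
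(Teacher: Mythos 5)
Your proof is correct and follows essentially the same route as the paper: the paper also uses \eqref{f3}--\eqref{f4} to get $|F(x,v)|\le \tfrac{\varepsilon}{2}|v|^2+c|v|^p$, applies the Sobolev embeddings to bound $\Phi_\lambda(u)$ below by $\left(\tfrac a2-\lambda c\varepsilon\right)\|u\|^2$ minus strictly higher powers of $\|u\|$, and then chooses $\varepsilon$ (hence $R_\lambda$) suitably. Your write-up simply makes the final choice of $R_\lambda$ and the resulting positive lower bound explicit.
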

\begin{proof} Indeed, given $\varepsilon>0$, from \eqref{f3}, \eqref{f4} and Sobolev embeddings, there exists a positive constant $c$ such that
	\begin{eqnarray*}
	\Phi_\lambda(u)&\ge& \frac{a}{2}\|u\|^2+\frac{b}{4} \|u\|^4-\frac{c}{2^*}\|u\|^{2^*}-\lambda c(\varepsilon\|u\|^2+\|u\|^p) \\
	&=& \left(\frac{a}{2}-\lambda c\varepsilon\right)\|u\|^2+\frac{b}{4} \|u\|^4-\frac{1}{2^*}\|u\|^{2^*}-\lambda c\|u\|^p, \forall u\in H_0^1(\Omega).
 	\end{eqnarray*}
 By choosing $\varepsilon>0$ conveniently the proof is complete.
\end{proof}
For each $\lambda\ge \lambda_0^*$ define 
\begin{equation*}
\Gamma_\lambda=\{\gamma\in C([0,1],H_0^1(\Omega)): \gamma(0)=0,\ \gamma(1)=u_{\lambda_0^*}\},
\end{equation*}
where $u_{\lambda_0^*}$ is as in Theorem \ref{existencem1}.
and
\begin{equation*}
c_\lambda=\inf_{\gamma\in \Gamma_\lambda}\max_{t\in [0,1]}\Phi_\lambda(\gamma(t)).
\end{equation*}
\begin{theor}\label{MPS} There holds:
	 \begin{itemize}\item[i)] If $a^\frac{N-4}{2}b> C_2(N)$, then for each $\lambda\ge \lambda_0^*$, there exist $w_\lambda\in H_0^1(\Omega)\setminus\{0\}$ such that $\Phi_\lambda(w_\lambda)=c_\lambda$ and $\Phi'_\lambda(w_\lambda)=0$.  \item[ii)]If $a^\frac{N-4}{2}b= C_2(N)$, then the above conclusion  holds for $\lambda$ sufficiently large.	
		\end{itemize}
\end{theor}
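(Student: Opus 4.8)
The plan is to run a mountain pass argument for $\Phi_\lambda$ over the path class $\Gamma_\lambda$ already defined, with endpoints $0$ and $u_{\lambda_0^*}$, and to import compactness from Lemma \ref{variational property}, item 2). Recall that $u_{\lambda_0^*}$ is available and nonzero by Theorem \ref{existencem1}, item i), because $a^{\frac{N-4}{2}}b \ge C_2(N) > C_1(N)$, and that $\Phi_{\lambda_0^*}(u_{\lambda_0^*}) = 0$. First I would check the mountain pass geometry for every $\lambda \ge \lambda_0^*$: one has $\Phi_\lambda(0) = 0$, and since $F \ge 0$ by \eqref{f2} and $\lambda \ge \lambda_0^*$,
$$\Phi_\lambda(u_{\lambda_0^*}) = \Phi_{\lambda_0^*}(u_{\lambda_0^*}) - (\lambda - \lambda_0^*)\int_\Omega F(x, u_{\lambda_0^*})\,dx \le 0.$$
The proof of Proposition \ref{MPG} in fact gives $\inf\{\Phi_\lambda(u) : \|u\| = r\} > 0$ for every sufficiently small $r$, so I may fix $R_\lambda \in (0, \|u_{\lambda_0^*}\|)$ and put $\rho_\lambda := \inf\{\Phi_\lambda(u) : \|u\| = R_\lambda\} > 0$. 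Every $\gamma \in \Gamma_\lambda$ joins a point of norm $0$ to $u_{\lambda_0^*}$, of norm $>R_\lambda$, hence meets $\{\|u\| = R_\lambda\}$, so $\max_{[0,1]}\Phi_\lambda\circ\gamma \ge \rho_\lambda$ and therefore $c_\lambda \ge \rho_\lambda > 0$. The Mountain Pass Theorem, in the version that produces a Palais--Smale sequence at the minimax level without assuming the Palais--Smale condition, then yields $(u_k)_k$ with $\Phi_\lambda(u_k) \to c_\lambda$ and $\Phi_\lambda'(u_k) \to 0$.

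For item i), when $a^{\frac{N-4}{2}}b > C_2(N)$, Lemma \ref{variational property}, item 2) (applied with $\lambda_k \equiv \lambda$), imposes no restriction on the level, so along a subsequence $u_k \to w_\lambda$; by $C^1$ regularity $\Phi_\lambda(w_\lambda) = c_\lambda$ and $\Phi_\lambda'(w_\lambda) = 0$, and $w_\lambda \ne 0$ since $c_\lambda > 0 = \Phi_\lambda(0)$.

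For item ii), when $a^{\frac{N-4}{2}}b = C_2(N)$, Lemma \ref{variational property}, item 2), still gives compactness at a level $c$ provided $c \ne \kappa := \frac{(2^*-2)^2 a^2}{4\cdot 2^*(4-2^*)b}$, a fixed positive number since $2 < 2^* < 4$. So it suffices to prove $c_\lambda < \kappa$ for $\lambda$ large, and I would establish the stronger fact $c_\lambda \to 0$ as $\lambda \to \infty$. The key point is that $\Gamma_\lambda$ does not depend on $\lambda$ (only on the fixed endpoints), so I can estimate $c_\lambda \le \max_{t\in[0,1]}\psi_{\lambda, u_{\lambda_0^*}}(t)$ along the segment $t \mapsto t\,u_{\lambda_0^*}$. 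Given $\delta > 0$, choose $s \in (0,1)$ so small that $\frac{a}{2}\|u_{\lambda_0^*}\|^2 s^2 + \frac{b}{4}\|u_{\lambda_0^*}\|^4 s^4 < \delta$; dropping the negative terms, $\psi_{\lambda, u_{\lambda_0^*}} < \delta$ on $[0,s]$, uniformly in $\lambda$. On $[s,1]$ write $\psi_{\lambda, u_{\lambda_0^*}}(t) = \psi_{0, u_{\lambda_0^*}}(t) - \lambda\int_\Omega F(x, t u_{\lambda_0^*})\,dx \le M_0 - \lambda\,\eta_s$, where $M_0 := \frac{a}{2}\|u_{\lambda_0^*}\|^2 + \frac{b}{4}\|u_{\lambda_0^*}\|^4$ bounds $\psi_{0,u_{\lambda_0^*}}$ on $[0,1]$ and $\eta_s := \min_{t\in[s,1]}\int_\Omega F(x, t u_{\lambda_0^*})\,dx$; the map $t \mapsto \int_\Omega F(x, t u_{\lambda_0^*})\,dx$ is continuous (by \eqref{f3}) and strictly positive for $t > 0$ (by \eqref{f2}, since $F(x, t u_{\lambda_0^*}(x)) > 0$ on the positive-measure set $\{u_{\lambda_0^*} \ne 0\}$), so $\eta_s > 0$. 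Hence $\max_{[0,1]}\psi_{\lambda, u_{\lambda_0^*}} < \delta$ for all large $\lambda$, which gives $c_\lambda \to 0$. Consequently, for $\lambda$ large one has $0 < c_\lambda < \kappa$, Lemma \ref{variational property}, item 2), applies to the Palais--Smale sequence from the first paragraph, and exactly as in item i) one extracts a nonzero $w_\lambda$ with $\Phi_\lambda(w_\lambda) = c_\lambda > 0$ and $\Phi_\lambda'(w_\lambda) = 0$.

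The geometry and the Palais--Smale extraction are routine; the only genuinely delicate point is the borderline case $a^{\frac{N-4}{2}}b = C_2(N)$, where compactness fails precisely at the level $\kappa$, so the whole matter there reduces to forcing $c_\lambda$ strictly below $\kappa$ — which is exactly what the asymptotics $c_\lambda \to 0$ delivers, at the price of treating only large $\lambda$, as the statement permits.
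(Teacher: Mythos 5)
Your proof is correct and follows essentially the same route as the paper: mountain pass geometry obtained from Proposition \ref{MPG} together with $\Phi_\lambda(0)=0$ and $\Phi_\lambda(u_{\lambda_0^*})\le 0$, the Palais--Smale condition of Lemma \ref{variational property} in case i), and the asymptotics $c_\lambda\to 0$ as $\lambda\to\infty$ to avoid the exceptional level $\frac{(2^*-2)^2a^2}{4\cdot 2^*(4-2^*)b}$ in the borderline case ii). Your two-region estimate along the segment $t\mapsto t\,u_{\lambda_0^*}$ (small $t$ controlled uniformly in $\lambda$ by the quadratic--quartic part, $t\in[s,1]$ killed by $-\lambda\eta_s$) is a somewhat more direct derivation of $c_\lambda\to 0$ than the paper's argument via the auxiliary parameter $\mu$, but the underlying idea is the same.
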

\begin{proof} The proof is standard and we write only the main steps. Note that $\Phi_\lambda(0)=0$ and $\Phi_\lambda(u_{\lambda_0^*})\le 0$. In fact, from Theorem \ref{existencem1} we know that $\Phi_{\lambda_0^*}(u_{\lambda_0^*})=0$ and if $\lambda>\lambda^*$, from Proposition \ref{prop systems} and Proposition \ref{lambdazeroachieved}, we must conclude that $\Phi_\lambda(u_{\lambda_0^*})<0$.
	 These together with Proposition \ref{MPG} implies a mountain pass geometry to $\Phi_\lambda$. 
	 
	 i) If $a^\frac{N-4}{2}b> C_2(N)$, from Lemma \ref{variational property},  $\Phi_\lambda$ satisfies the Palais-Smale condition at any level and the proof is complete.
	
ii) If	 $a^\frac{N-4}{2}b= C_2(N)$, it is enough to prove that (see Lemma \ref{variational property})
	\begin{equation*}
	c_\lambda\neq\frac{(2^*-2)^2a^2}{4 \cdot 2^*(4-2^*)b}.
	\end{equation*}
	We will actually show that  $c_\lambda\to 0$ as $\lambda\to \infty$. Indeed, given $\varepsilon>0$, fix any $\lambda'>0$. From \eqref{f1} and \eqref{f4}, there exists $\delta>0$ such that $0<\psi_{\lambda',u_{\lambda_0^*}}(t)\le \varepsilon$ for all $t\in(0,\delta]$. Since the function $(\lambda',\infty)\ni\lambda\mapsto \psi_{\lambda,u_{\lambda_0^*}}(\delta)$ is continuous, decreasing and tends to $-\infty$ as $\lambda\to \infty$ (see the proof of Proposition \ref{lambdazero}), it follows that there exists a unique parameter $\mu>\lambda'$ such that $\psi_{\mu,u_{\lambda_0^*}}(\delta)=0$. Now observe that on compact sets  $[t_0,t_1]\subset (0,\infty)$, we can always choose $\lambda$ so large that $\psi_{\lambda,u_{\lambda_0^*}}(t)<0$ for all $t\in[t_0,t_1]$. By taking $\delta$ even smaller if necessary, we can suppose that
	\begin{equation*}
c_\lambda \leq \max_{t\in[0,1]}\Phi_\lambda(tu_{\lambda_0^*})= \max_{t\in[0,1]}\psi_{\mu,u_{\lambda_0^*}}(t)=\max_{t\in(0,\delta)}\psi_{\mu,u_{\lambda_0^*}}(t)=\psi_{\mu,u_{\lambda_0^*}}(t_{max}),
	\end{equation*}
	where $t_{max}\in(0,\delta)$. Since  $\psi_{\mu,u_{\lambda_0^*}}(t_{max})\le \psi_{\lambda',u_{\lambda_0^*}}(t_{max})\le \varepsilon$, it follows that $c_\lambda\to 0$ as $\lambda\to \infty$. Choosing  $\lambda$ sufficiently large there holds
	\begin{equation*}
	c_\lambda<\frac{(2^*-2)^2a^2}{4\cdot 2^*(4-2^*)b},
	\end{equation*}
	and Lemma \ref{variational property} applies.
\end{proof}
\subsection{Local minimizers and mountain pass type solutions for $\lambda<\lambda_0^*$}
From Proposition \ref{NehariP},  $I_\lambda=\inf_{H^1_0(\Omega)}\Phi_\lambda\ge 0$ for $\lambda\le\lambda_0^*$, and consequently $u=0$ is a global minimizer of $\Phi_\lambda$. It is the unique global minimizer if $\lambda<\lambda_0^*$, while when $\lambda=\lambda_0^*$ (see Theorem \ref{existencem1}) there exists a second global minimizer $u_{\lambda_0^*}\neq 0$. We will prove that for $\lambda<\lambda_0^*$, close to $\lambda_0^*$, $\Phi_\lambda$ has a local minimizer  with positive energy.

First we prove a refined version of Proposition \ref{MPG}: fix $u_{\lambda_0^*}\in H_0^1(\Omega)\setminus\{0\}$ such that $\lambda_0^*=\lambda_0(u_{\lambda_0^*})$ (see Theorem \ref{existencem1} and Proposition \ref{lambdazeroachieved}). Denote $R=\|u_{\lambda_0^*}\|$.
\begin{prop}\label{MPGIMPROVED} Suppose that $\lambda\le \lambda_0^*$, then there exists $0<r<R$ and $M>0$ such that
	\begin{equation*}
	\inf\{\Phi_\lambda(u):u\in H_0^1(\Omega),\ \|u\|=r\}\ge M.
	\end{equation*}
\end{prop}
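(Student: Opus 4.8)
The plan is to exploit the defining property of $\lambda_0^*=\lambda_0(u_{\lambda_0^*})$, namely that the fiber map $\psi_{\lambda_0^*,u_{\lambda_0^*}}$ is nonnegative on $(0,\infty)$, vanishes at $R=\|u_{\lambda_0^*}\|$, and has $R$ as a global minimizer with zero energy; moreover, by Proposition \ref{graph psi}, $\psi_{\lambda_0^*,u_{\lambda_0^*}}$ is strictly positive on a punctured neighborhood of the origin. Since for $\lambda\le\lambda_0^*$ one has $\psi_{\lambda,v}\ge\psi_{\lambda_0^*,v}$ pointwise for every $v$ (by \eqref{fiberlambda} and $F\ge 0$), it suffices to establish the estimate for $\lambda=\lambda_0^*$ and transfer it down. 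The key point is that a sphere $\|u\|=r$ with $r<R$ is crossed, along each ray $tu/\|u\|$, at the parameter value $t=r/\|u\|$, but the geometry of a \emph{single} fiber $\psi_{\lambda_0^*,u_{\lambda_0^*}}$ does not directly control $\Phi_{\lambda_0^*}$ on the whole sphere — different directions $u$ have genuinely different fiber maps. So the argument cannot merely reread Proposition \ref{MPGIMPROVED} off one fiber; one has to produce a uniform positive lower bound over the sphere.

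The natural route is to revisit the proof of Proposition \ref{MPG}: there it was shown, using \eqref{f3}, \eqref{f4} and the Sobolev embeddings, that for a suitable choice of $\varepsilon$,
\[
\Phi_\lambda(u)\ge\Bigl(\tfrac{a}{2}-\lambda c\varepsilon\Bigr)\|u\|^2+\tfrac{b}{4}\|u\|^4-\tfrac{1}{2^*}\|u\|^{2^*}-\lambda c\|u\|^p
\]
for all $u\in H_0^1(\Omega)$, and the right-hand side is a function of $\|u\|=:s$ alone. Call this function $\varphi_\lambda(s)$; it satisfies $\varphi_\lambda(s)>0$ for $s>0$ small, since $p>2$ and the quadratic coefficient is positive once $\varepsilon$ is chosen small enough (this requires $\lambda$ to range over a bounded set, which is fine: $\lambda\le\lambda_0^*$). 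Now fix any $0<r<R$ with $\varphi_{\lambda_0^*}(r)>0$ — such $r$ exists because $\varphi_{\lambda_0^*}$ is continuous, positive near $0$; shrinking $r$ if needed we may also keep $r<R$. Set $M:=\varphi_{\lambda_0^*}(r)>0$. Then for every $\lambda\le\lambda_0^*$ and every $u$ with $\|u\|=r$, monotonicity of $\Phi_\lambda$ in $\lambda$ (again \eqref{fiberlambda}) gives $\Phi_\lambda(u)\ge\Phi_{\lambda_0^*}(u)\ge\varphi_{\lambda_0^*}(r)=M$, which is the claim.

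The main obstacle — really the only delicate point — is the bookkeeping on $\varepsilon$ and $r$: one must choose $\varepsilon$ (hence $c$) first so that $\tfrac{a}{2}-\lambda_0^* c\varepsilon>0$, which simultaneously works for all $\lambda\in(0,\lambda_0^*]$ by monotonicity in $\lambda$ of the offending term; then choose $r\in(0,R)$ small enough that $\varphi_{\lambda_0^*}(r)>0$, using $p>2$ so the negative terms $-\tfrac{1}{2^*}s^{2^*}-\lambda_0^* c\,s^p$ are $o(s^2)$ as $s\to0$. One should double-check that $r$ can indeed be taken strictly below $R=\|u_{\lambda_0^*}\|$; since $R$ is a fixed positive number and $\varphi_{\lambda_0^*}$ is positive on an interval $(0,\rho)$ for some $\rho>0$, any $r<\min\{\rho,R\}$ works. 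No compactness or weak-limit arguments are needed here; this is a purely pointwise estimate, and the role of $\lambda\le\lambda_0^*$ is only to allow the reduction to the single value $\lambda_0^*$ via the inequality $\psi_{\lambda,u}\ge\psi_{\lambda_0^*,u}$.
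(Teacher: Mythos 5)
Your proposal is correct and follows essentially the same route as the paper: repeat the pointwise estimate from Proposition \ref{MPG}, use $\lambda\le\lambda_0^*$ to replace $\lambda$ by $\lambda_0^*$ in the negative terms, choose $\varepsilon$ so that $\frac{a}{2}-\lambda_0^*c\varepsilon>0$, and then take $r\in(0,R)$ small enough that the resulting radial lower bound is positive at $s=r$. The opening discussion of fiber maps is a harmless detour, since you correctly discard that approach before giving the actual argument.
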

\begin{proof} Indeed, as in  the proof of Proposition \ref{MPG}, given $\varepsilon>0$, there exists a positive constant $c$, depending only on $N$ and $p$, such that
	\begin{equation*}
		\Phi_\lambda(u)\ge \left(\frac{a}{2}-\lambda c\varepsilon\right)\|u\|^2+\frac{b}{4} \|u\|^4-\frac{c}{2^*}\|u\|^{2^*}-\lambda c\|u\|^p, \ \forall u\in H_0^1(\Omega),
	\end{equation*}
	therefore
	\begin{equation*}
	\Phi_\lambda(u)\ge \left(\frac{a}{2}-\lambda_0^* c\varepsilon\right)\|u\|^2+\frac{b}{4} \|u\|^4-\frac{c}{2^*}\|u\|^{2^*}-\lambda_0^* c\|u\|^p, \ \forall u\in H_0^1(\Omega).
	\end{equation*}
	If we choose $\varepsilon$ in such a way that $\frac{a}{2}-\lambda_0^* c\varepsilon>0$ the proof is complete.
\end{proof}
Let $r$ be given as in Proposition \ref{MPGIMPROVED}. For each $\lambda\le\lambda_0^*$, define
\begin{equation*}
\hat{I}_\lambda=\inf\{\Phi_\lambda(u):u\in H_0^1(\Omega),\ \|u\|\ge r\}.
\end{equation*}
\begin{prop}\label{convergezero} Suppose that $a^\frac{N-4}{2}b> C_1(N)$, then $\hat{I}_\lambda\to 0$ as $\lambda\uparrow \lambda_0^*$.

\end{prop}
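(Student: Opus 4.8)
The plan is to show that $\hat I_\lambda \to 0$ as $\lambda \uparrow \lambda_0^*$ by sandwiching $\hat I_\lambda$ between $0$ and a quantity that tends to $0$. First I would establish the lower bound $\hat I_\lambda \ge 0$: for $\lambda < \lambda_0^*$ we know from Proposition~\ref{NehariP} that $\inf_{t>0}\psi_{\lambda,u}(t) = 0$ for every $u \in H_0^1(\Omega)\setminus\{0\}$, hence $\Phi_\lambda(u) \ge 0$ for all $u$, and in particular $\hat I_\lambda \ge 0$. (In fact $I_\lambda = 0$, but only $\hat I_\lambda \ge 0$ is needed here.) So it remains to produce, for $\lambda$ close to $\lambda_0^*$, an admissible test function $u$ with $\|u\|\ge r$ and $\Phi_\lambda(u)$ small.

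The natural candidate is a rescaling of the minimizer $u_{\lambda_0^*}$ furnished by Theorem~\ref{existencem1}, which satisfies $\lambda_0^* = \lambda_0(u_{\lambda_0^*})$ (Proposition~\ref{lambdazeroachieved}) and $\|u_{\lambda_0^*}\| = R > r$. Consider the fiber map $\psi_{\lambda, u_{\lambda_0^*}}$. For $\lambda = \lambda_0^*$ the system \eqref{zeroener} holds: there is $t^\star > 0$ (in fact $t^\star = 1$ by the choice of $u_{\lambda_0^*}$) with $\psi_{\lambda_0^*, u_{\lambda_0^*}}(t^\star) = \inf_{s>0}\psi_{\lambda_0^*, u_{\lambda_0^*}}(s) = 0$. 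Now I would use the identity \eqref{fiberlambda}, namely $\psi_{\lambda, u}(t) - \psi_{\lambda', u}(t) = (\lambda' - \lambda)\int_\Omega F(x, tu)\,dx \ge 0$ for $\lambda < \lambda'$, together with the uniform convergence $\psi_{\lambda, u_{\lambda_0^*}} \to \psi_{\lambda_0^*, u_{\lambda_0^*}}$ on compact subsets of $(0,\infty)$ as $\lambda \uparrow \lambda_0^*$. Pick any fixed $t_1 > t^\star$ large enough that $\|t_1 u_{\lambda_0^*}\| = t_1 R \ge r$ (true for all $t_1 \ge r/R$, so any $t_1 \ge 1$ works since $R > r$). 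Then
\[
0 \le \hat I_\lambda \le \Phi_\lambda(t_1 u_{\lambda_0^*}) = \psi_{\lambda, u_{\lambda_0^*}}(t_1) \longrightarrow \psi_{\lambda_0^*, u_{\lambda_0^*}}(t_1)
\]
as $\lambda \uparrow \lambda_0^*$. The issue is that $\psi_{\lambda_0^*, u_{\lambda_0^*}}(t_1)$ need not be zero — the minimum of the limiting fiber is attained at $t^\star$, not necessarily at a point with norm $\ge r$. To fix this cleanly, I would instead argue: for each $\lambda < \lambda_0^*$ let $t_\lambda > 0$ be a point where $\psi_{\lambda, u_{\lambda_0^*}}$ attains its infimum over $(0,\infty)$ (which exists and is $0$ by Proposition~\ref{graph psi} and Proposition~\ref{NehariP}), and then show $t_\lambda$ stays bounded away from $0$ as $\lambda \uparrow \lambda_0^*$, so that eventually $\|t_\lambda u_{\lambda_0^*}\| = t_\lambda R \ge r$; since $\psi_{\lambda, u_{\lambda_0^*}}(t_\lambda) = 0$, this gives $0 \le \hat I_\lambda \le 0$, forcing $\hat I_\lambda = 0$ for $\lambda$ near $\lambda_0^*$ and the conclusion is immediate. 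Alternatively, if $t_\lambda$ could degenerate to $0$, one falls back on the sandwich above with $t_1$ chosen after passing to the limit: choose $t_1$ so that $\psi_{\lambda_0^*, u_{\lambda_0^*}}(t_1) < \varepsilon$ and $t_1 R \ge r$ — this is possible because near the minimizer $t^\star$ of the limiting fiber the value is $0 < \varepsilon$, and $t^\star$ itself may be taken $\ge r/R$ after a homogeneity rescaling of $u_{\lambda_0^*}$ (recall $u \mapsto \lambda_0(u)$ is zero-homogeneous, so we are free to normalize $\|u_{\lambda_0^*}\|$ as large as we like while keeping $\lambda_0(u_{\lambda_0^*}) = \lambda_0^*$).

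The main obstacle is precisely this bookkeeping about \emph{where} the infimum of the fiber map sits relative to the radius $r$: one must ensure the test configuration realizing a near-zero energy also satisfies the constraint $\|u\| \ge r$. The homogeneity of $\lambda_0(\cdot)$ is the key tool that removes this difficulty — by rescaling $u_{\lambda_0^*}$ we may assume $R = \|u_{\lambda_0^*}\|$ is as large as needed, so that the relevant portion of the fiber (a neighborhood of $t^\star = 1$) lies in the region $\|tu_{\lambda_0^*}\| \ge r$. Once that is arranged, the monotonicity identity \eqref{fiberlambda} and uniform convergence on compacts close the argument, and combining with $\hat I_\lambda \ge 0$ yields $\hat I_\lambda \to 0$. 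Everything else is routine: continuity of $\lambda \mapsto \psi_{\lambda,u}(t)$, coercivity (already noted in the proof of Theorem~\ref{existencem0}) guaranteeing $\hat I_\lambda$ is a genuine infimum, and the elementary estimates from \eqref{f3}, \eqref{f4}.
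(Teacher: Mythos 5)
Your proposal arrives at the correct statement and, in its final form, is the same sandwich argument as the paper's: the paper simply takes the test function $u=u_{\lambda_0^*}$ with $\lambda_0^*=\lambda_0(u)$ and writes $0\le \hat I_\lambda\le \Phi_\lambda(u)\to \Phi_{\lambda_0^*}(u)=0$ as $\lambda\uparrow\lambda_0^*$. The "main obstacle" you identify is in fact not there: by construction $R=\|u_{\lambda_0^*}\|$ and $r<R$, and the infimum of the limiting fiber is attained at $t^\star=1$ (since $\Phi_{\lambda_0^*}(u_{\lambda_0^*})=I_{\lambda_0^*}=0$), so the admissible test function at $t=1$ already does the job with no rescaling and no bookkeeping. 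One sub-branch of your argument is genuinely wrong, though: for $\lambda<\lambda_0^*=\lambda_0(u)$ the infimum $\inf_{t>0}\psi_{\lambda,u}(t)=0$ is \emph{not} attained at any $t_\lambda>0$, because \eqref{fiberlambda} gives $\psi_{\lambda,u}(t)=\psi_{\lambda_0(u),u}(t)+(\lambda_0(u)-\lambda)\int_\Omega F(x,tu)\,dx>0$ for every $t>0$ (the zero infimum is only approached as $t\to 0^+$); so the plan of locating a minimizer $t_\lambda$ and showing it stays away from $0$ cannot be carried out. Your fallback (evaluate the fiber at a fixed $t_1$ near $t^\star$ where the limiting fiber is below $\varepsilon$ and $t_1R\ge r$) is sound and, with $t_1=1$, collapses to the paper's one-line proof.
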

\begin{proof}  In fact, let $u\in H_0^1(\Omega)$ be such that $\lambda_0^*=\lambda_0(u)$ (see Theorem \ref{existencem1} and Proposition \ref{lambdazeroachieved}). Note that
	\begin{equation*}
	0\leq \hat{I}_\lambda \le \Phi_\lambda(u)\to 0,\ \mbox{as}\ \lambda\uparrow \lambda_0^*.
	\end{equation*}
	\end{proof}
\begin{theor}\label{existencelocal} Assume that $a^\frac{N-4}{2}b\ge C_2(N)$. There exists $\varepsilon>0$ such that if $\lambda\in (\lambda_0^*-\varepsilon,\lambda_0^*)$, then the infimum $\hat{I}_\lambda$ is achieved by some $u_\lambda\in H_0^1(\Omega)$ satisfying $\|u_\lambda\|>r$. Moreover $u_\lambda$ is a local minimizer and a critical point to $\Phi_\lambda$ and $\hat{I}_\lambda>0$.
\end{theor}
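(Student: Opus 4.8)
The plan is to run the direct method on the constrained minimization problem defining $\hat I_\lambda$, using the geometry set up in Propositions \ref{MPGIMPROVED} and \ref{convergezero} to prevent the minimizing sequence from escaping to the boundary $\|u\|=r$ of the admissible set, and using Lemma \ref{variational property} to pass to the limit. First I would fix $u_{\lambda_0^*}$ with $\lambda_0^*=\lambda_0(u_{\lambda_0^*})$ and $R=\|u_{\lambda_0^*}\|$, and recall from Proposition \ref{MPGIMPROVED} that there are $0<r<R$ and $M>0$ with $\Phi_\lambda(u)\ge M$ whenever $\|u\|=r$ and $\lambda\le\lambda_0^*$. By Proposition \ref{convergezero}, $0\le\hat I_\lambda\to 0$ as $\lambda\uparrow\lambda_0^*$, so I can choose $\varepsilon>0$ small enough that $\hat I_\lambda<M/2$ for all $\lambda\in(\lambda_0^*-\varepsilon,\lambda_0^*)$. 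Fix such a $\lambda$.

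Next I would take a minimizing sequence $(u_k)_k$ for $\hat I_\lambda$, i.e. $\|u_k\|\ge r$ and $\Phi_\lambda(u_k)\to\hat I_\lambda$. Coercivity of $\Phi_\lambda$ (established exactly as in Theorem \ref{existencem0}, via \eqref{f3}, \eqref{f4} and the Sobolev embedding) gives that $(u_k)_k$ is bounded, so up to a subsequence $u_k\rightharpoonup u_\lambda$ in $H_0^1(\Omega)$. Since $a^\frac{N-4}{2}b\ge C_2(N)\ge C_1(N)$, part 1) of Lemma \ref{variational property} applies with the constant sequence $\lambda_k\equiv\lambda$ and yields $\Phi_\lambda(u_\lambda)\le\liminf_k\Phi_\lambda(u_k)=\hat I_\lambda$. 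To conclude $u_\lambda$ is admissible I must rule out $\|u_\lambda\|<r$: if that held, then because $\Phi_\lambda(tu_\lambda/\|u_\lambda\|)$ restricted to $\|tu_\lambda/\|u_\lambda\|\|=r$ is $\ge M$, and more directly since $u_\lambda$ (if nonzero and of norm $<r$) would need to be compared — the clean argument is: the value $\hat I_\lambda$ is by definition the infimum over $\|u\|\ge r$, and $\Phi_\lambda(u_\lambda)\le\hat I_\lambda<M/2<M$; if we had $\|u_\lambda\|=r$ this would contradict $\Phi_\lambda(u_\lambda)\ge M$ from Proposition \ref{MPGIMPROVED}, while the case $\|u_\lambda\|>r$ is exactly what we want, so it remains to exclude $0\le\|u_\lambda\|<r$. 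Here I would use continuity of $t\mapsto\Phi_\lambda(tu_\lambda)$ together with the mountain-pass-type barrier: for $\|u_\lambda\|<r$, consider the ray through $u_\lambda$; since $\Phi_\lambda$ is $\ge M$ on the sphere of radius $r$ and $\to+\infty$ along the ray, and since $\psi_{\lambda,u_\lambda}$ attains values $\le\hat I_\lambda<M/2$ somewhere is impossible once $\|u_\lambda\|<r$ because every point of norm $\ge r$ has $\Phi_\lambda\ge\hat I_\lambda$ and the sphere forces $\ge M$ — more carefully, one shows that if $\|u_\lambda\|<r$ then $\Phi_\lambda(u_\lambda)\ge 0$ and one derives a contradiction with $\hat I_\lambda$ being approached from $\|u_k\|\ge r$ by using that $\Phi_\lambda$ is weakly lower semicontinuous and the ``valley'' at level $<M/2$ beyond the barrier is separated from the origin. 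I will phrase this as: were $\|u_\lambda\|<r$, pick $t^*>1$ with $\|t^*u_\lambda\|=r$; by Proposition \ref{graph psi}(i) and the barrier, $\psi_{\lambda,u_\lambda}$ is $\ge M$ at $t^*$, while for $s\ge t^*$ one has $\psi_{\lambda,u_\lambda}(s)\ge\hat I_\lambda$, and since $\psi_{\lambda,u_\lambda}\to\infty$ there is a global minimizer of $\psi_{\lambda,u_\lambda}$ on $[t^*,\infty)$ at some $s_0$ with value $\ge\hat I_\lambda$; but also $\inf_{\|v\|\ge r}\Phi_\lambda(v)=\hat I_\lambda$ is attained in the weak limit with $\Phi_\lambda(u_\lambda)\le\hat I_\lambda<M$, forcing $\|u_\lambda\|\ge r$ — the honest version of this step is the one I expect to have to write out with care.

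Once $\|u_\lambda\|>r$ is established (strict, since $\|u_\lambda\|=r$ contradicts $\Phi_\lambda(u_\lambda)<M$), the point $u_\lambda$ lies in the interior of the admissible set $\{\|u\|\ge r\}$, hence is an unconstrained local minimizer of $\Phi_\lambda$ on a neighborhood, and therefore a critical point: $\Phi_\lambda'(u_\lambda)=0$, so $u_\lambda$ solves $(\mathcal P_\lambda)$. Finally $\hat I_\lambda=\Phi_\lambda(u_\lambda)>0$: indeed $\hat I_\lambda\ge 0$ by Proposition \ref{NehariP} (since $\lambda<\lambda_0^*$ gives $\inf_{t>0}\psi_{\lambda,v}(t)=0$ for every $v$, so $\Phi_\lambda\ge 0$ everywhere), and $\hat I_\lambda=0$ would force $u_\lambda=0$ by the uniqueness of the global minimizer for $\lambda<\lambda_0^*$, contradicting $\|u_\lambda\|>r>0$.

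**Main obstacle.** The delicate point is the exclusion of the case $\|u_\lambda\|<r$ for the weak limit: weak lower semicontinuity only gives $\Phi_\lambda(u_\lambda)\le\hat I_\lambda$, not that $u_\lambda$ is admissible, so I must exploit the barrier $M$ on the sphere of radius $r$ together with the fact that the minimizing sequence lives in $\{\|u\|\ge r\}$ and that $\hat I_\lambda<M/2$ to geometrically confine the limit to $\{\|u\|\ge r\}$; isolating the clean topological argument for this confinement is the crux of the proof, and it is precisely where the hypothesis $a^\frac{N-4}{2}b\ge C_2(N)$ (rather than merely $>C_1(N)$) will be used, via the form of Lemma \ref{variational property} and the positivity/barrier structure.
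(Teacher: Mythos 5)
Your overall setup (the barrier $M$ from Proposition \ref{MPGIMPROVED}, the choice of $\varepsilon$ via Proposition \ref{convergezero} so that $\hat I_\lambda<M$) matches the paper, but the core of your argument has a genuine gap exactly at the point you flag as the crux, and the sketched ways around it do not close it. Running the direct method with only item 1) of Lemma \ref{variational property} (sequential weak lower semicontinuity) cannot confine the weak limit to the admissible set $\{\|u\|\ge r\}$: a minimizing sequence with $\|u_k\|\ge r$ may converge weakly to a point of arbitrarily small norm, even to $0$, and no contradiction arises from $\Phi_\lambda(u_\lambda)\le\hat I_\lambda$, since $\Phi_\lambda(0)=0\le\hat I_\lambda$ is perfectly consistent. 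Your ``barrier along the ray'' argument only constrains values of $\Phi_\lambda$ at points of norm $\ge r$, which says nothing about the weak limit sitting at norm $<r$; and your final positivity step ($\hat I_\lambda=0$ would force $u_\lambda=0$, contradicting $\|u_\lambda\|>r$) presupposes the very admissibility you are trying to establish, so it is circular. In short, weak lower semicontinuity alone does not yield attainment here.

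The paper closes this gap by upgrading weak to strong convergence. Since $\hat I_\lambda<M$ and the radial lower bound in the proof of Proposition \ref{MPGIMPROVED} is continuous, minimizing sequences eventually satisfy $\|u_k\|>r+\delta$ for some $\delta>0$; one then applies Ekeland's variational principle on the closed set $\{\|u\|\ge r\}$ to produce a minimizing sequence which is also a Palais--Smale sequence for the unconstrained functional (the constraint being inactive), and invokes item 2) of Lemma \ref{variational property} -- this is precisely where the hypothesis $a^{\frac{N-4}{2}}b\ge C_2(N)$ is used, not through weak lower semicontinuity -- to extract a strongly convergent subsequence; strong convergence preserves the norm, so $\|u_\lambda\|\ge r+\delta>r$. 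Note also that in the borderline case $a^{\frac{N-4}{2}}b=C_2(N)$ the Palais--Smale condition is only available at levels different from $\frac{(2^*-2)^2a^2}{4\cdot 2^*(4-2^*)b}$, so $\varepsilon$ must additionally be chosen so that $\hat I_\lambda$ stays below this threshold; your plan omits this point entirely.
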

\begin{proof} From Proposition \ref{convergezero}, there exists $\varepsilon>0$ such that $\hat{I}_\lambda<\min\left\{M,\frac{(2^*-2)^2a^2}{42^*(4-2^*)b}\right\}$ for all $\lambda\in (\lambda_0^*-\varepsilon,\lambda_0^*)$, where $M$ is given by Proposition \ref{MPGIMPROVED}. Therefore, there exists $\delta>0$ such that if $(u_k)_k$ is a minimizing sequence to $\hat{I}_\lambda$, then $\|u_k\|>r+\delta$ for sufficiently large $k$. This combined with Ekeland's variational principle and Palais-Smale condition (see Lemma \ref{variational property}), implies the existence of $u_\lambda$ satisfying $\hat{I}_\lambda=\Phi_\lambda(u_\lambda)$ and $\|u_\lambda\|>r$. One can easily see that $u_\lambda$ is a local minimizer and a critical point to $\Phi_\lambda$. Moreover, from the definition of $\lambda_0^*$ we also have that $\hat{I}_\lambda>0$.

\end{proof}

Now we show the existence of a mountain pass type solution: let $\varepsilon>0$ be given as in Theorem \ref{existencelocal} and for each $\lambda\in (\lambda_0^*-\varepsilon,\lambda_0^*)$, choose $u_\lambda\in H_0^1(\Omega)\setminus\{0\}$ such that $\hat{I}_\lambda=\Phi_\lambda(u_\lambda)$. Define
\begin{equation*}
\Gamma_\lambda=\{\gamma\in C([0,1],H_0^1(\Omega)): \gamma(0)=0,\ \gamma(1)=u_\lambda\},
\end{equation*}
and
\begin{equation*}
c_\lambda=\inf_{\gamma\in \Gamma_\lambda}\max_{t\in [0,1]}\Phi_\lambda(\gamma(t)).
\end{equation*}
\begin{theor}\label{MPS>0} Assume that $a^\frac{N-4}{2}b> C_2(N)$, then for each $\lambda\in (\lambda_0^*-\varepsilon,\lambda_0^*)$, there exist $w_\lambda\in H_0^1(\Omega)\setminus\{0\}$ such that $\Phi_\lambda(w_\lambda)=c_\lambda$ and $\Phi'_\lambda(w_\lambda)=0$.
\end{theor}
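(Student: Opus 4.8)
The plan is to apply the classical Mountain Pass Theorem, with the local minimizer $u_\lambda$ produced by Theorem \ref{existencelocal} serving as the far endpoint and $0$ as the near one. First I would record the two geometric ingredients already at our disposal: $\Phi_\lambda(0)=0$, and by Theorem \ref{existencelocal} we have $\Phi_\lambda(u_\lambda)=\hat{I}_\lambda$ with $0\le \hat{I}_\lambda<M$, where $M>0$ and $0<r<\|u_\lambda\|$ are the constants of Proposition \ref{MPGIMPROVED}, so that $\inf\{\Phi_\lambda(u):\|u\|=r\}\ge M$. Since $\|u_\lambda\|>r$, every path $\gamma\in\Gamma_\lambda$ joining $0$ to $u_\lambda$ must cross the sphere $\{\|u\|=r\}$ by continuity of $t\mapsto\|\gamma(t)\|$ and the intermediate value theorem; hence $\max_{t\in[0,1]}\Phi_\lambda(\gamma(t))\ge M$ for every such $\gamma$, and therefore $c_\lambda\ge M>\max\{\Phi_\lambda(0),\Phi_\lambda(u_\lambda)\}$. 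In particular $c_\lambda$ is a genuine mountain pass level; it is finite because the segment $\gamma_0(t)=tu_\lambda$ belongs to $\Gamma_\lambda$ and $\max_{t\in[0,1]}\Phi_\lambda(tu_\lambda)=\max_{t\in[0,1]}\psi_{\lambda,u_\lambda}(t)<\infty$ by Proposition \ref{graph psi}.

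Next, since $a^{\frac{N-4}{2}}b>C_2(N)$, item 2) of Lemma \ref{variational property} (applied with the constant sequence $\lambda_k\equiv\lambda$) shows that $\Phi_\lambda$ satisfies the Palais--Smale condition at every real level, with no additional restriction on the level. Consequently the Mountain Pass Theorem applies at the level $c_\lambda$ and yields $w_\lambda\in H_0^1(\Omega)$ with $\Phi_\lambda(w_\lambda)=c_\lambda$ and $\Phi'_\lambda(w_\lambda)=0$. Finally $w_\lambda\ne 0$, because $\Phi_\lambda(w_\lambda)=c_\lambda\ge M>0=\Phi_\lambda(0)$; moreover $w_\lambda$ is distinct from the local minimizer $u_\lambda$, since $\Phi_\lambda(w_\lambda)=c_\lambda\ge M>\hat{I}_\lambda=\Phi_\lambda(u_\lambda)$, so that $(\mathcal P_\lambda)$ indeed admits a second nontrivial solution of mountain pass type with positive energy.

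Because all the substantive analysis — the weak lower semicontinuity of $\Phi_\lambda$, the compactness threshold $C_2(N)$ together with the Palais--Smale property, and the existence of the local minimizer $u_\lambda$ with $0<\hat{I}_\lambda<M$ — has already been carried out in the preceding results, no serious obstacle remains. The only point demanding a little care is to keep the endpoint energies strictly below $c_\lambda$, and this is exactly what the choice of $\varepsilon$ in Theorem \ref{existencelocal} secures through the strict inequality $\hat{I}_\lambda<M$.
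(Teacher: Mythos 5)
Your argument is correct and follows essentially the same route as the paper: mountain pass geometry from $\Phi_\lambda(0)=0$ and $\Phi_\lambda(u_\lambda)=\hat{I}_\lambda<M$ (guaranteed by the choice of $\varepsilon$ in Theorem \ref{existencelocal}) together with the barrier $\inf\{\Phi_\lambda(u):\|u\|=r\}\ge M$ from Proposition \ref{MPGIMPROVED}, plus the Palais--Smale condition at every level from item 2) of Lemma \ref{variational property} when $a^{\frac{N-4}{2}}b>C_2(N)$. Your write-up is in fact more careful than the paper's (which is a one-line sketch), spelling out the crossing argument giving $c_\lambda\ge M>0$ and hence $w_\lambda\neq 0$.
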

\begin{proof} Note that $\min\{\Phi_\lambda(0),\Phi_\lambda(u_\lambda)\}< M$, where $M$ is given by Proposition \ref{MPG}. Therefore $\Phi_\lambda$ has a mountain pass geometry. From Lemma \ref{variational property} we know that $\Phi_\lambda$ satisfies the Palais-Smale condition and thus the proof is complete.
\end{proof}
Now we are in position to prove Theorems \ref{T1}, \ref{T12}, \ref{T2}:
\begin{proof}[Proof of Theorem \ref{T1}] It follows from Theorems \ref{existencem0}, \ref{existencem1}, \ref{continuitylambdazero} and  the definition of $\lambda_0^*$. 
\end{proof}
\begin{proof}[Proof of Theorem \ref{T12}] It follows from Theorem \ref{existencelocal}. \end{proof}
\begin{proof}[Proof of Theorem \ref{T2}] It follows from Theorems \ref{MPS} and \ref{MPS>0}.
\end{proof}
\subsection{Non-existence result} Suppose \eqref{f5}. Therefore the following system is well defined:
\begin{equation}\label{zeroenergyderivati}
\left\{	\begin{array}{ll}
\psi'_{\lambda,u}(t)=0,\\
	\psi''_{\lambda,u}(t)=0, \\
	\psi'_{\lambda,u}(t)=\inf_{s>0}\psi'_{\lambda,u}(s).
		\end{array} \right.
\end{equation}
The next Proposition can be proved in the same way as Proposition \ref{lambdazero}
\begin{prop} Assume that $u\in H_0^1(\Omega)\setminus\{0\}$, then  there exists a unique $\lambda(u)>0$  satisfying \eqref{zeroenergyderivati}.
	
\end{prop}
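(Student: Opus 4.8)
The plan is to repeat the proof of Proposition~\ref{lambdazero} almost verbatim, with the first derivative $\psi'_{\lambda,u}$ in the place of the fiber map $\psi_{\lambda,u}$, the function $h$ in the place of $g$, and part~ii) of Lemma~\ref{functions}, Proposition~\ref{functionsappl} and Proposition~\ref{graph psi} in the place of their part~i); accordingly, the hypothesis $a^{\frac{N-4}{2}}b\ge C_2(N)$ (in force for the non-existence results, cf.\ Theorem~\ref{T3}) takes over the role that $a^{\frac{N-4}{2}}b\ge C_1(N)$ played in Proposition~\ref{lambdazero}. The starting point is the analogue of \eqref{fiberlambda}: differentiating $\psi_{\lambda,u}(t)-\psi_{\lambda',u}(t)=(\lambda'-\lambda)\int_\Omega F(x,tu)\,dx$ in $t$ yields
\[
\psi'_{\lambda,u}(t)-\psi'_{\lambda',u}(t)=(\lambda'-\lambda)\int_\Omega f(x,tu)u\,dx .
\]
By \eqref{f2}, $f(x,tu(x))u(x)\ge 0$ for every $t>0$, and, $u$ being nonzero, it is strictly positive on a set of positive measure; hence $t\mapsto\int_\Omega f(x,tu)u\,dx$ is positive on $(0,\infty)$, the map $\lambda\mapsto\psi'_{\lambda,u}(t)$ is strictly decreasing for each fixed $t>0$, and $\psi'_{\lambda,u}\to\psi'_{\lambda',u}$ uniformly on compact subsets of $(0,\infty)$ as $\lambda\to\lambda'$.

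Next I would locate the threshold. For $\lambda=0$, Proposition~\ref{functionsappl}~ii) and Lemma~\ref{functions}~ii) give $\psi'_{0,u}(t)>t\,h(\|u\|t)\|u\|^2\ge 0$ for every $t>0$ (the first inequality being strict since \eqref{embedding} is not attained, the second because $a^{\frac{N-4}{2}}b\ge C_2(N)$), so $\psi'_{0,u}>0$ on $(0,\infty)$; by Proposition~\ref{graph psi}~ii) it is moreover positive near the origin and tends to $+\infty$ at infinity. Exactly as in Proposition~\ref{lambdazero}, combining this with the uniform convergence on compacts shows $\psi'_{\lambda,u}>0$ on $(0,\infty)$ for all sufficiently small $\lambda>0$, whereas, fixing any $t>0$, $\psi'_{\lambda,u}(t)\to-\infty$ as $\lambda\to+\infty$. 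By the strict monotonicity in $\lambda$ there is then a unique $\lambda(u)>0$ such that $\psi'_{\lambda,u}>0$ on $(0,\infty)$ for $0\le\lambda<\lambda(u)$, $\psi'_{\lambda(u),u}\ge 0$ on $(0,\infty)$ with a zero at some $t>0$, and $\psi'_{\lambda,u}$ is negative somewhere for $\lambda>\lambda(u)$. Since $\psi'_{\lambda(u),u}$ is positive near $0$ and diverges at $\infty$, that zero $t$ is an interior global minimizer of $\psi'_{\lambda(u),u}$, hence a critical point of it, so $\psi''_{\lambda(u),u}(t)=0$ (the system \eqref{zeroenergyderivati} being well defined by virtue of \eqref{f5}), and $(t,\lambda(u))$ solves \eqref{zeroenergyderivati}. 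For uniqueness of the parameter I would argue as in Proposition~\ref{prop systems}: if $\lambda>\lambda(u)$ and $t^{\ast}>0$ realizes $\inf_{s>0}\psi'_{\lambda(u),u}(s)=0$, then the displayed identity gives $\psi'_{\lambda,u}(t^{\ast})=-(\lambda-\lambda(u))\int_\Omega f(x,t^{\ast}u)u\,dx<0$, which is incompatible with the third equation of \eqref{zeroenergyderivati} at $\lambda$.

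The only place where this argument genuinely differs from the proof of Proposition~\ref{lambdazero} — and the step I would treat most carefully — is the strict positivity of $\psi'_{0,u}$ on $(0,\infty)$, which is exactly what forces $\lambda(u)>0$; here one must invoke the estimate of Proposition~\ref{functionsappl}~ii) and the sign of $h$ (Lemma~\ref{functions}~ii), Corollary~\ref{graph g}) in the place of the corresponding facts about $g$. All the remaining steps are a line-by-line transcription of the proof of Proposition~\ref{lambdazero}.
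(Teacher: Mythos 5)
Your proposal is correct and is precisely the argument the paper intends: its own proof is the one-line remark that the statement ``can be proved in the same way as Proposition \ref{lambdazero}'', and your write-up is exactly that transcription with $\psi'_{\lambda,u}$, $h$ and the parts ii) of Lemma \ref{functions}, Proposition \ref{functionsappl} and Proposition \ref{graph psi} in place of $\psi_{\lambda,u}$, $g$ and the parts i). You also rightly flag the one substantive point, namely that the positivity $\lambda(u)>0$ rests on the bound $\psi'_{0,u}(t)>t\,h(\|u\|t)\|u\|^2\ge 0$ and hence on the standing assumption $a^{\frac{N-4}{2}}b\ge C_2(N)$, which the statement leaves implicit.
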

\begin{prop}\label{prop systems 1} For each $u\in H^1_0(\Omega)\setminus \{0\}$ one has: $\lambda(u)$ is the unique parameter $\lambda>0$ for which the fiber map $\psi_{\lambda,u}$ has a critical point with second derivative zero and satisfies $\inf_{t>0}\psi'_{\lambda,u}(t)=0$. Moreover, if $0<\lambda<\lambda(u)$, then $\psi_{\lambda,u}$ has no critical points.
\end{prop}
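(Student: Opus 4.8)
The approach is to transpose the proof of Proposition~\ref{prop systems} from the fiber map $\psi_{\lambda,u}$ to its derivative $\psi'_{\lambda,u}$, so that the role of \eqref{zeroener} is played by \eqref{zeroenergyderivati}. First I would differentiate the identity \eqref{fiberlambda} with respect to $t$ (legitimate by \eqref{f3}), obtaining
\[
\psi'_{\lambda,u}(t)-\psi'_{\lambda',u}(t)=(\lambda'-\lambda)\int_\Omega f(x,tu)u\,dx\qquad\text{for all }t>0 .
\]
By \eqref{f1}--\eqref{f2}, for $t>0$ the integrand $f(x,tu(x))u(x)$ is nonnegative a.e.\ and strictly positive on $\{u\neq0\}$, whence $\int_\Omega f(x,tu)u\,dx>0$; consequently, for each fixed $t>0$, the map $\lambda\mapsto\psi'_{\lambda,u}(t)$ is strictly decreasing. (For orientation one may also note, using \eqref{f3}--\eqref{f4}, that $\psi'_{\lambda,u}(t)\to 0$ as $t\to 0^+$, so $\inf_{t>0}\psi'_{\lambda,u}\le 0$, with equality precisely when $\psi'_{\lambda,u}\ge 0$ on $(0,\infty)$.)

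Next I would observe that the two conditions appearing in the statement are exactly equivalent to the solvability of \eqref{zeroenergyderivati}. On the one hand, if $(t,\lambda)$ solves \eqref{zeroenergyderivati}, then $t$ is a critical point of $\psi_{\lambda,u}$ (since $\psi'_{\lambda,u}(t)=0$) with $\psi''_{\lambda,u}(t)=0$, and $\inf_{s>0}\psi'_{\lambda,u}(s)=\psi'_{\lambda,u}(t)=0$. On the other hand, if for some $\lambda>0$ the map $\psi_{\lambda,u}$ has a critical point $t_1$ with $\psi''_{\lambda,u}(t_1)=0$ and $\inf_{s>0}\psi'_{\lambda,u}(s)=0$, then $\psi'_{\lambda,u}(t_1)=0$ equals the infimum, so $t_1$ solves all three equations of \eqref{zeroenergyderivati}. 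Since the Proposition preceding this statement guarantees that \eqref{zeroenergyderivati} is solvable for one and only one $\lambda>0$, namely $\lambda(u)$, this proves the first assertion.

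Finally, for $0<\lambda<\lambda(u)$: the solution of \eqref{zeroenergyderivati} at $\lambda(u)$ gives $\inf_{t>0}\psi'_{\lambda(u),u}(t)=0$, hence $\psi'_{\lambda(u),u}(t)\ge0$ for every $t>0$; combining this with the displayed identity,
\[
\psi'_{\lambda,u}(t)=\psi'_{\lambda(u),u}(t)+(\lambda(u)-\lambda)\int_\Omega f(x,tu)u\,dx>0\qquad\text{for all }t>0,
\]
so $\psi_{\lambda,u}$ has no critical point, which is the ``moreover'' part. The only delicate point is the middle paragraph: one must check carefully that ``$\psi_{\lambda,u}$ has a critical point with vanishing second derivative, together with $\inf_{t>0}\psi'_{\lambda,u}=0$'' is genuinely the same condition as solvability of the three-equation system \eqref{zeroenergyderivati}, so that the uniqueness of $\lambda(u)$ can be imported directly from the previous Proposition; everything else is an immediate consequence of the strict monotonicity of $\lambda\mapsto\psi'_{\lambda,u}(t)$ and of the inequality $\psi'_{\lambda(u),u}\ge0$ on $(0,\infty)$.
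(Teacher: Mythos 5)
Your argument is correct and is essentially the paper's own proof (which consists precisely of the observation $\psi'_{\lambda,u}(s)>\psi'_{\lambda(u),u}(s)\geq 0$ for $0<\lambda<\lambda(u)$), merely with the monotonicity identity $\psi'_{\lambda,u}(t)-\psi'_{\lambda',u}(t)=(\lambda'-\lambda)\int_\Omega f(x,tu)u\,dx$ and the equivalence with system \eqref{zeroenergyderivati} spelled out in full detail. No gaps.
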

\begin{proof}  If $0<\lambda<\lambda(u)$, then  $\psi'_{\lambda,u}(s)>\psi'_{\lambda(u),u}(s)\geq 0$ for each $t>0$.
\end{proof}

\begin{cor}\label{lambda<lambdazero} For each $u\in H^1_0(\Omega)\setminus \{0\}$ one has that $\lambda(u)<\lambda_0(u)$.
\end{cor}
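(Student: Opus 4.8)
The plan is to read off both inequalities directly from the threshold descriptions of $\lambda_0(u)$ and $\lambda(u)$ established above. Recall that $\lambda_0(u)>0$ is the unique parameter solving \eqref{zeroener} (Proposition \ref{lambdazero}), that $\lambda(u)>0$ is the parameter defined by \eqref{zeroenergyderivati}, and that, by Proposition \ref{prop systems 1}, the fiber map $\psi_{\lambda,u}$ has \emph{no} critical point whenever $0<\lambda<\lambda(u)$.

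First I would obtain the non-strict inequality $\lambda(u)\le\lambda_0(u)$. Indeed, \eqref{zeroener} provides some $t^*>0$ with $\psi'_{\lambda_0(u),u}(t^*)=0$, i.e.\ $\psi_{\lambda_0(u),u}$ \emph{does} possess a critical point. By the contrapositive of Proposition \ref{prop systems 1} (applicable since $\lambda_0(u)>0$), this is only possible if $\lambda_0(u)\ge\lambda(u)$.

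Next I would rule out equality by contradiction. Suppose $\lambda(u)=\lambda_0(u)=:\bar\lambda$. The first and third equations of \eqref{zeroenergyderivati} give $\inf_{s>0}\psi'_{\bar\lambda,u}(s)=0$, so $\psi'_{\bar\lambda,u}\ge 0$ on $(0,\infty)$ and hence $\psi_{\bar\lambda,u}$ is non-decreasing there. On the other hand, \eqref{zeroener} yields $t^*>0$ with $\psi_{\bar\lambda,u}(t^*)=0=\inf_{s>0}\psi_{\bar\lambda,u}(s)$; since $\psi_{\bar\lambda,u}$ is non-decreasing and attains its infimum $0$ at $t^*$, it must vanish identically on $(0,t^*]$, whence $\psi'_{\bar\lambda,u}\equiv 0$ on the interval $(0,t^*)$. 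This contradicts Proposition \ref{graph psi}(ii), which guarantees $\psi'_{\bar\lambda,u}(t)>0$ for all $t$ in a right neighborhood of the origin. Therefore $\lambda(u)<\lambda_0(u)$.

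The argument is short; the only delicate point is the strict inequality, where one must use that the ``derivative fiber'' $\psi'_{\lambda,u}$ is strictly positive near $0$, so that $\psi_{\lambda,u}$ cannot be flat on a right neighborhood of the origin. This is precisely what prevents the zero-energy critical point produced at $\lambda_0(u)$ from coinciding with the degenerate critical point (with vanishing second derivative and minimal first derivative) produced at $\lambda(u)$.
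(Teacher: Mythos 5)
Your proof is correct and uses exactly the same ingredients as the paper's (Proposition \ref{prop systems 1}, the defining system \eqref{zeroener} for $\lambda_0(u)$, and the positivity of $\psi'_{\lambda,u}$ near the origin from Proposition \ref{graph psi}); the paper merely compresses your two steps into a single contradiction argument starting from $\lambda_0(u)\le\lambda(u)$. No gaps.
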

\begin{proof} Indeed, assume on the contrary that $\lambda_0(u)\le \lambda(u)$, then from Proposition \ref{prop systems 1}, the definition of $\lambda_0(u)$ and Proposition \ref{graph psi}, we deduce that $\psi_{\lambda_0(u),u}$ is increasing, which contradicts the definition of $\lambda_0(u)$, therefore, $\lambda(u)<\lambda_0(u)$.
\end{proof}
Define the extremal value (see \cite{Y})
\[\lambda^*=\inf_{u\in H^1_0(\Omega)\setminus \{0\}} \lambda(u).\]
\begin{prop}\label{comparison1} There holds:
	\begin{itemize}
			\item[i)] If $\displaystyle a^\frac{N-4}{2}b> C_2(N)$, then $0<\lambda^*< \lambda_0^*$.
						\item[ii] If $\displaystyle a^\frac{N-4}{2}b=C_2(N)$, then $ \lambda^*=0$. Moreover if $u_k\in H_0^1(\Omega)\setminus\{0\}$ satisfies $\lambda(u_k)\to \lambda^*=0$, then $u_k\rightharpoonup 0$ and $\frac{\|u_k\|_2^2}{\|u_k\|_{2^*}^2}\to S_N$.
	\end{itemize}
	
\end{prop}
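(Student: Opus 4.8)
The plan is to treat (i) and (ii) separately, in each case reproducing the relevant parts of Proposition~\ref{comparison} with the fiber map $\psi_{\lambda,u}$ replaced by its first derivative $\psi'_{\lambda,u}$, the function $g$ by $h$, the constant $C_1(N)$ by $C_2(N)$, and the system~\eqref{zeroener} by~\eqref{zeroenergyderivati}. For (i), to prove $\lambda^*>0$ I would first note that $u\mapsto\lambda(u)$ is zero--homogeneous (the substitution $u\mapsto\mu u$ sends a solution $(t,\lambda)$ of~\eqref{zeroenergyderivati} to $(\mu t,\lambda)$, and uniqueness applies), and then argue by contradiction: if $\lambda^*=0$, pick $u_k$ with $\|u_k\|=1$, $\lambda_k:=\lambda(u_k)\to 0$, and $t_k>0$ solving~\eqref{zeroenergyderivati} for $(\lambda_k,u_k)$. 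From $\psi'_{\lambda_k,u_k}(t_k)=0$ and Proposition~\ref{functionsappl}(ii) one gets $h(t_k)<\lambda_k\int_\Omega f(x,t_ku_k)u_k/t_k\,dx$, and by the refined growth bound from~\eqref{f3}--\eqref{f4} the right--hand side is $\le\lambda_k(c\,\varepsilon+c\,t_k^{p-2})$; since $2<p<2^*<4$, $h(t)\sim bt^2$ dominates $t^{p-2}$ at infinity, so $\{t_k\}$ is bounded, and passing to a convergent subsequence $t_k\to\bar t\ge 0$ gives $h(\bar t)\le 0$, contradicting $h>0$ on $[0,\infty)$ (Lemma~\ref{functions}(ii), valid since $a^{\frac{N-4}{2}}b>C_2(N)$). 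For $\lambda^*<\lambda_0^*$: because $a^{\frac{N-4}{2}}b>C_2(N)>C_1(N)$, Theorem~\ref{existencem1}(i) and Proposition~\ref{lambdazeroachieved} produce $\bar u\in H^1_0(\Omega)\setminus\{0\}$ with $\lambda_0(\bar u)=\lambda_0^*$, and Corollary~\ref{lambda<lambdazero} yields $\lambda^*\le\lambda(\bar u)<\lambda_0(\bar u)=\lambda_0^*$.

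For (ii), $\lambda^*=0$: I would use the Brezis--Nirenberg family $u_\varepsilon=v_\varepsilon/\|v_\varepsilon\|$ and the expansions~\eqref{BNEQ} as in the proof of Proposition~\ref{comparison}(ii). Evaluating at $t_0$ from~\eqref{t00} (the minimiser of $h$, with $h(t_0)=0$ because $a^{\frac{N-4}{2}}b=C_2(N)$) and using $\|u_\varepsilon\|=1$,
\[
\psi'_{\lambda,u_\varepsilon}(t_0)=t_0\Big(h(t_0)-O(\varepsilon^{\frac{2^*N}{4}})t_0^{2^*-2}-\lambda\int_\Omega\tfrac{f(x,t_0u_\varepsilon)u_\varepsilon}{t_0}\,dx\Big)=-O(\varepsilon^{\frac{2^*N}{4}})-\lambda\int_\Omega f(x,t_0u_\varepsilon)u_\varepsilon\,dx .
\]
By~\eqref{f2}, $f(x,v)\ge\mu$ for $v$ in the interval $I\subset(0,\infty)$, so the non--negative integrand $f(x,t_0u_\varepsilon)u_\varepsilon$ is bounded below by a positive constant on $\{x:\ t_0u_\varepsilon(x)\in I\}$, a set which by the explicit form of $v_\varepsilon$ and~\eqref{BNEQ} is an annulus of measure of order $\varepsilon^{N/4}$ (the same computation as in \cite{BN} and in Proposition~\ref{comparison}(ii)); hence $\int_\Omega f(x,t_0u_\varepsilon)u_\varepsilon\,dx\ge c\,\varepsilon^{N/4}$, and since $\tfrac{2^*N}{4}>\tfrac N4$ this gives $\psi'_{\lambda,u_\varepsilon}(t_0)<0$ for small $\varepsilon$, so $\lambda(u_\varepsilon)<\lambda$ by Proposition~\ref{prop systems 1}. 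As $\lambda>0$ is arbitrary and every $\lambda(u)>0$, we conclude $\lambda^*=0$.

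For the concentration claim in (ii), normalise $\|u_k\|=1$ (legitimate by homogeneity), pass to $u_k\rightharpoonup u$, and take $t_k>0$ solving~\eqref{zeroenergyderivati} for $(\lambda_k,u_k)$ with $\lambda_k=\lambda(u_k)\to 0$. As in (i), $h(t_k)<\lambda_k\int_\Omega f(x,t_ku_k)u_k/t_k\,dx\to 0$ forces $\{t_k\}$ bounded and bounded away from $0$ (otherwise $h(t_k)\to a>0$), so every subsequential limit of $\{t_k\}$ is a zero of $h$ and hence equals $t_0$, i.e. $t_k\to t_0$. Passing to the limit in $\psi'_{\lambda_k,u_k}(t_k)=0$ then gives $a+bt_0^2-\big(\lim_k\|u_k\|_{2^*}^{2^*}\big)t_0^{2^*-2}=0$, which together with $h(t_0)=0$ forces $\|u_k\|_{2^*}^{2^*}\to S_N^{-2^*/2}$, i.e. $\|u_k\|^2/\|u_k\|_{2^*}^2\to S_N$. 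Finally, to see $u=0$: since $t_ku_k\rightharpoonup t_0u$, Lemma~\ref{variational property}(3) gives $\Phi'_0(t_0u)(t_0u)\le\liminf_k\Phi'_{\lambda_k}(t_ku_k)(t_ku_k)=\liminf_k t_k\psi'_{\lambda_k,u_k}(t_k)=0$; but by the Sobolev inequality $\Phi'_0(t_0u)(t_0u)=t_0^2\big(a\|u\|^2+bt_0^2\|u\|^4-t_0^{2^*-2}\|u\|_{2^*}^{2^*}\big)\ge t_0^2\|u\|^2\,h(t_0\|u\|)\ge 0$, so $\|u\|^2 h(t_0\|u\|)=0$; if $u\ne 0$ this gives $h(t_0\|u\|)=0$, hence $\|u\|=1$ (uniqueness of the zero of $h$), whence $u_k\to u$ strongly and $u$ realises $S_N$ — impossible on a bounded domain. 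Thus $u=0$.

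The step I expect to be delicate is the lower bound $\int_\Omega f(x,t_0u_\varepsilon)u_\varepsilon\,dx\ge c\,\varepsilon^{N/4}$ in (ii): unlike Proposition~\ref{comparison}(ii), which estimates $\int_\Omega F(x,t_0u_\varepsilon)\,dx$ and freely uses the monotonicity of $F$, here one must work directly with $f(x,t_0u_\varepsilon)u_\varepsilon$, localise to the region where $t_0u_\varepsilon$ lies in $I$, and control the measure of that region via the precise decay of $v_\varepsilon$ and the constants in~\eqref{BNEQ}. A minor but essential point throughout is checking that $h>0$ on $(0,\infty)$ when $a^{\frac{N-4}{2}}b>C_2(N)$ and that $t_0$ is the unique zero of $h$ when $a^{\frac{N-4}{2}}b=C_2(N)$, since this underlies both the control of $\{t_k\}$ and the final rigidity step.
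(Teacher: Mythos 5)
Your proposal is correct and follows essentially the same route as the paper: the authors only write out the inequality $\lambda^*<\lambda_0^*$ (via Theorem \ref{existencem1}, Proposition \ref{lambdazeroachieved} and Corollary \ref{lambda<lambdazero}, exactly as you do) and defer everything else to ``similar to the proof of Proposition \ref{comparison}'', which is precisely the $\psi\mapsto\psi'$, $g\mapsto h$, $C_1\mapsto C_2$ translation you carry out. The one detail the paper leaves implicit --- replacing the monotonicity-of-$F$ argument by a lower bound for $\int_\Omega f(x,t_0u_\varepsilon)u_\varepsilon\,dx$ over the annulus $\{x:\,t_0u_\varepsilon(x)\in I\}$, whose measure is of order $\varepsilon^{N/4}$ --- is handled correctly in your write-up.
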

\begin{proof} We only prove that $\lambda^*< \lambda_0^*$ (the rest of the proof is similar to the proof of Proposition \ref{comparison}). Indeed, from Theorem \ref{existencem1} and Proposition \ref{lambdazeroachieved}, there exists $u\in H_0^1(\Omega)\setminus\{0\}$ such that $\lambda_0^*=\lambda_0(u)$, therefore from Corollary \ref{lambda<lambdazero} we obtain $\lambda^*\le \lambda(u)<\lambda_0(u)=\lambda^*_0$.

\end{proof}
\begin{prop}\label{NehariP1} For each $\lambda<\lambda^*$, the fiber map $\psi_{\lambda,u}$ is increasing and has no critical points.
\end{prop}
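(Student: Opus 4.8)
The plan is to deduce the statement directly from Proposition \ref{prop systems 1} together with the very definition of $\lambda^*$ as an infimum. First I would fix $\lambda<\lambda^*$ and take an arbitrary $u\in H^1_0(\Omega)\setminus\{0\}$. Since $\lambda^*=\inf_{w\in H^1_0(\Omega)\setminus\{0\}}\lambda(w)$, we have $\lambda<\lambda^*\le\lambda(u)$, so $u$ falls into the range of parameters covered by Proposition \ref{prop systems 1}; the point is that this inequality holds \emph{simultaneously} for every admissible $u$, which is exactly what makes the argument uniform.

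Next I would recall the monotonicity in the parameter already used in the proof of Proposition \ref{prop systems 1}. Differentiating the identity \eqref{fiberlambda} with respect to $t$ (legitimate under \eqref{f5}) gives, for every $t>0$,
\[\psi'_{\lambda,u}(t)-\psi'_{\lambda(u),u}(t)=(\lambda(u)-\lambda)\int_\Omega f(x,tu)u\,dx.\]
By \eqref{f2} the integrand $f(x,tu(x))u(x)$ is nonnegative for every $t>0$ and strictly positive on a set of positive measure (recall $u\not\equiv 0$), hence $\int_\Omega f(x,tu)u\,dx>0$; since $\lambda(u)-\lambda>0$ we get $\psi'_{\lambda,u}(t)>\psi'_{\lambda(u),u}(t)$ for all $t>0$. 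On the other hand, the third equation of system \eqref{zeroenergyderivati} forces $\psi'_{\lambda(u),u}(t)\ge\inf_{s>0}\psi'_{\lambda(u),u}(s)=0$. Combining the two inequalities yields $\psi'_{\lambda,u}(t)>0$ for every $t>0$.

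Consequently $\psi_{\lambda,u}$ is strictly increasing on $(0,+\infty)$ and, in particular, has no critical point; as $u$ was arbitrary, the conclusion follows. I do not expect any genuine obstacle in this step: the substantive work was already carried out in Proposition \ref{prop systems 1} and Corollary \ref{lambda<lambdazero}, and the only thing worth keeping in mind is that the estimate $\psi'_{\lambda,u}>0$ is obtained uniformly in $u$ precisely because $\lambda^*$ is defined as the infimum of the thresholds $\lambda(u)$.
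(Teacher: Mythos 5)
Your proof is correct and follows essentially the same route as the paper: the statement is reduced to the inequality $\lambda<\lambda^*\le\lambda(u)$ for every $u$ together with the monotonicity of $\psi'_{\lambda,u}$ in $\lambda$ and the normalization $\inf_{s>0}\psi'_{\lambda(u),u}(s)=0$ from system \eqref{zeroenergyderivati}, exactly as in Proposition \ref{prop systems 1}. You merely spell out the positivity of $\int_\Omega f(x,tu)u\,dx$ via \eqref{f2}, which the paper leaves implicit.
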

\begin{proof} This follows form the fact that $\lambda<\lambda^*\leq \lambda(u)$ for every $u\in H_0^1(\Omega)\setminus\{0\}$ and Proposition \ref{prop systems 1}.
\end{proof}
	\begin{theor}\label{nonexistence} If $\displaystyle a^\frac{N-4}{2}b> C_2(N)$ and $\lambda\in(0,\lambda^*)$, then $(\mathcal P_\lambda)$ has no  non-zero solution.
		\end{theor}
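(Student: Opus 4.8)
The plan is to argue by contradiction. Suppose $u \in H_0^1(\Omega)\setminus\{0\}$ is a non-zero solution of $(\mathcal P_\lambda)$ for some $\lambda \in (0,\lambda^*)$. Since solutions of $(\mathcal P_\lambda)$ are exactly the non-zero critical points of $\Phi_\lambda$, in particular $\Phi'_\lambda(u)(u)=0$, i.e. $\psi'_{\lambda,u}(1)=0$. Thus the fiber map $\psi_{\lambda,u}$ has a critical point at $t=1$. On the other hand, from Proposition \ref{NehariP1}, for every $\lambda<\lambda^*$ and every $v\in H_0^1(\Omega)\setminus\{0\}$ the fiber map $\psi_{\lambda,v}$ is strictly increasing and has no critical points on $(0,\infty)$. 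Applying this with $v=u$ contradicts the existence of the critical point at $t=1$.

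More explicitly, the mechanism behind Proposition \ref{NehariP1} is that since $\lambda<\lambda^* \le \lambda(u)$, the monotonicity relation $\psi'_{\lambda,u}(t) > \psi'_{\lambda(u),u}(t) \ge 0$ for all $t>0$ holds (using \eqref{fiberlambda} differentiated in $t$, together with $\psi'_{\lambda(u),u}\ge 0$ coming from the third equation of system \eqref{zeroenergyderivati} and Proposition \ref{prop systems 1}). Hence $\psi'_{\lambda,u}$ is strictly positive on all of $(0,\infty)$, so $\psi_{\lambda,u}$ cannot have a critical point, in particular not at $t=1$. This rules out the existence of the solution $u$.

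There is essentially no obstacle here once the preliminary machinery on the extremal value $\lambda^*$ and the fiber map analysis (Propositions \ref{prop systems 1}, \ref{comparison1}, \ref{NehariP1}, and Corollary \ref{lambda<lambdazero}) is in place: the only point requiring care is the hypothesis $a^\frac{N-4}{2}b> C_2(N)$, which is what guarantees, via Proposition \ref{comparison1} i), that $\lambda^*>0$ so that the interval $(0,\lambda^*)$ is non-empty and the statement is non-vacuous. The argument itself is a one-line consequence of Proposition \ref{NehariP1}.
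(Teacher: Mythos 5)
Your argument is correct and is essentially identical to the paper's proof: a non-zero solution would give a critical point of the fiber map $\psi_{\lambda,u}$ at $t=1$, which is impossible since Proposition \ref{NehariP1} (via $\lambda<\lambda^*\le\lambda(u)$ and $\psi'_{\lambda,u}>\psi'_{\lambda(u),u}\ge 0$) shows $\psi'_{\lambda,u}(t)>0$ for all $t>0$. Nothing further is needed.
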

	\begin{proof} In fact, from Proposition \ref{NehariP1} we have that $\psi'_{\lambda,u}(t)>0$ for all $t>0$ and $u\in H_0^1(\Omega)\setminus\{0\}$, therefore $\Phi_\lambda$ has no critical points other than $u=0$.
	\end{proof}
The next result provides the existence of $u\in H_0^1(\Omega)\setminus\{0\}$ such that $\Phi_{\lambda^*}'(u)u=0$.

	\begin{prop}\label{lambdaachieved} Suppose that $a^{\frac{N-4}{2}}b>C_2(N)$. Then, there exists $u\in H^1_0(\Omega)\setminus \{0\}$ such that $\lambda^*=\lambda(u)$.
\end{prop}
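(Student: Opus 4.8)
The plan is to show that $\lambda^*$ is attained by exhibiting a minimizing sequence for $\lambda(\cdot)$ and extracting a limit that is non-zero, then invoking uniqueness of the solution to system \eqref{zeroenergyderivati} together with Proposition \ref{prop systems 1}. First I would take $u_k\in H^1_0(\Omega)\setminus\{0\}$ with $\lambda(u_k)\to\lambda^*$. Since $u\mapsto\lambda(u)$ is zero-homogeneous (exactly as in the proof of Proposition \ref{comparison}, using that the system \eqref{zeroenergyderivati} scales), we may normalize $\|u_k\|=1$ and pass to a subsequence with $u_k\rightharpoonup u$. For each $k$ there is $t_k>0$ solving \eqref{zeroenergyderivati} for $\lambda(u_k)$; writing out $\psi'_{\lambda(u_k),u_k}(t_k)=0$ and $\psi''_{\lambda(u_k),u_k}(t_k)=0$ explicitly, and using the growth bounds \eqref{f3}, \eqref{f4} as in the proof of Proposition \ref{lambdazero}, one shows $\{t_k\}$ is bounded away from $0$ and $\infty$, so $t_k\to t>0$, and along a further subsequence $\|u_k\|^{2^*}_{2^*}\to s$ for some $s>0$.

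The key point is that $u\neq 0$. Here I would argue by contradiction: if $u=0$, then by the compactness of the subcritical embedding $\int_\Omega F(x,t_ku_k)\,dx\to 0$ and $\int_\Omega f(x,t_ku_k)u_k\,dx\to 0$, so the equations $\psi'_{\lambda(u_k),u_k}(t_k)=0$, $\psi''_{\lambda(u_k),u_k}(t_k)=0$ pass to the limit to give $a+bt^2-st^{2^*-2}=0$ and $2bt^2-(2^*-2)st^{2^*-2}=0$, i.e. $h(t)=h'(t)=0$ in the notation of Lemma \ref{functions} (after the rescaling $t\mapsto\|u_k\|t$, which is trivial here since $\|u_k\|=1$, together with $s\le S_N^{-2^*/2}$ from \eqref{embedding}). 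But Lemma \ref{functions}(ii) says $h(t_0)=h'(t_0)=0$ forces $a^{(N-4)/2}b=C_2(N)$, contradicting $a^{(N-4)/2}b>C_2(N)$. (One must be slightly careful: with $s\le S_N^{-2^*/2}$ one only gets $h$-type inequalities; the cleanest route is to note that $a+bt^2-S_N^{-2^*/2}t^{2^*-2}\le a+bt^2-st^{2^*-2}=0$ while $h$ has strictly positive minimum under our hypothesis, a contradiction — this is exactly the computation carried out at the end of the proof of Proposition \ref{comparison}.) Hence $u\neq 0$.

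It remains to check that $\lambda^*=\lambda(u)$. By definition $\lambda(u)\ge\lambda^*$. For the reverse inequality, I would use the weak semicontinuity properties of Lemma \ref{variational property}(3) applied to $t_ku_k\rightharpoonup tu$ (note $t_k\to t$), which give $\Phi'_{\lambda^*}(tu)(tu)\le\liminf_k\Phi'_{\lambda(u_k)}(t_ku_k)(t_ku_k)=\liminf_k t_k\psi'_{\lambda(u_k),u_k}(t_k)=0$, i.e. $\psi'_{\lambda^*,u}(t)\le 0$. On the other hand Proposition \ref{prop systems 1} (with $\lambda=\lambda^*\le\lambda(u)$) gives $\psi'_{\lambda^*,u}(s)\ge\psi'_{\lambda(u),u}(s)\ge 0$ for all $s>0$; if $\lambda^*<\lambda(u)$ the first inequality would be strict for all $s$, contradicting $\psi'_{\lambda^*,u}(t)\le 0$. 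Therefore $\lambda^*=\lambda(u)$, as claimed.

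The main obstacle is establishing $u\neq 0$, i.e. ruling out the concentration/vanishing scenario; everything else is a transcription of arguments already used for $\lambda_0^*$ in Propositions \ref{lambdazero} and \ref{comparison}. The strict inequality $a^{(N-4)/2}b>C_2(N)$ is used precisely there, via Lemma \ref{functions}(ii), to contradict the possibility that the weak limit carries no mass.
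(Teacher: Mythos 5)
Your proposal is correct and follows essentially the same route as the paper's proof: normalize a minimizing sequence for $\lambda(\cdot)$ by homogeneity, rule out $u=0$ via the lower bound $0<\min h\le h(t_k)\le \lambda_k\int_\Omega f(x,t_ku_k)u_k/t_k\,dx$ (which is where $a^{(N-4)/2}b>C_2(N)$ enters through Lemma \ref{functions}), and then conclude $\lambda^*=\lambda(u)$ by combining Lemma \ref{variational property}(3) with Proposition \ref{prop systems 1}. Your parenthetical worry about the $\psi''=0$ equation is resolved exactly as the paper does it — only $\psi'_{\lambda_k,u_k}(t_k)=0$ is needed.
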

\begin{proof}
	Let $\lambda_k$ be a sequence of positive numbers converging to $ \lambda^*$. Thus, there exists  $u_k\in H^1_0(\Omega)\setminus \{0\}$ with $\|u_k\|=1$ (by the  homogeneity of the map $u\to\lambda(u)$) such that $\lambda_k=\lambda(u_k)$. We deduce then, the existence of  $u\in H^1_0(\Omega)$ such that $u_k\rightharpoonup u$. We claim that $u\neq 0$.
	By the defintion of $\lambda_k$, there exists $t_k=t(u_k)>0$ such that
	\[\psi'_{\lambda_k, u_k}(t_k)=\Phi'_{\lambda_k}(t_ku_k)( u_k)=0\] that is
	\[	a+bt_k^2-\|u_k\|_{2^*}^{2^*} t_k^{2^*-2}-\lambda_k \int_\Omega \frac{f(x,t_ku_k)u_k}{t_k}dx=0.\]
	
	Thus, we obtain
	\begin{equation}\label{null derivative2} 0<h(t_k)\le  a+bt_k^2-S_N^{-\frac{2^*}{2}} t_k^{2^*-2}\leq \lambda_k\int_\Omega \frac{f(x,t_ku_k)u_k}{t_k}dx.	\end{equation}
 From the above inequality, \eqref{f3} and \eqref{f4} we deduce that $\{t_k\}$ is bounded in $(0,+\infty)$ and it admits a subsequence still denoted by $\{t_k\}$ converging to some $\bar t>0$. Also, from  \eqref{null derivative2} and Lemma \ref{functions} we deduce that $u\neq0$. By Proposition \ref{prop systems 1}, $\psi'_{\lambda^*, u}(t)>0$ for every $t>0$. But since $t_ku_k\rightharpoonup \bar t u$, by 3) Lemma \ref{variational property} it follows
	$$\psi'_{\lambda^*, u}(\bar t)=\Phi_{\lambda^*}'(\bar t u)(\bar t u)\leq \liminf_k\Phi_{\lambda_k}'(t_k u_k)(t_k u_k)=\liminf_k \psi'_{\lambda_k, u_k}(t_k)=0,$$ which leads to a contradiction.
\end{proof}
As a consequence we have:
\begin{proof}[Proof of Theorem \ref{T3}]  It follows from Theorem \ref{nonexistence}, Proposition \ref{prop systems 1} and Proposition \ref{lambdaachieved}.
\end{proof}
\section{A particular case: $f(x,u)=|u|^{p-2}u$}\label{S4}
In this Section we consider the particular case where $f(x,u)=|u|^{p-2}u$, that is

\begin{equation}
\label{parti} \ \ \ \ \left\{
\begin{array}{ll}
- \left( a+b\ds\int_\Omega |\nabla u|^2 dx\right)\Delta u=
|u|^{2^*-2}u+\lambda |u|^{p-2}u, & \hbox{ in } \Omega \\ \\
u=0, & \hbox{on } \partial \Omega
\end{array}
\right.
\end{equation}
and $p\in (2,2^*)$. We will compare the results obtained here with  the literature. In fact we will extend and complement some results of \cite{DN}. For some values of $p$  in fact, we have a fairly complete picture. One can easily see that $f(x,u)=|u|^{p-2}u$ satisfies all hypothesis \eqref{f1}-\eqref{f5} and therefore, with respect to problem \eqref{parti} we have, as a consequence of Theorems \ref{T1}, \ref{T12}, \ref{T2} and \ref{T3}, the following:
\begin{theor}\label{MT} There exists a function $\lambda_0^*:(0,\infty)^2\to [0,\infty)$ satisfying the following. 
	\begin{itemize}
		\item[i)] If $\displaystyle a^\frac{N-4}{2}b> C_1(N)$, then $\lambda_0^*(a,b)>0$ and:
		\begin{itemize}
			\item[1)]  For each $\lambda> \lambda_0^*(a,b)$, problem \eqref{parti} admits a positive solution, which is a global minimizer to $\Phi_\lambda$ with negative energy.
			\item[2)]  If $\lambda=\lambda_0^*(a,b)$, then problem \eqref{parti} admits a positive solution, which is a global minimizer to $\Phi_{\lambda_0^*(a,b)}$ with zero energy.
			\item[3)] For $\lambda\in(0,\lambda_0^*(a,b))$, then only global minimizer to $\Phi_\lambda$ is $u=0$.
		\end{itemize}
		\item[ii)]  If $\displaystyle a^\frac{N-4}{2}b= C_1(N)$, then $\lambda_0^*(a,b)=0$ and for each $\lambda>0$, problem \eqref{parti} admits a positive solution, which is a global minimizer to $\Phi_\lambda$ with negative energy.
		\item[iii)] Moreover
	\begin{equation*}
	\lambda_0^*(a_k,b_k)\to 0,\ \mbox{if}\ a_k\to a>0, b_k\to b>0,\  \displaystyle a_k^\frac{N-4}{2}b_k\downarrow C_1(N).
	\end{equation*}
	\item[iv)] If $\displaystyle a^\frac{N-4}{2}b\ge C_2(N)$, then there exists $\varepsilon:=\varepsilon(a,b)>0$ such that: for each $\lambda \in(\lambda_0^*(a,b)-\varepsilon,\lambda_0^*(a,b))$, problem \eqref{parti} admits a positive solution, which is a local minimizer to $\Phi_\lambda$ with positive energy.
	\end{itemize}
\end{theor}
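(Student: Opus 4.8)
The plan is to read off Theorem \ref{MT} from Theorems \ref{T1} and \ref{T12} (the latter together with its quantitative version Theorem \ref{existencelocal} and the continuity statement Theorem \ref{continuitylambdazero}) once two routine facts are in place: that the pure power $f(x,u)=|u|^{p-2}u$ satisfies the abstract hypotheses \eqref{f1}--\eqref{f5}, and that the non-zero solutions supplied by those theorems can be chosen positive.

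First I would check \eqref{f1}--\eqref{f5} for $f(x,u)=|u|^{p-2}u$ with $p\in(2,2^*)$. Indeed $f(x,0)=0$; $f(x,v)=|v|^{p-2}v$ has the sign of $v$ and $f(x,v)\ge 1$ for $v\ge 1$, so \eqref{f2} holds with $\mu=1$ and $I=(1,\infty)$; $|f(x,v)|=|v|^{p-1}\le 1+|v|^{p-1}$ gives \eqref{f3}; $|f(x,v)|\,|v|^{-1}=|v|^{p-2}\to 0$ as $v\to 0$ since $p>2$, which is \eqref{f4}; and for $u\in H_0^1(\Omega)\setminus\{0\}$ and $t>0$ one has $\int_\Omega f(x,tu)\,dx=t^{p-1}\int_\Omega|u|^{p-2}u\,dx$, a $C^1$ function of $t$ on $(0,\infty)$, which is \eqref{f5}. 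Setting $\lambda_0^*(a,b):=\inf_{u\in H_0^1(\Omega)\setminus\{0\}}\lambda_0(u)$ whenever $a^{\frac{N-4}{2}}b\ge C_1(N)$ (with $\lambda_0(u)$ as in Proposition \ref{lambdazero}, a quantity that depends on $a,b$) and $\lambda_0^*(a,b):=0$ otherwise, Proposition \ref{comparison} shows that $\lambda_0^*$ maps $(0,\infty)^2$ into $[0,\infty)$, that $\lambda_0^*(a,b)>0$ when $a^{\frac{N-4}{2}}b>C_1(N)$, and that $\lambda_0^*(a,b)=0$ when $a^{\frac{N-4}{2}}b=C_1(N)$. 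Items i)--iv) then follow from Theorems \ref{T1}, \ref{T12}, \ref{existencelocal} and \ref{continuitylambdazero}, except that those results only assert the existence of a \emph{non-zero} solution.

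To upgrade "non-zero" to "positive" I would exploit the structure $F(x,v)=|v|^p/p$: it is even, and $\|\,|u|\,\|=\|u\|$, $\|\,|u|\,\|_{2^*}=\|u\|_{2^*}$ for every $u\in H_0^1(\Omega)$, so $\Phi_\lambda(|u|)=\Phi_\lambda(u)$. Hence, if $u_\lambda$ is the global minimizer of $\Phi_\lambda$ furnished by Theorem \ref{T1} (respectively the minimizer of $\hat I_\lambda$ furnished by Theorem \ref{existencelocal}), then $w:=|u_\lambda|$ attains the same infimum, so it is a minimizer of the same type, hence a critical point of $\Phi_\lambda$; that is, $w$ is a non-negative, non-trivial weak solution of \eqref{parti}. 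Since $a+b\|w\|^2$ is a fixed positive constant, $w$ solves the uniformly elliptic problem $-\Delta w=(a+b\|w\|^2)^{-1}\bigl(w^{2^*-1}+\lambda w^{p-1}\bigr)\ge 0$; a Brezis--Kato/Moser iteration gives $w\in L^\infty(\Omega)\cap C^{1,\alpha}_{\mathrm{loc}}(\Omega)$, and the strong maximum principle then forces $w>0$ in $\Omega$. Replacing $u_\lambda$ by $w$ gives the positive solutions claimed in items i), ii) and iv).

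Since the statement is essentially a specialization, I do not anticipate a genuine obstacle. The two points deserving care are: that $u\mapsto|u|$ preserves the \emph{type} of the critical point (global minimizer, or minimizer of $\hat I_\lambda$ hence local minimizer) --- immediate, because it leaves both $\Phi_\lambda$ and $\|\cdot\|$, and so the constraint $\|u\|\ge r$, unchanged --- and the elliptic regularity needed to apply the strong maximum principle in the presence of the critical exponent $2^*$, which is classical for Brezis--Nirenberg type equations.
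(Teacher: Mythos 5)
Your proposal is correct and follows essentially the same route as the paper, which obtains Theorem \ref{MT} by verifying that $f(x,u)=|u|^{p-2}u$ satisfies \eqref{f1}--\eqref{f5} and then invoking Theorems \ref{T1} and \ref{T12} (via Theorems \ref{existencem0}, \ref{existencem1}, \ref{continuitylambdazero} and \ref{existencelocal}). The only addition is your explicit upgrade from ``non-zero'' to ``positive'' via $\Phi_\lambda(|u|)=\Phi_\lambda(u)$, preservation of the constraint $\|u\|\ge r$, and the strong maximum principle --- a standard detail the paper leaves implicit, and which you handle correctly.
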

Recall that $C_1(N)<C_2(N)$.
\begin{theor}\label{MT1} There exists a function $\lambda^*:(0,\infty)^2\to [0,\infty)$ satisfying the following. 
	\begin{itemize}
		\item[i)] If $\displaystyle a^\frac{N-4}{2}b> C_2(N)$, then $0<\lambda^*(a,b)<\lambda_0^*(a,b)$.
		\item[ii)] If $\displaystyle a^\frac{N-4}{2}b= C_2(N)$, then $0=\lambda^*(a,b)<\lambda_0^*(a,b)$.
		\item[iii)] If $\displaystyle a^\frac{N-4}{2}b> C_2(N)$, then there exists $\varepsilon:=\varepsilon(a,b)>0$ such that for each $\lambda>\lambda_0^*(a,b)-\varepsilon$, problem \eqref{parti} admits a positive mountain pass type solution with positive energy.
		\item[iv) ]If $\displaystyle a^\frac{N-4}{2}b= C_2(N)$, then there exists $\tilde\lambda>0$ such that for each $\lambda>\tilde\lambda$, problem \eqref{parti} admits a positive mountain pass type solution with positive energy.
		\item[v)] If $\displaystyle a^\frac{N-4}{2}b> C_2(N)$ and $\lambda\in(0,\lambda^*(a,b))$, then problem \eqref{parti} has no non-zero solutions.
	
	\end{itemize}
\end{theor}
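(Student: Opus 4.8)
The plan is to derive Theorem \ref{MT1} from the abstract results of Section \ref{S3} --- once one records the elementary fact that $f(x,u)=|u|^{p-2}u$ satisfies \eqref{f1}--\eqref{f5} --- together with a positivity argument specific to the power nonlinearity. Throughout, $\lambda_0^*(a,b)$ and $\lambda^*(a,b)$ denote the extremal values of Sections \ref{S2}--\ref{S3}, now viewed as functions of $(a,b)\in(0,\infty)^2$.

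First I would dispatch items i), ii) and v), which are restatements of facts already established. The chain $0<\lambda^*(a,b)<\lambda_0^*(a,b)$ in i) is Proposition \ref{comparison1} i); in ii), $\lambda^*(a,b)=0$ is Proposition \ref{comparison1} ii), while $\lambda_0^*(a,b)>0$ follows from Proposition \ref{comparison} i) since $a^\frac{N-4}{2}b=C_2(N)>C_1(N)$. Item v) is Theorem \ref{nonexistence} applied to $(\mathcal P_\lambda)$ with this particular $f$.

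For the existence parts iii) and iv) I would simply invoke the mountain pass solutions already produced. If $a^\frac{N-4}{2}b>C_2(N)$, Theorem \ref{MPS} i) gives a non-zero critical point of $\Phi_\lambda$ at the mountain pass level $c_\lambda$ for every $\lambda\ge\lambda_0^*$, and Theorem \ref{MPS>0} gives one for $\lambda\in(\lambda_0^*-\varepsilon,\lambda_0^*)$ with $\varepsilon=\varepsilon(a,b)$ the constant of Theorem \ref{existencelocal}; gluing the two ranges covers all $\lambda>\lambda_0^*-\varepsilon$. If $a^\frac{N-4}{2}b=C_2(N)$, only Theorem \ref{MPS} ii) is available, which produces a mountain pass critical point for every $\lambda$ beyond some $\tilde\lambda>0$. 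In all cases the energy is $c_\lambda$, which is bounded below by the positive constant of Proposition \ref{MPG}, so it is positive.

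What remains --- and the step I expect to be the main obstacle --- is to make these critical points positive; this is the only place where the power form of $f$ is genuinely used. The idea is to work with the truncated functional
\[\Phi_\lambda^+(u)=\frac{a}{2}\|u\|^2+\frac{b}{4}\|u\|^4-\frac{1}{2^*}\|u^+\|_{2^*}^{2^*}-\frac{\lambda}{p}\|u^+\|_p^p,\]
which satisfies $\Phi_\lambda^+\ge\Phi_\lambda$ on $H^1_0(\Omega)$, coincides with $\Phi_\lambda$ on the cone of non-negative functions, and obeys $\Phi_\lambda^+(|u|)=\Phi_\lambda(u)$ because $\|\,|u|\,\|=\|u\|$ and the Lebesgue norms are unchanged. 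Taking as mountain pass endpoint the non-negative representative $|u_{\lambda_0^*}|$ (resp.\ $|u_\lambda|$) --- which is again a minimizer, hence admissible, and still satisfies $\lambda_0(|u_{\lambda_0^*}|)=\lambda_0^*$ since $\psi_{\lambda,|u|}=\psi_{\lambda,u}$ --- and using that $u\mapsto|u|$ is continuous on $H^1_0(\Omega)$, I would check that the mountain pass level of $\Phi_\lambda^+$ over the corresponding path class equals $c_\lambda$: the inequality $\ge$ is immediate from $\Phi_\lambda^+\ge\Phi_\lambda$, while $\le$ follows by replacing a path $\gamma$ with $|\gamma|$, along which $\Phi_\lambda^+$ agrees with $\Phi_\lambda\circ\gamma$. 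The mountain pass geometry of $\Phi_\lambda^+$ is inherited from $\Phi_\lambda$, and its Palais--Smale condition reduces to the one already at hand: testing $(\Phi_\lambda^+)'(u_k)\to 0$ against $u_k^-$ forces $(a+b\|u_k\|^2)\|u_k^-\|^2\to 0$, hence $u_k^-\to 0$, and then $\{u_k^+\}$ is an approximate Palais--Smale sequence for $\Phi_\lambda$ to which Lemma \ref{variational property} applies (in the borderline case of iv) the excluded level is avoided because $c_\lambda\to 0$ as $\lambda\to\infty$, exactly as in the proof of Theorem \ref{MPS} ii)). This yields a critical point $w_\lambda$ of $\Phi_\lambda^+$ with $\Phi_\lambda^+(w_\lambda)=c_\lambda>0$; testing $(\Phi_\lambda^+)'(w_\lambda)=0$ against $w_\lambda^-$ shows $w_\lambda\ge 0$, so $w_\lambda$ solves \eqref{parti}, and from
\[-\Delta w_\lambda=\frac{1}{a+b\|w_\lambda\|^2}\bigl(w_\lambda^{2^*-1}+\lambda\,w_\lambda^{p-1}\bigr)\ge 0,\qquad w_\lambda\not\equiv 0,\]
standard regularity and the strong maximum principle give $w_\lambda>0$ in $\Omega$. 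The positivity of the minimizers appearing in Theorem \ref{MT} is then immediate, since $|u|$ is again a global, resp.\ local, minimizer of $\Phi_\lambda$ and hence a non-negative critical point.
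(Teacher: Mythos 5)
Your proposal is correct, and for the substance of items i)--v) it follows exactly the paper's route: the paper simply declares Theorem \ref{MT1} to be a consequence of the general-case results (items i) and ii) from Proposition \ref{comparison1}, item v) from Theorem \ref{nonexistence}, items iii) and iv) from Theorems \ref{MPS} and \ref{MPS>0} glued across the two ranges of $\lambda$, with $c_\lambda\ge M>0$ giving positive energy). Where you go beyond the paper is the positivity of the solution itself: the general theorems only produce \emph{non-zero} critical points, and the paper asserts positivity in Theorems \ref{MT} and \ref{MT1} without any argument. Your truncation of the nonlinear terms to $u^+$, the verification that $|u_{\lambda_0^*}|$ is an admissible endpoint with the same extremal value, the Palais--Smale reduction via testing against $u_k^-$, and the maximum principle at the end constitute a standard and correct way to fill that gap. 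One small imprecision: the claim that the mountain pass level of $\Phi_\lambda^+$ over paths ending at $|u_{\lambda_0^*}|$ \emph{equals} $c_\lambda$ is not quite immediate (the two path classes have different endpoints, and replacing $\gamma$ by $|\gamma|$ only maps one class into the other, giving an inequality in one direction); but this is harmless, since all that is needed is that the new level is a critical value bounded below by the constant $M$ of Proposition \ref{MPG} and, in the borderline case $a^\frac{N-4}{2}b=C_2(N)$, still tends to $0$ as $\lambda\to\infty$ so that the excluded level in Lemma \ref{variational property} is avoided --- both of which your argument secures.
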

We note that items i) and ii) of Theorem \ref{MT1} follow from Proposition \ref{comparison1}. Combining Theorem \ref{MT} with \cite[Proposition 4.2]{DN} we conclude that the curve $\displaystyle a^\frac{N-4}{2}b= C_1(N)$ is a threshold in the sense stated in Theorem \ref{T4}:

\begin{proof}[Proof of Theorem \ref{T4}] By inspection, one can easily see that the constant $\alpha_2$ defined in \cite{DN} corresponds to our $C_2(N)$ with obvious modifications with respect to $a> 0$. Since $C_1(N)<C_2(N)$ and for each $a,b$ satisfying $0<\displaystyle a^\frac{N-4}{2}b\le C_1(N)$, there exists $u\in H_0^1(\Omega)\setminus\{0\}$ such that $\Phi_\lambda(u)<0$ for all $\lambda>0$, 
	it follows that  \cite[Proposition 4.2]{DN} can be applied and then $\Phi_\lambda$ has a global minimizer with negative energy for all $\lambda>0$. The rest of the proof is a consequence of Theorem \ref{MT}.
\end{proof}

In order to get more results concerning our problem $(\mathcal{P}_\lambda)$, let us introduce and study the Nehari sets associated to $\Phi_\lambda$: for each $a,b,\lambda\in \mathbb{R}$ let
$$\mathcal N:=\mathcal N_{a,b,\lambda} = \{ u \in H^1_0(\Omega)\setminus \{ 0 \} : \Phi'_\lambda(u)u=0\}=\{ u \in H^1_0(\Omega)\setminus \{ 0 \} : \psi'_{\lambda,u}(1)=0\}.$$	
We split the above set in three disjoint sets
$$\mathcal N^0:=\mathcal N_{a,b,\lambda}^0=\{ u \in H^1_0(\Omega)\setminus \{ 0 \} : \psi'_{\lambda,u}(1)=0, \psi''_{\lambda,u}(1)=0\},$$
$$\mathcal N^+:=\mathcal N_{a,b,\lambda}^+=\{ u \in H^1_0(\Omega)\setminus \{ 0 \} : \psi'_{\lambda,u}(1)=0, \psi''_{\lambda,u}(1)>0\},$$
$$\mathcal N^-:=\mathcal N_{a,b,\lambda}^-=\{ u \in H^1_0(\Omega)\setminus \{ 0 \} : \psi'_{\lambda,u}(1)=0, \psi''_{\lambda,u}(1)<0\}.$$
By using the implicit function theorem and the Lagrange's multiplier rule we have that:
\begin{prop}\label{naturalconstraint} Suppose that $a,b>0$ and $\lambda\ge 0$. Then, whenever  $\mathcal{N}^-,\mathcal{N}^+$ are not empty, they are $C^1$ manifolds of co-dimension $1$ in $H_0^1(\Omega)$. Moreover, every critical point of $\Phi_\lambda$ restricted to $\mathcal{N}^-\cup \mathcal{N}^+$ is a critical point to $\Phi_\lambda$. Moreover, if $u\in \mathcal N^+$ is a local minimizer of ${\Phi_\lambda}_{|_{\mathcal N^+}}$, then it is a local minimizer of $\Phi_\lambda$ over $H^1_0(\Omega)$.
\end{prop}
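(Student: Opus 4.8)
The plan is to realize $\mathcal N^{+}$ and $\mathcal N^{-}$ as regular level sets of a single $C^{1}$ constraint, then apply the Lagrange multiplier rule for the critical point statement, and finally use a fibre–projection argument for the local–minimizer statement. Set
\[
J_{\lambda}(u):=\Phi'_{\lambda}(u)u=\psi'_{\lambda,u}(1)=a\|u\|^{2}+b\|u\|^{4}-\|u\|_{2^{*}}^{2^{*}}-\lambda\|u\|_{p}^{p},
\]
so that $J_{\lambda}\in C^{1}(H^{1}_{0}(\Omega))$ and $\mathcal N=J_{\lambda}^{-1}(0)\setminus\{0\}$; moreover (when nonempty) $\mathcal N^{\pm}$ is the subset of $\mathcal N$ on which the continuous map $u\mapsto\psi''_{\lambda,u}(1)$ is positive, resp. negative, hence $\mathcal N^{\pm}$ is relatively open in $J_{\lambda}^{-1}(0)$. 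Since $J_{\lambda}(su)=s\,\psi'_{\lambda,u}(s)$, differentiation at $s=1$ gives, for every $u\in\mathcal N$,
\[
J'_{\lambda}(u)u=\psi'_{\lambda,u}(1)+\psi''_{\lambda,u}(1)=\psi''_{\lambda,u}(1),
\]
which is nonzero on $\mathcal N^{\pm}$. Therefore $J'_{\lambda}(u)\neq0$ there, $0$ is a regular value of $J_{\lambda}$ in a neighbourhood of $\mathcal N^{\pm}$, and the implicit function theorem yields that $\mathcal N^{+}$ and $\mathcal N^{-}$ are $C^{1}$ submanifolds of $H^{1}_{0}(\Omega)$ of codimension one.

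For the second assertion, a critical point $u$ of $\Phi_{\lambda}$ restricted to $\mathcal N^{+}\cup\mathcal N^{-}$ lies in exactly one of the two disjoint, relatively open pieces $\mathcal N^{\pm}$, so the Lagrange multiplier rule applied on that manifold produces $\mu\in\R$ with $\Phi'_{\lambda}(u)=\mu J'_{\lambda}(u)$. Testing with $u$ and using $\Phi'_{\lambda}(u)u=0$ together with $J'_{\lambda}(u)u=\psi''_{\lambda,u}(1)\neq0$ forces $\mu=0$, hence $\Phi'_{\lambda}(u)=0$, i.e. $u$ is a free critical point of $\Phi_{\lambda}$.

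The delicate point is the last statement. Let $u_{0}\in\mathcal N^{+}$ be a local minimizer of $\Phi_{\lambda}$ on $\mathcal N^{+}$. The map $(t,v)\mapsto\psi'_{\lambda,v}(t)$ is $C^{1}$ near $(1,u_{0})$, vanishes there, and has $t$–derivative $\psi''_{\lambda,u_{0}}(1)>0$ at that point; the implicit function theorem therefore yields a neighbourhood $V$ of $u_{0}$ and a $C^{1}$ map $v\mapsto t(v)>0$ with $t(u_{0})=1$, $\psi'_{\lambda,v}(t(v))=0$, and, after shrinking $V$, $\psi''_{\lambda,v}(t(v))>0$, so that $t(v)v\in\mathcal N^{+}$ and $t(v)v\to u_{0}$ as $v\to u_{0}$. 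Using the joint continuity of $(t,v)\mapsto\psi''_{\lambda,v}(t)$ and its positivity at $(1,u_{0})$, shrink $V$ further and choose $\varepsilon>0$ so that $\psi''_{\lambda,v}(t)>0$ for all $v\in V$ and all $t\in I:=(1-\varepsilon,1+\varepsilon)$, with also $t(v)\in I$ on $V$. Then for each $v\in V$ the fibre $\psi_{\lambda,v}$ is strictly convex on $I$ with unique critical point $t(v)$ on $I$, whence $\Phi_{\lambda}(t(v)v)=\psi_{\lambda,v}(t(v))\le\psi_{\lambda,v}(1)=\Phi_{\lambda}(v)$. Finally, since $t(v)v\in\mathcal N^{+}$ and $t(v)v\to u_{0}$, a last shrinking of $V$ gives $\Phi_{\lambda}(t(v)v)\ge\Phi_{\lambda}(u_{0})$, and combining the two inequalities yields $\Phi_{\lambda}(v)\ge\Phi_{\lambda}(u_{0})$ for all $v\in V$.

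The main obstacle is precisely the inequality $\Phi_{\lambda}(v)\ge\Phi_{\lambda}(t(v)v)$: it is not a formal consequence of the existence of the projection $t(v)$, and the device that makes it work is the uniform strict convexity of the fibres $\psi_{\lambda,v}$ on a fixed interval around $1$, which rests on $u_{0}\notin\mathcal N^{0}$ (so $\psi''_{\lambda,u_{0}}(1)>0$) and on the joint continuity of the second fibre–derivative. One must also check that the finitely many shrinkings of $V$ are mutually compatible and that $u_{0}\neq0$, so that all fibres through nearby points are defined.
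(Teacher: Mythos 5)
Your proof is correct and uses exactly the tools the paper invokes (the implicit function theorem for the manifold structure and the Lagrange multiplier rule, with $J'_\lambda(u)u=\psi''_{\lambda,u}(1)\neq 0$ on $\mathcal N^\pm$ killing the multiplier); the paper itself states the proposition without written proof. Your fibre–projection argument for the local-minimizer claim, resting on the uniform positivity of $\psi''_{\lambda,v}$ on a fixed interval around $t=1$, is the standard and correct way to supply the detail the paper omits.
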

To understand the Nehari sets we prove:
\begin{prop}\label{graph psi 1}
	For each $a,b>0$ and $\lambda\ge0$ and $u\in H^1_0(\Omega),$ only one of the next $i)-iii)$ occurs.
	
	\begin{itemize}
		\item[i)] The function $\psi_{\lambda,u}$ is increasing and has no critical points.
		\item[ii)] The function $\psi_{\lambda,u}$ has only one critical point in $(0,+\infty)$ at the value $t_\lambda(u)$. Moreover, $\psi''_{\lambda,u}(t_\lambda(u))=0$ and $\psi_{\lambda,u}$ is increasing.
		\item[iii)] The function $\psi_{\lambda,u}$ has only two critical points, $0 < t^-_\lambda (u) < t^+_\lambda (u)$. Moreover, $t^-_\lambda(u)$ is a local maximum and  $t^+_\lambda(u)$ is a local minimum with $\psi_{\lambda,u}''(t^-_\lambda(u))<0<\psi_{\lambda,u}''(t^+_\lambda(u))$.
	\end{itemize}
\end{prop}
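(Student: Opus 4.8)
The plan is to reduce the trichotomy to the shape of the derivative $\psi'_{\lambda,u}$, and then to exploit the algebraic structure of the three competing powers $2<p<2^*<4$. Fix $u\in H^1_0(\Omega)\setminus\{0\}$ and set, for $t>0$,
\[
\psi'_{\lambda,u}(t)=t\Big(a\|u\|^2+b\|u\|^4 t^2-\|u\|^{2^*}_{2^*}t^{2^*-2}-\lambda t^{p-2}\|u\|_p^p\Big),
\]
using $f(x,v)=|v|^{p-2}v$. So the sign of $\psi'_{\lambda,u}(t)$ for $t>0$ coincides with the sign of
\[
q(t):=a\|u\|^2+b\|u\|^4 t^2-\|u\|^{2^*}_{2^*}t^{2^*-2}-\lambda\|u\|_p^p\, t^{p-2}.
\]
By Proposition \ref{graph psi}(ii), $q(t)>0$ near $0$ and $q(t)\to+\infty$ as $t\to\infty$ (the leading term is $b\|u\|^4t^2$ since $2>2^*-4$ and $2>p-2$). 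Hence any critical point of $\psi_{\lambda,u}$ is a zero of $q$, all such zeros lie in a compact subinterval of $(0,\infty)$, and $q$ is positive on a neighborhood of $0$ and of $+\infty$. The three cases correspond to: $q>0$ throughout (case i)), $q$ touches zero once tangentially (case ii)), or $q$ has exactly two simple zeros $t^-_\lambda(u)<t^+_\lambda(u)$ with $q<0$ between them (case iii)); in that last case the sign pattern $+\,-\,+$ of $q$ forces $\psi_{\lambda,u}$ to have a local max at $t^-_\lambda(u)$ and a local min at $t^+_\lambda(u)$, and $\psi''_{\lambda,u}(t^\pm_\lambda(u))=t^\pm q'(t^\pm_\lambda(u))$ has the sign of $q'$ at those simple zeros, which is $<0$ at $t^-_\lambda(u)$ and $>0$ at $t^+_\lambda(u)$.

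The crux is therefore to rule out every other configuration of zeros of $q$ — in particular to show $q$ cannot have three or more zeros, nor two zeros other than the $+\,-\,+$ pattern. The cleanest route is a convexity/concavity argument: divide $q(t)$ by $t^{p-2}$ (or differentiate), and observe that
\[
r(t):=t^{2-p}q(t)=a\|u\|^2 t^{2-p}+b\|u\|^4 t^{4-p}-\|u\|^{2^*}_{2^*}t^{2^*-p}-\lambda\|u\|_p^p
\]
has derivative
\[
r'(t)=(2-p)a\|u\|^2 t^{1-p}+(4-p)b\|u\|^4 t^{3-p}-(2^*-p)\|u\|^{2^*}_{2^*}t^{2^*-p-1}.
\]
Since $2-p<0$ while $4-p>0$ and $2^*-p>0$, the map $t\mapsto t^{p-1}r'(t)$ is the sum of a negative constant, an increasing concave-from-below term $(4-p)b\|u\|^4 t^2$, and $-(2^*-p)\|u\|^{2^*}_{2^*}t^{2^*-2}$; dividing once more by a suitable power one checks that $t^{p-1}r'(t)$ changes sign at most once from $-$ to $+$ (it is $<0$ near $0$ because of the constant, and $\to+\infty$ because $2>2^*-2$, and its derivative is monotone after one more division). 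Hence $r$ — and so $q$ — is first strictly decreasing then strictly increasing on $(0,\infty)$: $q$ is ``quasi-convex'' with a unique minimum point $t_m(u)$. Since $q>0$ near $0$ and near $\infty$, the value $q(t_m(u))$ being positive, zero, or negative gives exactly cases i), ii), iii) respectively, with the stated sign of the second derivative in each case.

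The main obstacle I anticipate is the last monotonicity claim: showing that $t\mapsto t^{p-1}r'(t)$ (equivalently $q$ itself) has at most one critical point requires handling the three exponents $2-p,\,4-p,\,2^*-p$ carefully, since it is not literally a convex function. The trick is to keep differentiating and dividing by powers of $t$ until only two competing monomials with opposite-sign coefficients remain, at which point the sign change is elementary; the hypothesis $2<p<2^*<4$ is exactly what makes the ordering of exponents cooperate at each step. Once $q$ is known to be strictly-decreasing-then-strictly-increasing the rest is a direct reading of signs, together with Proposition \ref{graph psi}(i)-(ii) to pin down the behavior at the endpoints $t\to 0^+$ and $t\to\infty$; uniqueness of the critical points $t_\lambda(u)$, $t^\pm_\lambda(u)$ in each case is then automatic. (One may alternatively deduce this Proposition directly from Corollary \ref{graph g} and Proposition \ref{functionsappl} by comparing $q$ with $h$, since $a^{(N-4)/2}b$ relative to $C_2(N)$ governs whether $h$ dips below zero, but the self-contained convexity argument above avoids splitting into parameter regimes.)
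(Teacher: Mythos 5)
Your proof is correct and follows essentially the same route as the paper: the paper rewrites $\psi'_{\lambda,u}(t)=0$ as $a\|u\|^2=\varphi(t)$ for a function $\varphi$ with a unique interior maximum (your $q$ is exactly $a\|u\|^2-\varphi$), and reads off the three cases from the position of the level $a\|u\|^2$ relative to $\max\varphi$; you merely supply the divide-and-differentiate details that the paper dismisses as ``easy to see''. The one wrinkle is the inference ``$r$ is decreasing-then-increasing, and so $q$ is'' --- multiplying by $t^{p-2}$ does not preserve quasi-convexity in general --- but this is harmless, since $q$ and $r=t^{2-p}q$ have the same zeros and signs (and $q'=t^{p-2}r'$ at those zeros), which is all the trichotomy and the signs of $\psi''_{\lambda,u}$ actually require.
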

\begin{proof}
	We have $\psi_{\lambda,u}'(t)=0$ if and only if
	$$a\|u\|^2=-b\|u\|^4t^2+\|u\|^{2^*}_{2^*}t^{2^*-2}+\frac{\lambda}{p}\|u\|_p^pt^{p-2}.$$
	Let $\varphi(t)=-b\|u\|^4t^2+\|u\|^{2^*}_{2^*}t^{2^*-2}+\frac{\lambda}{p}\|u\|_p^pt^{p-2}$ for each $t>0$. Then, it is easy to see that there exists a unique maximum point $t^*$ of $\varphi$ such that $\varphi(t^*)>0$. Thus, the following cases occur.
	If $a\|u\|^2>\varphi(t^*)$, then,  	$\psi_{\lambda,u}'(t)>0$ for every $t>0$ and i) holds. If $a\|u\|^2=\varphi(t^*)$, then, $\psi_{\lambda,u}'(t)>0$ for every $t\neq t^*$ and $\psi_{\lambda,u}''(t^*)=a\|u\|^2-\varphi(t^*)-t^* \varphi'(t^*)=0$, so that ii) is verified. Finally, if $a\|u\|^2<\varphi(t^*)$, then, there exist $t_1<t^*<t_2$ such that $a\|u\|^2=\varphi(t_1)=\varphi(t_2)$ and $a\|u\|^2>\varphi(t)$ for $t<t_1$ and $t>t_2$, $a\|u\|^2<\varphi(t)$ for $t_1<t<t_2$ so that iii) is satisfied with $t_\lambda^-(u)=t_1$ and $t_\lambda^+(u)=t_2$.
\end{proof}
\subsection{A refined non-existence result}
Recall from Theorem \ref{nonexistence} that if $\displaystyle a^\frac{N-4}{2}b> C_2(N)$ and $\lambda\in(0,\lambda^*)$, then $(\mathcal P_\lambda)$ has no  non-zero solution. This is clear, since for that range of parameters, the Nehari set is empty. We show how to improve the non-existence result. First we need some preliminaries results:
\begin{cor}\label{equation} Assume that $\displaystyle a^\frac{N-4}{2}b> C_2(N)$, then for each $u\in H_0^1(\Omega)\setminus\{0\}$ satisfying $\lambda^*=\lambda(u)$ we have that
	\begin{equation*}
	-(2a+4b\|u\|^2)\Delta u-2^*|u|^{2^*-2}u-\lambda^* p|u|^{p-2}u=0.
	\end{equation*}
\end{cor}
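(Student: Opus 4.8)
The plan is to recognise the displayed identity as the Euler--Lagrange equation of the minimisation problem underlying the extremal parameter $\lambda^*$. The starting point is the variational characterisation
\[
\lambda^*=\inf_{v\in H_0^1(\Omega)\setminus\{0\}}\mathcal R(v),\qquad
\mathcal R(v):=\frac{a\|v\|^2+b\|v\|^4-\|v\|_{2^*}^{2^*}}{\|v\|_p^p}.
\]
To derive it, note that $\psi'_{\lambda,u}(t)=t\bigl(a\|u\|^2+b\|u\|^4t^2-\|u\|_{2^*}^{2^*}t^{2^*-2}-\lambda\|u\|_p^pt^{p-2}\bigr)$, so that $\psi'_{\lambda,u}(t)>0$ for all $t>0$ is equivalent to $\lambda<\mathcal R(tu)$ for all $t>0$. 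By Proposition \ref{graph psi} and Proposition \ref{prop systems 1}, for $0<\lambda<\lambda(u)$ the fiber $\psi_{\lambda,u}$ has no critical point and is therefore increasing, whereas at $\lambda=\lambda(u)$ it has one; hence $\lambda(u)=\inf_{t>0}\mathcal R(tu)$. Taking the infimum over $u\neq0$ and noting that every nonzero element of $H_0^1(\Omega)$ is of the form $tu$, we obtain $\lambda^*=\inf_{u\neq0}\inf_{t>0}\mathcal R(tu)=\inf_{v\neq0}\mathcal R(v)$.

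Now let $u\in H_0^1(\Omega)\setminus\{0\}$ satisfy $\lambda^*=\lambda(u)$; such $u$ exists by Proposition \ref{lambdaachieved}. From $\psi'_{\lambda^*,u}(t(u))=0$ one reads off $\mathcal R(t(u)u)=\lambda^*$, so $v:=t(u)u$ is a global minimiser of $\mathcal R$; since replacing $u$ by $t(u)u$ leaves $\lambda(u)=\lambda^*$ unchanged and makes $t(u)=1$, we may assume $v=u$. Because $a^{\frac{N-4}{2}}b>C_2(N)$, Proposition \ref{functionsappl}(ii) and Lemma \ref{functions}(ii) give $a\|w\|^2+b\|w\|^4-\|w\|_{2^*}^{2^*}>0$ for every $w\neq0$, so $\mathcal R\in C^1\!\bigl(H_0^1(\Omega)\setminus\{0\}\bigr)$ takes only positive values (in particular $\lambda^*>0$, in agreement with Proposition \ref{comparison1}). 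As a minimiser over the open set $H_0^1(\Omega)\setminus\{0\}$, $u$ is a free critical point: $\mathcal R'(u)=0$. Writing $\mathcal R=A/B$ with $A(v)=a\|v\|^2+b\|v\|^4-\|v\|_{2^*}^{2^*}$ and $B(v)=\|v\|_p^p$, this becomes $A'(u)=\mathcal R(u)B'(u)=\lambda^*B'(u)$; since $A'(u)\varphi=(2a+4b\|u\|^2)\int_\Omega\nabla u\cdot\nabla\varphi\,dx-2^*\int_\Omega|u|^{2^*-2}u\varphi\,dx$ and $B'(u)\varphi=p\int_\Omega|u|^{p-2}u\varphi\,dx$, this is exactly the weak formulation of
\[
-(2a+4b\|u\|^2)\Delta u-2^*|u|^{2^*-2}u-\lambda^*p|u|^{p-2}u=0.
\]

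The Euler--Lagrange computation itself is routine. The two steps that require attention are the reduction $\lambda^*=\inf\mathcal R$ --- where one must check that ``$\psi_{\lambda,u}$ has no critical point'' genuinely forces $\psi'_{\lambda,u}>0$, using the positivity near the origin from Proposition \ref{graph psi} --- and the strict bound $A(w)>0$ for all $w\neq0$, which is precisely where the \emph{strict} inequality $a^{\frac{N-4}{2}}b>C_2(N)$ enters (for $a^{\frac{N-4}{2}}b=C_2(N)$ one has $\lambda^*=0$ and the whole picture degenerates, cf. Proposition \ref{comparison1}). A second, more self-contained route is to apply the implicit function theorem to the system \eqref{zeroenergyderivati}, deducing that $u\mapsto\lambda(u)$ is $C^1$ near a minimiser and differentiating it there; the main obstacle along that route is the non-degeneracy of the relevant $2\times2$ Jacobian, equivalently $\psi'''_{\lambda^*,u}(t(u))\neq0$, which again has to be drawn from $a^{\frac{N-4}{2}}b>C_2(N)$.
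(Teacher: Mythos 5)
Your proof is correct, and it reaches the equation by a route that is equivalent to, but packaged differently from, the paper's. The paper defines $J_{\lambda^*}(w)=\Phi'_{\lambda^*}(w)w=A(w)-\lambda^*B(w)$ (in your notation), observes that by the very definition of $\lambda^*$ one has $J_{\lambda^*}\ge 0$ on all of $H_0^1(\Omega)$ with the infimum $0$ attained at the nonzero point furnished by Proposition \ref{lambdaachieved}, and concludes $J'_{\lambda^*}(u)=0$, which is exactly the displayed identity. You instead establish the nonlinear Rayleigh quotient characterisation $\lambda^*=\inf_{v\neq 0}\mathcal R(v)$ with $\mathcal R=A/B$ (which the paper never states explicitly, although it is the content of Il'yasov's method that the paper cites), and differentiate $\mathcal R$ at a free minimiser. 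Since at such a point $A(u)=\lambda^*B(u)$ and $B(u)>0$, one has $\mathcal R'(u)=J'_{\lambda^*}(u)/B(u)$, so the two Euler--Lagrange computations coincide; what your version buys is that it makes the extremal-value structure $\lambda(u)=\inf_{t>0}\mathcal R(tu)$ explicit and handles the normalisation $t(u)=1$ more carefully than the paper does (the paper tacitly identifies $u$ with the minimiser $t(u)u$), at the cost of having to justify the two reductions you flag. Both of those reductions are sound: $\psi'_{\lambda,u}>0$ for $\lambda<\lambda(u)$ is exactly what the proof of Proposition \ref{prop systems 1} shows, and the differentiability of $\mathcal R$ away from the origin needs only $B(v)=\|v\|_p^p>0$ (the strict positivity of $A$, where $a^{\frac{N-4}{2}}b>C_2(N)$ enters, is needed only to keep $\lambda^*>0$, not for the Euler--Lagrange step). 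Your closing remark about an implicit-function-theorem alternative is an aside and carries no burden for the argument.
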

\begin{proof} Define $J_{\lambda^*}:H^1_0(\Omega)\to \mathbb R$ by $J_{\lambda^*}(w)=\Phi_{\lambda^*}'(w)w$. From Lemma \ref{variational property} item 3),  $J_{\lambda^*}$ attains its infimum. Moreover, by the definition of $\lambda^*$, 
	\begin{equation*}
	\inf\{J_{\lambda^*}(w):w\in H_0^1(\Omega)\}=J_{\lambda^*}(u).
	\end{equation*} (see also Proposition \ref{lambdazeroachieved}).
 We conclude that  $J_{\lambda^*}'(u)=0$, which is the desired equation.
\end{proof}

\begin{theor}\label{nonexistenceimproved} If $a^{\frac{N-4}{2}}b>C_{2}(N)$ and $\Omega$ is star-shaped, then there exists $\varepsilon>0$ such that $(\mathcal{P}_\lambda)$ has no non-zero solution for each $\lambda\in(0,\lambda^*+\varepsilon)$.
\end{theor}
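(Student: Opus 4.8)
The plan is to argue by contradiction, combining the Pohozaev identity (available since $\Omega$ is star-shaped and $f(x,u)=|u|^{p-2}u$) with the characterization of $\lambda^*$ as an extremal value and the compactness statements in Lemma~\ref{variational property}. Suppose the conclusion fails: then there exist sequences $\lambda_k\downarrow\lambda^*$ (one can take $\lambda_k\to\lambda^*$ from above, since for $\lambda<\lambda^*$ Theorem~\ref{nonexistence} already gives non-existence) and non-zero solutions $u_k$ of $(\mathcal P_{\lambda_k})$. Each $u_k\in\mathcal N_{a,b,\lambda_k}$, so $\psi'_{\lambda_k,u_k}(1)=0$, and since $u_k$ is a solution we have $\Phi'_{\lambda_k}(u_k)=0$, in particular $\Phi_{\lambda_k}(u_k)=c_k$ for some critical value $c_k$. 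First I would show $(u_k)_k$ is bounded in $H_0^1(\Omega)$: this follows from the fibering analysis in Proposition~\ref{graph psi 1} (the critical point must be the local minimum $t^+_{\lambda_k}(u_k)$ or the degenerate one) together with the coercivity-type estimates already used repeatedly, e.g. $h(\|u_k\|)\le \lambda_k\|u_k\|_p^p/(p\|u_k\|^2)$ from $\psi'_{\lambda_k,u_k}(1)=0$, which bounds $\|u_k\|$ because $h$ is bounded below by a positive constant when $a^{\frac{N-4}{2}}b>C_2(N)$.

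Next, passing to a subsequence, $u_k\rightharpoonup u$ in $H_0^1(\Omega)$. I would use the Palais--Smale-type conclusion of Lemma~\ref{variational property}(2): since $\Phi_{\lambda_k}(u_k)=c_k$ is bounded (boundedness of $u_k$ plus the growth of $\Phi$) and $\Phi'_{\lambda_k}(u_k)=0\to 0$, as long as the limit of $c_k$ avoids the forbidden level $\tfrac{(2^*-2)^2a^2}{4\cdot 2^*(4-2^*)b}$ (automatic when $a^{\frac{N-4}{2}}b>C_2(N)$, strict inequality), $u_k$ has a strongly convergent subsequence, so $u_k\to u$ strongly and $u$ solves $(\mathcal P_{\lambda^*})$. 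The key point is then that $u\ne 0$: if $u=0$ then $\|u_k\|\to 0$, but the lower bound $h(\|u_k\|)\le \lambda_k\|u_k\|_p^{p-2}\|u_k\|^2_{p}\|u_k\|^{-2}\cdot(1/p)\cdots$ forces $\|u_k\|$ to stay away from $0$ since $h$ attains a positive minimum; more directly, from $\psi'_{\lambda_k,u_k}(1)=0$ divided by $\|u_k\|^2$ one gets $h(\|u_k\|)\le \tfrac{\lambda_k}{p}\|u_k\|_p^p/\|u_k\|^2\to 0$ if $u=0$, contradicting $\inf h>0$. Hence $u\in H_0^1(\Omega)\setminus\{0\}$ is a non-zero solution of $(\mathcal P_{\lambda^*})$.

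Finally I would derive the contradiction from the extremal nature of $\lambda^*$. Since $u$ solves $(\mathcal P_{\lambda^*})$ we have $\psi'_{\lambda^*,u}(1)=0$, so $u\in\mathcal N_{a,b,\lambda^*}$ with $1$ a critical point of $\psi_{\lambda^*,u}$; by Proposition~\ref{prop systems 1} and the definition $\lambda^*=\inf_u\lambda(u)$, the only way this can happen is $\lambda^*=\lambda(u)$ and the critical point is the degenerate one, i.e. $\psi''_{\lambda^*,u}(1)=0$, so $u\in\mathcal N^0_{a,b,\lambda^*}$. On the other hand, being a genuine solution, $u$ also satisfies the Pohozaev identity; rescaling $w=(a+b\|u\|^2)^{-1/(2^*-2)}u$ reduces $(\mathcal P_{\lambda^*})$ to $-\Delta w=|w|^{2^*-2}w+\tilde\lambda|w|^{p-2}w$ with $\tilde\lambda=\lambda^*(a+b\|u\|^2)^{(p-2^*)/(2^*-2)}(a+b\|u\|^2)^{\cdots}>0$, and for star-shaped $\Omega$ the Pohozaev and energy identities together with $p<2^*$ impose the relation $\tfrac{1}{N}\|w\|^2\big(\tfrac{N-2}{2}-\tfrac{N}{p}\big)\tilde\lambda\|w\|_p^p$-type constraint; combined with $\psi''_{\lambda^*,u}(1)=0$ this over-determines $u$ and yields $u=0$, a contradiction. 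The main obstacle I anticipate is making this last step airtight: one must show that the three simultaneous conditions on $u$ — being a solution (Euler--Lagrange), lying in $\mathcal N^0$ (degenerate fiber), and satisfying Pohozaev — are incompatible for $a^{\frac{N-4}{2}}b>C_2(N)$; the cleanest route is probably to express Pohozaev and the Nehari identity $\psi'_{\lambda^*,u}(1)=0$ as two linear relations among $\|u\|^2$, $\|u\|^{2^*}_{2^*}$, $\|u\|_p^p$, use $\psi''_{\lambda^*,u}(1)=0$ as a third, and check that the resulting linear system has only the trivial solution (equivalently, that the corresponding determinant, which depends on $N$, $p$ and the sign-pattern forced by $a^{\frac{N-4}{2}}b>C_2(N)$, is non-zero), giving $\|u\|=0$. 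An alternative, avoiding the explicit determinant, is to invoke Corollary~\ref{equation}: at $\lambda^*$, $u$ minimizes $w\mapsto\Phi'_{\lambda^*}(w)w$ so it solves the auxiliary equation $-(2a+4b\|u\|^2)\Delta u=2^*|u|^{2^*-2}u+\lambda^* p|u|^{p-2}u$; subtracting suitable multiples of this and of $(\mathcal P_{\lambda^*})$ and applying Pohozaev to each eliminates the critical term and forces $\|u\|_p^p=0$, hence $u=0$. I would present the argument via Corollary~\ref{equation} since it keeps the computation short and uses machinery already in the paper.
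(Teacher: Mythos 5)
Your overall architecture coincides with the paper's: the range $\lambda<\lambda^*$ is Theorem \ref{nonexistence}; for $\lambda\in[\lambda^*,\lambda^*+\varepsilon)$ one takes $\lambda_k\downarrow\lambda^*$ with non-zero solutions $u_k$, uses the Nehari identity divided by $\|u_k\|^2$ together with $\inf h>0$ (valid since $a^{\frac{N-4}{2}}b>C_2(N)$) to get $0<d_1\le\|u_k\|\le d_2$, invokes Lemma \ref{variational property}(2) to pass to a strong limit $u\neq 0$ solving $(\mathcal P_{\lambda^*})$, and then uses Proposition \ref{prop systems 1} plus $\lambda^*=\inf\lambda(\cdot)$ to conclude $\lambda^*=\lambda(u)$, $u\in\mathcal N^0$, so that Corollary \ref{equation} supplies the auxiliary equation. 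All of this is exactly what the paper does, and your identification of the last step as the delicate one is accurate.

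That last step, however, does not survive as you wrote it. Combining the two equations so as to \emph{eliminate the critical term} leaves (after dividing by the resulting coefficient, when positive) $-\Delta u=c\,|u|^{p-2}u$ with $2<p<2^*$; Pohozaev does \emph{not} exclude non-trivial solutions of a subcritical pure-power equation on a bounded star-shaped domain (the Lane--Emden problem has positive solutions there), so no contradiction follows and certainly not ``$\|u\|_p^p=0$''. The correct combination is the opposite one: take $p$ times the Euler--Lagrange equation of $(\mathcal P_{\lambda^*})$ minus the equation of Corollary \ref{equation}, which eliminates $|u|^{p-2}u$ and yields $-\bigl[(2-p)a+(4-p)b\|u\|^2\bigr]\Delta u=(2^*-p)|u|^{2^*-2}u$. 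If the bracket is positive this rescales to the pure \emph{critical} equation $-\Delta w=|w|^{2^*-2}w$, which the Pohozaev identity does rule out on star-shaped domains; if the bracket is $\le 0$, pairing with $u$ gives a non-positive left-hand side equal to $(2^*-p)\|u\|_{2^*}^{2^*}\ge 0$, hence $u=0$ directly. (Your alternative ``determinant'' route is also not clean as stated, since Pohozaev enters as an identity carrying the boundary term $\int_{\partial\Omega}(x\cdot\nu)|\partial_\nu u|^2$, not as a linear relation among $\|u\|^2$, $\|u\|_{2^*}^{2^*}$, $\|u\|_p^p$ alone.) With the elimination corrected, your argument is the paper's proof.
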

\begin{proof} The case  $\lambda\in(0,\lambda^*)$ is given by Theorem \ref{nonexistence}. Suppose on the contrary that $(\mathcal{P}_{\lambda^*})$ has a non-zero solution $u$. From Proposition \ref{prop systems 1} and the definition of $\lambda^*$, we have that $u\in \mathcal{N}_{\lambda^*}^0=\mathcal{N}_{\lambda^*}$ (note from Proposition \ref{lambdaachieved} that $\mathcal{N}_{\lambda^*}^0\neq \emptyset$) and hence $\lambda^*=\lambda(u)$. From Corollary \ref{equation} we deduce that
	\begin{equation*}
	\left\{	\begin{array}{ll}
	-(a+b\|u\|^2)\Delta u-|u|^{2^*-2}u-\lambda^* |u|^{p-2}u=0,\\ \\
	-(2a+4b\|u\|^2)\Delta u-2^*|u|^{2^*-2}u-\lambda^* p|u|^{p-2}u=0,
	\end{array} \right.
	\end{equation*}	
	which implies that
	\begin{equation*}
	-[(2-p)a+(4-p)b\|u\|^2]\Delta u=(2^*-p)|u|^{2^*-2}u,
	\end{equation*}
which leads, from   	Pohozaev identity, to $u=0$, a contradiction. Now suppose that there exists a sequence $\lambda_k\downarrow \lambda^*$ and a corresponding sequence of non-zero solutions $u_k$ of $(\mathcal{P}_{\lambda_k})$. Then
	\begin{equation*}
		a+b\|u_k\|^2 -\|v_k\|_{2^*}^{2^*}\|u_k\|^{2^*-2} -\lambda_k \|v_k\|_p^p\|u_k\|^{p-2} =0,
	\end{equation*}
	where $v_k=u_k/\|u_k\|$. Therefore $(u_k)_k$ is bounded and does not converge to $0$. From Lemma \ref{variational property} item 2), we conclude that $u_k\to u\in H_0^1(\Omega)\setminus\{0\}$ and
	\begin{equation*}
	-(a+b\|u\|^2)\Delta u-|u|^{2^*-2}u-\lambda^* |u|^{p-2}u=0,
	\end{equation*}
	that is $u$ is a non zero solution of $(\mathcal{P}_{\lambda^*})$, a contradiction.
	
\end{proof}
\subsection{Existence of local minimizers with positive energy when $C_1(N)<a^\frac{N-4}{2}b< C_2(N)$}
In this Section we prove Theorem \ref{T41}. For each $a,b,\lambda> 0$, define (whenever $\mathcal{N}^0_\lambda$, $\mathcal{N}^+_\lambda$ are not empty)
\begin{equation*}
c^0:=c^0(a,b,\lambda)=\inf\{\Phi_\lambda(u):u\in \mathcal N^0\},
\end{equation*}
\begin{equation*}
c^+:=c^+(a,b,\lambda)=\inf\{\Phi_\lambda(u):u\in \mathcal{N}^+\},
\end{equation*}
and
\begin{equation*}
\sigma:=\inf\{\liminf_{n\to \infty}\Phi_\lambda(u_k):u_k\in \mathcal{M}\},
\end{equation*}
where
\begin{equation*}
\mathcal{M}=\{u_k\in\mathcal{N}:\lim_{n\to \infty}\psi''_{u_k}(1)=0\}.
\end{equation*}
With a simple modification of  \cite[Lemma 3.4]{DN} we can prove:
\begin{lem}\label{naimenineq} There holds
	\begin{equation*}
	\frac{(p-2)^2a^2}{4p(4-p)b}\le \sigma\le  c^0.
	\end{equation*}
\end{lem}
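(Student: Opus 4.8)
The plan is to establish the two inequalities $\frac{(p-2)^2a^2}{4p(4-p)b}\le \sigma$ and $\sigma\le c^0$ separately, working entirely with the fiber-map description in Proposition \ref{graph psi 1}. For the upper bound $\sigma\le c^0$, I would first observe that when $u\in\mathcal{N}^0$, the constant sequence $u_k\equiv u$ belongs to $\mathcal{M}$ (since $\psi''_{\lambda,u}(1)=0$), so that $\liminf_k\Phi_\lambda(u_k)=\Phi_\lambda(u)\ge \sigma$; taking the infimum over $\mathcal{N}^0$ gives $\sigma\le c^0$ at once. (If $\mathcal{N}^0=\emptyset$ the right-hand side is $+\infty$ and there is nothing to prove.)

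For the lower bound, the key point is a pointwise estimate: I claim that for every $u\in H^1_0(\Omega)\setminus\{0\}$ and every $t>0$ for which $\psi'_{\lambda,u}(t)=0$, one has
\begin{equation*}
\Phi_\lambda(tu)\ge \frac{(p-2)^2a^2}{4p(4-p)b}+o(1)
\end{equation*}
as $\psi''_{\lambda,u}(t)\to 0$. Concretely, writing $A=\|u\|^2$, $B=\|u\|^2_{2^*}$ (adjusting exponents appropriately for the power nonlinearity) and using $\psi'_{\lambda,tu}\cdot$-type relations, the two conditions $\psi'_{\lambda,u}(t)=0$ and $\psi''_{\lambda,u}(t)=0$ let one solve for two of the three monomials $at^2A$, $bt^4A^2$, $t^{2^*}\|u\|^{2^*}_{2^*}$, $\lambda t^p\|u\|^p_p$ in terms of the others; substituting back into $\Phi_\lambda(tu)=\frac{a}{2}t^2A+\frac{b}{4}t^4A^2-\frac{1}{2^*}t^{2^*}\|u\|^{2^*}_{2^*}-\frac{\lambda}{p}t^p\|u\|^p_p$ and minimizing the resulting expression in the remaining free quantity yields exactly $\frac{(p-2)^2a^2}{4p(4-p)b}$. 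This is the computation underlying \cite[Lemma 3.4]{DN}; here one only needs the inequality, so the critical-term estimate $\|u\|^{2^*}_{2^*}\le S_N^{-2^*/2}\|u\|^{2^*}$ (equivalently Proposition \ref{functionsappl} / Lemma \ref{functions}) is used to absorb the critical contribution in the favorable direction. For a genuine minimizing sequence $u_k\in\mathcal{M}$ one has $\psi'_{\lambda,u_k}(1)=0$ and $\psi''_{\lambda,u_k}(1)\to 0$, so applying the estimate at $t=1$ and passing to the liminf gives $\sigma\ge \frac{(p-2)^2a^2}{4p(4-p)b}$.

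I expect the main obstacle to be the algebraic bookkeeping in the pointwise estimate, in particular checking that the critical term enters with the correct sign so that dropping it (via $S_N$) only decreases $\Phi_\lambda$, and that the minimization over the one remaining degree of freedom is attained at an interior point producing precisely the stated constant rather than a weaker bound. A secondary subtlety is the possibility that $\mathcal{M}$ or $\mathcal{N}^0$ is empty for the given parameters, which must be handled by the convention that the infimum of the empty set is $+\infty$; in that degenerate case the asserted chain of inequalities holds trivially, and the substantive content is exactly the non-degenerate case treated above.
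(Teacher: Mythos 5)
Your proposal is correct and follows essentially the same route as the paper, which proves this lemma simply as a ``simple modification'' of \cite[Lemma 3.4]{DN}: the upper bound via constant sequences in $\mathcal{M}$, and the lower bound by solving the approximate system $\psi'_{\lambda,u_k}(1)=0$, $\psi''_{\lambda,u_k}(1)=o(1)$ for $\|u_k\|_{2^*}^{2^*}$ and $\lambda\|u_k\|_p^p$, substituting into $\Phi_\lambda(u_k)$ (which leaves a positive combination of $a\|u_k\|^2$ and $b\|u_k\|^4$) and minimizing subject to $\|u_k\|_{2^*}^{2^*}\ge 0$, which yields exactly $\frac{(p-2)^2a^2}{4p(4-p)b}$. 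The one point to state more carefully is that the Sobolev inequality is not used to ``absorb'' the critical term (nonnegativity of $\|u_k\|_{2^*}^{2^*}$ suffices for the algebra) but rather to guarantee that $\|u_k\|$ stays bounded away from zero on $\mathcal{N}$, which excludes the spurious small-norm branch of the constraint region that appears when $\psi''_{\lambda,u_k}(1)$ approaches $0$ from below.
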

\begin{prop}\label{bifurcation} There exists $\varepsilon>0$ such that for all $\lambda\in(\lambda_0^*-\varepsilon,\lambda_0^*)$ we have that
	\begin{equation*}
	c^+<\frac{(p-2)^2a^2}{4p(4-p)b}.
	\end{equation*}
\end{prop}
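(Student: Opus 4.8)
The plan is to estimate $c^+(a,b,\lambda)$ from above by testing the energy functional along a fiber through a well-chosen function and letting $\lambda$ approach $\lambda_0^*$. Recall from Theorem~\ref{existencem1} and Proposition~\ref{lambdazeroachieved} that there exists $u_{\lambda_0^*}\in H_0^1(\Omega)\setminus\{0\}$ with $\lambda_0^*=\lambda_0(u_{\lambda_0^*})$, so that the fiber map $\psi_{\lambda_0^*,u_{\lambda_0^*}}$ has a critical point with zero energy realizing $\inf_{t>0}\psi_{\lambda_0^*,u_{\lambda_0^*}}(t)=0$. Since $C_1(N)<a^\frac{N-4}{2}b<C_2(N)$, Corollary~\ref{graph g} tells us $\overline g(t)=t^2g(t)$ has a local maximum at $t^-_{a,b}$ and a local minimum at $t^+_{a,b}$, and Proposition~\ref{graph psi 1} then says that each fiber $\psi_{\lambda,u}$ is either monotone, has a degenerate critical point, or has exactly two critical points $t^-_\lambda(u)<t^+_\lambda(u)$ with $t^+_\lambda(u)$ a local minimum lying on $\mathcal N^+$. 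First I would show that for $u=u_{\lambda_0^*}$ the fiber $\psi_{\lambda,u}$ is of type iii) for $\lambda$ slightly below $\lambda_0^*$: indeed at $\lambda=\lambda_0^*$ the fiber touches zero at its minimum from above, and by \eqref{fiberlambda} decreasing $\lambda$ pushes the fiber strictly upward, so the minimum value becomes strictly positive but the local min/max structure persists by continuity and the strict inequalities $\overline g''(t^-_{a,b})<0<\overline g''(t^+_{a,b})$.

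Next I would track the value $\Phi_\lambda\big(t^+_\lambda(u_{\lambda_0^*})\,u_{\lambda_0^*}\big)=\psi_{\lambda,u_{\lambda_0^*}}\!\big(t^+_\lambda(u_{\lambda_0^*})\big)$ as $\lambda\uparrow\lambda_0^*$. By \eqref{fiberlambda} the fibers converge uniformly on compact subsets of $(0,\infty)$ to $\psi_{\lambda_0^*,u_{\lambda_0^*}}$, and the critical points $t^\pm_\lambda(u_{\lambda_0^*})$ converge (using that the limiting critical point is the nondegenerate local minimum $t^+_{a,b}\|u_{\lambda_0^*}\|^{-1}$ up to scaling, so the implicit function theorem applies), whence
\[
c^+(a,b,\lambda)\ \le\ \psi_{\lambda,u_{\lambda_0^*}}\!\big(t^+_\lambda(u_{\lambda_0^*})\big)\ \xrightarrow[\lambda\uparrow\lambda_0^*]{}\ \psi_{\lambda_0^*,u_{\lambda_0^*}}\!\big(t^+_{\lambda_0^*}(u_{\lambda_0^*})\big)\ =\ 0.
\]
In particular $c^+(a,b,\lambda)\to 0$ as $\lambda\uparrow\lambda_0^*$. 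Since the quantity $\frac{(p-2)^2a^2}{4p(4-p)b}$ is a fixed positive constant, there exists $\varepsilon>0$ such that $c^+(a,b,\lambda)<\frac{(p-2)^2a^2}{4p(4-p)b}$ for all $\lambda\in(\lambda_0^*-\varepsilon,\lambda_0^*)$, which is the assertion. (One should also check that $\mathcal N^+_\lambda\neq\emptyset$ for these $\lambda$, but this is exactly the content of the type iii) analysis above, so $c^+$ is well defined on this range.)

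The main obstacle is the continuity of $\lambda\mapsto t^+_\lambda(u_{\lambda_0^*})$ up to and including $\lambda=\lambda_0^*$: at the endpoint the two branches $t^-$ and $t^+$ of critical points are both present but we need the minimum branch to vary smoothly, which requires knowing the limiting critical point is nondegenerate. This is where Corollary~\ref{graph g} (the strict sign of $\overline g''$ at $t^\pm_{a,b}$ when $a^\frac{N-4}{2}b<C_2(N)$) together with Proposition~\ref{functionsappl} is essential: they force $\psi''_{\lambda_0^*,u_{\lambda_0^*}}\big(t^+_{\lambda_0^*}(u_{\lambda_0^*})\big)>0$, so $u_{\lambda_0^*}$ lies in $\mathcal N^+_{\lambda_0^*}$ (not $\mathcal N^0$), the implicit function theorem gives a $C^1$ branch $\lambda\mapsto t^+_\lambda(u_{\lambda_0^*})$ near $\lambda_0^*$, and the estimate above goes through by plain continuity. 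A secondary point to verify is that the convergence $\psi_{\lambda,u_{\lambda_0^*}}\to\psi_{\lambda_0^*,u_{\lambda_0^*}}$ is strong enough in $C^2$ on a neighborhood of $t^+_{\lambda_0^*}(u_{\lambda_0^*})$; this follows from \eqref{f5} and \eqref{fiberlambda} since for the power nonlinearity the fiber maps depend real-analytically on $(t,\lambda)$.
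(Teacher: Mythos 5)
Your proof is correct and follows essentially the same route as the paper: test $c^+$ against $\Phi_\lambda\bigl(t^+_\lambda(w)\,w\bigr)$ for the extremizer $w$ with $\lambda_0(w)=\lambda_0^*$, observe that the fiber map is of type iii) for $\lambda$ slightly below $\lambda_0^*$, and let $\lambda\uparrow\lambda_0^*$ so that this test value tends to $0$, which beats the fixed positive constant $\frac{(p-2)^2a^2}{4p(4-p)b}$. One small correction: the nondegeneracy $\psi''_{\lambda_0^*,w}(t^+)>0$ does not really come from Corollary \ref{graph g} (which concerns the comparison function $\overline g$, not the fiber map itself) but is already built into item iii) of Proposition \ref{graph psi 1}, once Corollary \ref{lambda<lambdazero} (i.e.\ $\lambda(w)<\lambda_0(w)$) rules out the degenerate alternative ii) at $\lambda=\lambda_0(w)$.
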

\begin{proof}
We claim that $c^+ \to 0$ as $\lambda\uparrow \lambda_0^*$. In fact, let $w\in H_0^1(\Omega)$ be such that $\lambda_0^*=\lambda_0(w)$ (see Proposition \ref{lambdazeroachieved}). Since $\lambda^*<\lambda_0^*$, there exists $\varepsilon>0$ such that if $\lambda\in(\lambda_0^*-\varepsilon,\lambda_0^*)$, then the fiber map $\psi_{\lambda,w}$ satisfies iii) of Proposition \ref{graph psi} and hence $t_\lambda^+(w)w\in \mathcal{N}_\lambda^+$. It follows that
	\begin{equation*}
	0\leq c^+ \le \Phi_\lambda(t_\lambda^+(w)w)\to 0,\ \mbox{as}\ \lambda\uparrow \lambda_0^*.
	\end{equation*}
	To conclude we choose $\varepsilon>0$ in such a way that for each $\lambda\in(\lambda_0^*-\varepsilon,\lambda_0^*)$ we have that $c^+<\frac{(p-2)^2a^2}{4p(4-p)b}$.
\end{proof}

\begin{proof}[Proof of Theorem \ref{T41}] Let $\varepsilon>0$ be given as in Proposition \ref{bifurcation}. With a simple adaptation of the proof of \cite[Corollary 3.3]{DN}, one can use Lemma \ref{naimenineq} and Proposition \ref{bifurcation}, to show the existence of a Palais-Smale sequence $u_k\in \mathcal{N}_\lambda^+$ such that  $\Phi_\lambda(u_k)\rightarrow c^+$. As in the proof of \cite[Proposition 4.2]{DN} we have that $(u_k)_k$ is bounded and $u_k\rightharpoonup u_0$ with $u_0\neq 0$. We claim that $(u_k)_k$ has a strongly convergent subsequence. Indeed, suppose on the contrary and define $\tilde{u}_k:=u_k-u_0$.
	
Let $e>0$ satisfies $\|u_0\|^2-e\lim_{k\to \infty}\|\tilde{u}_k\|^2=0$ and define
\begin{equation*}
w_{k,s}=(1+s)^{\frac{1}{2}}u_0+(1-es)^{\frac{1}{2}}\tilde{u}_k \ \ \ \mbox{and}\ \ \ \overline{h}(s)=\lim_{k\to \infty}\Phi_\lambda(w_{k,s}),\ \forall s\in (-1,1/e),
\end{equation*}
and observe that
\begin{equation}\label{c+1}
\overline{h}(0)=c^+,\  \overline{h}'(0)=0 \ \mbox{and}\ \overline{h}''(0)<0.
\end{equation}
 Define
\begin{equation*}
\overline{g}(s)=\lim_{k\to \infty}\Phi'_\lambda(w_{k,s})w_{k,s},\ \forall s\in (-1,1/e),
\end{equation*}
	and observe (see for details \cite[Proposition 4.2]{DN}) that
	\begin{equation}\label{c+2}
	\overline{g}(0)<0.
	\end{equation}
	From \eqref{c+1} and \eqref{c+2} we deduce that for sufficiently large $k$ we have that  $\Phi_\lambda(w_{k,s})<c^+$ and $\Phi'_\lambda(w_{k,s})w_{k,s}<0$. It follows that $\psi_{\lambda,w_{k,s}}$ satisfies item iii) of Proposition \ref{graph psi 1} and there exists $t_{\lambda}^-(w_{k,s})<1<t_{\lambda}^+(w_{k,s})$ such that $t_{\lambda}^+(w_{k,s})w_{k,s}\in\mathcal{N}_\lambda^+$. Therefore
	\begin{equation*}
	\Phi_\lambda(t_{\lambda}^+(w_{k,s})w_{k,s})<	\Phi_\lambda(w_{k,s})<c^+,
	\end{equation*}
	which is a contradiction. Thus we can assume that $u_k\to u$ in $H_0^1(\Omega)$ and from Proposition \ref{naturalconstraint} the proof is complete.
\end{proof}
\subsection{Existence of the second solution when $a^\frac{N-4}{2}b< C_2(N)$ }
For each $a>0$ and $b,\lambda\ge 0$, define (whenever $\mathcal{N}^-$ is not empty)
\begin{equation*}
c^-:=c^-(a,b,\lambda)=\inf\{\Phi_\lambda(u):u\in \mathcal N^-\}.
\end{equation*}

Now we prove a result which complements \cite[Theorem 1.1]{DN}.
\begin{theor}\label{N-existence} Assume $a^\frac{N-4}{2}b< C_2(N)$. Then, there exists $p_0(a,b)\in(2,2^*)$ such that if $p\in(p_0(a,b),2^*)$,  for all $\lambda>0$, there exists $v_\lambda\in {\mathcal N^-}$ for which $c^-(a,b,\lambda)=\Phi_\lambda(v_\lambda)$.
\end{theor}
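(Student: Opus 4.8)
The plan is to realize $c^- = c^-(a,b,\lambda)$ as a mountain pass level and to show that, provided $p$ is close enough to $2^*$, this level lies strictly below the compactness threshold dictated by Lemma \ref{variational property}, namely the first critical level at which Palais--Smale can fail. First I would verify that for every $\lambda>0$ the set $\mathcal N^-$ is non-empty: this follows from Proposition \ref{graph psi 1}, since for a fixed $u\in H_0^1(\Omega)\setminus\{0\}$ one has $a\|u\|^2<\varphi(t^*)$ whenever $\lambda$ or $\|u\|_{2^*},\|u\|_p$ are suitably arranged, and in fact one can always rescale $u$ so that case iii) occurs; hence $t^-_\lambda(u)u\in\mathcal N^-$. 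Next, I would show that $\Phi_\lambda$ restricted to $\mathcal N^-$ is bounded below by a positive constant (using that on $\mathcal N^-$ the fiber attains a local maximum at $t=1$, together with the estimates of Proposition \ref{MPG}/\ref{MPGIMPROVED}), so $c^->0$ is well defined, and that $c^-$ coincides with the mountain pass level $\inf_{\gamma}\max_t\Phi_\lambda(\gamma(t))$ over paths joining $0$ to a point beyond the Nehari maximum — this is the standard identification of the Nehari-$\mathcal N^-$ infimum with the mountain pass value, valid because every such path must cross $\mathcal N^-$.

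The heart of the argument is the energy estimate. By concentration-compactness (Lemma \ref{variational property}, item 2)), a Palais--Smale sequence at level $c$ is relatively compact as long as $c$ avoids the bad value; for $a^\frac{N-4}{2}b< C_2(N)$ the relevant obstruction comes from bubbling, and one needs $c^-$ to stay below the energy of the limiting "bubble + zero" configuration, whose level is comparable to a quantity of the form $\frac{a}{N}S_N^{N/2}\cdot(\text{something})$ determined by the function $h$ of Lemma \ref{functions}. To get below it, I would use the Brezis--Nirenberg truncated instantons $u_\varepsilon$ (as in the proof of Proposition \ref{comparison}) as test functions: one estimates $\max_{t>0}\psi_{\lambda,u_\varepsilon}(t)$ and shows that the subcritical term $\lambda\int_\Omega F(x,tu_\varepsilon)\,dx = \frac{\lambda}{p}t^p\|u_\varepsilon\|_p^p$ lowers the maximum by an amount that, for $p$ sufficiently close to $2^*$, dominates the positive error terms $O(\varepsilon^{(N-2)/2})$-type corrections coming from the critical and Kirchhoff pieces. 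The closeness of $p$ to $2^*$ enters precisely here: the gain from the perturbation scales like $\varepsilon^{(2N-(N-2)p)/4}$ (up to constants), and this beats the loss exactly when $p$ exceeds a threshold $p_0(a,b)\in(2,2^*)$ depending on $a,b$ through the comparison of exponents and constants. Hence $c^- < $ threshold for all $\lambda>0$ once $p>p_0(a,b)$.

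With the strict inequality in hand, I would take a minimizing (hence Palais--Smale, after Ekeland applied on the $C^1$-manifold $\mathcal N^-$ — legitimate by Proposition \ref{naturalconstraint}) sequence $u_k\in\mathcal N^-$ with $\Phi_\lambda(u_k)\to c^-$, $\Phi_\lambda'(u_k)\to 0$. Boundedness follows from the coercivity/growth estimates (Proposition \ref{graph psi} and the $2<p<2^*<4$ structure), and $u_k$ does not go to $0$ because $c^->0$; then Lemma \ref{variational property} item 2) gives a strongly convergent subsequence $u_k\to v_\lambda$, and by Proposition \ref{naturalconstraint} $v_\lambda$ is a critical point of $\Phi_\lambda$ with $\Phi_\lambda(v_\lambda)=c^->0$, lying in $\overline{\mathcal N^-}$; a short argument using that $\psi''_{\lambda,v_\lambda}(1)\le 0$ together with $v_\lambda\notin\mathcal N^0$ (the latter would force $\Phi_\lambda(v_\lambda)\ge\sigma\ge\frac{(p-2)^2a^2}{4p(4-p)b}$, contradicting the strict upper bound if we arrange $p_0$ also to beat this value) pins $v_\lambda\in\mathcal N^-$. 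The main obstacle is the sharp energy estimate: one must carry out the Brezis--Nirenberg expansion for the Kirchhoff energy carefully enough to see exactly which power of $\varepsilon$ governs each term and to extract the explicit threshold $p_0(a,b)$; the rest is routine variational machinery already set up in the paper.
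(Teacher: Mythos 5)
Your plan rests on Lemma \ref{variational property} item 2) to recover compactness, but that item is only available when $a^{\frac{N-4}{2}}b\ge C_2(N)$, whereas Theorem \ref{N-existence} lives precisely in the complementary regime $a^{\frac{N-4}{2}}b< C_2(N)$, where the Palais--Smale condition is not known to hold and $\mathcal N^0\neq\emptyset$ (Lemma \ref{nelambda0}). In this regime the relevant threshold is not a bubbling level of Brezis--Nirenberg type but the quantity $\sigma$ associated with degenerating Nehari sequences, bounded below by $\frac{(p-2)^2a^2}{4p(4-p)b}$ (Lemma \ref{naimenineq}); one needs $c^-(a,b,\lambda)<\sigma$ so that minimizing sequences on $\mathcal N^-$ stay away from $\mathcal N^0$, after which the compactness is extracted by the Nehari-manifold machinery of \cite[Corollary 3.3 and Proposition 4.1]{DN} (the same path-deformation argument used in the proof of Theorem \ref{T41}), not by concentration--compactness at sub-bubble levels. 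So your energy estimate, even if carried out, targets the wrong threshold.

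The second, related gap is the mechanism producing $p_0(a,b)$. You derive it from the instanton computation, where the gain $\varepsilon^{\frac{2p-N(p-2)}{4}}$ must beat the error terms; but for $N>4$ that comparison succeeds for every $p\in(2,2^*)$ and yields no threshold in $p$ (and in any case produces a bound depending on $\lambda$ and $p$ in a way that does not obviously give a single $p_0(a,b)$ uniform in $\lambda>0$). The paper's argument is much more elementary: by Proposition \ref{decreasingincreasing}, $c^-(a,b,\lambda)\le c^-(a,b,0)$ for all $\lambda>0$, and $c^-(a,b,0)$ is a fixed, $p$-independent number satisfying $c^-(a,b,0)<\frac{(2^*-2)^2a^2}{4\cdot 2^*(4-2^*)b}$ by Proposition \ref{nehariestimatives}. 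Since $p\mapsto \frac{(p-2)^2a^2}{4p(4-p)b}$ increases from $0$ to that limit value as $p$ runs over $(2,2^*)$, there is a unique $p_0(a,b)$ with $c^-(a,b,0)=\frac{(p_0-2)^2a^2}{4p_0(4-p_0)b}$, and for $p>p_0$ one gets $c^-(a,b,\lambda)\le c^-(a,b,0)<\frac{(p-2)^2a^2}{4p(4-p)b}\le\sigma$ for all $\lambda>0$. No instanton estimate is needed. You do brush against the correct comparison at the very end (when ruling out $v_\lambda\in\mathcal N^0$), but as stated your argument neither establishes $c^-<\frac{(p-2)^2a^2}{4p(4-p)b}$ nor identifies where $p_0$ actually comes from; these are the essential points of the proof.
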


\begin{proof} From Proposition \ref{nehariestimatives} in the Appendix we know that
		\begin{equation}\label{Q11}
	c^-(a,b,0)< \frac{(2^*-2)^2a^2}{4\cdot 2^*(4-2^*)b}.
	\end{equation}
		Note that the function $[2,2^*)\ni p\mapsto \frac{(p-2)^2a^2}{4p(4-p)b}$ is increasing and is zero for $p=2$, therefore from \eqref{Q11}, there exists a unique $p_0:=p_0(a,b)\in(2,2^*)$ such that
			\begin{equation*}
		c^-(a,b,0)=\frac{(p_0-2)^2a^2}{4p_0(4-p_0)b}.
		\end{equation*}
		As a consequence
			\begin{equation*}
		c^-(a,b,0)< \frac{(p-2)^2a^2}{4p(4-p)b},
		\end{equation*}
		 for all $p\in(p_0(a,b),2^*)$. From Proposition \ref{decreasingincreasing} and Corollary \ref{nehrilambda>0} in the Appendix and Lemma \ref{naimenineq} we deduce that
		 	\begin{equation*}
		 c^-(a,b,\lambda)\le c^-(a,b,0)< \frac{(p-2)^2a^2}{4p(4-p)b}\le \sigma,  \ \forall \lambda>0
		 \end{equation*}
		 and  from \cite[Corollary 3.3 and Proposition 4.1]{DN}, the proof is complete.
\end{proof}

\begin{rem}\label{comparisonnaimen} Note that:
	\begin{itemize}
		\item[i)] Our method to prove Theorem \ref{N-existence} also proves \cite[Theorem 1.1]{DN}. Indeed, fix $p\in (2,2^*)$. By one hand we know from Proposition \ref{decreasingincreasing} in the Appendix that $c^-(a,b,0)$ is non-decreasing in $b$. On the other hand
		\begin{equation*}
		\lim_{b\downarrow 0}\frac{(p-2)^2a^2}{4p(4-p)b}=\infty,
		\end{equation*}
		therefore by choosing $b$ sufficiently small we conclude that
		\begin{equation*}
		c^-(a,b,\lambda)\le c^-(a,b,0)< \frac{(p-2)^2a^2}{4p(4-p)b},  \ \forall \lambda>0.
		\end{equation*}
		which is \cite[Corollary 3.3]{DN} and consequently implies \cite[Theorem 1.1]{DN}.
				\item[ii)] Observe that the method employed in \cite[Corollary 3.3]{DN}, which was used to prove \cite[Theorem 1.1]{DN}, does not work for all values of $p$ and $a,b>0$ with $a^\frac{N-4}{2}b< C_2(N)$. Indeed, fix $a,b>0$ with $a^\frac{N-4}{2}b< C_2(N)$. Choose $p\in (2,2^*)$ such that
				\begin{equation*}
				\frac{(p-2)^2a^2}{4p(4-p)b}<c^-(a,b,0).
				\end{equation*}
						Therefore from Proposition \ref{continuousA} in Appendix we deduce that for small $\lambda$,
						\begin{equation*}
						\frac{(p-2)^2a^2}{4p(4-p)b}<c^-(a,b,\lambda),
						\end{equation*}
						which contradicts the inequality in \cite[Proposition 3.1]{DN} that was used to prove \cite[Corollary 3.3]{DN}.
	\end{itemize}
\end{rem}
\begin{proof}[Proof of Theorem \ref{T5}] From Theorem \ref{N-existence}, there exists $v_\lambda\in \mathcal{N}^-$ such that $\Phi_\lambda(v_\lambda)=c^-(a,b,\lambda)$. From Proposition \ref{naturalconstraint} the proof is complete.
\end{proof}

However, without any restriction on $p$ or $a,b$, we can prove the following:
\begin{theor}\label{N-existence1} For each $a,b>0$ there exists $\tilde\lambda:=\tilde\lambda(a,b,p)>0$ such that for all $\lambda>\tilde\lambda$, there exists  $v_\lambda\in\mathcal N^- $ satisfying $c^-(a,b,\lambda)=\Phi_\lambda(v_\lambda)$.
\end{theor}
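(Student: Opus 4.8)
The plan is to realize $v_\lambda$ as a minimizer of $\Phi_\lambda$ over the component $\mathcal N^-=\mathcal N^-_{a,b,\lambda}$ of the Nehari set, following the scheme already used for Theorems \ref{T41} and \ref{N-existence}. Recall that, by the arguments of \cite[Corollary 3.3 and Proposition 4.1]{DN} (adapted to the present functional via Proposition \ref{naturalconstraint}), the only obstruction to such a minimization, namely loss of compactness of a minimizing sequence, is ruled out as soon as $c^-(a,b,\lambda)$ lies strictly below the degeneracy level $\sigma$, i.e. the infimum of $\liminf_k\Phi_\lambda(u_k)$ along Nehari sequences with $\psi''_{\lambda,u_k}(1)\to 0$. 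Thus the whole proof reduces to producing $\tilde\lambda=\tilde\lambda(a,b,p)$ such that $c^-(a,b,\lambda)<\sigma$ for every $\lambda>\tilde\lambda$.

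The key point is that $c^-(a,b,\lambda)\to 0$ as $\lambda\to\infty$, while $\sigma$ stays bounded away from $0$ uniformly in $\lambda$. The latter is exactly Lemma \ref{naimenineq}, which gives $\sigma\ge \frac{(p-2)^2a^2}{4p(4-p)b}>0$. For the former, fix once and for all some $w\in H^1_0(\Omega)\setminus\{0\}$. For $\lambda$ large, the auxiliary function $\varphi$ appearing in the proof of Proposition \ref{graph psi 1}, which here carries the term $\frac{\lambda}{p}\|w\|_p^p\,t^{p-2}$, has maximum exceeding $a\|w\|^2$, so $\psi_{\lambda,w}$ is of type iii), $t^-_\lambda(w)\,w\in\mathcal N^-$ (in particular $\mathcal N^-\ne\emptyset$), and, since $\psi_{\lambda,w}$ increases from $0$ up to its first (local-maximum) critical value, $0<\Phi_\lambda(t^-_\lambda(w)w)=\psi_{\lambda,w}(t^-_\lambda(w))$. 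Because $2<p<2^*$, near the origin the equation $\psi'_{\lambda,w}(t)=0$ (divided by $t$) is dominated by its $t^{p-2}$-term, whose coefficient is of order $\lambda$; hence this term must remain comparable to $a\|w\|^2$, forcing $t^-_\lambda(w)\to 0$. Substituting this into $\psi_{\lambda,w}$ and using $\psi'_{\lambda,w}(t^-_\lambda(w))=0$ to rewrite the subcritical contribution $\sim\lambda\|w\|_p^p(t^-_\lambda(w))^p$ as $(t^-_\lambda(w))^2$ times a bounded quantity, one sees that every term tends to $0$. Therefore $0\le c^-(a,b,\lambda)\le\psi_{\lambda,w}(t^-_\lambda(w))\to 0$, and there is $\tilde\lambda=\tilde\lambda(a,b,p)$ with $c^-(a,b,\lambda)<\frac{(p-2)^2a^2}{4p(4-p)b}\le\sigma$ for all $\lambda>\tilde\lambda$.

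Finally, fix $\lambda>\tilde\lambda$. Since $\mathcal N^-$ is a $C^1$ manifold (Proposition \ref{naturalconstraint}), adapting the argument of \cite[Corollary 3.3]{DN}, which uses precisely $c^-(a,b,\lambda)<\sigma$ through Lemma \ref{naimenineq}, yields a Palais--Smale sequence $(u_k)\subset\mathcal N^-$ with $\Phi_\lambda(u_k)\to c^-(a,b,\lambda)$ and $\Phi'_\lambda(u_k)\to 0$; by coercivity of $\Phi_\lambda$ (as in the proof of Theorem \ref{existencem0}) it is bounded, so $u_k\rightharpoonup u_0$ along a subsequence. As in the proof of Theorem \ref{T41} (adapting \cite[Proposition 4.1]{DN} to $\mathcal N^-$), the strict inequality $c^-(a,b,\lambda)<\sigma$ forces $u_0\ne 0$ and excludes the splitting off of a nontrivial bubble: otherwise, a reparametrization of $u_k$ around $u_0$ combined with the fiber trichotomy of Proposition \ref{graph psi 1} would produce elements of $\mathcal N^-$ with energy strictly below $c^-(a,b,\lambda)$, a contradiction. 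Hence $u_k\to u_0$ in $H^1_0(\Omega)$, so $v_\lambda:=u_0\in\mathcal N^-$ and $c^-(a,b,\lambda)=\Phi_\lambda(v_\lambda)$, which is the claim.

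The main obstacle is this last compactness step; it works only because $c^-(a,b,\lambda)$ has been pushed below the $\lambda$-independent threshold supplied by Lemma \ref{naimenineq}, which is the sole reason for restricting to large $\lambda$. The one genuinely new ingredient with respect to \cite{DN} is the decay $c^-(a,b,\lambda)\to 0$ as $\lambda\to\infty$; everything else is a routine adaptation of the Nehari/fibering analysis already developed above and in \cite{DN}.
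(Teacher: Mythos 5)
Your first half is exactly the paper's key new ingredient and is proved correctly: $c^-(a,b,\lambda)\to 0$ as $\lambda\to\infty$ (your route via $t^-_\lambda(w)\to 0$ is a legitimate variant of the paper's $\delta$--argument), while Lemma \ref{naimenineq} pins $\sigma$ above the $\lambda$-independent constant $\frac{(p-2)^2a^2}{4p(4-p)b}$, so $c^-(a,b,\lambda)<\sigma$ for all $\lambda>\tilde\lambda$. Up to this point you match the paper.

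The gap is in the compactness step, where you apply the machinery of \cite[Corollary 3.3 and Proposition 4.1]{DN} uniformly in $a,b$. Those results are established in \cite{DN} only below the threshold $\alpha_2=C_2(N)$ (equivalently $a^{\frac{N-4}{2}}b<C_2(N)$); above it the geometry is different (for instance $\mathcal N_{a,b,0}=\emptyset$ by Lemma \ref{nelambda0}), so "adapting" them is not a routine citation but something that must be checked. Moreover your sketched bubble-exclusion for $\mathcal N^-$ --- reparametrize around $u_0$ and project back onto $\mathcal N^-$ to get energy strictly below $c^-$ --- is suspect as stated: the projection onto $\mathcal N^-$ uses the fiber's local \emph{maximum} $t^-_\lambda$, so $\Phi_\lambda(t^-_\lambda(w)w)\ge\Phi_\lambda(w)$ and the inequality runs the wrong way (in the proof of Theorem \ref{T41} the analogous trick lands in $\mathcal N^+$ via the local minimum $t^+_\lambda$, which is why it produces a lower energy there). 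The paper avoids all of this by splitting into cases: for $a^{\frac{N-4}{2}}b<C_2(N)$ it invokes \cite{DN} exactly where those results apply, and for $a^{\frac{N-4}{2}}b\ge C_2(N)$ it runs a direct argument on a plain minimizing sequence $(u_k)\subset\mathcal N^-$: boundedness from the Nehari identity, $u\ne 0$ because $h(\|u_k\|)>0$ in that regime, $\psi'_{\lambda,u}(1)\le 0$ by the weak lower semicontinuity of $w\mapsto\Phi'_\lambda(w)w$ (Lemma \ref{variational property}, item 3, which is precisely what $a^{\frac{N-4}{2}}b\ge C_2(N)$ buys), exclusion of $u\in\mathcal N^0$ by the level bound, and finally $t^-_\lambda(u)u\in\mathcal N^-$ attains $c^-$ since $t^-_\lambda(u)\le1$ and each $\psi_{\lambda,u_k}$ is increasing on $(0,1]$. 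Your argument is repairable --- in the regime $a^{\frac{N-4}{2}}b\ge C_2(N)$ compactness is in fact easier, via Lemma \ref{variational property} --- but as written the case $a^{\frac{N-4}{2}}b\ge C_2(N)$ is not covered by the results you cite.
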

\begin{proof} We claim that $c^-(a,b,\lambda)\to 0$ as $\lambda\to \infty$. To prove it, fix $u\in H_0^1(\Omega)\setminus\{0\}$. Given $\varepsilon>0$, fix any $\lambda'>0$. Then there exists $\delta>0$ such that $0<\psi_{\lambda',u}(t)\le \varepsilon$ for all $t\in(0,\delta]$. Since the function $(\lambda',\infty)\ni\lambda\mapsto \psi_{\lambda,u}(\delta)$ is continuous, decreasing and tends to $-\infty$ as $\lambda\to \infty$, it follows that there exists a unique parameter $\mu>\lambda'$ such that $\psi_{\mu,u}(\delta)=0$. Therefore $0<t_{\mu}^-<\delta$ and $\psi_{\mu,u}(t_{\mu}^-)\le \psi_{\lambda',u}(t_{\mu}^-)\le \varepsilon$. By the arbitrariness of  $\varepsilon$, the claim is proved.

	Now choose $\tilde \lambda$ such that
	\begin{equation*}
	c^-(a,b,\tilde\lambda)< \frac{(p-2)^2a}{4p(4-p)b},
	\end{equation*}
	then from Proposition \ref{decreasingincreasing} in the Appendix we deduce that
		\begin{equation}\label{PP1}
	c^-(a,b,\lambda)< \frac{(p-2)^2a}{4p(4-p)b}, \ \forall \lambda>\tilde\lambda.
	\end{equation}
	Now we divide the proof in two cases: if $a^\frac{N-4}{2}b< C_2(N)$, then we can apply \cite[Corollary 3.3 and Proposition 4.1]{DN} and the proof is complete. Now assume that $a^\frac{N-4}{2}b\ge C_2(N)$. Let $(u_k)_k\in \mathcal{N}^-$ be a minimizing sequence to $c^-(a,b,\lambda)$. Since
	\begin{equation}\label{N-1}
	a\|u_k\|^2+b\|u_k\|^4-\|u_k\|_{2^*}^{2^*}-\lambda\|u_k\|_p^p=0, \forall k,
	\end{equation}
	we deduce that there exist positive constants $d_1,d_2$ such that $d_1\le \|u_k\|\le d_2$ for all $k\in \mathbb N$.  Without loss of generality we can assume that $u_k\rightharpoonup u$ in $H_0^1(\Omega)$ and $\|u_k\|\to t>0$. We claim that $u\neq 0$. Indeed, from \eqref{N-1} and the Sobolev embedding we also have that
	\begin{equation*}
	h(\|u_k\|)= a+b\|u_k\|^2-S_N^{-\frac{2^*}{2}}\|u_k\|^{2^*-2}\le C\lambda\|u_k\|^{p-2},
	\end{equation*}
	where $C$ is some positive constant. Then, if $u= 0$, we would reach the contradiction $0<h(t)\le 0$ (see Proposition \ref{functionsappl} and Lemma \ref{functions}). From Lemma \ref{variational property} we have that
	\begin{equation*}
	\psi_{\lambda,u}'(1)=a\|u\|^2+b\|u\|^4-\|u\|_{2^*}^{2^*}-\lambda\|u\|_p^p\le \liminf_{k\to \infty}(a\|u_k\|^2+b\|u_k\|^4-\|u_k\|_{2^*}^{2^*}-\lambda\|u_k\|_p^p)=0,
	\end{equation*}
	which implies that the fiber map $\psi_{\lambda,u}$ satisfies ii) or iii) of Proposition \ref{graph psi 1}. We claim that it satisfies iii). Indeed, if it satisfies ii), then $u\in \mathcal{N}^0$ and from Lemma \ref{variational property} and \eqref{PP1} we obtain that
		\begin{equation*}
	\Phi_\lambda(u)\le \liminf_{k\to \infty}\Phi_\lambda(u_k)=c^-(a,b,\lambda)< \frac{(p-2)^2a}{4p(4-p)b},
	\end{equation*}
	which contradicts Lemma \ref{naimenineq}. Therefore $\psi_{\lambda,u}$ satisfies iii) and there exists $t_{\lambda}^-(u)\le 1$ such that $ t_{\lambda}^-(u)u\in \mathcal{N}^-$. From Lemma \ref{variational property}
	\begin{equation*}
	\Phi_\lambda( t_{\lambda}^-(u)u)\le \liminf_{k\to \infty}\Phi_\lambda( t_{\lambda}^-(u)u_k)\le \liminf_{k\to \infty}\Phi_\lambda(u_k)=c^-(a,b,\lambda),
	\end{equation*}
	and the proof is complete.
\end{proof}

\begin{rem}  Note that
	\begin{itemize}
		\item[i)]  Theorem \ref{N-existence1} complements the results of \cite{DN}, globally in $a,b$ and locally in $\lambda$.
		\item[ii)] Recall from Theorems \ref{MPS} and \ref{MPS>0} that if $a^\frac{N-4}{2}b> C_2(N)$ and $\lambda>\lambda_0^*-\varepsilon$, then $\Phi_\lambda$ has a mountain pass type solution. One may ask if the solutions found in Theorem \ref{N-existence1} and in those theorems are the same? Or at least, is it true that $c^-(a,b,\lambda)=c_{\lambda}$?
	\end{itemize}
\end{rem}
\begin{proof}[Proof of Theorem \ref{T6}] From Theorem \ref{N-existence1}, there exists $v_\lambda\in \mathcal{N}^-$ such that $\Phi_\lambda(v_\lambda)=c^-(a,b,\lambda)$. From Proposition \ref{naturalconstraint} the proof is complete.
\end{proof}

\subsection{Brezis-Nirenberg problem: the limit case $b\to 0$}
In this Section we show how to recover a well known result from Brezis and Nirenberg  \cite{BN} as a byproduct of our study. To emphasize the more important role of the parameter $b$, we use the notation $\psi_{b,\lambda,u}=\psi_{\lambda,u}$, $t_{b,\lambda}^-(u)=t_\lambda^-(u)$, $\Phi_{b,\lambda}=\Phi_\lambda$ and so on.
\begin{lem} Fix $a>0$, then
	\begin{equation*}
	c^-(a,0,0)=\frac{a^{\frac{N}{2}}}{N}S_N^{\frac{N}{2}}.
	\end{equation*}
\end{lem}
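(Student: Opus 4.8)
The plan is to compute $c^-(a,0,0)$ directly, exploiting the fact that when $b=0$ and $\lambda=0$ the problem reduces to the pure critical Kirchhoff equation $-a\Delta u = |u|^{2^*-2}u$, whose fiber maps are completely explicit. Fix $u\in H^1_0(\Omega)\setminus\{0\}$ and look at $\psi_{0,0,u}(t)=\frac{a}{2}\|u\|^2t^2-\frac{1}{2^*}\|u\|_{2^*}^{2^*}t^{2^*}$. Since $b=0$ and $\lambda=0$, $\varphi(t)$ from Proposition \ref{graph psi 1} is simply $\|u\|_{2^*}^{2^*}t^{2^*-2}$, which is increasing from $0$ to $\infty$; hence the equation $a\|u\|^2=\varphi(t)$ has exactly one solution and $\psi_{0,0,u}$ has a single critical point, a global maximum, so $\mathcal N^-_{a,0,0}$ coincides with the whole Nehari set and consists of exactly one positive dilate of each ray. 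The maximum is attained at $t^-(u)=\bigl(a\|u\|^2/\|u\|_{2^*}^{2^*}\bigr)^{1/(2^*-2)}$, and a direct substitution gives
\[
\Phi_{0,0}(t^-(u)u)=\Bigl(\tfrac12-\tfrac{1}{2^*}\Bigr)a^{\frac{2^*}{2^*-2}}\left(\frac{\|u\|^2}{\|u\|_{2^*}^{2}}\right)^{\frac{2^*}{2^*-2}}
=\frac{a^{N/2}}{N}\left(\frac{\|u\|^2}{\|u\|_{2^*}^{2}}\right)^{N/2},
\]
using $\frac{2^*}{2^*-2}=\frac N2$ and $\frac12-\frac1{2^*}=\frac1N$.

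Next I would minimize over $u$. From the displayed formula, $c^-(a,0,0)=\inf_{u\neq 0}\Phi_{0,0}(t^-(u)u)=\frac{a^{N/2}}{N}\inf_{u\neq0}\bigl(\|u\|^2/\|u\|_{2^*}^2\bigr)^{N/2}=\frac{a^{N/2}}{N}S_N^{N/2}$, by the very definition \eqref{embedding} of $S_N$ and monotonicity of $t\mapsto t^{N/2}$. This already proves the $\le$ inequality is in fact an equality if the infimum is interpreted over all rays; the only subtlety is that $S_N$ is not attained on a bounded domain, so $c^-(a,0,0)$ is an infimum that is not achieved — but the Lemma only asserts the value of the infimum, so nothing more is needed. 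For the lower bound one uses that $\Phi_{0,0}(t^-(u)u)=\max_{t>0}\Phi_{0,0}(tu)\ge \Phi_{0,0}(v)$ is irrelevant; rather, every element of $\mathcal N^-_{a,0,0}$ is of the form $t^-(u)u$, so the infimum of $\Phi_{0,0}$ over $\mathcal N^-_{a,0,0}$ is exactly the infimum of the displayed expression over all $u\neq0$, giving both inequalities at once.

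The only mildly delicate point — and the one I would write out carefully — is the bookkeeping of the exponents and the constant $\frac1N$, together with the observation that when $b=\lambda=0$ the trichotomy of Proposition \ref{graph psi 1} degenerates to case iii) with $t^-_{0}(u)$ the unique critical point and no $t^+$ (equivalently, $\mathcal N^+_{a,0,0}=\mathcal N^0_{a,0,0}=\emptyset$ and $\mathcal N_{a,0,0}=\mathcal N^-_{a,0,0}$). I do not anticipate a genuine obstacle here: it is a one-variable optimization plus an appeal to \eqref{embedding}. The main thing to be careful about is simply not to invoke attainment of $S_N$ anywhere, since that would be false on a bounded domain; the argument works entirely at the level of infima.
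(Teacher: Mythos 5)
Your proof is correct and rests on the same idea as the paper's: on each ray the unique Nehari point has energy $\frac{a^{N/2}}{N}\bigl(\|u\|^2/\|u\|_{2^*}^2\bigr)^{N/2}$, so $c^-(a,0,0)$ reduces to the Sobolev constant via \eqref{embedding}. The only difference is organizational: the paper proves the two inequalities separately (the lower bound from $\Phi_{0,0}(u)=\frac{a}{N}\|u\|^2$ on $\mathcal N^-_{0,0}$ plus the Sobolev inequality, the upper bound by projecting an $L^{2^*}$-normalized minimizing sequence for $S_N$ onto the Nehari set), whereas your closed-form expression for the fiber maximum yields both at once; your remark that attainment of $S_N$ is never needed is exactly the right point of care.
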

\begin{proof} Indeed, first observe that
	\begin{equation*}
	\Phi_{0,0}(u)=\frac{1}{N}a\|u\|^2, \ \forall u\in \mathcal{N}_{0,0}^-,
	\end{equation*}
	which implies from the definition of $S_N$ that
	\begin{equation*}
	\Phi_{0,0}(u)\ge \frac{a^\frac{N}{2}}{N}S_N^{\frac{N}{2}}, \ \forall u\in \mathcal{N}_{0,0}^-.
	\end{equation*}
	Now suppose that $(u_k)_k$ is a minimizing sequence to $S_N$ satisfying $\|u_k\|_{2^*}=1$ for all $k\in\mathbb N$. From Lemma \ref{graph psi 2} and Remark \ref{globalmax} in Appendix, for each $k$, there exists $t_k:=t_{0,0}(u_k)$ such that $t_ku_k\in  \mathcal{N}_{0,0}^-$. From
	\begin{equation*}
	at_k^2\|u_k\|^2-t_k^{2^*}\|u_k\|_{2^*}^{2^*}=0,
	\end{equation*}
	we have that
	\begin{equation*}
	t_k\to \left(a S_N\right)^{\frac{1}{2^*-2}},\ k\to \infty.
	\end{equation*}
	Therefore
		\begin{equation*}
	\Phi_{0,0}(t_ku_k)=\frac{1}{N}at_k^2\|u_k\|^2\to \frac{1}{N}a \left(a S_N\right)^{\frac{2}{2^*-2}}S_N= \frac{a^{\frac{N}{2}}}{N}S_N^{\frac{N}{2}},
	\end{equation*}
	and the proof is complete.
\end{proof}

\begin{prop}\label{energylambdanega} Fix $a>0$, then for each $\lambda>0$ we have that
	\begin{equation*}
	c^-(a,0,\lambda)<c^-(a,0,0)=\frac{a^{\frac{N}{2}}}{N}S_N^{\frac{N}{2}}.
	\end{equation*}
\end{prop}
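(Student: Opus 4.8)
The plan is to exhibit, for each fixed $\lambda>0$, a single test element $u\in H^1_0(\Omega)\setminus\{0\}$ whose fiber map $\psi_{0,\lambda,u}$ is of type iii) in Proposition \ref{graph psi 1} and satisfies $\psi_{0,\lambda,u}(t^-_{0,\lambda}(u))<\frac{a^{N/2}}{N}S_N^{N/2}$; since $t^-_{0,\lambda}(u)u\in\mathcal N^-_{a,0,\lambda}$, this immediately gives $c^-(a,0,\lambda)<c^-(a,0,0)$. The natural candidate is the family of Aubin--Talenti--type functions $u_\varepsilon$ already introduced in the proof of Proposition \ref{comparison} and used again in Theorem \ref{continuitylambdazero}, for which one has the asymptotics \eqref{BNEQ}, namely $\|u_\varepsilon\|=1$ and $\|u_\varepsilon\|_{2^*}^{2^*}=S_N^{-2^*/2}+O(\varepsilon^{2^*N/4})$, together with the lower bound $\int_\Omega |u_\varepsilon|^p\,dx\ge c\,\varepsilon^{(2N-(N-2)p)/4}$ (the Brezis--Nirenberg estimate from \cite{BN}, which requires precisely $p\in(2,2^*)$ and is where the hypothesis $N>4$, or rather $p>2$, is used to make the power of $\varepsilon$ strictly smaller than the leading term coming from the $2^*$-norm correction).

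First I would record that along $t\mapsto tu_\varepsilon$ the functional reads
\begin{equation*}
\Phi_{0,\lambda}(tu_\varepsilon)=\frac{a}{2}t^2-\frac{1}{2^*}\|u_\varepsilon\|_{2^*}^{2^*}\,t^{2^*}-\frac{\lambda}{p}\|u_\varepsilon\|_p^p\,t^p,
\end{equation*}
and that for $\varepsilon$ small this is a function of type iii) (it is positive near $0$, tends to $-\infty$, and has exactly a local max then a local min, by Proposition \ref{graph psi 1} with $b=0$). Its local maximum value $m(\varepsilon):=\max_{t\ge 0,\ t\le t^-}\Phi_{0,\lambda}(tu_\varepsilon)$ — equivalently the value at the first critical point — can be estimated from above by comparing with the $\lambda=0$ profile: since $\frac{\lambda}{p}\|u_\varepsilon\|_p^p\,t^p>0$, one has $\Phi_{0,\lambda}(tu_\varepsilon)\le \Phi_{0,0}(tu_\varepsilon)$ pointwise, so it suffices to estimate $\max_{t\ge 0}\big(\frac{a}{2}t^2-\frac{1}{2^*}\|u_\varepsilon\|_{2^*}^{2^*}t^{2^*}\big)$ and then subtract the genuine gain coming from the $\lambda$-term evaluated near the maximizer $t_\varepsilon\sim (aS_N)^{1/(2^*-2)}$. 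The standard computation gives $\max_{t\ge0}\big(\frac{a}{2}t^2-\frac{1}{2^*}\|u_\varepsilon\|_{2^*}^{2^*}t^{2^*}\big)=\frac{1}{N}a^{N/2}\big(\|u_\varepsilon\|_{2^*}^{2^*}\big)^{-(N-2)/2}=\frac{a^{N/2}}{N}S_N^{N/2}+O(\varepsilon^{2^*N/4})$, while the subtracted term is bounded below by a constant times $\lambda\,\varepsilon^{(2N-(N-2)p)/4}$. Choosing $\varepsilon$ small enough that the negative $\lambda$-contribution dominates the $O(\varepsilon^{2^*N/4})$ error — possible exactly because $\tfrac{2N-(N-2)p}{4}<\tfrac{2^*N}{4}$ for $p<2^*$ — yields $m(\varepsilon)<\frac{a^{N/2}}{N}S_N^{N/2}$, and hence $c^-(a,0,\lambda)\le \Phi_{0,\lambda}(t^-_{0,\lambda}(u_\varepsilon)u_\varepsilon)\le m(\varepsilon)<c^-(a,0,0)$.

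The main obstacle is the bookkeeping of the two competing powers of $\varepsilon$: one must verify that the first critical point $t^-_{0,\lambda}(u_\varepsilon)$ stays in a fixed compact subset of $(0,\infty)$ bounded away from $0$ (so that the lower bound on the $\lambda\|u_\varepsilon\|_p^p t^p$ term is genuinely of order $\lambda\varepsilon^{(2N-(N-2)p)/4}$ there and not degenerate), and that the error term in the $2^*$-norm expansion is really $O(\varepsilon^{2^*N/4})$ with a smaller exponent. Both are classical Brezis--Nirenberg estimates; I would cite \cite{BN} (as is already done for \eqref{BNEQ}) rather than redo them, and simply note that the relevant inequality between exponents is $\frac{2N-(N-2)p}{4}<\frac{2^*N}{4}$, which holds for every $p\in(2,2^*)$. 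Strict inequality $c^-(a,0,\lambda)<c^-(a,0,0)$ then follows for all $\lambda>0$.
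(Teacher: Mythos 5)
Your proposal is correct and follows essentially the same route as the paper's proof: test with the Brezis--Nirenberg family $u_\varepsilon$, expand the maximum of the fiber map as $\frac{a^{N/2}}{N}S_N^{N/2}+O(\varepsilon^{2^*N/4})$ minus a $\lambda$-contribution of order $\varepsilon^{(2N-(N-2)p)/4}$ (kept nondegenerate because $t^-_{0,\lambda}(u_\varepsilon)$ stays bounded away from $0$), and win by the same exponent comparison; the paper merely packages the subtraction via the mean value theorem in $\lambda$ using Lemma \ref{increas}. One small slip: with $b=0$ the fiber map is not of type iii) of Proposition \ref{graph psi 1} (which assumes $b>0$) but of type iv) of Lemma \ref{graph psi 2} -- a single critical point that is a global maximum, cf.\ Remark \ref{globalmax} -- which only simplifies your argument and does not affect its validity.
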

\begin{proof} For each $\varepsilon>0$, choose $u_{\varepsilon}\in H_0^1(\Omega)$ such that (see \cite{BN})
	\begin{equation*}
	\int_\Omega |\nabla u_\varepsilon|^2=1,\ \ \int_\Omega |u_\varepsilon|^{2^*}=S_N^{\frac{-2^*}{2}}+O(\varepsilon^{\frac{2^*N}{4}}),\ \ \int_\Omega |u_\varepsilon|^p=\frac{\varepsilon^{\frac{2p-N(p-2)}{4}}}{(c+O(1)\varepsilon^{\frac{N-2}{2}})^{\frac{p}{2}}},
	\end{equation*}
	where $c$ is a positive constant independent on $\varepsilon$. From Lemma \ref{graph psi 2} and Remark \ref{globalmax} in Appendix, for each $\varepsilon>0$, there exists $t_{\varepsilon,\lambda}:=t_{0,\lambda}^-(u_\varepsilon)$ such that $t_{\varepsilon,\lambda} u_\varepsilon\in\mathcal N^-$. Denote $f_{\varepsilon}(\lambda)=\psi_{0,\lambda}(t_{\varepsilon,\lambda}u_\varepsilon)=\Phi_{0,\lambda}(t_{\varepsilon,\lambda}u_\varepsilon)$. From Lemma \ref{increas} (and its proof) we know that
	\begin{equation*}
	f_{\varepsilon}(\lambda)-f_{\varepsilon}(0)=f'_{\varepsilon}(\theta)\lambda=-\frac{t_{\varepsilon,\theta}^p}{p}\lambda\|u_\varepsilon\|^p_p,
	\end{equation*}
	and hence
	\begin{equation}\label{E1}
		f_{\varepsilon}(\lambda)=f_{\varepsilon}(0)-\frac{t_{\varepsilon,\theta}^p}{p}\lambda\|u_\varepsilon\|^p_p,  \ \forall \varepsilon,
	\end{equation}
	where $\theta:=\theta_{\varepsilon}\in (0,\lambda)$. Now some calculations are in order: note from
	\begin{equation*}
	at_{\varepsilon,\theta}^2=t_{\varepsilon,\theta}^{2^*}\|u_\varepsilon\|_{2^*}^{2^*}+\lambda t_{\varepsilon,\theta}^p\|u_\varepsilon\|_p^p, \ \forall \varepsilon,
	\end{equation*}
	that there exists a positive constant $c_1$ such that
	\begin{equation}\label{E2}
	t_{\varepsilon,\theta}\ge c_1,  \ \forall \varepsilon.
	\end{equation}
	Moreover, since
		\begin{equation*}
	at_{\varepsilon,0}^2-t_{\varepsilon,0}^{2^*}\|u_\varepsilon\|_{2^*}^{2^*}=0,  \ \forall \varepsilon,
	\end{equation*}
	we conclude that
	\begin{equation*}
t_{\varepsilon,0}=\left(\frac{a}{S_N^{\frac{-2^*}{2}}+O(\varepsilon^{\frac{2^*N}{4}})}\right)^{\frac{1}{2^*-2}}=\left(\frac{a}{S_N^{\frac{-2^*}{2}}}\right)^{\frac{1}{2^*-2}}+O(\varepsilon^{\frac{2^*N}{4(2^*-2)}}), \  \forall \varepsilon
	\end{equation*}
	and hence
	\begin{eqnarray}\label{E3}
	f_{\varepsilon}(0)&=& \frac{a}{2}t_{\varepsilon,0}^2-\frac{t_{\varepsilon,0}^{2^*}}{2^*}\|u_\varepsilon\|_{2^*} ^{2^*} \nonumber \\
	&=&  \frac{a}{2}\left[\left(\frac{a}{S_N^{\frac{-2^*}{2}}}\right)^{\frac{2}{2^*-2}}+O(\varepsilon^{\frac{2^*N}{2(2^*-2)}})\right]-\frac{1}{2^*}\left[\left(\frac{a}{S_N^{\frac{-2^*}{2}}}\right)^{\frac{2^*}{2^*-2}}+O(\varepsilon^{\frac{2^*2^*N}{4(2^*-2)}})\right]\left(S_N^{\frac{-2^*}{2}}+O(\varepsilon^{\frac{2^*N}{4}})\right), \nonumber \\
	&=& \frac{a^{\frac{N}{2}}}{N}S_N^{\frac{N}{2}}+O(\varepsilon^{\frac{2^*N}{4}}).
	\end{eqnarray}
	We combine \eqref{E1} and \eqref{E3} to obtain that
	\begin{eqnarray*}
		f_{\varepsilon}(\lambda)&=&\frac{a^{\frac{N}{2}}}{N}S_N^{\frac{N}{2}}+O(\varepsilon^{\frac{2^*N}{4}})-\frac{t_{\varepsilon,\theta}^p}{p}\lambda\frac{\varepsilon^{\frac{2p-N(p-2)}{4}}}{(c+O(1)\varepsilon^{\frac{N-2}{2}})^{\frac{p}{2}}}, \\
		&=& \frac{a^{\frac{N}{2}}}{N}S_N^{\frac{N}{2}}+\varepsilon^{\frac{2p-N(p-2)}{4}}\left[\frac{O(\varepsilon^{\frac{2^*N}{4}})}{\varepsilon^{\frac{2p-N(p-2)}{4}}}-\frac{t_{\varepsilon,\theta}^p}{p}\lambda\frac{1}{(c+O(1)\varepsilon^{\frac{N-2}{2}})^{\frac{p}{2}}}\right].
	\end{eqnarray*}
	Since
\begin{equation*}
\frac{2^*N}{4}>1>\frac{2p-N(p-2)}{4},
\end{equation*}
we conclude from \eqref{E2} that for sufficiently small $\varepsilon$, we must have that $f_\varepsilon(\lambda)<\frac{a^{\frac{N}{2}}}{N}S_N^{\frac{N}{2}}$ which concludes the proof.
	\end{proof}
\begin{rem}\label{bstar} Fix $a>0$ and $\lambda\ge 0$:
	\begin{itemize}
		\item[i)] By using a continuity argument, one can easily see that the Nehari manifold $\mathcal{N}_{b,\lambda}^-$ is not empty for $b$ on a neighborhood of $0$.
		\item[ii)] However, it is possible to adapt the calculations made in Theorem \ref{nonexistence}, to prove the existence of $b^*>0$ such that if $b\in [0,b^*)$, then $\mathcal{N}_{b,\lambda}^-\neq\emptyset$, while if $b>b^*$, then $\mathcal{N}_{b,\lambda}=\emptyset$ (see Appendix \ref{AB}).
\end{itemize}
\end{rem}
	As a corollary of Theorem \ref{N-existence1} we obtain the following result \`{a} la Brezis Nirenberg \cite{BN}:
\begin{theor}\label{BNR} Let $a=1$ and $b_k\downarrow 0$. Then, there exists a sequence $(v_k)_k$ of solutions of $(\mathcal P_\lambda)$ such that $v_k\to v$ where $v$ is a nontrivial solution of 
	
	\begin{equation*}	(\mathcal Q_\lambda) \ \ \ \ \left\{
	\begin{array}{ll}
	- \Delta u=
	|u|^{2^*-2}u+\lambda |u|^{p-2}u, & \hbox{ in } \Omega  \\
	u=0, & \hbox{on } \partial \Omega.
	\end{array}
	\right.
	\end{equation*}
\end{theor}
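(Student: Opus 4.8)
The plan is to read Theorem~\ref{BNR} as a vanishing--$b$ limit of the $\mathcal N^-$--minimizers supplied by Theorem~\ref{N-existence1}, and to recover strong convergence through a Brezis--Nirenberg type compactness analysis. Fix $a=1$, $p\in(2,2^*)$ and $\lambda>0$. For $k$ large (so $b_k$ small), Remark~\ref{comparisonnaimen}~i) gives $c^-(1,b_k,\lambda)\le c^-(1,b_k,0)<\frac{(p-2)^2}{4p(4-p)b_k}$ for every $\lambda>0$, and Theorem~\ref{N-existence1} then provides $v_k\in\mathcal N^-_{b_k,\lambda}$ with $\Phi_{b_k,\lambda}(v_k)=c^-(1,b_k,\lambda)$; by Proposition~\ref{naturalconstraint} each $v_k$ is a weak solution of $(\mathcal P_\lambda)$ with $a=1$, $b=b_k$. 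I would first record an energy bound below the compactness threshold: since $\mathcal N^-_{b,\lambda}\ne\emptyset$ for $b$ near $0$ (Remark~\ref{bstar}~i)), the Brezis--Nirenberg test family $u_\varepsilon$ from the proof of Proposition~\ref{energylambdanega} can be projected onto $\mathcal N^-_{b_k,\lambda}$ with $t_{b_k,\lambda}^-(u_\varepsilon)\to t_{0,\lambda}^-(u_\varepsilon)$; letting $b_k\downarrow 0$ in the fiber maps and then choosing $\varepsilon$ small yields $\limsup_k c^-(1,b_k,\lambda)\le\Phi_{0,\lambda}\big(t_{0,\lambda}^-(u_\varepsilon)u_\varepsilon\big)<\frac1N S_N^{\frac{N}{2}}$. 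Thus $0<c^-(1,b_k,\lambda)<\frac1N S_N^{\frac{N}{2}}-\delta$ for some $\delta>0$ and all large $k$.

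The key point is a uniform $H^1_0$ bound on $(v_k)_k$, which cannot be read off from the energy alone, since the coercivity of $\Phi_{b,\lambda}$ degrades as $b\downarrow 0$: the quartic Kirchhoff term dominates the critical term (because $2^*<4$) but has vanishing coefficient. I would instead exploit the $\mathcal N^-$ constraint. Combining $\psi'_{b_k,\lambda,v_k}(1)=0$ and $\psi''_{b_k,\lambda,v_k}(1)<0$, eliminating $\|v_k\|_{2^*}^{2^*}$ and using $\|v_k\|_{2^*}^{2^*}\le\|v_k\|^2+b_k\|v_k\|^4$, one gets $(4-2^*)b_k\|v_k\|^2<2^*-2$, so that $M_k:=1+b_k\|v_k\|^2\in[1,\,1+\frac{2}{N-4}]$. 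Since elements of $\mathcal N^-$ have positive energy, $\Phi_{b_k,\lambda}(v_k)\ge 0$ together with the Nehari identity gives $\|v_k\|_p^p\le C\|v_k\|^2$, hence also $\|v_k\|_{2^*}^{2^*}\le(1+\frac{2}{N-4})\|v_k\|^2$. If $\|v_k\|\to\infty$ along a subsequence, then $\bar v_k:=v_k/\|v_k\|$ satisfies $\|\bar v_k\|_{2^*}^{2^*}\le(1+\frac{2}{N-4})\|v_k\|^{2-2^*}\to 0$, contradicting $\|\bar v_k\|_{2^*}\ge c>0$, which holds on the bounded domain $\Omega$ by Poincar\'e and H\"older since $\|\bar v_k\|=1$. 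So $(v_k)_k$ is bounded; the Nehari identity and the Sobolev embedding also force $\|v_k\|\ge d>0$, and therefore $b_k\|v_k\|^2\to 0$, $M_k\to 1$.

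It then remains to pass to the limit. Up to a subsequence $v_k\rightharpoonup v$ in $H^1_0(\Omega)$, with $v_k\to v$ in $L^q(\Omega)$ for $q<2^*$ and a.e.; passing to the limit in $\int_\Omega M_k\nabla v_k\cdot\nabla\varphi=\int_\Omega|v_k|^{2^*-2}v_k\varphi+\lambda\int_\Omega|v_k|^{p-2}v_k\varphi$ (using $M_k\to 1$ and that $|v_k|^{2^*-2}v_k$ is bounded in $L^{\frac{2^*}{2^*-1}}(\Omega)$ and converges a.e.) shows that $v$ solves $(\mathcal Q_\lambda)$. For strong convergence put $w_k:=v_k-v$; since $b_k\|v_k\|^4\to 0$, the Brezis--Lieb lemma and the equations for $v_k$ and $v$ give $\|w_k\|^2=\|w_k\|_{2^*}^{2^*}+o(1)$ and $c^-(1,b_k,\lambda)=\Phi_{0,\lambda}(v)+\frac1N\|w_k\|^2+o(1)$. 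If $\|w_k\|\not\to 0$, taking a subsequence with $\|w_k\|\to\ell>0$ and using $\|w_k\|^2=\|w_k\|_{2^*}^{2^*}+o(1)\le S_N^{-\frac{2^*}{2}}\|w_k\|^{2^*}+o(1)$ forces $\ell^2\ge S_N^{\frac{N}{2}}$, whence $c^-(1,b_k,\lambda)\ge\Phi_{0,\lambda}(v)+\frac1N S_N^{\frac{N}{2}}+o(1)\ge\frac1N S_N^{\frac{N}{2}}+o(1)$ (because $\Phi_{0,\lambda}(v)\ge 0$: it is $0$ if $v=0$, and strictly positive otherwise, since then $v\in\mathcal N^-_{0,\lambda}$), contradicting the threshold bound above. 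Hence $v_k\to v$ strongly in $H^1_0(\Omega)$, so $\|v\|=\lim_k\|v_k\|\ge d>0$ and $v$ is a nontrivial solution of $(\mathcal Q_\lambda)$.

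The main obstacle is precisely the uniform $H^1_0$ bound: the usual a priori estimates (energy identity, Nehari identity, Pohozaev identity) do not control $\|v_k\|$ uniformly as $b_k\downarrow 0$, so one must pass through the second-order condition defining $\mathcal N^-$ to keep $b_k\|v_k\|^2$ bounded and then rule out blow-up by the normalization above; once this is in place, the remaining compactness is the classical Brezis--Nirenberg dichotomy, closed by the strict inequality $c^-(1,0,\lambda)<\frac1N S_N^{\frac{N}{2}}$ coming from Proposition~\ref{energylambdanega}.
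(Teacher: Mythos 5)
Your overall strategy (minimize on $\mathcal N^-_{b_k,\lambda}$, keep the energy strictly below $\frac1N S_N^{N/2}$, pass to the limit) is the paper's, and your closing Brezis--Lieb splitting is a legitimate alternative to the paper's final compactness step. But there is a genuine gap exactly at the point you yourself flag as the main obstacle: the uniform $H^1_0$ bound. Your argument reduces to the claim that $\|\bar v_k\|_{2^*}\ge c>0$ whenever $\|\bar v_k\|=1$, ``by Poincar\'e and H\"older on the bounded domain.'' This is false: Poincar\'e and H\"older give $\|u\|_2\le C\|u\|$ and $\|u\|_2\le C\|u\|_{2^*}$, i.e.\ only \emph{upper} bounds for $\|u\|_2$; there is no reverse inequality bounding $\|u\|_{2^*}$ from below by the gradient norm (take $u_n=\chi(x)\sin(nx_1)$ normalized in $H_0^1$: then $\|u_n\|_{2^*}\sim 1/n\to0$). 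Your preliminary estimates are correct and useful --- indeed $\psi''_{b_k,\lambda,v_k}(1)<0$ together with the Nehari identity gives $(4-2^*)\,b_k\|v_k\|^2<2^*-2$, and $\Phi_{b_k,\lambda}(v_k)\ge0$ gives $\|v_k\|_p^p\le C\|v_k\|^2$ --- but if you feed these back into either the Nehari identity or the energy bound, the constants conspire (by criticality, every available inequality is exactly scale-balanced at the exponent $\|v_k\|^2$) and you only obtain lower bounds of the form $\|v_k\|_{2^*}^{2^*}\ge(1-\kappa)\|v_k\|^2$ with $\kappa>1$, which carry no information. So the blow-up alternative is not excluded by your normalization argument.

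The paper closes this gap differently: since $b\mapsto t^-_{b,\lambda}(u)$ is increasing (Lemma~\ref{increas}), the projection $t_k:=t^-_{0,\lambda}(v_k)$ satisfies $t_k\le1$, whence
\begin{equation*}
c^-(1,0,\lambda)\le\Phi_{0,\lambda}(t_kv_k)\le\Phi_{b_k,\lambda}(t_kv_k)\le\Phi_{b_k,\lambda}(v_k)=c^-(1,b_k,\lambda)\to c^-(1,0,\lambda),
\end{equation*}
so $(t_kv_k)_k$ is a minimizing sequence for $c^-(1,0,\lambda)$; on $\mathcal N^-_{0,\lambda}$ the second-order constraint yields $\Phi_{0,\lambda}(u)\ge\frac{p-2}{Np}\|u\|^2$ (here the constants do \emph{not} cancel, unlike for $b>0$), so $(t_kv_k)_k$ is bounded, and a separate contradiction argument shows $t_k$ is bounded away from $0$, giving boundedness of $(v_k)_k$. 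You should replace your normalization step by this (or an equivalent) argument; once boundedness is secured, the rest of your proof --- passage to the limit in the equation with $M_k\to1$, the Brezis--Lieb identity $\|w_k\|^2=\|w_k\|_{2^*}^{2^*}+o(1)$, the dichotomy $\ell=0$ or $\ell^2\ge S_N^{N/2}$, and the exclusion of the second case via $c^-(1,b_k,\lambda)<\frac1NS_N^{N/2}-\delta$ and $\Phi_{0,\lambda}(v)\ge0$ --- is sound and in fact packages the paper's separate ``$v\ne0$'' and ``strong convergence'' steps into one cleaner dichotomy. A minor point: the minimizers $v_k$ for all $\lambda>0$ and small $b_k$ come from the mechanism of Theorem~\ref{N-existence}/Remark~\ref{comparisonnaimen}~i) rather than from Theorem~\ref{N-existence1} itself, which only covers $\lambda>\tilde\lambda$.
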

\begin{proof} Fix $\lambda>0$.  From Remark \ref{bstar} we can assume that $c^-(1,b_k,\lambda)$ is well defined for all $k$. Let also $\varepsilon>0$ such that $c^-(1,0,\lambda)+ \varepsilon< \frac{1}{N}S_N^{\frac{N}{2}}$ (see Proposition \ref{energylambdanega}). Thus, by Proposition \ref{continuousA} in Appendix, for $k$ big enough, one has
	\begin{equation*}
	c^-(1,b_k,\lambda)< c^-(1,0,\lambda)+\varepsilon
	< \frac{1}{N}S_N^{\frac{N}{2}}.	
	\end{equation*}
	 We claim that $(v_k)_k$ is bounded in $H^1_0(\Omega)$.
	Indeed, we know that
	\begin{equation}\label{BN1}
	0=\Phi_{b_k,\lambda}'(v_k)(v_k)=\|v_k\|^2+{b_k} \|v_k\|^4-\|v_k\|^{2^*}_{2^*}-{\lambda}\|v_k\|_p^p
	\end{equation}
	\begin{equation}\label{BN2}c^-(1,b_k,\lambda)=\Phi_{b_k,\lambda}(v_k)=\frac{1}{2}\|v_k\|^2+\frac{b_k}{4} \|v_k\|^4-\frac{1}{2^*}\|v_k\|^{2^*}_{2^*}-\frac{\lambda}{p}\|v_k\|_p^p.
	\end{equation}
	Denote $t_k=t_{0,\lambda}^-(v_k)$ and note from Lemma \ref{increas} in the Appendix that $0< t_k \le 1$ for all $k$. This property combined with Proposition \ref{continuousA} implies that
	\begin{eqnarray*}
		0&\le& \lim_{k\to \infty}\Phi_{0,\lambda}(t_kv_k)-c^-(1,0,\lambda), \\
		&\le& \lim_{k\to \infty} \Phi_{b_k,\lambda}(t_kv_k)-c^-(1,0,\lambda), \\
		&\le & \lim_{k\to \infty} \Phi_{b_k,\lambda}(v_k)-c^-(1,0,\lambda), \\
		&=& \lim_{k\to \infty} \left(\Phi_{b_k,\lambda}(v_k)-c^-(1,b_k,\lambda)\right)=0,
	\end{eqnarray*}
	and hence $(t_kv_k)_k$ is a minimizing sequence to $c^-(1,0,\lambda)$. We claim that $(t_k)_k$ is bounded away from $0$. Suppose on the contrary that $t_k\to 0$ as $k\to \infty$. Since $t_kv_k\in \mathcal{N}_{0,\lambda}^-$ we know that
	\begin{equation*}
	2t_k^2\|v_k\|^2-2^*t_k^{2^*}\|v_k\|^{2^*}_{2^*}-p{\lambda}t_k^p\|v_k\|_p^p<0, \quad  \forall k.
	\end{equation*}
	Thus
	\begin{equation*}
	2\frac{\|v_k\|^2}{\|v_k\|^{2^*}_{2^*}}-2^*t_k^{2^*-2}<p{\lambda}t_k^{p-2}\frac{\|v_k\|_p^p}{\|v_k\|^{2^*}_{2^*}}, \quad \forall k,
	\end{equation*}
	and hence
	\begin{equation}\label{O1}
	\frac{\|v_k\|^2}{\|v_k\|^{2^*}_{2^*}}=o(1).
	\end{equation}
	From
	\begin{equation*}
	\|v_k\|^2+b_k\|v_k\|^4-\|v_k\|^{2^*}_{2^*}-{\lambda}\|v_k\|_p^p=0, \quad \forall k,
	\end{equation*}
	and \eqref{O1} we deduce that
	\begin{equation} \label{O2}
	\frac{b_k\|v_k\|^4}{\|v_k\|_{2^*}^{2^*}}=1+\lambda\frac{\|v_k\|_p^p}{\|v_k\|^{2^*}_{2^*}}+o(1), \quad \forall k.
	\end{equation}
	Since
	\begin{eqnarray*}
		\Phi_{b_k,\lambda}(v_k)=\|v_k\|^{2^*}_{2^*}\left(\frac{1}{2}\frac{\|v_k\|^2}{\|v_k\|^{2^*}_{2^*}}+\frac{1}{4}\frac{b_k\|v_k\|^4}{\|v_k\|_{2^*}^{2^*}}-\frac{1}{2^*}-\frac{\lambda}{p}\frac{\|v_k\|_p^p}{\|v_k\|^{2^*}_{2^*}}\right), \quad \forall k,
	\end{eqnarray*}
	it follows from \eqref{O2} that
	\begin{eqnarray*}
		\Phi_{b_k,\lambda}(v_k)&=&\|v_k\|^{2^*}_{2^*}\left[\frac{1}{4}\left(1+\lambda\frac{\|v_k\|_p^p}{\|v_k\|^{2^*}_{2^*}}\right)-\frac{1}{2^*}-\frac{\lambda}{p}\frac{\|v_k\|_p^p}{\|v_k\|^{2^*}_{2^*}}+o(1)\right] \\
		&=& \|v_k\|^{2^*}_{2^*}\left[\frac{2^*-4}{2^*4}+\left(\frac{p-4}{2^*4}\right)\lambda\frac{\|v_k\|_p^p}{\|v_k\|^{2^*}_{2^*}}+o(1)\right],
	\end{eqnarray*}
	which is a contradiction since $\Phi_{b_k,\lambda}(v_k)=c^-(1,b_k,\lambda)>0$ for all $k$ and therefore $t_k$ is bounded away from $0$. Once $(t_kv_k)_k$ is a minimizing sequence to $c^-(1,0,\lambda)$, it has to be bounded, that is, there exists $d>0$ such that
	\begin{equation*}
	t_k^2\int|\nabla v_k|^2\le d, \quad \forall k,
	\end{equation*}
	and as a consequence $(v_k)_k$ is bounded in $H_0^1(\Omega)$.

	Eventually passing to a subsequence, there exists $v\in H^1_0(\Omega)$ such that
	$v_k\rightharpoonup v$ weakly in $H^1_0(\Omega)$,  $v_k\rightarrow v$ strongly in $L^q(\Omega)$ for $q<2^*$,
	$|v_k|^{2^*-2} v_k\rightharpoonup |v|^{2^*-2}v$ weakly in $(L^{2^*})'$. Thus, since  $v_k$ is a critical point of $\Phi_{k,\lambda}$, for every $\varphi\in H^1_0(\Omega)$,
	\[(1+b_k\|v_k\|^2)\int_{\Omega}\nabla v_k\nabla \varphi -\int_{\Omega }|v_k|^{2^*-2}v_k\varphi-\lambda\int_{\Omega}|v_k|^{p-2}v_k\varphi =0,  \]  passing to the limit as $k\to\infty$ we deduce that
	\[\int_{\Omega}\nabla v\nabla \varphi -\int_{\Omega }|v|^{2^*-2}v\varphi-\lambda\int_{\Omega}|v|^{p-2}v\varphi =0,  \] which implies that $v$ is a solution of $(\mathcal Q_\lambda)$.  Let us show that $v\neq 0$. Assume by contradiction that $v=0$. By \eqref{BN1}, dividing by $\|v_k\|^2$ we get
	\[
	1+b_k\|v_k\|^2-S_N^{-\frac{2}{2^*}}\|v_k\|^{2^*-2}\leq
	1+b_k\|v_k\|^2-\|v_k\|_{2^*}^{2^*-2}=\lambda \|v_k\|_{p}^{p-2}
	\leq c_1\lambda \|v_k\|^{p-2}	\] and $(\|v_k\|)_k$ is bounded away from zero. Passing to a subsequence we  can assume that $\|v_k\|\to l>0$. From \eqref{BN1} and \eqref{BN2} (recall that $0=v=\lim_kv_k$ in $L^p$),  we obtain that
	$$l^2=\lim_k\|v_k\|_{2^*}^{2^*}$$  and $$\lim_k c^-(1,b_k,\lambda)=\frac{1}{2}l^2-\frac{1}{2^*}\lim_k\|v_k\|^{2^*}_{2^*}=\frac{1}{N}l^2.$$
	Since $\|v_k\|^2 \geq S_N \|v_k\|_{2^*}^{2}$
	we obtain that $l^2\geq S_N^{\frac{N}{2}}$ which implies
	\[\lim_k c^-(1,b_k,\lambda)\geq \frac{1}{N} S_N^{\frac{N}{2}},  \] against the initial assumptions. Thus, $v\neq 0.$  Let us prove now that 
	 $v_k\to v$ in $H_0^1(\Omega)$ and $\Phi_{0,\lambda}(v)=c^-(1,0,\lambda)$. Indeed, since $v_k\in \mathcal{N}_{b_k,\lambda}^-$ for all $k$, we have that
			\begin{equation*}
			\Phi_{b_k,\lambda}(v_k)=\frac{2^*-2}{22^*}\|v_k\|^2+\frac{2^*-4}{42^*}b_k\|v_k\|^4-\frac{2^*-p}{2^*p}\|v_k\|_p^p,  \ \forall k.
			\end{equation*}
			Since $v$ solves $(\mathcal Q_\lambda)$ we conclude from Remark \ref{globalmax} in the Appendix that $v\in \mathcal{N}_{0,\lambda}^-$ and hence
			\begin{equation*}
			c^-(1,0,\lambda)\leq\Phi_{0,\lambda}(v)=\frac{2^*-2}{22^*}\|v\|^2-\frac{2^*-p}{2^*p}\|v\|_p^p\le \liminf_{k\to \infty}\Phi_{b_k,\lambda}(v_k)=c^-(1,0,\lambda),
			\end{equation*}
			and therefore $\|v_k\|\to \|v\|$ as $k\to \infty$, which implies that $v_k\to v$ in $H_0^1(\Omega)$ and $\Phi_{0,\lambda}(v)=c^-(1,0,\lambda)$.
		\end{proof}
	\begin{proof}[Proof of Theorem \ref{T7}] See  Theorem \ref{BNR}.
	\end{proof}

\appendix

\section{Some topological properties of the Nehari manifolds}\label{A}
We collect some topological properties concerning the Nehari manifold $\mathcal{N}^-$. Since the dependency on each parameter will be considered, we will write the full notation $\Phi_{a,b,\lambda}$, $t_{a,b,\lambda}^-(u)$, $\mathcal{N}_{a,b,\lambda}^-$ and so on.

Similarly to Proposition \ref{graph psi 1} we can prove:
\begin{lem}\label{graph psi 2}
	For each $a>0$, $b\in \mathbb{R}$, $\lambda\in \mathbb{R}$ and $u\in H^1_0(\Omega)\setminus\{0\},$ only one of the next $i)-iv)$ occurs.
	
	\begin{itemize}
		\item[i)] The function $ \psi_{a,b,\lambda,u}$ is increasing and has no critical points.
		\item[ii)] The function $ \psi_{a,b,\lambda,u}$ has only one critical point in $]0,+\infty[$ at the value $t_{a,b,\lambda}(u)$. Moreover, $ \psi''_{a,b,\lambda,u}(t_{a,b,\lambda}(u))=0$ and $ \psi_{a,b,\lambda,u}$ is increasing.
		\item[iii)] The function $ \psi_{a,b,\lambda,u}$ has only two critical points, $0 < t^-_{a,b,\lambda} (u) < t^+_{a,b,\lambda} (u)$. Moreover, $t^-_{a,b,\lambda}(u)$ is a local maximum and  $t^+_{a,b,\lambda}(u)$ is a local minimum with $ \psi_{a,b,\lambda,u}''(t^-_{a,b,\lambda}(u))<0<\psi_{a,b,\lambda,u}''(t^+_{a,b,\lambda}(u))$.
		\item[iv)] The function $ \psi_{a,b,\lambda,u}$ has only one critical point in $]0,+\infty[$ at the value $t^-_{a,b,\lambda}(u)$.  Moreover, $t^-_{a,b,\lambda}(u)$ is a local maximum and $ \psi_{a,b,\lambda,u}''(t^-_{a,b,\lambda}(u))<0$.
 	\end{itemize}
\end{lem}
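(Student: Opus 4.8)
The plan is to mirror the proof of Proposition \ref{graph psi 1}; the single new feature is that, since $b$ and $\lambda$ may now be negative, the auxiliary function below can be \emph{eventually increasing} rather than eventually decreasing, and it is exactly this extra branch that produces alternative iv). Set $A=\|u\|^2$, $B=\|u\|_{2^*}^{2^*}$, $C=\|u\|_p^p$, all strictly positive, so that for $t>0$
\[
\psi_{a,b,\lambda,u}'(t)=t\bigl(aA+bA^2t^2-Bt^{2^*-2}-\lambda C\,t^{p-2}\bigr)=t\bigl(aA-\varphi(t)\bigr),\qquad \varphi(t):=-bA^2t^2+Bt^{2^*-2}+\lambda C\,t^{p-2}.
\]
Thus the critical points of $\psi_{a,b,\lambda,u}$ in $(0,+\infty)$ are exactly the solutions of $\varphi(t)=aA$, and differentiating $\psi_{a,b,\lambda,u}'(t)=t(aA-\varphi(t))$ one gets $\psi_{a,b,\lambda,u}''(\bar t)=-\bar t\,\varphi'(\bar t)$ at any such $\bar t$; hence $\bar t$ is a strict local maximum of $\psi_{a,b,\lambda,u}$ when $\varphi'(\bar t)>0$, a strict local minimum when $\varphi'(\bar t)<0$, and a critical point with vanishing second derivative when $\varphi'(\bar t)=0$. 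Everything therefore reduces to describing the graph of $\varphi$ on $(0,+\infty)$. I would first record that $\varphi(0^+)=0$ and that, as $t\to+\infty$, $\varphi(t)\to-\infty$ if $b>0$ while $\varphi(t)\to+\infty$ if $b\le 0$ (using $2>2^*-2>p-2$, since $2<p<2^*<4$). Then I would analyse $\varphi'$: writing $\varphi'(t)=t^{p-3}\beta(t)$ with $\beta(t)=-2bA^2t^{4-p}+(2^*-2)Bt^{2^*-p}+(p-2)\lambda C$, one has $\beta(0^+)=(p-2)\lambda C$ and $\beta'(t)=t^{3-p}\bigl(-2(4-p)bA^2+(2^*-2)(2^*-p)Bt^{2^*-4}\bigr)$; since $2^*-4<0$, the bracket strictly decreases from $+\infty$ to $-2(4-p)bA^2$, so $\beta$ is increasing then decreasing when $b>0$ and strictly increasing when $b\le 0$. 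Together with the endpoint values of $\beta$ this fixes its sign pattern, hence the monotonicity of $\varphi$, in every sign regime of $b$ and $\lambda$; in particular $\varphi$ has at most two critical points in $(0,+\infty)$.

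From this the four alternatives fall out. If $b>0$, then $\varphi\to-\infty$, so $M:=\sup_{t>0}\varphi(t)$ is finite and, since $\varphi(0^+)=0$, nonnegative; when $M>0$ it is attained at a single point $t^*$, because the graph of $\varphi$ consists of at most an initial decreasing stretch (on which $\varphi<0$, as $\varphi(0^+)=0$), followed by an increasing stretch up to $t^*$ and then a decreasing stretch down to $-\infty$. Comparing $aA>0$ with $M$ then gives: case i) if $aA>M$ (then $\varphi<aA$ on $(0,\infty)$, so $\psi_{a,b,\lambda,u}'>0$); case ii) if $aA=M$ (then $M=aA>0$, $\psi_{a,b,\lambda,u}$ is increasing, and $\psi_{a,b,\lambda,u}''(t^*)=-t^*\varphi'(t^*)=0$); case iii) if $aA<M$, the two solutions $t^-<t^+$ lying on the increasing and on the final decreasing branch of $\varphi$ respectively, so $\varphi'(t^-)>0>\varphi'(t^+)$ and hence $\psi_{a,b,\lambda,u}''(t^-)<0<\psi_{a,b,\lambda,u}''(t^+)$. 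If instead $b\le 0$, then $\varphi\to+\infty$ and $\varphi$ is either strictly increasing (when $\lambda\ge 0$) or strictly decreasing and then strictly increasing with minimum value $<\varphi(0^+)=0$ (when $\lambda<0$); in both situations $\varphi<0$ on any decreasing stretch, so the level $aA>0$ is reached exactly once, at a point $t^-$ where $\varphi$ is strictly increasing, whence $\psi_{a,b,\lambda,u}''(t^-)=-t^-\varphi'(t^-)<0$; this is case iv). Finally I would observe that these alternatives are mutually exclusive, being distinguished by the number of critical points together with the sign of $\psi_{a,b,\lambda,u}''$ at them.

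I expect the real work to be the sign bookkeeping in the previous paragraph, and in particular the subcase $b>0$, $\lambda<0$, which does not appear in Proposition \ref{graph psi 1}: there $\varphi$ may be monotone decreasing, or may first dip to a negative local minimum before attaining a local maximum, so one has to invoke $\varphi(0^+)=0$ together with the negativity of $\varphi$ on its initial decreasing branch in order to rule out extra solutions of $\varphi=aA$ and to conclude that only cases i)--iii) can occur there. Once the shape of $\varphi$ is pinned down in every regime of $b$ and $\lambda$, the remaining verification is the routine bookkeeping indicated above.
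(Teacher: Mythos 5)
Your proposal is correct and follows essentially the route the paper intends: the paper gives no separate proof of this lemma, stating only that it is proved "similarly to Proposition \ref{graph psi 1}", whose argument is exactly your reduction of $\psi'_{a,b,\lambda,u}(t)=0$ to the level equation $\varphi(t)=a\|u\|^2$ together with the identity $\psi''_{a,b,\lambda,u}(\bar t)=-\bar t\,\varphi'(\bar t)$ at critical points. Your sign analysis of $\varphi$ in the regimes $b>0$ versus $b\le 0$ (the latter producing alternative iv), consistently with Remark \ref{globalmax}) correctly supplies the bookkeeping the paper omits.
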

\begin{rem}\label{globalmax} If $b\le 0$ and $\lambda \ge 0$, then only item iv) of Lemma \ref{graph psi 2} occurs. Moreover, if $b>0$, then only one of the items $i)-iii)$ occurs.
\end{rem}

\begin{lem}\label{increas} Fix $u\in H_0^1(\Omega)\setminus\{0\}$ and $a>0$. Let $V\subset \mathbb{R}^2$ be an open set and assume that $t_{a,b,\lambda}^-(u)$ is defined for all $(b,\lambda)\in V$. Then the function $V\ni (b,\lambda)\mapsto t_{a,b,\lambda}^-(u)$ is $C^1$. Moreover the following holds true.
	\begin{itemize}
		\item[i)]  The functions $t_{a,b,\lambda}^-(u)$ and  $\psi_{a,b,\lambda,u}(t_{a,b,\lambda}^-(u))$ are increasing with respect to $b$;
		\item[ii)] The functions $t_{a,b,\lambda}^-(u)$ and  $\psi_{a,b,\lambda,u}(t_{a,b,\lambda}^-(u))$ are decreasing with respect to $\lambda$.
	\end{itemize}
\end{lem}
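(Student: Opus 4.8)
The plan is to obtain the $C^1$ regularity from the implicit function theorem and then read off both monotonicity statements by differentiating the defining relation $\psi'_{a,b,\lambda,u}\big(t^-_{a,b,\lambda}(u)\big)=0$ and using that $t^-_{a,b,\lambda}(u)$ is a strict local maximum of the fiber map.

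First I would set $\Psi(t,b,\lambda):=\psi'_{a,b,\lambda,u}(t)$. For fixed $a$ and $u$ this is a polynomial in $t$ whose coefficients depend affinely on $(b,\lambda)$, hence $\Psi\in C^\infty\big((0,\infty)\times\mathbb R^2\big)$. Fix $(b_0,\lambda_0)\in V$ and put $\tau_0:=t^-_{a,b_0,\lambda_0}(u)$. Since $t^-_{a,b,\lambda}(u)$ is assumed defined on all of $V$, Lemma \ref{graph psi 2} forces case iii) or iv) to hold at each point of $V$, and in both of these cases $\psi''_{a,b,\lambda,u}\big(t^-_{a,b,\lambda}(u)\big)<0$. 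In particular $\partial_t\Psi(\tau_0,b_0,\lambda_0)=\psi''_{a,b_0,\lambda_0,u}(\tau_0)\neq 0$, so the implicit function theorem produces a $C^1$ map $\sigma$ on a neighbourhood of $(b_0,\lambda_0)$ with $\sigma(b_0,\lambda_0)=\tau_0$ and $\Psi(\sigma(b,\lambda),b,\lambda)\equiv 0$. Shrinking this neighbourhood and using continuity of $\sigma$ and of $(t,b,\lambda)\mapsto\psi''_{a,b,\lambda,u}(t)$, we keep $\psi''_{a,b,\lambda,u}(\sigma(b,\lambda))<0$; but in cases iii) and iv) the unique critical point of $\psi_{a,b,\lambda,u}$ with negative second derivative is exactly $t^-_{a,b,\lambda}(u)$, so $\sigma(b,\lambda)=t^-_{a,b,\lambda}(u)$ there. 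As $(b_0,\lambda_0)$ was arbitrary, $(b,\lambda)\mapsto t^-_{a,b,\lambda}(u)$ is $C^1$ on $V$.

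For the monotonicity of $\tau:=\tau(b,\lambda)=t^-_{a,b,\lambda}(u)$ I would differentiate $\psi'_{a,b,\lambda,u}(\tau)\equiv 0$. From $\psi'_{a,b,\lambda,u}(t)=a\|u\|^2t+b\|u\|^4t^3-\|u\|_{2^*}^{2^*}t^{2^*-1}-\lambda\|u\|_p^p t^{p-1}$ we get $\partial_b\psi'_{a,b,\lambda,u}(t)=\|u\|^4t^3$ and $\partial_\lambda\psi'_{a,b,\lambda,u}(t)=-\|u\|_p^p t^{p-1}$, hence
$$\partial_b\tau=-\frac{\|u\|^4\tau^3}{\psi''_{a,b,\lambda,u}(\tau)}>0,\qquad \partial_\lambda\tau=\frac{\|u\|_p^p\tau^{p-1}}{\psi''_{a,b,\lambda,u}(\tau)}<0,$$
since $\tau>0$ and $\psi''_{a,b,\lambda,u}(\tau)<0$. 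This gives the claimed monotonicity of $t^-_{a,b,\lambda}(u)$.

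Finally, for $m(b,\lambda):=\psi_{a,b,\lambda,u}\big(\tau(b,\lambda)\big)$ I would invoke the envelope identity: because $\psi'_{a,b,\lambda,u}(\tau)=0$, the chain rule kills the $\partial_t$-term and leaves $\partial_b m=\partial_b\psi_{a,b,\lambda,u}(\tau)=\tfrac14\|u\|^4\tau^4>0$ and $\partial_\lambda m=\partial_\lambda\psi_{a,b,\lambda,u}(\tau)=-\tfrac1p\|u\|_p^p\tau^p<0$; these are exactly the expressions used (with $b=0$) in the proof of Proposition \ref{energylambdanega}. Hence $m$ is increasing in $b$ and decreasing in $\lambda$, completing i) and ii). The only genuinely delicate point is the identification in the first paragraph — checking that the local IFT branch is globally the curve $t^-_{a,b,\lambda}(u)$ and not some other critical branch; once $\psi''_{a,b,\lambda,u}<0$ at $t^-$ is secured, the differentiation steps are routine.
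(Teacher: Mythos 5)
Your proposal is correct and follows essentially the same route as the paper: the implicit function theorem applied to $\psi'_{a,b,\lambda,u}(t)=0$ at a point where $\psi''_{a,b,\lambda,u}<0$ gives the $C^1$ regularity, implicit differentiation gives $\partial_b\tau=-\|u\|^4\tau^3/\psi''_{a,b,\lambda,u}(\tau)>0$ (and the analogous sign in $\lambda$), and the envelope identity gives the monotonicity of the critical value. Your extra care in identifying the local IFT branch with $t^-_{a,b,\lambda}(u)$ via the sign of the second derivative (using cases iii) and iv) of Lemma \ref{graph psi 2}) is a point the paper leaves implicit, but it is the right justification.
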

\begin{proof} Denote $t_{b,\lambda}=t_{a,b,\lambda}^-(u)$ and note from the implicit function theorem that $\psi_{a,b,\lambda,u}'(t_{b,\lambda})=0$ and $\psi_{a,b,\lambda,u}''(t_{b,\lambda})<0$ implies that $t_{b,\lambda}$ is $C^1$ as a function of $(b,\mu,\lambda)\in V$. Since
	\begin{equation*}
	at_{b,\lambda}^2\|u\|^2+bt_{b,\lambda}^4\|u\|^4-t_{b,\lambda}^{2^*}\mu\|u\|_{2^*}^{2^*}-\lambda t_{b,\lambda}^p\|u\|_p^p=0,
	\end{equation*}
	we conclude by differentiating both sides, with respect to $b$, that
	\begin{equation*}
	\frac{\partial{t_{b,\lambda}}}{\partial b}=-\frac{t_{b,\lambda}^4\|u\|^4}{\psi_{a,b,\lambda,u}''(t_{b,\lambda})}>0,
	\end{equation*}
	and hence $t_{b,\lambda}$ is increasing in $b$. Now let $f(b)=\psi_{a,b,\lambda,u}(t_{a,b,\lambda}^-(u))$ and observe that
	\begin{equation*}
	f'(b)=\frac{\partial{t_{b,\lambda}}}{\partial b}\psi_{a,b,\lambda,u}'(t_{b,\lambda})+\frac{t_{b,\lambda}^4\|u\|^4}{4}>0,
	\end{equation*}
	which implies that $f$ is increasing and hence i) is proved. The proof of ii) is similar.
\end{proof}
\begin{rem}\label{t+} Note that a similar result can also be proved with respect to the functions $t_{a,b,\lambda}^+(u)$ and $\psi_{a,b,\lambda,u}(t_{a,b,\lambda}^+(u))$.
\end{rem}
Denote
\begin{equation*}
\mathcal{M}_{a,b,\lambda}=\left\{\frac{u}{\|u\|}:u\in\mathcal{N}_{a,b,\lambda}^-\right\}.
\end{equation*}
\begin{lem}\label{containedmu} There holds:
	\begin{itemize}
		\item[i)] If $b_1< b_2$, then $\mathcal{M}_{b_2}\subset \mathcal{M}_{b_1}$.
		\item[ii)] If $\lambda_1< \lambda_2$, then $\mathcal{M}_{\lambda_1}\subset \mathcal{M}_{\lambda_2}$.
	\end{itemize}
\end{lem}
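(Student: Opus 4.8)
The plan is to rewrite membership in $\mathcal M_{a,b,\lambda}$ as a sign condition on the fiber map, and then to invoke the (elementary) monotonicity of $t\mapsto\psi'_{a,b,\lambda,v}(t)$ in the parameters $b$ and $\lambda$.

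The first step would be to establish that, for $a,b>0$ and $\lambda\ge 0$,
\[
\mathcal M_{a,b,\lambda}=\bigl\{\,v\in H^1_0(\Omega):\ \|v\|=1\ \text{and}\ \psi'_{a,b,\lambda,v}(t)<0\ \text{for some }t>0\,\bigr\}.
\]
For this one uses the homogeneity identity $\psi_{a,b,\lambda,u}(s)=\psi_{a,b,\lambda,u/\|u\|}(s\|u\|)$: differentiating in $s$ shows that $s_0$ is a critical point of $\psi_{a,b,\lambda,u}$ iff $s_0\|u\|$ is a critical point of $\psi_{a,b,\lambda,v}$, $v=u/\|u\|$, with $\psi''_{a,b,\lambda,u}(s_0)$ and $\psi''_{a,b,\lambda,v}(s_0\|u\|)$ of the same sign. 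Hence $u\in\mathcal N^-_{a,b,\lambda}$ iff $\|u\|$ is a critical point of $\psi_{a,b,\lambda,v}$ at which the second derivative is negative, that is, iff $\psi_{a,b,\lambda,v}$ admits a local maximum (and then $\|u\|=t^-_{a,b,\lambda}(v)$). So $\mathcal M_{a,b,\lambda}$ is exactly the set of unit vectors $v$ for which $\psi_{a,b,\lambda,v}$ has a local maximum; by the trichotomy of Lemma \ref{graph psi 2} together with Remark \ref{globalmax} (recall $b>0$), this occurs precisely when $\psi_{a,b,\lambda,v}$ fails to be strictly increasing on $(0,+\infty)$, i.e.\ exactly when $\psi'_{a,b,\lambda,v}$ takes a negative value. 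If instead $b\le 0$ and $\lambda\ge 0$, Remark \ref{globalmax} shows that $\psi_{a,b,\lambda,v}$ always has a local maximum, so $\mathcal M_{a,b,\lambda}$ is the whole unit sphere and both inclusions are trivial; hence it suffices to treat $b>0$.

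Next I would record the monotonicity. For $\|v\|=1$ one has $\psi_{a,b,\lambda,v}(t)=\frac{a}{2}t^2+\frac{b}{4}t^4-\frac{1}{2^*}\|v\|_{2^*}^{2^*}t^{2^*}-\frac{\lambda}{p}\|v\|_p^pt^p$, hence
\[
\psi'_{a,b,\lambda,v}(t)=at+bt^3-\|v\|_{2^*}^{2^*}t^{2^*-1}-\lambda\|v\|_p^pt^{p-1},
\]
so that $\partial_b\psi'_{a,b,\lambda,v}(t)=t^3>0$ and $\partial_\lambda\psi'_{a,b,\lambda,v}(t)=-\|v\|_p^pt^{p-1}<0$ for every $t>0$. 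Thus $\psi'_{a,b_1,\lambda,v}(t)<\psi'_{a,b_2,\lambda,v}(t)$ when $b_1<b_2$, and $\psi'_{a,b,\lambda_2,v}(t)<\psi'_{a,b,\lambda_1,v}(t)$ when $\lambda_1<\lambda_2$, for all $t>0$. Combining with the characterization: if $b_1<b_2$ and $v\in\mathcal M_{b_2}$, choose $t_0>0$ with $\psi'_{a,b_2,\lambda,v}(t_0)<0$; then $\psi'_{a,b_1,\lambda,v}(t_0)<\psi'_{a,b_2,\lambda,v}(t_0)<0$, so $v\in\mathcal M_{b_1}$, which is i). If $\lambda_1<\lambda_2$ and $v\in\mathcal M_{\lambda_1}$, choose $t_0>0$ with $\psi'_{a,b,\lambda_1,v}(t_0)<0$; then $\psi'_{a,b,\lambda_2,v}(t_0)<\psi'_{a,b,\lambda_1,v}(t_0)<0$, so $v\in\mathcal M_{\lambda_2}$, which is ii).

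The only step that needs genuine care is the first one: tracking the homogeneity so that the non scale-invariant constraint $u\in\mathcal N^-_{a,b,\lambda}$ turns into the scale-invariant property ``$\psi_{a,b,\lambda,v}$ has a local maximum'' for $v=u/\|u\|$, and then quoting the fiber-map trichotomy to convert this into ``$\psi'_{a,b,\lambda,v}<0$ somewhere''. Once that reformulation is in place, the two inclusions follow from the one-line fact that, pointwise in $t>0$, the derivative $\psi'_{a,b,\lambda,v}(t)$ is increasing in $b$ and decreasing in $\lambda$.
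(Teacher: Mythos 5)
Your proof is correct and follows essentially the same route as the paper: the paper evaluates $\psi'_{a,b_1,\lambda,u}$ at the witness point $t^-_{a,b_2,\lambda}(u)$ (where $\psi'_{a,b_2,\lambda,u}$ vanishes), then uses the strict pointwise monotonicity of $\psi'$ in $b$ and $\lambda$ together with the fiber-map trichotomy of Lemma \ref{graph psi 2} --- exactly the two ingredients you isolate. Your explicit homogeneity bookkeeping and the characterization of $\mathcal{M}_{a,b,\lambda}$ as the set of unit vectors whose fiber derivative takes a negative value somewhere is just a more careful write-up of what the paper leaves implicit.
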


\begin{proof} i)  Take $u\in \mathcal{M}_{a,b_2,\lambda}$. Once $\psi_{a,b_1,\lambda}'(t)\le  \psi_{a,b_2,\lambda}'(t)$ for all $t>0$, it follows that $\psi_{a,b_1,\lambda}'(t_{a,b_2,\lambda}^-(u))<  \psi_{a,b_2,\lambda}'(t_{a,b_2,\lambda}^-(u))=0$ and hence, from Lemma \ref{graph psi 2} we conclude that $u\in  \mathcal{M}_{a,b_1,\lambda}$.
	
ii) 	Take $u\in \mathcal{M}_{a,b,\lambda_1}$. Once $\psi_{a,b,\lambda_2}'(t)\le  \psi_{a,b,\lambda_1}'(t)$ for all $t>0$, it follows that $\psi_{a,b,\lambda_2}'(t_{a,b,\lambda_1}^-(u))<
\psi_{a,b,\lambda_1}'(t_{a,b,\lambda_1}^-(u))=0$ and hence, from Proposition \ref{graph psi 2} we conclude that $u\in  \mathcal{M}_{a,b,\lambda_1}$.
\end{proof}
\begin{prop}\label{decreasingincreasing} Fix $a>0$ and let $I$ be an  interval. Then, the following holds true.
	\begin{itemize}
		\item[i)] Fix $b\in \mathbb{R}$. If $c^-(a,b,\lambda)$ is defined for all $\lambda\in I$, then it is non-increasing as a function of $\lambda$.
		\item[ii)] Fix $\lambda\in \mathbb{R}$. If $c^-(a,b,\lambda)$ is defined for all $b\in I$, then it is non-decreasing as a function of $b$.
 	\end{itemize}
\end{prop}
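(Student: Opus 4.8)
The plan is to express $c^-(a,b,\lambda)$ as an infimum taken over the normalized set $\mathcal{M}_{a,b,\lambda}$, and then to combine two facts already at our disposal: the nestedness of the sets $\mathcal{M}_{a,b,\lambda}$ as the parameters vary (Lemma \ref{containedmu}) and the monotonicity, along a fixed ray, of the value of $\psi_{a,b,\lambda,u}$ at its local maximum $t^-_{a,b,\lambda}(u)$ (Lemma \ref{increas}).

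First I would record the reformulation. If $v\in\mathcal{N}^-_{a,b,\lambda}$ and $u=v/\|v\|$, then from $\psi_{a,b,\lambda,v}(t)=\psi_{a,b,\lambda,u}(t\|v\|)$ one gets $\psi'_{a,b,\lambda,u}(\|v\|)=0$ and $\psi''_{a,b,\lambda,u}(\|v\|)<0$, so by the classification of fiber maps in Lemma \ref{graph psi 2} the number $\|v\|$ is exactly the local maximum point $t^-_{a,b,\lambda}(u)$; conversely $t^-_{a,b,\lambda}(u)\,u\in\mathcal{N}^-_{a,b,\lambda}$ whenever $u\in\mathcal{M}_{a,b,\lambda}$. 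Since $\Phi_{a,b,\lambda}(v)=\psi_{a,b,\lambda,u}(\|v\|)$, this gives
\begin{equation*}
c^-(a,b,\lambda)=\inf_{u\in\mathcal{M}_{a,b,\lambda}}\psi_{a,b,\lambda,u}(t^-_{a,b,\lambda}(u)).
\end{equation*}

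For i), I would fix $b$ and take $\lambda_1<\lambda_2$ in $I$. By item ii) of Lemma \ref{containedmu}, $\mathcal{M}_{a,b,\lambda_1}\subseteq\mathcal{M}_{a,b,\lambda}$ for every $\lambda\ge\lambda_1$, so for each $u\in\mathcal{M}_{a,b,\lambda_1}$ the quantity $t^-_{a,b,\lambda}(u)$ is defined on $[\lambda_1,\infty)$, and item ii) of Lemma \ref{increas} yields $\psi_{a,b,\lambda_2,u}(t^-_{a,b,\lambda_2}(u))\le\psi_{a,b,\lambda_1,u}(t^-_{a,b,\lambda_1}(u))$. Because $\mathcal{M}_{a,b,\lambda_1}\subseteq\mathcal{M}_{a,b,\lambda_2}$, the infimum defining $c^-(a,b,\lambda_2)$ runs over a larger set than $\mathcal{M}_{a,b,\lambda_1}$, hence
\begin{equation*}
c^-(a,b,\lambda_2)\le\inf_{u\in\mathcal{M}_{a,b,\lambda_1}}\psi_{a,b,\lambda_2,u}(t^-_{a,b,\lambda_2}(u))\le\inf_{u\in\mathcal{M}_{a,b,\lambda_1}}\psi_{a,b,\lambda_1,u}(t^-_{a,b,\lambda_1}(u))=c^-(a,b,\lambda_1),
\end{equation*}
which is the asserted monotonicity. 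Item ii) goes the same way after interchanging the roles of the two lemmas: for $b_1<b_2$ in $I$, item i) of Lemma \ref{containedmu} gives $\mathcal{M}_{a,b_2,\lambda}\subseteq\mathcal{M}_{a,b,\lambda}$ for $b\le b_2$, item i) of Lemma \ref{increas} gives $\psi_{a,b_1,\lambda,u}(t^-_{a,b_1,\lambda}(u))\le\psi_{a,b_2,\lambda,u}(t^-_{a,b_2,\lambda}(u))$ for $u\in\mathcal{M}_{a,b_2,\lambda}$, and combining these with $\mathcal{M}_{a,b_2,\lambda}\subseteq\mathcal{M}_{a,b_1,\lambda}$ gives $c^-(a,b_1,\lambda)\le c^-(a,b_2,\lambda)$.

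The argument is mostly bookkeeping once the reformulation of $c^-$ is written down, and I do not expect a genuine obstacle. The one point needing a little care is that Lemma \ref{increas} must be applied along a parameter interval on which $t^-$ stays defined, which is precisely what the inclusions of Lemma \ref{containedmu} supply; the rest is purely organizational, namely keeping the two directions of monotonicity aligned with the correct items of the two lemmas (as $\lambda$ grows $\mathcal{M}$ grows while each fiber maximum decreases, both forcing $c^-$ down; as $b$ grows $\mathcal{M}$ shrinks while each fiber maximum increases, both forcing $c^-$ up).
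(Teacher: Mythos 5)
Your proposal is correct and follows essentially the same route as the paper: both reduce $c^-$ to an infimum over the normalized set $\mathcal{M}_{a,b,\lambda}$ and combine the inclusion of Lemma \ref{containedmu} with the monotonicity of $\psi_{a,b,\lambda,u}(t^-_{a,b,\lambda}(u))$ from Lemma \ref{increas}. Your write-up is merely a bit more explicit than the paper's about the identification $\|v\|=t^-_{a,b,\lambda}(v/\|v\|)$ and about $t^-$ remaining defined along the relevant parameter range, which the paper leaves implicit.
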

\begin{proof} i) Indeed, fix $ \lambda_1<\lambda_2$ and $u\in \mathcal{M}_{a,b,\lambda_1}$. Since from Lemma \ref{containedmu} we have that $u\in \mathcal{M}_{a,b,\lambda_2}$, it follows from Lemma \ref{increas}  that
	\begin{equation}\label{ineincere}
	c^-(a,b,\lambda_2)\le \psi_{a,b,\lambda_2}(t_{a,b,\lambda_2}^-(u))<\psi_{a,b,\lambda_1}(t_{a,b,\lambda_1}^-(u)), \forall u\in \mathcal{M}_{a,b,\lambda_1}.
	\end{equation}
	and hence $c^-(a,b,\lambda_2)\le c^-(a,b,\lambda_1)$. The proof of ii) is similar.
\end{proof}
\begin{prop}\label{continuousA} Fix $a>0$ and let $I$ be an  interval. Then, the following holds true.
	\begin{itemize}
	\item[i)] Fix $\lambda\in \mathbb{R}$. If $c^-(a,b,\lambda)$ is defined for all $b\in I$, then it is right continuous as a function of $b$.
	\item[ii)] Fix $b\in \mathbb{R}$. If $c^-(a,b,\lambda)$ is defined for all $\lambda\in I$, then it is right continuous as a function of $\lambda$.
\end{itemize}
\end{prop}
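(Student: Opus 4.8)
The two parts are proved by the same scheme, exploiting the monotonicity of $c^-$ already isolated in Proposition~\ref{decreasingincreasing}. Fix $a>0$. \emph{Part~(i)} is the easier one: since $b\mapsto c^-(a,b,\lambda)$ is non-decreasing, the right limit at $b_0$ exists and is $\ge c^-(a,b_0,\lambda)$, so it suffices to show $\limsup_{b\downarrow b_0}c^-(a,b,\lambda)\le c^-(a,b_0,\lambda)$. Given $\varepsilon>0$, pick $u$ with $\|u\|=1$, $u\in\mathcal M_{a,b_0,\lambda}$ and $\psi_{a,b_0,\lambda,u}(t^-_{a,b_0,\lambda}(u))<c^-(a,b_0,\lambda)+\varepsilon$. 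Because $\psi''_{a,b_0,\lambda,u}(t^-_{a,b_0,\lambda}(u))<0$, the implicit function theorem (as in the proof of Lemma~\ref{increas}) shows that $t^-_{a,b,\lambda}(u)$ is defined and that $b\mapsto\psi_{a,b,\lambda,u}(t^-_{a,b,\lambda}(u))$ is continuous on a right neighborhood of $b_0$; hence $c^-(a,b,\lambda)\le\psi_{a,b,\lambda,u}(t^-_{a,b,\lambda}(u))\to\psi_{a,b_0,\lambda,u}(t^-_{a,b_0,\lambda}(u))<c^-(a,b_0,\lambda)+\varepsilon$ as $b\downarrow b_0$, and $\varepsilon\to 0$ finishes~(i). (The identical argument applied to $\lambda$ yields \emph{left} continuity of $c^-(a,b,\cdot)$, but that is the wrong side for~(ii).)

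\emph{Part~(ii)} is genuinely harder. Since $\lambda\mapsto c^-(a,b,\lambda)$ is non-increasing, $L:=\lim_{\lambda\downarrow\lambda_0}c^-(a,b,\lambda)$ exists with $L\le c^-(a,b,\lambda_0)$, and one must prove the reverse inequality $L\ge c^-(a,b,\lambda_0)$ — a lower bound for an infimum, not reachable by a test function. I would work with $\lambda\in(\lambda_0,\lambda_0+1]$ and first record a uniformity: for $\|u\|=1$ in this range, whenever $t^\pm_{a,b,\lambda}(u)$ exist they lie in a fixed compact $[m,M]\subset(0,\infty)$; this is read off $\varphi'_{a,b,\lambda,u}(t^*)=0$ exactly as in the proof of Proposition~\ref{graph psi 1}, using that $\|u\|_{2^*}^{2^*},\|u\|_p^p$ are bounded above by the Sobolev embeddings and that, for $u\in\mathcal M_{a,b,\lambda}$, $\|u\|_{2^*}^{2^*}$ is bounded below (otherwise $\max_{t>0}\varphi_{a,b,\lambda,u}(t)<a$ and $u\notin\mathcal M_{a,b,\lambda}$). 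Using $\mathcal M_{a,b,\lambda_0}\subseteq\mathcal M_{a,b,\lambda}$ for $\lambda>\lambda_0$ (Lemma~\ref{containedmu}) I split
\[
c^-(a,b,\lambda)=\min\Big\{\,\inf_{u\in\mathcal M_{a,b,\lambda_0}}\psi_{a,b,\lambda,u}(t^-_{a,b,\lambda}(u))\,,\ \inf_{u\in\mathcal M_{a,b,\lambda}\setminus\mathcal M_{a,b,\lambda_0}}\psi_{a,b,\lambda,u}(t^-_{a,b,\lambda}(u))\,\Big\}=:\min\{A_\lambda,B_\lambda\}.
\]
On the first set, the envelope identity behind Lemma~\ref{increas} gives $\psi_{a,b,\lambda_0,u}(t^-_{a,b,\lambda_0}(u))-\psi_{a,b,\lambda,u}(t^-_{a,b,\lambda}(u))=\tfrac1p\int_{\lambda_0}^{\lambda}(t^-_{a,b,s}(u))^p\|u\|_p^p\,ds\le C(\lambda-\lambda_0)$, uniformly in $u$ by $t^-\le M$ and $\|u\|_p^p\le C_p$; hence $A_\lambda\ge c^-(a,b,\lambda_0)-C(\lambda-\lambda_0)\to c^-(a,b,\lambda_0)$.

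The crux is the estimate for $B_\lambda$. For $u\in\mathcal M_{a,b,\lambda}\setminus\mathcal M_{a,b,\lambda_0}$ one has $a<\max_{t>0}\varphi_{a,b,\lambda,u}(t)\le a+C(\lambda-\lambda_0)$ (again via $t^*\le M$ and Sobolev), so the fiber of $u$ is almost degenerate; Taylor expanding $\varphi_{a,b,\lambda,u}$ at its maximum point then gives, for $v:=t^-_{a,b,\lambda}(u)u\in\mathcal N^-_{a,b,\lambda}$, that $|\psi''_{a,b,\lambda,v}(1)|=O(\sqrt{\lambda-\lambda_0})$, uniformly in $u$. I would combine the algebraic identity used in the proof of Lemma~\ref{naimenineq} — which expresses $\Phi_{a,b,\lambda}(v)$, for $v\in\mathcal N_{a,b,\lambda}$, as an increasing function of $\|v\|^2$ minus a fixed multiple of $\psi''_{a,b,\lambda,v}(1)$ — with the Sobolev lower bounds on $\|v\|$ valid on $\mathcal N^-_{a,b,\lambda}$, to get $\liminf_{\lambda\downarrow\lambda_0}B_\lambda\ge\frac{(p-2)^2a^2}{4p(4-p)b}$; and, more sharply, taking an infimizing sequence for $B_\lambda$, using the uniform bounds above together with the relevant compactness (item~2) of Lemma~\ref{variational property} when $a^{\frac{N-4}{2}}b\ge C_2(N)$, and the corresponding statement from \cite{DN} when $a^{\frac{N-4}{2}}b<C_2(N)$), one checks that these $v$ converge, after rescaling, to an element of $\mathcal N^0_{a,b,\lambda_0}$, whence $\liminf_{\lambda\downarrow\lambda_0}B_\lambda\ge c^0(a,b,\lambda_0)\ge c^-(a,b,\lambda_0)$ — the last inequality because every point of $\mathcal N^0$ is a limit, along its own fiber, of points of $\mathcal N^-$. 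Putting $A_\lambda$ and $B_\lambda$ together gives $L=\liminf_{\lambda\downarrow\lambda_0}\min\{A_\lambda,B_\lambda\}\ge c^-(a,b,\lambda_0)$, as required.

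The real obstacle is precisely the $B_\lambda$ estimate: one must rule out that the directions which \emph{newly} enter the $\mathcal N^-$ condition as $\lambda$ crosses $\lambda_0$ carry energy below $c^-(a,b,\lambda_0)$. Monotonicity plus the implicit function theorem and Lemma~\ref{increas} handle everything else, but this last point forces the combined use of the Nehari energy identity, Lemma~\ref{naimenineq}, and the (critical, non-compact) convergence analysis underlying Lemma~\ref{variational property}; making all the constants uniform in $\lambda\in(\lambda_0,\lambda_0+1]$ also needs care.
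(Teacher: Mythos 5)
Your part~i) is exactly the paper's argument: monotonicity in $b$ (Proposition~\ref{decreasingincreasing}) reduces right continuity to an upper bound, obtained by transporting a near-optimal $u\in\mathcal{M}_{a,b_0,\lambda}$ to $b_0+\delta$ via the implicit function theorem and Lemma~\ref{increas}. Your diagnosis of part~ii) is also correct and worth stressing: the paper disposes of it with ``the proof of ii) is similar'', but the symmetric test-function argument only reproduces the upper bound $\lim_{\lambda\downarrow\lambda_0}c^-(a,b,\lambda)\le c^-(a,b,\lambda_0)$ already contained in the monotonicity, and what it genuinely proves is \emph{left} continuity in $\lambda$. Right continuity needs a lower bound on the infimum over the enlarged set $\mathcal{M}_{a,b,\lambda}\supseteq\mathcal{M}_{a,b,\lambda_0}$, and your splitting into $A_\lambda$ (correctly handled by the envelope identity of Lemma~\ref{increas} together with the uniform bound on $t^-$) and $B_\lambda$ isolates the difficulty in the right place.

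The gap is in the $B_\lambda$ estimate, at two points. First, the inequality $c^0(a,b,\lambda_0)\ge c^-(a,b,\lambda_0)$ is justified by saying that every point of $\mathcal{N}^0$ is a limit along its own fiber of points of $\mathcal{N}^-$; this is false, since for $v\in\mathcal{N}^0$ the fiber map $\psi_{\lambda_0,v}$ falls in case~ii) of Proposition~\ref{graph psi 1} and has $t=1$ as its \emph{only} critical point, so no other point of the ray through $v$ lies in $\mathcal{N}$. The paper proves $c^-\le c^0$ only at $\lambda=0$ (Proposition~\ref{nehariestimatives}), using the explicit value of $\Phi$ on $\mathcal{N}^0$ there; for general $\lambda_0$ the inequality needs its own proof, and it is not optional: for $v\in\mathcal{N}^0_{a,b,\lambda_0}$ one has $t^-_\lambda(v)v\in\mathcal{N}^-_{a,b,\lambda}$ with energy tending to $\Phi_{\lambda_0}(v)$ as $\lambda\downarrow\lambda_0$, so $\lim_{\lambda\downarrow\lambda_0}c^-(a,b,\lambda)\le c^0(a,b,\lambda_0)$ and right continuity would genuinely fail if $c^0<c^-$. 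Second, the compactness step: a minimizing sequence for $B_\lambda$ is not a Palais--Smale sequence for $\Phi_{\lambda_0}$, so item~2) of Lemma~\ref{variational property} does not apply as invoked; that lemma is in any case unavailable when $a^{\frac{N-4}{2}}b<C_2(N)$, which the hypotheses allow, and in the critical setting the rescaled elements may concentrate and converge weakly to $0$, leaving no limiting element of $\mathcal{N}^0_{a,b,\lambda_0}$. The robust fallback $\liminf B_\lambda\ge\frac{(p-2)^2a^2}{4p(4-p)b}$ from the algebra behind Lemma~\ref{naimenineq} does survive, but it is insufficient precisely when $c^-(a,b,\lambda_0)>\frac{(p-2)^2a^2}{4p(4-p)b}$, which is exactly the regime of Remark~\ref{comparisonnaimen}~ii), the one place where part~ii) is applied. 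So: part~i) stands and matches the paper, your critique of the paper's one-line treatment of part~ii) is sound, but your replacement proof of part~ii) is not yet complete.
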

\begin{proof} i) Fix $b_0\in I$. We claim that $\lim_{b\downarrow b_0}c^-(a,b,\lambda)=c^-(a,b_0,\lambda)$. Indeed, once $I\ni b\mapsto c^-(a,b,\lambda)$ is non-decreasing, we can assume that $\lim_{b\downarrow b_0}c^-(a,b,\lambda)=c\ge c^-(a,b_0,\lambda)$. Suppose on the contrary that $c>c^-(a,b_0,\lambda)$. Given $\varepsilon>0$ choose $u\in \mathcal{M}_{a,b_0,\lambda}$ such that $\Phi_{a,b_0,\lambda}(t_{a,b_0,\lambda}^-(u)u)\in [c^-(a,b_0,\lambda),c^-(a,b_0,\lambda)+\varepsilon)$ and $c^-(a,b_0,\lambda)+\varepsilon<c$. From Lemma \ref{increas} we conclude that for small $\delta>0$
	\begin{equation*}
	c^-(a,b_0+\delta,\lambda)\le \Phi_{a,b_0+\delta,\lambda}(t_{a,b_0+\delta,\lambda}^-(u)u)< c^-(a,b_0,\lambda)+\varepsilon<c,
	\end{equation*}
	which is a contradiction and thus $I\ni b\mapsto c^-(a,b,\lambda)$ is right continuous. The proof of ii) is similar.
\end{proof}
\section{The case $\lambda=0$}\label{AB}
We collect some results concerning the fiber maps $\psi$ when $\lambda=0$. The parameter now is $b>0$, while $a>0$ is fixed. For this reason, we write $\psi_{b,u}$ and $\Phi_b$ instead of $\psi_{0,u}$ and $\Phi_0$ and so on. As we already know, for each $u\in H_0^1(\Omega)\setminus\{0\}$ the fiber map $\psi_{b,u}$ has satisfies Proposition \ref{graph psi 1}. One can see now that the systems $\psi_{b,u}(t)=\psi_{b,u}'(t)=0$ and $\psi'_{b,u}(t)=\psi_{b,u}''(t)=0$ admits a unique solution, with respect to $t,b$, which are given respectively by  (see \cite{FF} and \cite{S})
\begin{equation*}
t_{0}(u)=\left(\frac{2^*a}{4-2^*}\frac{\|u\|^2}{\|u\|_{2^*}^{2^*}}\right)^{\frac{1}{2^*-2}},
\end{equation*}
\begin{equation*}
b_0(u)=a^{\frac{4-N}{2}}S_N^{\frac{N}{2}}C_1(N)\left(\frac{\|u\|_{2^*}}{\|u\|}\right)^N,
\end{equation*}
and
\begin{equation*}
t(u)=\left(\frac{2a}{4-2^*}\frac{\|u\|^2}{\|u\|_{2^*}^{2^*}}\right)^{\frac{1}{2^*-2}},
\end{equation*}
\begin{equation*}
b(u)=a^{\frac{4-N}{2}}S_N^{\frac{N}{2}}C_2(N)\left(\frac{\|u\|_{2^*}}{\|u\|}\right)^N.
\end{equation*}
As a conclusion of this analysis and similar to Propositions \ref{prop systems} and \ref{prop systems 1} we have 
\begin{prop}\label{prop systems 2} There holds
	
	\begin{itemize}
		\item[i)]
		For each $b\ge b_0(u)$ and each  $u\in H^1_0(\Omega)\setminus \{0\}$, $\inf_{t>0}\psi_{b,u}(t)=0$; for each $b<b_0(u)$ there exists $u\in H^1_0(\Omega)\setminus \{0\}$ such that $\Phi_b(u)<0$.
		\item[ii)] For each $b\ge b(u)$, the set $\mathcal N_b=\emptyset$; for each $b<b(u)$, the sets $\mathcal N_b^+$, $\mathcal N_b^-$ and $\mathcal N_b^0$ are non empty.
	\end{itemize}
\end{prop}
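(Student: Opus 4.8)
The plan is to read Proposition~\ref{prop systems 2} as the $\lambda=0$ analogue of Propositions~\ref{prop systems} and~\ref{prop systems 1}, with the role played there by the monotonicity in $\lambda$ now played by the monotonicity in $b$: since $f\equiv0$,
\[
\psi_{b,u}(t)=\frac a2\|u\|^2t^2+\frac b4\|u\|^4t^4-\frac1{2^*}\|u\|_{2^*}^{2^*}t^{2^*},
\]
so $b_1<b_2$ forces $\psi_{b_1,u}(t)<\psi_{b_2,u}(t)$ and $\psi'_{b_1,u}(t)<\psi'_{b_2,u}(t)$ for every $t>0$, just as $F\ge0$ did in Proposition~\ref{lambdazero}. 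I would start from the formulas for $t_0(u),b_0(u),t(u),b(u)$ recalled just above (see \cite{FF},\cite{S}), keeping in mind that $b_0(u)=a^{\frac{4-N}2}S_N^{\frac N2}C_1(N)(\|u\|_{2^*}/\|u\|)^N$ and $b(u)=a^{\frac{4-N}2}S_N^{\frac N2}C_2(N)(\|u\|_{2^*}/\|u\|)^N$; by continuity and connectedness of $H_0^1(\Omega)\setminus\{0\}$ and because $S_N$ is never attained on a bounded domain, $\|v\|_{2^*}/\|v\|$ takes every value in $(0,S_N^{-1/2})$, whence $b_0(\cdot)$ and $b(\cdot)$ take every value in $(0,a^{\frac{4-N}2}C_1(N))$ and $(0,a^{\frac{4-N}2}C_2(N))$ respectively, with these endpoints never reached.

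For i), fix $u$ and set $M(b):=\inf_{t>0}\psi_{b,u}(t)$. Since $\psi_{b,u}(t)>0$ near $t=0$ and $\psi_{b,u}(t)\to+\infty$ as $t\to\infty$ (here one uses $2^*<4$), $M(b)\le0$, with equality exactly when $\psi_{b,u}\ge0$ on $(0,\infty)$; moreover $M$ is non-decreasing in $b$, $M(b)<0$ for small $b>0$ (because $\psi_{0,u}(t)\to-\infty$) and $M(b)=0$ for large $b$ (then $\psi_{b,u}$ is increasing by Proposition~\ref{graph psi 1}). Hence there is a threshold $\bar b(u)$ with $M(b)<0$ for $b<\bar b(u)$ and $M(b)=0$ for $b\ge\bar b(u)$. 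At $b=\bar b(u)$ the fiber cannot be strictly positive on $(0,\infty)$: otherwise, running the uniform control of $\psi_{b,u}$ near $0$, on compact $t$-intervals and near $\infty$ exactly as in the proofs of Propositions~\ref{lambdazero} and~\ref{comparison}, it would stay positive for slightly smaller $b$, against $M(b)<0$. So the minimum value $0$ of $M(\bar b(u))$ is attained at some interior $t_*$, whence $\psi_{\bar b(u),u}(t_*)=\psi'_{\bar b(u),u}(t_*)=0$ and, by uniqueness of the solution of that system, $\bar b(u)=b_0(u)$ and $t_*=t_0(u)$. This gives $\inf_{t>0}\psi_{b,u}(t)=0$ for $b\ge b_0(u)$, and for $b<b_0(u)$ a point $t_*u\in H_0^1(\Omega)\setminus\{0\}$ with $\Phi_b(t_*u)<0$.

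For ii), combine the trichotomy of Proposition~\ref{graph psi 1} with the same monotonicity. Writing $\psi'_{b,u}(t)=0$ as $a\|u\|^2=-b\|u\|^4t^2+\|u\|_{2^*}^{2^*}t^{2^*-2}$ and maximising the right-hand side, the quantity $a\|u\|^2-\max_{t>0}(-b\|u\|^4t^2+\|u\|_{2^*}^{2^*}t^{2^*-2})$ is strictly increasing in $b$ and vanishes exactly at $b(u)$; hence $\psi_{b,u}$ is of type iii) for $b<b(u)$, of type ii) for $b=b(u)$, and of type i) (no critical point) for $b>b(u)$. Transferring this to $\mathcal N_b$ itself through the rescaling $v\mapsto sv$, which moves a critical point of $\psi_{b,v}$ at $s$ to a point of $\mathcal N_b$, one gets $\mathcal N_b=\emptyset$ as soon as $b\ge\sup_v b(v)=a^{\frac{4-N}2}C_2(N)$ (a supremum which is not attained; this is the threshold $b^*$ of Remark~\ref{bstar}), while for $0<b<a^{\frac{4-N}2}C_2(N)$, since $b(\cdot)$ covers that whole interval, one may pick $v$ with $b(v)=b$ to obtain a type~ii) fiber and hence a point of $\mathcal N^0_b$, and $v$ with $b(v)>b$ to obtain a type~iii) fiber and hence $t^-_b(v)v\in\mathcal N^-_b$, $t^+_b(v)v\in\mathcal N^+_b$.

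The main obstacle, exactly as in Propositions~\ref{comparison} and~\ref{comparison1}, is the non-attainment of the best Sobolev constant on a bounded domain: this is what makes the comparisons $b_0(u)\le a^{\frac{4-N}2}C_1(N)$ and $b(u)\le a^{\frac{4-N}2}C_2(N)$ strict, makes the ``empty'' regimes genuine, and forces the threshold argument in i) to be run through the uniform estimates of Proposition~\ref{functionsappl} near $0$ and near $\infty$ instead of through naive compactness. The remaining work is the explicit resolution of the two $2\times2$ systems defining $(t_0(u),b_0(u))$ and $(t(u),b(u))$ together with the verification that each produced pair is the unique solution with $b>0$ --- a direct computation of the same type as in Lemma~\ref{functions}.
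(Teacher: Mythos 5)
Your argument is correct and follows exactly the route the paper intends (the paper offers no written proof, only the remark that the statement follows from the explicit solvability of the two systems and is ``similar to Propositions \ref{prop systems} and \ref{prop systems 1}''): you replace monotonicity in $\lambda$ by monotonicity in $b$, run the threshold argument of Proposition \ref{lambdazero} for part i), and use the trichotomy of Proposition \ref{graph psi 1} for part ii). Your two added touches --- reading ``$b\ge b(u)$'' as $b\ge\sup_u b(u)=a^{\frac{4-N}{2}}C_2(N)$ so as to match Lemma \ref{nelambda0}, and invoking the connectedness of $H_0^1(\Omega)\setminus\{0\}$ to get a $v$ with $b(v)=b$ and hence $\mathcal N_b^0\neq\emptyset$ --- are exactly the details the paper leaves implicit, and both are sound.
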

Therefore:
\begin{lem}\label{nelambda0} The following holds true.
	\begin{itemize}
	\item[i)] If $a^\frac{N-4}{2}b<C_1(N)$, then there exists $u\in H_0^1(\Omega)\setminus\{0\}$ such that $\Phi_b(u)<0$.
	\item[ii)] If $a^\frac{N-4}{2}b\ge C_1(N)$, then $\psi_{b	,u}(t)>0$ for all $u\in H_0^1(\Omega)\setminus\{0\}$ and $t>0$.
	\item[iii)] If $a^\frac{N-4}{2}b<C_2(N)$, then $\mathcal{N}^0_b, \mathcal{N}^-_b, \mathcal{N}^+_b$ are non-empty.
	\item[iv)] If $a^\frac{N-4}{2}b\ge C_2(N)$, then $\mathcal{N}_b=\emptyset$.
\end{itemize}
\end{lem}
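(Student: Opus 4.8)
The plan is to deduce all four statements from Proposition \ref{prop systems 2} together with the explicit formulas for $b_0(u)$ and $b(u)$ recalled above, once the suprema of these two quantities over $H_0^1(\Omega)\setminus\{0\}$ have been identified.

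The one computation to carry out first is the following. Since $S_N$ is never achieved, $\|u\|_{2^*}^2/\|u\|^2<S_N^{-1}$ for every $u\in H_0^1(\Omega)\setminus\{0\}$, while by the definition of $S_N$ we have $\sup_{u\neq 0}\|u\|_{2^*}/\|u\|=S_N^{-1/2}$. Inserting this into the formulas for $b_0$ and $b$ gives
\[
\sup_{u\neq 0}b_0(u)=a^{\frac{4-N}{2}}S_N^{\frac N2}C_1(N)S_N^{-\frac N2}=C_1(N)a^{-\frac{N-4}{2}},\qquad \sup_{u\neq 0}b(u)=C_2(N)a^{-\frac{N-4}{2}},
\]
and neither supremum is attained. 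Hence, for a fixed $b>0$: there exists $u$ with $b<b_0(u)$ if and only if $a^{\frac{N-4}{2}}b<C_1(N)$; there exists $u$ with $b<b(u)$ if and only if $a^{\frac{N-4}{2}}b<C_2(N)$; if $a^{\frac{N-4}{2}}b\ge C_1(N)$ then $b>b_0(u)$ for every $u$; and if $a^{\frac{N-4}{2}}b\ge C_2(N)$ then $b>b(u)$ for every $u$ --- in these last two cases the fact that the relevant supremum is not attained is what upgrades the non-strict hypothesis to a strict inequality.

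Granting this, the four claims follow quickly. For i), pick $u$ with $b<b_0(u)$; Proposition \ref{prop systems 2} i) gives $\inf_{t>0}\psi_{b,u}(t)<0$, so $\Phi_b(tu)=\psi_{b,u}(t)<0$ for a suitable $t>0$ and $w=tu$ does the job. For iii), pick $u$ with $b<b(u)$ and apply Proposition \ref{prop systems 2} ii) directly. For iv), since $b>b(u)$ for every $u$, Lemma \ref{graph psi 2} (only case i) is possible, as $b>0$, $\lambda=0$; see Remark \ref{globalmax}) shows that each fiber map $\psi_{b,u}$ has no critical point and $\psi'_{b,u}(t)>0$ for all $t>0$; in particular $\psi'_{b,u}(1)>0$, so no $u$ lies in $\mathcal N_b$, i.e. $\mathcal N_b=\emptyset$. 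For ii) it is cleanest not to use $b_0$ at all: Proposition \ref{functionsappl} i) with $\lambda=0$ gives, for every $u\neq0$ and $t>0$,
\[
\psi_{b,u}(t)=t^2\Bigl(\frac a2\|u\|^2+\frac b4\|u\|^4t^2-\frac1{2^*}\|u\|^{2^*}_{2^*}t^{2^*-2}\Bigr)>t^2\,g(\|u\|t)\,\|u\|^2 ,
\]
and since $a^{\frac{N-4}{2}}b\ge C_1(N)$, Lemma \ref{functions} i) yields $g(t_0)\ge0$; as $t_0$ is the unique (hence global) minimizer of $g$, this means $g\ge0$ on $(0,\infty)$, so $\psi_{b,u}(t)>0$ for all $t>0$ --- the same estimate already used in the proof of Theorem \ref{existencem1} ii).

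The only step requiring any care is the one isolated in the second paragraph: passing from the boundary equality $a^{\frac{N-4}{2}}b=C_1(N)$ (resp. $=C_2(N)$) to the strict inequality $b>b_0(u)$ (resp. $b>b(u)$) for every $u$. This rests exactly on the non-existence of extremals for $S_N$; all the rest is a direct translation of Proposition \ref{prop systems 2} and of the definitions of $C_1(N)$ and $C_2(N)$.
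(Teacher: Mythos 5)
Your proof is correct and follows essentially the route the paper intends: the paper states the lemma as an immediate consequence ("Therefore:") of Proposition \ref{prop systems 2} and the explicit formulas for $b_0(u)$ and $b(u)$, and you supply precisely the missing details, namely that $\sup_u b_0(u)=C_1(N)a^{-\frac{N-4}{2}}$ and $\sup_u b(u)=C_2(N)a^{-\frac{N-4}{2}}$ with neither supremum attained (by non-attainment of $S_N$), which converts the hypotheses on $a^{\frac{N-4}{2}}b$ into the pointwise comparisons with $b_0(u)$ and $b(u)$ needed to invoke that proposition. Your direct argument for ii) via Proposition \ref{functionsappl} and Lemma \ref{functions} is also exactly the estimate the paper uses elsewhere (e.g., in Theorem \ref{existencem1} ii)), so nothing is genuinely different.
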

\begin{rem} Comparing Lemmas \ref{nelambda0} and \ref{variational property} we see that
	\begin{itemize}
		\item[i)] $\Phi_b$ is weak lower semi-continuous if, and only, $\Phi_b(u)\ge 0$ for all $u\in H_0^1(\Omega)$.
		\item[ii)] If $\Phi_b'(u)u>0$ for all $u\in H_0^1(\Omega)\setminus\{0\}$, then $\Phi_b$ satisfies the Palais-Smale condition. Equivalently $\mathcal{N}_b=\emptyset$.
	\end{itemize}
\end{rem}
\begin{cor}\label{nehrilambda>0} If $a^\frac{N-4}{2}b<C_2(N)$, then for all $\lambda>0$ we have $\mathcal{N}_{b}^-\neq\emptyset$.
\end{cor}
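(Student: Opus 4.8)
The plan is to reduce this to the $\lambda=0$ analysis already carried out in Appendix \ref{AB}, together with the monotonicity of the fiber derivative in $\lambda$. Throughout, write $\mathcal{N}^-_{a,b,\lambda}$ and $\mathcal{M}_{a,b,\lambda}$ for the Nehari set and its normalization, and recall that $\mathcal{N}^-_{a,b,\lambda}\neq\emptyset$ if and only if $\mathcal{M}_{a,b,\lambda}\neq\emptyset$, since $\psi_{a,b,\lambda,u}$ and $\psi_{a,b,\lambda,u/\|u\|}$ have the same shape up to rescaling of the variable $t$.

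First I would record the elementary identity, valid for every $u\in H^1_0(\Omega)\setminus\{0\}$ and $t>0$,
\[
\psi'_{a,b,\lambda,u}(t)=\psi'_{a,b,0,u}(t)-\lambda\,\|u\|_p^p\,t^{p-1},
\]
so that $\lambda\mapsto\psi'_{a,b,\lambda,u}(t)$ is strictly decreasing on $(0,\infty)$ for each fixed $t>0$. Next, since $a^{\frac{N-4}{2}}b<C_2(N)$, Lemma \ref{nelambda0}(iii) gives $\mathcal{N}^-_{a,b,0}\neq\emptyset$; pick $u$ such that $u/\|u\|\in\mathcal{M}_{a,b,0}$, i.e. $\psi_{a,b,0,u}$ has a strict local maximum. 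By Proposition \ref{graph psi 1} (which applies for $\lambda\ge 0$, $b>0$), the only one of the alternatives i)–iii) in which the fiber map admits a local maximum is iii), so $\psi_{a,b,0,u}$ has exactly two critical points $0<t^-_{a,b,0}(u)<t^+_{a,b,0}(u)$ and $\psi'_{a,b,0,u}<0$ on the interval between them. Fixing any $\lambda>0$, the displayed monotonicity shows that $\psi'_{a,b,\lambda,u}$ is also negative at such a point; invoking Proposition \ref{graph psi 1} once more for $\psi_{a,b,\lambda,u}$, we conclude it is again in case iii), hence possesses a local maximum at some $t^-_{a,b,\lambda}(u)>0$, and therefore $t^-_{a,b,\lambda}(u)\,u\in\mathcal{N}^-_{a,b,\lambda}$, proving the claim.

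Alternatively, and even more briefly, one may simply quote Lemma \ref{containedmu}(ii) with $\lambda_1=0<\lambda_2=\lambda$ to obtain $\mathcal{M}_{a,b,0}\subseteq\mathcal{M}_{a,b,\lambda}$; then $\mathcal{M}_{a,b,0}\neq\emptyset$ (Lemma \ref{nelambda0}(iii)) forces $\mathcal{M}_{a,b,\lambda}\neq\emptyset$, hence $\mathcal{N}^-_{a,b,\lambda}\neq\emptyset$. There is no genuine obstacle here; the only point deserving care is the consistency of notation ($\lambda=0$ versus $\lambda>0$) and the fact that increasing $\lambda$ pushes $\psi'_{a,b,\lambda,u}$ downward, which is precisely what keeps the "local maximum" branch (case iii) of the fiber map alive and thus $\mathcal{N}^-$ non-empty.
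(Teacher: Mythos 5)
Your proof is correct and is essentially the paper's own argument: the proof given there consists precisely of citing Lemma \ref{nelambda0} together with Lemma \ref{containedmu}, which is your second (shorter) paragraph verbatim. Your first paragraph merely unpacks the proof of Lemma \ref{containedmu}(ii) — the $\lambda$-monotonicity of $\psi'_{a,b,\lambda,u}$ forcing case iii) of Proposition \ref{graph psi 1} to persist — so nothing new is needed.
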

\begin{proof} Indeed, this is a consequence of Lemmas \ref{nelambda0} and \ref{containedmu}. We also refer the reader to \cite[Lemma 2.6]{DN}.
\end{proof}
The next lemma is an application of Lemma \ref{increas} and Remark \ref{t+}:
\begin{lem}\label{decreasing} Fix $u\in H_0^1(\Omega)\setminus\{0\}$. The following  holds true.
	\begin{itemize}
		\item[i)] The function $(0,b(u))\ni\mapsto t_b^-(u)$ is continuous and increasing.
		\item[ii)] The function $(0,b(u))\ni\mapsto t_b^+(u)$ is continuous and decreasing.
		\item[iii)]
		\begin{equation*}
		\lim_{b\uparrow b(u)}t_b^-(u)=t(u)=\lim_{b\uparrow b(u)}t_b^+(u).
		\end{equation*}
	\end{itemize}
\end{lem}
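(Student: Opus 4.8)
The plan is to read off i) and ii) from the implicit-function-theorem computations already recorded in Lemma \ref{increas} and Remark \ref{t+}, and then to prove iii) by squeezing the two branches $t_b^-(u)$ and $t_b^+(u)$ together as $b\uparrow b(u)$.

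Since $\lambda=0$, the relation $\psi_{b,u}'(t)=0$ reads $a\|u\|^2=\varphi_b(t):=-b\|u\|^4t^2+\|u\|_{2^*}^{2^*}t^{2^*-2}$, and (as in the proof of Proposition \ref{graph psi 1}) $\varphi_b$ has a unique maximum point $t^*_b$ with $\varphi_b(t^*_b)>0$; moreover $\varphi_b(t)$ is strictly decreasing in $b$ for each fixed $t>0$, hence so is $\varphi_b(t^*_b)$, and it crosses the level $a\|u\|^2$ exactly at $b=b(u)$ (with $t^*_{b(u)}=t(u)$). Consequently, for every $b\in(0,b(u))$ the fiber map $\psi_{b,u}$ is in case iii) of Proposition \ref{graph psi 1}, so $0<t_b^-(u)<t_b^+(u)$ are well defined with $\psi_{b,u}''(t_b^-(u))<0<\psi_{b,u}''(t_b^+(u))$, whereas $\psi_{b(u),u}$ is in case ii): it has the single critical point $t(u)$, with $\psi_{b(u),u}''(t(u))=0$. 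Now Lemma \ref{increas}, applied with $\lambda=0$ fixed, shows that $b\mapsto t_b^-(u)$ is $C^1$ on $(0,b(u))$, and the sign of its derivative there is governed by $\psi_{b,u}''(t_b^-(u))<0$, making it strictly increasing; this is i). Remark \ref{t+} gives, in the same way, that $b\mapsto t_b^+(u)$ is $C^1$ and, since now $\psi_{b,u}''(t_b^+(u))>0$, strictly decreasing; this is ii).

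For iii), fix $b_0\in(0,b(u))$. By i) and ii), on $(b_0,b(u))$ the map $b\mapsto t_b^-(u)$ is increasing and, because $t_b^-(u)<t_b^+(u)<t_{b_0}^+(u)$, bounded above; hence it converges as $b\uparrow b(u)$ to some $t^-$ with $0<t_{b_0}^-(u)\le t^-\le t_{b_0}^+(u)$. Likewise $b\mapsto t_b^+(u)$ is decreasing and bounded below by $t_{b_0}^-(u)>0$, so it converges to some $t^+\in(0,\infty)$, and $t^-\le t^+$. Letting $b\uparrow b(u)$ in the identities $a\|u\|^2+b\|u\|^4(t_b^\pm(u))^2-\|u\|_{2^*}^{2^*}(t_b^\pm(u))^{2^*-2}=0$ (equivalent, since $t_b^\pm(u)>0$, to $\psi'_{b,u}(t_b^\pm(u))=0$), we find $\psi'_{b(u),u}(t^-)=\psi'_{b(u),u}(t^+)=0$; but $\psi_{b(u),u}$ has $t(u)$ as its only critical point, so $t^-=t^+=t(u)$, which is iii).

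The whole argument is essentially routine once Proposition \ref{graph psi 1}, Lemma \ref{increas} and Remark \ref{t+} are in place; the only point deserving a bit of care is keeping $t_b^-(u)$ bounded and bounded away from $0$ as $b\uparrow b(u)$, which is precisely why one wedges it between the fixed value $t_{b_0}^-(u)$ and the decreasing branch $t_b^+(u)$ (and hence below $t_{b_0}^+(u)$), using i)--ii).
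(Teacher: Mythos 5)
Your proof is correct and follows the same route the paper intends: the paper gives no written proof beyond the remark that the lemma "is an application of Lemma \ref{increas} and Remark \ref{t+}", and your parts i)--ii) are exactly that application (reading the sign of $\partial t^{\pm}_b/\partial b$ off the sign of $\psi_{b,u}''(t_b^{\pm}(u))$). Your part iii) supplies the squeezing/limit-passage details the paper leaves implicit, and the key precaution you flag — wedging $t_b^-(u)$ and $t_b^+(u)$ between $t_{b_0}^-(u)$ and $t_{b_0}^+(u)$ before passing to the limit in $\psi_{b,u}'(t_b^{\pm}(u))=0$ and invoking uniqueness of the critical point of $\psi_{b(u),u}$ — is exactly what is needed.
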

The following proposition can be found in \cite{S,S1} (with some adaptations). We give an outline of the proof (recall from Lemma \ref{nelambda0} that $\mathcal{N}^0_b, \mathcal{N}^-_b$ are not empty for all $a,b>0$ satisfying $a^\frac{N-4}{2}b<C_2(N)$):
\begin{prop}\label{nehariestimatives} Suppose that $a^\frac{N-4}{2}b<C_2(N)$, then
	\begin{equation*}
	\Phi_b(u)=\frac{(2^*-2)^2a^2}{4\cdot 2^*(4-2^*)b}, \forall u\in \mathcal{N}^0_b.
	\end{equation*}
	Moreover,
	\begin{equation*}
	c^-(a,b,0)< \frac{(2^*-2)^2a^2}{4\cdot 2^*(4-2^*)b}=c^0(a,b,0).
	\end{equation*}
\end{prop}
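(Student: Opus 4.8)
The plan is to handle the equality on $\mathcal N^0_b$ first, which is a direct computation, and then obtain the strict inequality for $c^-(a,b,0)$ by a monotonicity argument in the parameter $b$.

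For the equality: if $u\in\mathcal N^0_b$ then $\psi'_{b,u}(1)=a\|u\|^2+b\|u\|^4-\|u\|_{2^*}^{2^*}=0$ and $\psi''_{b,u}(1)=a\|u\|^2+3b\|u\|^4-(2^*-1)\|u\|_{2^*}^{2^*}=0$. Viewing this as a linear system in the two quantities $b\|u\|^4$ and $\|u\|_{2^*}^{2^*}$, and recalling $2<2^*<4$ since $N>4$, one solves $b\|u\|^4=\tfrac{2^*-2}{4-2^*}\,a\|u\|^2$ and $\|u\|_{2^*}^{2^*}=\tfrac{2}{4-2^*}\,a\|u\|^2$, whence $\|u\|^2=\tfrac{(2^*-2)a}{(4-2^*)b}$. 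Substituting these into $\Phi_b(u)=\tfrac a2\|u\|^2+\tfrac b4\|u\|^4-\tfrac1{2^*}\|u\|_{2^*}^{2^*}$ and simplifying yields $\Phi_b(u)=\tfrac{(2^*-2)^2a^2}{4\cdot 2^*(4-2^*)b}$, independently of $u$. Since $\mathcal N^0_b\neq\emptyset$ under the hypothesis $a^{\frac{N-4}2}b<C_2(N)$ (Lemma \ref{nelambda0}), this proves both the equality and that $c^0(a,b,0)$ equals the same constant.

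For the strict inequality it suffices to produce a single $u$ with $t_b^-(u)u\in\mathcal N_b^-$ and $\Phi_b(t_b^-(u)u)<\tfrac{(2^*-2)^2a^2}{4\cdot 2^*(4-2^*)b}$. Recall the threshold $b(u)=a^{\frac{4-N}2}S_N^{\frac N2}C_2(N)(\|u\|_{2^*}/\|u\|)^N$. Since $S_N$ is never attained, $\sup_{u\neq0}(\|u\|_{2^*}^2/\|u\|^2)=S_N^{-1}$ is approached but not reached, so $b(u)$ takes all values in $(0,a^{\frac{4-N}2}C_2(N))$; as $a^{\frac{N-4}2}b<C_2(N)$, i.e. $b<a^{\frac{4-N}2}C_2(N)$, we may fix $u$ with $b<b(u)$. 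For such $u$, Lemma \ref{decreasing} gives that $\beta\mapsto t_\beta^-(u)$ is continuous on $(0,b(u))$ with $t_\beta^-(u)\to t(u)$ as $\beta\uparrow b(u)$, and Lemma \ref{increas}(i) gives that $\beta\mapsto\psi_{\beta,u}(t_\beta^-(u))$ is strictly increasing on $(0,b(u))$; by joint continuity of $\psi$ in $(\beta,t)$ we get $\psi_{\beta,u}(t_\beta^-(u))\to\psi_{b(u),u}(t(u))$, hence $\psi_{b,u}(t_b^-(u))<\psi_{b(u),u}(t(u))$. From the scaling relations $\psi'_{\beta,tu}(1)=t\,\psi'_{\beta,u}(t)$ and $\psi''_{\beta,tu}(1)=t^2\,\psi''_{\beta,u}(t)$ together with the defining property of the pair $(t(u),b(u))$, namely $\psi'_{b(u),u}(t(u))=\psi''_{b(u),u}(t(u))=0$, we have $t(u)u\in\mathcal N^0_{b(u)}$, so by the first part $\psi_{b(u),u}(t(u))=\Phi_{b(u)}(t(u)u)=\tfrac{(2^*-2)^2a^2}{4\cdot 2^*(4-2^*)b(u)}$. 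Since $b(u)>b$, this last quantity is strictly below $\tfrac{(2^*-2)^2a^2}{4\cdot 2^*(4-2^*)b}$, and chaining the estimates gives $c^-(a,b,0)\le\Phi_b(t_b^-(u)u)=\psi_{b,u}(t_b^-(u))<\tfrac{(2^*-2)^2a^2}{4\cdot 2^*(4-2^*)b}$.

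The main obstacle is the middle step of the second part: transferring information about the fiber map at the parameter $b$ to its degenerate threshold $b(u)$ via strict monotonicity together with a limiting argument. This is where Lemmas \ref{increas} and \ref{decreasing} do the essential work, and where the non-attainment of $S_N$ is used to guarantee the existence of a test function $u$ with $b(u)>b$; the remainder is bookkeeping and the elementary algebra of the first part.
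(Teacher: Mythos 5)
Your proof is correct and follows essentially the same route as the paper: the identity on $\mathcal N^0_b$ by direct algebra from $\psi'_{b,u}(1)=\psi''_{b,u}(1)=0$, and the strict inequality via the monotonicity in $b$ of $\psi_{b,u}(t_b^-(u))$ (Lemma \ref{increas}) combined with the limit $\Phi_{b'}(t_{b'}^-(u)u)\to\Phi_{b(u)}(t(u)u)=\frac{(2^*-2)^2a^2}{4\cdot 2^*(4-2^*)b(u)}$ as $b'\uparrow b(u)$. The only cosmetic difference is that you argue directly with a chosen test function satisfying $b<b(u)$, whereas the paper phrases the same estimate as a proof by contradiction.
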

\begin{proof} The first part is trivial. Now suppose on the contrary that there exists $u\in\mathcal{N}^-_b$ such that
	\begin{equation*}
	\Phi_b(u)\ge\frac{(2^*-2)^2a^2}{4\cdot 2^*(4-2^*)b}.
	\end{equation*}
	From Lemma \ref{decreasing} we have that $t^-_b(u)=1<t^-_{b'}(u)<t^+_{b'}(u)<t^+_{b}(u)$ for each $0<b<b'<b(u)$ and hence
	\begin{eqnarray*}
		\Phi_{b'}(t^-_{b'}(u)u)&>& \Phi_{b'}(u) \nonumber\\
		&>& \Phi_{b}(u) \nonumber\\
		&\ge& \frac{(2^*-2)^2a^2}{4\cdot 2^*(4-2^*)b}, \nonumber
	\end{eqnarray*}
	which implies that
	\begin{equation*}
\frac{(2^*-2)^2a^2}{4\cdot 2^*(4-2^*)b}<\lim_{b'\uparrow b(u)}	\Phi_{b'}(t^-_{b'}(u)u)=\Phi_{b(u)}(t_b(u)u)=\frac{(2^*-2)^2a^2}{4\cdot 2^*(4-2^*)b(u)},
	\end{equation*}
	a contradiction since $b<b(u)$.
\end{proof}

{\bf Acknowledgments}
F. Faraci has been supported by the Universit\`{a} degli Studi di Catania, "Piano della Ricerca 2016/2018 Linea di intervento 2". She is   member of the Gruppo Nazionale per l'Analisi Matematica, la Probabilit\`{a}
e le loro Applicazioni (GNAMPA) of the Istituto Nazionale di Alta Matematica (INdAM). K. Silva has been supported by CNPq-Grant 408604/2018-2.

\end{document}